

\documentclass{imsart}
\usepackage{float}
\usepackage{pgf}
\usepackage{tikz}
\usepackage{comment}
\usetikzlibrary{shadows,backgrounds}
\usepackage{amsmath}
\usepackage{amsfonts}
\usepackage{amsthm}
\usepackage{xcolor}
\usepackage{bbm}
\usepackage{amssymb}
\usepackage{nomencl}
\usepackage[utf8]{inputenc}  
\usepackage[T1]{fontenc}   
\usepackage{tikz,pgfplots}
\pgfplotsset{compat=1.14}

\DeclareMathOperator{\tr}{Tr}

\DeclareMathOperator{\diag}{Diag}
\DeclareMathOperator{\conv}{Conv}
\DeclareMathOperator{\dimm}{dim}
\DeclareMathOperator{\vect}{Vect}
\DeclareMathOperator{\id}{Id}
\newtheorem{theorem}{Theorem}[section]
\newtheorem{assumption}{Assumption}
\newtheorem{corollary}[theorem]{Corollary}
\newtheorem{example}[theorem]{Example}
\newtheorem{definition}{Definition}
\newtheorem{lemma}[theorem]{Lemma}
\newtheorem{proposition}[theorem]{Proposition}
\newtheorem{remark}[theorem]{Remark}
\newcommand{\un}{\mathbbm{1}}
\usepackage{tikz,pgfplots}
\usepackage[titletoc]{appendix}
 \pgfplotsset{compat=1.14}
 
 \newcommand{\ajout}{\color{black}}
\def\restriction#1#2{\mathchoice
              {\setbox1\hbox{${\displaystyle #1}_{\scriptstyle #2}$}
              \restrictionaux{#1}{#2}}
              {\setbox1\hbox{${\textstyle #1}_{\scriptstyle #2}$}
              \restrictionaux{#1}{#2}}
              {\setbox1\hbox{${\scriptstyle #1}_{\scriptscriptstyle #2}$}
              \restrictionaux{#1}{#2}}
              {\setbox1\hbox{${\scriptscriptstyle #1}_{\scriptscriptstyle #2}$}
              \restrictionaux{#1}{#2}}}
\def\restrictionaux#1#2{{#1\,\smash{\vrule height .8\ht1 depth .85\dp1}}_{\,#2}} 
\newcommand{\RR}{\mathbb{R}}
\newcommand{\trans}{T}
\newtheorem{assump}{Assumption}
\newenvironment{assbis}[1]
  {%
  \addtocounter{assumption}{-1}   \begin{assumption}}
  {\end{assumption}}

\begin{document}







\begin{frontmatter}

\title{Concentration of Measure and Large Random Matrices \\ with an application to Sample Covariance Matrices
\protect\thanksref{T1}}

\runtitle{Concentration of Measure and Large Random Matrices}
\thankstext{T1}{This work is supported by the ``LargeDATA'' MIAI Chair at UGA, and the HUAWEI-GIPSA Lardist project.}
\begin{aug}
  \author{\fnms{Cosme}  \snm{Louart}\corref{}\thanksref{t1}\thanksref{t2}\ead[label=e1]{cosme.louart@gipsa-lab.grenoble-inp.fr}}
  \and
  \author{\fnms{Romain} \snm{Couillet}\corref{}\thanksref{t2}\thanksref{t3}\ead[label=e2]{romain.couillet@gipsa-lab.grenoble-inp.fr}\ead[label=u1,url]{http://romaincouillet.hebfree.org}}

  \thankstext{t1}{CEA, LIST, LVIC, nano-innov, 91120 Palaiseau}
  \thankstext{t2}{Univ. Grenoble Alpes, CNRS, Grenoble Institute of engenering, GIPSA-lab, 38000 Grenoble, France}  
  \thankstext{t3}{Univ. de Paris-Saclay, CentraleSup\'{e}lec, L2S, 91190 Gif-sur-Yvette, France}

  \runauthor{Louart}

  \affiliation{Laboratoire L2S, GIPSA-lab}

  \address{Louart Cosme : Nano-INNOV, Avenue de la Vauve, \\Bâtiment 861, 91120 Palaiseau.\\ 
          \printead{e1}}
  \address{Romain Couillet : Bureau D1178, 11 rue des math\'{e}matiques \\ Domaine Universitaire BP 46, 38402 Saint Martin d'Hères cedex. \\  
  \printead{e2} \\
      \printead{u1}}

\end{aug}
\begin{abstract}
The present work provides an original framework for random matrix analysis based on revisiting the concentration of measure theory from a probabilistic point of view.
By providing various notions of vector concentration ($q$-exponential, linear, Lipschitz, convex), a set of elementary tools is laid out that allows for the immediate extension of classical results from random matrix theory involving random concentrated vectors in place of vectors with independent entries. These findings are exemplified here in the context of sample covariance matrices but find a large range of applications in statistical learning and beyond, thanks to the broad adaptability of our hypotheses.
\\
\textbf{Keywords:} Concentration of the Measure, Random Matrices, Deterministic equivalent, Talagrand Theorem, Hanson-Wright Theorem, Davis Theorem
\end{abstract}
\end{frontmatter}



\tableofcontents

\renewcommand{\abstractname}{Acknowledgements}

\nomenclature[-1]{iif}{``if and only if''}
\nomenclature[01]{$E$}{Typical normed vector space over $\mathbb R$, endowed with the norm $\Vert \cdot\Vert$.}
\nomenclature[10]{$E^*$}{Dual space of $E$ (the set of linear maps from $E$ to $\mathbb R$).}
\nomenclature[11]{$\mathcal A$}{Typical non commutative algebra endowed with the algebra norm $\Vert \cdot\Vert$, the possibly non commutative product is written without operation character, we have $\forall x,y\in \mathcal A$, $\Vert xy\Vert \leq \Vert x \Vert \Vert y \Vert$.}
\nomenclature[110]{Conv$(A)$}{Convex hull of $A\subset E$ (i.e., Conv$(A)=\cap\{C \subset E,\, C \text{ convex}, C \supset A\}$).}
\nomenclature[1100]{$\partial A$}{Boundary of $A\subset E$, if $\bar A$ is the closure of $A$ and $\mathring A$, the interior of $A$, $\partial A=\bar A \setminus \mathring A$.}
\nomenclature[111]{$A_*$}{$A \setminus \{0\}$}
\nomenclature[112]{$\un_A$}{Indicator function of $A\subset E$, $\un_A :E \rightarrow \{0,1\}$ and $\un_A(x)=1 \Leftrightarrow x \in A$. If $A$ is not a set but an assertion (like $A=A(t)=(t\geq 1)$), $\un_A=1$ if $A$ is true and $\un_A=0$ if $A$ is false. If $\un$ presents no index, it designates a vector full of one with a convenient size for the context.}
\nomenclature[113]{$\mathfrak S_p$}{Set of permutations of $\{1,\ldots,p\}$ ; $\mathfrak S_{p,n} = \mathfrak S_p \times \mathfrak S_n$.}
\nomenclature[115]{$\prec$}{Majorization relation, see Definition~\ref{def:majorization}}
\nomenclature[114]{$x^{\downarrow}$}{Decreasing version of $x \in \mathbb R^p$, $\exists \sigma \in \mathfrak S_p$ such that $\forall i \in \{1,\ldots, p\}$, $[x^{\downarrow}]_i = x_{\sigma(i)}$ and $[x^{\downarrow}]_i\leq [x^{\downarrow}]_{i-1} $ for $ i \geq 2$.}
\nomenclature[00]{$\mathbb R$}{Set of real numbers ; $\mathbb R_+=\{x\in \mathbb R \ \vert \ x \geq 0\}$ ; $\mathbb R_- = - \mathbb R_+$. If $x \in \mathbb R^p$, one notes $[x]_i$, or more simply $x_i$, $1\leq i \leq p$, the $i^{\text{th}}$ entry of the vector $x$}
\nomenclature[000]{$(a,b]$}{Considering an interval $I\subset \mathbb{R}$, we employ the sign ``$($'' if left border of $I$ is open and ``$[$'' if it is closed ; the same rule works for the right border. To give an example, given $a,b\in \mathbb R$ : $(a,b]=\{x \ \vert \ a<x\leq b\}$}
\nomenclature[0000]{$\lfloor x \rfloor$}{Integer part of $x\in \mathbb R$, $\lfloor x \rfloor\in \mathbb N$ and verifies $\lfloor x \rfloor \leq x \leq \lfloor x \rfloor + 1 = \lceil x \rceil$.}
\nomenclature[01]{$\mathbb C$}{Set of complex numbers.}
\nomenclature[12]{$\mathcal M_{p,n}$}{Set of real matrices of size $p \times n$. If $M \in \mathcal M_{p,n}$, one notes $[M]_{i,i}$ or more simply $M_{i,j}$ the entry at the line $i$ and column $j$. If $n=p$, we simply note $\mathcal{M}_{p}=\mathcal M_{p,n}$.}

\nomenclature[121]{$\tr$}{Trace operator on $\mathcal M_{p}$, $\forall M \in \mathcal M_{p}, \tr M=\sum_{i=1}^p M_{i,i}$.}
\nomenclature[122]{$\cdot^T$}{Transpose operator on $\mathcal M_{p}$, $\forall M \in \mathcal M_{p,n}$, $[M^T]_{i,j} = M_{j,i}$, $1\leq i \leq p$, $1\leq j \leq n$.}
\nomenclature[123]{$I_p$}{Identity matrix of $\mathcal M_{p}$, ($[I_p]_i,j=0$ if $i \neq j$ and $[I_p]_{i,i}=1$, $1\leq i,j\leq j$).}
\nomenclature[123]{Diag}{Diagonal operator. If $M \in \mathcal M_{p,n}$, $\text{Diag}(M)=(M_{i,i})_{1\leq i\leq \min(p,n)}$ ; if $x\in \mathbb R^p$, $\text{Diag}_{q,n} (x) \in \mathcal M_{p,n}$, $[\text{Diag}_{p,n} (x)]_{i,i}=x_i$ if $1\leq i \leq \min(p,q,n)$ and $[\text{Diag}_{p,n} (x)]_{i,j}=0$ if $i\neq j$, for $1\leq i \leq p$, $1\leq j \leq n$.}
\nomenclature[1231]{$\text{Sp}(M)$}{Spectrum of the matrix $M$.}
\nomenclature[1232]{$Q_C$}{Resolvent of the matrix $C \in \mathcal M_p$. For $z \in \mathbb C \setminus \text{Sp}(C)$, $Q_C(z)=(C +z I_p)^{-1}$.}
\nomenclature[126]{$\mathcal S_p$}{Set of symmetric matrices of $\mathcal M_p$ : $S \in \mathcal S_n \Leftrightarrow  S_{i,j}=S_{i,j}, 1\leq i \leq p$ ; $S \in \mathcal S_p^+ \Leftrightarrow \forall u \in \mathbb R^p, u^TSu\geq 0$ ; $S \in \mathcal S_p^- \Leftrightarrow -S \in \mathcal S_p^+$. Given $S_1,S_2 \in \mathcal S_p$, we say that $S_1$ is greater  than $S_2$ and we note $S_1 \geq S_2$ if $S_1-S_2 \in \mathcal S_p^+$.}
\nomenclature[1261]{$S^{1/2}$}{Square root of the nonnegative symmetric matrix $S \in \mathcal{S}_p^+$ (with the diagonalization $S=P^T \Lambda P$, $P\in \mathcal O_p$, $\Lambda \in \mathcal D_p$, we define $S^{1/2}=P^T \Lambda^{1/2} P$ where $[\Lambda^{1/2}]_{i,i}=\Lambda_{i,i}^{1/2}$).}
\nomenclature[124]{$\mathcal O_{p}$}{Set of orthogonal matrices of $\mathcal M_{p}$ : $P \in \mathcal D_{p,n} \Leftrightarrow  P^{-1}=P^T$ ; $\mathcal O_{p,n}=\mathcal O_p \times \mathcal O_n$.}
\nomenclature[125]{$\mathcal D_{p,n}$}{Set of diagonal matrices of $\mathcal M_{p,n}$ : $D \in \mathcal D_{p,n} \Leftrightarrow  D=\text{Diag}_{p,n}(\text{Diag}(D))$ ; $D \in \mathcal D_{p,n}^+ \Leftrightarrow D_{i,i}\geq 0, 1 \leq i \leq \min(p,n)$ ; $D \in \mathcal D_{p,n}^- \Leftrightarrow  -D \in \mathcal D_{p,n}^+$. When $n=p$, we simply note $\mathcal D_p=\mathcal D_{p,n}$.}
\nomenclature[127]{$\mathcal P_{p}$}{Set of permutation matrices of $\mathcal M_{p}$ : $P \in \mathcal P_{p} \Leftrightarrow P \in \mathcal O_n \text{ and } (\exists \sigma \in \mathfrak S_p, \ P_{i,j}=1 \Leftrightarrow \sigma(i)=j)$ ; we also define $\mathcal P_{p,n}= \{(U,V) \in \mathcal P_p \times \mathcal P_n \ \vert \ U I_{p,n}V^T=I_{p,n}\}$ where $I_{p,n} = \diag_{p,n}(\un)$.}
\nomenclature[2]{$\Vert \cdot \Vert_q$}{$\ell_q$-norm on $\mathbb R^p$ for two integers $p,q \in \mathbb N_*$ ; $\Vert x \Vert_q = \left(\sum_{i=1}^p x_i^q \right)^{1/q}$.}
\nomenclature[20]{$\Vert \cdot \Vert$}{Classical norm of the vector space $E$ one is working on : if $E=\mathbb R^p$, the euclidean norm $\Vert \cdot \Vert_2$ ; if $E=M_{p,n}$, the spectral norm ($\forall M \in \mathcal M_{p,n} : \sup_{\Vert u \Vert = 1}\Vert Mu\Vert$).}
\nomenclature[21]{$\Vert \cdot \Vert_1$}{on $\mathbb R^p$, the $\ell_1$ ; on $M_{p,n}$, the nuclear norm ($\forall M \in \mathcal M_{p,n}, \Vert M \Vert_1 = \tr( (MM^T)^{1/2})$).}
\nomenclature[22]{$\Vert \cdot \Vert_F$}{Frobenius norm, $\forall M \in \mathcal M_{p,n} : \Vert M \Vert_F = \sqrt{\tr MM^T} = \sqrt{\sum_{i=1}^p\sum_{j=1}^nM_{i,j}^2}$.}
\nomenclature[23]{$\Vert \cdot \Vert_*$}{Dual norm on $E^*$. If $f \in E^*$, $\Vert f \Vert _* = \sup_{\Vert x \Vert \leq 1} f(x)$.}
\nomenclature[24]{$d_{\Vert \cdot \Vert}$}{Distance associated to the norm $\Vert \cdot \Vert$. Given two vectors $x,y \in E$ and two sets $A,B\subset E$, $d_{\Vert \cdot \Vert}(x,y)=\Vert x-y\Vert$, $d_{\Vert \cdot \Vert}(x,A)=\inf\{d_{\Vert \cdot \Vert}(x,y), y \in A \}$ and $d_{\Vert \cdot \Vert}(A,B)=\inf\{d_{\Vert \cdot \Vert}(x,B), x \in A \}$.}
\nomenclature[241]{$\mathcal B_t$}{Closed ball of $E$ of size $t>0$, $\mathcal B_t = \{0\}_t=\{x\in E \ \vert \ \Vert x \Vert \leq 1\}$, when $t=1$, we note $\mathcal B = \mathcal B_1$. We also use the notation $\mathcal B_{\Vert \cdot \Vert}(x,t)=\{x\}_t=\{y \in E \ \Vert \ \Vert x-y \Vert\leq t\}$, the index $\Vert \cdot \Vert$ could be of course a distance $d$ or simply unspecified when we implicitly consider the classical norm of $E$.}
\nomenclature[25]{$A_t$}{If $A\subset E$, $A_t$ is the closed set $\{x\in E \ \vert \ d(x,A) \leq t\}\supset A$.}
\nomenclature[26]{$S^t_f$}{Level set of $f : E \rightarrow \mathbb R$. For $t\in \mathbb R$, $S^t_f=\{x\in E \ \vert \ f(x)\leq t\}$.}
\nomenclature[3]{$\mathbb S^p$}{Sphere of $\mathbb{R}^{p+1}$ ($\mathbb S^p=\{x \in \mathbb{R}^{p+1} \ \vert \ \Vert x \Vert = 1\}$).}
\nomenclature[6]{$\mathbb P$}{We implicitly suppose all over the paper that there exists a probability space $(\Omega, \mathcal F, \mathbb P)$ where $\mathcal F$ is a sigma-algebra of the set $\Omega$ and $\mathbb P$, a probability measure defined on the elements of $\mathcal F$. The random vectors we consider are then $\mathbb P$-measurable applications defined on $\Omega$ and taking value in normed vector spaces endowed with the Borel $\sigma$-algebra. In that setting, given a random vector $X \in E$ (i.e. $X : \Omega \rightarrow E$), for any Borel set $A\subset E$, we note $\mathbb P (X \in A)=\mathbb P (\{\omega \in \Omega  , \ X(w)\in A\})$ and for any measurable function $f:E \mapsto \mathbb R$ and $t\in \mathbb R$, we note $\mathbb P(f(X)\geq t)=\mathbb P(\{\omega \in \Omega, \ f(X(\omega))\geq t\})$...}
\nomenclature[61]{$\mathbb E$}{The expectation operator. For any random vector $X \in E$ and any measurable function $f : E \rightarrow \mathbb R$, we define $\mathbb E [f(X)] = \int_\Omega f\circ X d\mathbb P $. When $E$ has finite dimension, it is possible to define $\mathbb E X$ by integrating all the coordinates of $X$.}
\nomenclature[611]{a.s.}{Almost surely, an event (i.e., an element of $\mathcal F$) $A$ is true almost surely iff $\mathbb P(A)=1$. We will often abusively mix up random vectors and classes of almost surely equal random vectors.}
\nomenclature[612]{i.i.d.}{``independent and identically distributed''.}
\nomenclature[6111]{$\sigma(X)$}{If $X \in \mathcal{M}_{p,n}$, $\sigma(X) \in \mathbb R_+^{\min(p,n)}$ is the vector constituted of the singular values of $X$ in increasing order (i.e. the eigenvalues of $(XX^T)^{1/2}$). If $X \in E$ is a random vector, $\sigma(X)$ is the $\sigma$-algebra generated by $X$ (i.e., the $\sigma$-algebra of sets of $\Omega$ containing all the sets $X^{-1}(B)$ when $B$ is a Borel set of $E$).}
\nomenclature[613]{$\mathbb E[\cdot \vert \cdot]$}{Conditional expectation. Given a random variable $X \in \mathbb R$ $\mathcal F$-measurable and $\mathcal G$, a sub $\sigma$-algebra of $\mathcal F$, $\mathbb E[X \vert \mathcal G]$ is the a $\mathcal G$-measurable random variable that satisfies for any additional $\mathcal G$-measurable random variable $Y$, $\mathbb E[Y\mathbb E[X \vert \mathcal G]] = \mathbb E[YX]$.
verifies for any $\mathcal G$-measurable random variable $Y$, $\mathbb E[YX]=\mathbb E[Y \mathbb E [X \vert \mathcal G]]$. Given two random variables $X,Y \in \mathbb R$, $\mathbb E[X \vert Y]= \mathbb E[X \vert \sigma(Y)]$.}
\nomenclature[614]{$\mathbb P(\cdot \vert \cdot )$}{Conditional probability. Given a Borel set $A\subset \mathbb R$ and a random variable $X \in \mathbb R$, $\mathbb{P}(A \vert X)=\mathbb E [\un_A \vert X]$.}
\nomenclature[62]{$\in, \pm $}{Concentration around a pivot or around a deterministic equivalent, see Definitions~\ref{def:concentration_autour} and \ref{def:deterministic equivalent}.}
\nomenclature[63]{$\propto$}{Lipschitz concentration, see Definitions~\ref{def:conc_variable_aleatoire} and~\ref{def:conc_lipschitzienne_vect_al}.}
\nomenclature[64]{$\propto_c$}{Convex concentration, see Definition~\ref{def:convexite_faible}.}
\nomenclature[65]{$\propto_\cdot^T$}{Transversal concentration, see Definition~\ref{def:concentrationfaible_transversale}.}
\nomenclature[65]{$\mathcal R_X$}{Observable diameter, see Remark~\ref{rem:diametre_observable_generalise}}
\nomenclature[90]{$d\mu$}{If $\mu$ is a probability law defined on $E$, for any function $f : E \rightarrow \mathbb R$, such that $\mu(f) = \int f d\mu =1$, we note $fd\mu$ the measure verifying for all Borel set $B$ : $fd\mu(B) = \int \un_B f d\mu$.}
\nomenclature[91]{$\lambda_p$}{Lebesgue measure on $\mathbb{R}^p$.}
\nomenclature[92]{$\sigma_p$}{Uniform measure on $\mathbb{S}^p$.}
\nomenclature[92]{$\nu^p$}{Exponential measure. If $p=1$ $\nu^1=\frac{e^\vert \cdot \vert}{2} d\lambda_1$, for $p\geq 1$, $\nu^p =\nu^1 \otimes \ldots \nu^1$ ($p$times).}
\nomenclature[92]{$\beta_q^p$}{Uniform measure on the ball $\mathcal B_{\Vert\cdot \Vert_q}$ of $\mathbb R^p$.}
\nomenclature[93]{$\mathcal N(0,I_p)$}{Distribution of Gaussian vectors of $\mathbb{R}^p$ with zero mean and covariance $I_p$.}
\nomenclature[4]{$\eta_{(E, \Vert \cdot \Vert)}$}{Norm degree, see Definition~\ref{def:norm_degree}.}
\nomenclature[94]{$m_\mu$}{Stieltjes transform of the probability law $\mu$ on $\mathbb R$. Let $D \subset \mathbb R$ be the maximal Borel set such that $\mu(\mathbb R \setminus D)=0$, then $\forall z \in \mathbb C \setminus D$, the Stieltjes transform is defined with the formula $m_\mu(z)=\int_D \frac{d\mu(w)}{z-w}$.}
\printnomenclature

\section*{Introduction}

Sample covariance matrices are key quantities in applied statistics in that they allow for the estimation of structural information in the second order statistics of the sampled vectors, and find a wide range of applications in fields as diverse as applied statistics (e.g., financial statistics, biostatistics), signal or data processing, wireless communications, etc. Precisely, for a set of $n$ independent random vectors $x_1,\ldots x_n \in \mathbb{R}^p$ stacked in a matrix $X=[x_1, \cdots x_n] \in \mathcal{M}_{p,n}$, the sample covariance matrix $S=\frac{1}{n}\sum_{i=1}^n x_ix_i^T=\frac{1}{n}XX^T \in \mathcal{M}_p$ provides an estimator for $\Sigma=\frac{1}{n}\mathbb{E}[XX^T]$. 

If the number of independently sampled vectors $n$ is large compared to the dimension $p$ of the vectors, then under usually mild assumptions $S$ converges to $\Sigma$. When $p$ and $n$ have the same order of magnitude though, again under classical assumptions, the operator norm difference $\|S-\Sigma\|$ usually does not vanish and $S$ is thus not a consistent estimator for $\Sigma$. In their now famous article \cite{MAR67}, Mar\u{c}enko and Pastur proved that, if the vectors $x_i$, in addition to being independent, have independent entries of zero mean and unit variance, then, as $n,p\to\infty$ with $p/n\to c\in(0,\infty)$, the normalized counting measure of the eigenvalues of $S$ converges a.s. to a limiting distribution having a continuous density, and now referred to as the Mar\u{c}enko--Pastur distribution. From this article on, many works have provided generalizations of \cite{MAR67}. This is the case for instance of \cite{SIL95}, where the authors assume that the $x_i$ can be written under the form $x_i=\Sigma^\frac12z_i$ for $z_i$ a vector with independent zero mean and unit variance entries. It is to be noted that the independence \cite{MAR67}, linear dependence \cite{SIL95}, or vanishing dependence \cite{ADA11} between the entries of $X$ is key in the approach pursued in these articles as it provides a necessary additional degree of freedom in the derivation of the proofs.

These earlier results thus found many practical applications in scientific fields involving large dimensional matrix models with mostly linearly dependent entries, most notably in applied statistics, electrical engineering and computer science. But the renewed interest for machine learning applications, spurred by the big data era, has recently brought forward the need to understand and improve algorithms and methods relying on random matrix models involving non-linear relations between their entries. In some scenarios, as with kernel matrices (that is, matrices $K\in\RR^{n\times n}$ with entries of the type $K_{ij}=f(x_i^\trans x_j)$ for some non-linear function $f$), an asymptotic equivalence between these matrices and classical matrices with linearly dependent entries can be proved \cite{ELK10,COU16,KAM17}, thereby transferring the asymptotic analysis of the former to that of the latter. In other scenarios though, such asymptotic equivalences are not available. This is in particular the case of so-called random feature maps and neural networks. In random feature maps, the vectors $x_i$ can be expressed under the form $x_i=\sigma(Wz_i)$ for some non-linear (referred to as the activation) function $\sigma:\RR\to\RR$, here applied entry-wise, $W$ a given matrix, and $z_i$ yet another random vector. The randomness in random feature maps arises from the fact that $W$ is usually chosen at random, often with independent and identically distributed entries (this way allowing to produce $p$ independent non-linear ``features'' of the vector $z_i$). Prior to training, neural networks (in particular feedforward neural nets) may usually be seen as a cascade of such random feature maps. In evaluating the performance of algorithms and methods based on random feature maps and neural networks, it is often of importance to understand the statistical behavior of $S$.

Still in the scope of the statistical analysis of data processing algorithms, where sample covariance matrices built upon \emph{real data vectors} $x_i$ are considered, it is also quite restrictive, if not disputable, to assume that $x_i$ can be written as $x_i=\mu+\Sigma^\frac12w_i$ for some deterministic $\mu$ and with $w_i$ having independent entries.

\bigskip

As we shall see in the course of the article, a very convenient assumption to be made on $x_i$ in order (i) to answer the aforementioned controversial real dataset modeling, (ii) to properly model random feature map and neural network and (iii) to largely generalize the Mar\u{c}enko--Pastur and related results, is to propose that \emph{$x_i$ satisfies a vector-concentration inequality}. In a nutshell, concentration inequalities being stable under (bounded) linear operations \emph{and non-linear Lipschitz operations}, the framework proposed in this study allows for a natural study of models of $S$ with independent $x_i$ of the form $f(z_i)$ with $z_i$ itself a concentrated random vector (for instance a Gaussian vector)\footnote{This framework is particularly interesting when one is studying neural nets since in that case the concentration is preserved through the layers that perform Lipschitz transformation of the output of previous one with operations of the form $z_{l+1} = \sigma(Wz_l)$ where $z_{l} and z_{l+1}$ are respectively the output of the $l ^{\textit{th}}$ and of the $(l+1)^{\textit{th}}$ layers, $\sigma$ Lipschitz and $\|W\|$ small compared to the dimension and the number of elements in the training set. Also, it can be showed that highly realistic images produced by generative adversarial neural networks are concentrated by construction~\cite{SED20}}.

The objective of the article is precisely to provide a consistent method for the analysis of $S$ with $x_i$ independent concentrated random vectors \emph{based on elementary concentration inequality principles}, particularly suited to practical applications in large dimensional machine learning. The article notably extends the results of \cite{Bai08t}\footnote{In \cite{Bai08t}, the authors only suppose that the second centered moment of $x_i^TBx_i/n$ is bounded for any deterministic matrix $B$ with bounded spectral norm -- in other words they assume that an analog to Hanson-Wright Theorem (see \cite{Ver17}) could be applied to their data. This assumption is a direct consequence to some of the hypotheses posed in the present article, and thus could a priori be claimed more general; however, we obtain in our conclusions a (quasi-asymptotic) estimation for the Stieltjes transform of the spectral measure of $S$ by means of a subgaussian concentration inequality, providing the optimal rate of convergence, which the authors of \cite{Bai08t} cannot reach under their limited hypotheses. This being said, it is interesting to explore the conditions under which ``Hanson-Wright like'' theorems arise in the first place: a significant part of the present article is precisely dedicated to this endeavor.}, \cite{PAJ07}\footnote{The authors there assume that the data are isotropic and follow a log-concave distribution, which our present framework encompasses as a special case --- in particular, any random vector $X$ of the form $X =f(Z)$ with $f$ $1$-Lipschitz and $Z$ following a log-concave distribution is valid.}, \cite{CHA15}\footnote{Here the authors introduce what they refer to as a ``weak tail projection property'' for the data, which basically imposes concentration inequalities to the norm of all the orthogonal projection of the data. Under this assumption, the authors study the asymptotic behavior of the extreme eigenvalues of the sample covariance matrix. As for \cite{Bai08t}, this weak tail projection property is an immediate consequence of the concentration hypothesis made in the present article, which we consider to be more natural --- first because the concentration of measure framework provides a richer set of random vector examples \cite{LED05}; second because it provides a better control on the convergence rate of most estimates considered in the article.}, all generalizing the independence assumption posed in the classical proof approaches of \cite{MAR67,SIL95}, and more specifically of \cite{ELK09} in which the author first exploited concentration of measure arguments. The technical arguments and findings of \cite{ELK09} are nonetheless quite specific and treated lemma after lemma, some of the main results being valid only for a restricted class of concentrated random vectors (such as elliptically distributed random vectors). We rather aim here at a self-contained generic framework for the manipulation of a large class of random matrix models using quite generic concentration identities. For instance, in the present article, we provide a similar result to Hanson-Wright concentration inequality concerning quantities of the type $x^TAx$ where $x\in \mathbb R^p$ is random and $A\in \mathcal M_p$ deterministic. With our hypothesis issued from concentration of measure theory, it is possible to consider random vectors $x$ with complexly (not linearly) entangled entries (see Theorems~\ref{the:conc_forme_quadrat_hypo_conc_lipsch} and \ref{the:conc_forme_quadrat}). The present work also follows after a previous article by the same authors \cite{LOU17b}, in which a particular model of concentrated random vectors $x_i=\sigma(Wz_i)$ was studied in the aim of analyzing the performances of a one hidden-layered non-linear random neural network, commonly referred to as an extreme learning machine \cite{HUA06}. Concentration was there induced by the randomness of $W$ and the Lipschitz character of $\sigma$. Yet, not imposing that the input data $z_i$ are themselves concentrated random vectors led to restrictions in the results of \cite{LOU17b}, to which the present article easily circumvents. 

Specifically, our main contributions are:
\begin{enumerate}
  \item a formal description of three different stable classes of concentrated random vectors, being --- from the larger to the smaller --- linearly concentrated vectors, convexly concentrated vectors, and Lipschitz concentrated vectors. Our approach goes beyond \cite{LED05} in that we start from the main theorems providing the class of concentration of a given random vector $X$, from which we then deduce the concentration of (sufficiently regular) real functionals $f(X)$, called the ``observations of $X$''. Notably, we do not rediscuss isoperimetrical inequalities in the sense of Ledoux (at the heart of the Lipschitz concentration of Gaussian and uniform spherical random vectors), nor do we elaborate on the popular Talagrand theorem\footnote{This major result gives the convex concentration (in our formalism) of random vectors of $\mathbb R^p$ with independent and bounded entries \cite{TAL95}.} (which establishes convex concentration for models of independent and bounded entries). 
  \item a general approach to obtain concentration inequalities for the norm of (linearly) concentrated random vectors, through the introduction a new central object called the \textit{norm degree} (Definition~\ref{def:norm_degree}). 
  \item an expression of the concentration of the product of Lipschitz concentrated random vectors (Theorem~\ref{the:concentration_des_transformations_lipschitz_multilineairement}).
  \item an expression of two Hanson--Wright-like results with fine-tuned concentration rates in the case of Lipschitz concentration (Theorem~\ref{the:conc_forme_quadrat_hypo_conc_lipsch}) or of convex concentration (Theorem~\ref{the:conc_forme_quadrat}).
  \item a control of the concentration of the resolvent $Q \equiv (S +z I_p)^{-1}$ of the random sample covariance matrix $S=\frac1nXX^T$ under convex concentration hypotheses on $X$, i.e., based on hypotheses derived from Talagrand's theorem.
  \item a very simple proof of the convergence of the spectral distribution of the sample covariance matrix $S = \frac{1}{n}XX^T$ with $X = (x_1,\ldots,x_n) \in \mathcal M_{p,n}$, in a general setting where 
  \begin{itemize}
    \item the columns of $x_1,\ldots, x_n$ do not have to be identically distributed (but must be independent)
    \item $\forall i \in \{1,\ldots, n\}$, the entries of $x_i$ do not have to be independent (nor linearly dependent)
    \item the expectations $\mathbb E[x_1],\ldots, \mathbb E[x_n]$ are allowed to be slight modifications of a small number of ``signals'' $s_1,\ldots,s_k$ (with $k\ll p, n$), each having a norm of order up to $O(\sqrt p)$. For instance, $\mathbb E[x_i]$ may, for all $i$, be small (say, $O(1)$-norm) deviations of a common $O(\sqrt p)$-norm mean vector $s$.
  \end{itemize}
\end{enumerate}
All those contributions are presented with a new set of notations designed to optimize the readability of our proposed framework. This original formalism to concentration of measure theory is thoroughly elaborated in (the lengthy but compelling) Section~\ref{sse:conc_variable_al}; this section rediscovers known results on the concentration of random variables --- many of which are collected in \cite{LED05} ---, which are then extended into new contributions to random vectors in Section~\ref{sse:conc_vect_al}.
\color{black}

\bigskip

The remainder of the article is structured as follows. To give a somewhat original (but convenient to our analysis) probabilistically oriented approach to the notion of concentration of measure, we present, in a first section, a collection of globally well known lemmas concerning random variables in an efficient formalism that we introduce to employ it, in a second part, to the case of random vectors (one original contribution of our approach is to study the stability of the concentration through basic operations like sums and products in algebras). The aim of those two sections is not only to prepare the ground for the study of the sample covariance but also to offer a generic toolbox beyond our present scope; for this reason we maintain as general hypotheses as possible. In the last section we then devise so-called deterministic equivalents for the sample covariance matrix model under study and we eventually illustrate the robustness of our results to both artificial and real datasets. 

\section*{Preamble}

\begin{remark}[Definition of $S$]
As a first remark, note that we abusively define the sample covariance matrix $S$ as $S=\frac1nXX^T$ rather than the conventional $\frac1nXX^T-\frac1n\mathbb{E}X \mathbb{E}X^T$. This choice is not completely marginal in that not subtracting the matrix $\mathbb{E}X \mathbb{E}X^T$ from $S$ in general brings additional complications (because this matrix will in general have unbounded norm as $p,n\to\infty$); our concentration of measure approach however efficiently deals with this term. From a practical standpoint, the removal of $\mathbb{E}X \mathbb{E}X^T$ means that this matrix can be computed, which is in general not the case. Alternatively, centering $S$ by the empirical average $\frac1nX1_n1_n^TX^T$ presumes that the $x_i$ are identically distributed which, as is common in classification applications in machine learning, is also not a desirable assumption.
\end{remark}

In the course of the article, we will be particularly interested in the eigenvalues $l_1,\ldots,l_p$ of $S$. More precisely, we will consider the spectral distribution $F$ of $S$ defined as the following normalized counting measure of the eigenvalues of $S$ (a random probability measure):
\begin{align*}
  F=\frac{1}{p}\sum_{i=1}^p \delta_{l_i},
\end{align*}
where $\delta_x$ is the Dirac measure centered at $x$. As is conventional in large dimensional random matrix theory, we shall retrieve information of $F$ through an approximation of its Stieltjes transform $m_{F}$, defined as:
\begin{align*}
  m_{F}(z) = \int_{w}  \frac{1}{w-z} dF(w)=\frac{1}{p}\tr \left(\left(S - z I_p\right)^{-1}\right),
\end{align*}
where $z\in\mathbb{C}$ belongs to the complementary of the support of $F$. Since the matrix $S$ is nonnegative definite, it suffices to study $m_{F}(z)$ for $z \in \mathbb{R}_-$ to recover $m_F$ by analytic extension. For convenience, rather than working on $\mathbb{R}^-$ we shall consider $m_{F}(-z)$ for $z$ in $\mathbb{R}^+$.

\bigskip

The random matrix $Q_S(z)=\left(S + z I_p\right)^{-1}$, referred to as the resolvent of $S$, will thus be an object of fundamental importance in the remainder. It shall be denoted $Q$ when non-ambiguous. The convenience of the resolvent $Q$ as a cornerstone of random matrix theory analysis is due in part to its simple boundedness properties:
\begin{lemma}\label{lem:controle_Q}
  Given a matrix $R \in \mathcal{M}_{p,n}$, a nonnegative definite symmetric matrix $C \in \mathcal{M}_{p}$ and $z \in \mathbb{R}_+$, we have the following bounds:
  \begin{align*}
    &\left\Vert Q_{C}(z)\right\Vert \leq \frac1z & 
    &\left\Vert Q_{C}(z) C\right\Vert \leq 1& 
    &\left\Vert Q_{\frac1nRR^T}(z) R\right\Vert \leq \frac{\sqrt n}{\sqrt z}. 
  \end{align*}  
\end{lemma}
\begin{proof}
  The upper bound for $\left\Vert Q_C(z)\right\Vert$ follows from the smallest eigenvalue of $C+zI_p$ being larger than $z$. The second result is a consequence of the simple identity $Q_{C}(z) C + zQ_{C}(z) = I_p$ and the fact that $Q_{C}(z)$ is symmetric positive definite. Combining the first two results, we have the bound $$\left\Vert Q_{\frac1nRR^T}(z)\frac1n RR^T Q_{\frac1nRR^T}(z)\right\Vert\leq \frac1z,$$ providing the last result.
\end{proof}

\bigskip

Beyond the eigenvalues of $S$, our interest (also driven by numerous applications in electrical engineering and data processing) will also be on the eigenvectors of $S$. For $U=[u_1,\ldots,u_l]$ an eigen-basis for the eigenspace associated to the multiplicity-$l$ eigenvalue $\lambda$ of $S$, remark that $UU^T=\frac1{2\pi\imath}\oint_{\Gamma_\lambda} Q(-z)dz$ for $\Gamma_\lambda$ a negatively oriented complex contour surrounding $\lambda$ only. As such, beyond studying the trace of $Q$ (and thus the Stieltjes transform of $S$), our interest is also on characterizing $Q$ itself. 

Precisely, we shall study so-called deterministic equivalents for $Q$, the precise definition of which is given in Definition~\ref{def:deterministic equivalent} and that can be described as deterministic matrices $\tilde Q$ verifying $\tr A (Q-\tilde Q) \rightarrow 0$ when $A$ has unit norm (depending on the tightness of the concentration around $\tilde Q$, the norm of $A$ considered will either be the Frobenius norm $\left\Vert A\right\Vert_F=\sqrt{\tr AA^T}$, either the nuclear norm $\left\Vert A\right\Vert_1=\tr (AA^T)^{\frac{1}{2}}$). Note that if we control $\tr A (Q-\tilde Q)$, we also control $u^T(Q-\tilde Q)v=\tr vu^T(Q-\tilde Q)$ for two deterministic vectors $u,v \in \mathbb{R}^p$ with unit norm (in that case $\left\Vert vu^T\right\Vert_F=\left\Vert vu^T\right\Vert_1=\left\Vert u\right\Vert \left\Vert v\right\Vert$). 
In the following lines, we provide an outline for our subsequent development. First, let us note that a naive approach would be to think that $\left(\Sigma + zI_p\right)^{-1}$ might be a deterministic equivalent for $Q$. This turns out to be incorrect under general assumptions. Instead, letting $\tilde Q = \left(\Sigma'  + z I_p\right)^{-1}$ where $\Sigma'$ is some deterministic matrix to determine, we may first compute the difference:
\begin{align*}
   \tilde{Q}-\mathbb{E}Q
  &=\mathbb{E}\left[Q\left(\frac1nXX^T - \Sigma'\right)\tilde{Q}\right] =\frac{1}{n}\sum_{i=1}^n  \mathbb{E}\left[Q (x_ix_i^T - \Sigma')\tilde{Q}\right].
\end{align*}
Here, to go further, we need to make explicit the dependence between $x_i$ and the matrix $Q$ in order to evaluate the expectation of the product $Qx_i$. Let us denote $X_{-i} \in \mathcal{M}_{p,n-1}$ the matrix $X$ deprived of its $i$-th column, which leads us to defining the matrices $S_{-i}=\frac1nX_{-i}X_{-i}^T$ and $Q_{-i}=(S_{X_{-i}}+z I_p)^{-1}$ (which is not $Q_{S_{X_{-i}}}$ as $n$ is not turned in $n-1$). To handle the dependence between $x_i$ and $Q$, we will massively exploit in the paper the classical Schur identities:
\begin{align}\label{eq:lien_q_qj}
  &Q=Q_{-i} -\frac{1}{n}\frac{Q_{-i}x_ix_i^TQ_{-i}}{1+\frac1nx_i^TQ_{-i}x_i}&
  &\text{and}&
  &Qx_i=\frac{Q_{-i}x_i}{1+\frac1nx_i^TQ_{-i}x_i}.
\end{align}
The second inequality allows us to disentangle the relation between $Q$ and $x_i$ in the product $Qx_i$ with a similar but easier to apprehend product $Q_{-i}x_i$ and a factor $1/\left(1+\frac1nx_i^T Q_{-i} x_i\right)$ easily controllable thanks to a first call to concentration inequalities (see subsequently Property~\ref{the:conc_forme_quadrat}). This leads us to:
\begin{align*}
  \tilde{Q}-\mathbb{E}Q
  &=\frac{1}{n}\sum_{i=1}^n  \mathbb{E}\left[Q_{-i} \left(\frac{x_ix_i^T}{1+\frac1nx_i^T Q_{-i}x_i} - \Sigma'\right)\tilde{Q}\right] + \frac{1}{n^2}\sum_{i=1}^n \mathbb{E}\left[Q_{-i}x_ix_i^TQ \Sigma' \tilde Q \right].
\end{align*}
We will see that, due to the supplementary factor $1/n$, the norm of the rightmost random matrix will be negligible compared to that of the other right-hand side matrix. Thus, if one assumes, say, that the random vectors $(x_i)_{1\leq i \leq n}$ follow the same law  (not our general assumption in the article), one would choose naturally $\Sigma' = \mathbb{E}[x_ix_i^T] / (1+\delta)$ where $\delta= \frac{1}{n}\mathbb{E}[x_i^TQ_{-i}x_i]=\frac1n\tr \mathbb{E}[Q_{-i}]\mathbb{E}[x_ix_i^T]$. Then, having established that $\frac1n\mathbb{E}\tr A Q$ (and thus $\frac1n\mathbb{E}\tr A Q_{-i}$) is close to $\frac1n\tr A\tilde Q$, in particular here for $A=\mathbb{E}[x_ix_i^T]$, one may establish an implicit equation for $\delta$ not involving expectations over $Q$ (or $Q_{-i}$). The remaining issue, at the core of our present analysis, is now to find a convenient setting in the modeling of the $x_i$ for which such a choice of $\Sigma'$ would guarantee that $\frac{1}{p}\tr(\mathbb{E}Q-\tilde Q)$ indeed vanishes. That will be related to the concentration of ``chaos-like'' quantities (see \cite[Section 6.1]{Ver17}) such as $x_i^TQ_{-i}x_i$ but also of quantities such as $\tr AQ$ or $u^TQv$ as we shall see.

In the first part of the article, we will show that a comfortable approach is to structure our results as an outgrowth of the concentration of measure phenomenon. To give a brief insight into our approach, let us give the original theorem of the theory that we owe to Paul Pierre Levy in the beginning of the twentieth century and concerns the concentration of the uniform distribution $\sigma_p$ on the sphere $\mathbb S^{p}$. To demystify the result we mention that it is closely linked to the isoperimetrical inequality. 

\begin{theorem}[Normal concentration of $\sigma_p$, \protect{\cite[Theorem 2.3]{Led01}}]\label{the:concentration_vecteur_spherique}
  Given a degree $p \in \mathbb{N}$ and a random vector $Z \sim \sigma_p$, for any $1$-Lipschitz function $f : \mathbb R^{p+1} \rightarrow \mathbb R$, we have the inequality~:
  \begin{align}\label{eq:concentraion_sphere}
    \mathbb{P}\left(\left\vert f(Z) - m_f\right\vert\geq t\right)\leq 2 e^{-(p-1)t^2/2},
  \end{align}
  where $m_f$ is a median of $f(Z)$ verifying by definition $\mathbb{P}(f(Z)\geq m_f), \mathbb{P}(f(Z)\leq m_f) \geq \frac{1}{2}$.
\end{theorem}
There exist plenty of other distributions that verify this inequality like the uniform distribution on the ball, on $[0,1]^n$, or the Gaussian distribution $\mathcal N(0,I_p)$ that are presented in \cite{Led01} ; more generally, for Riemannian manifolds, the concentration can be interpreted as a positive lower bound on the Ricci curvature (see Gromov appendix in \cite{Mil86} or \cite[Theorem 2.4]{Led01}).
Our study will not evoke the design of such distributions and the validity of their concentration since this work has already been treated in the past; we will rather directly assume a similar concentration inequality to \eqref{eq:concentraion_sphere} for the data matrix $X$ and then, with the tools developed in Sections~\ref{sse:conc_variable_al} and~\ref{sse:conc_vect_al}, we will infer the concentration of the Stieltjes transform $m_{F}(z)$ for any $z > 0$ and devise a good estimator of it.
To prepare the applications to come, the dimension $p$ must be thought to be quasi-asymptotic. In that sense the tightening of the concentration when the metric diameter of the distribution stays constant is a remarkable specificity of the concentration phenomenon ; this is furthermore a necessary condition, required for our study in Section~\ref{sec:emp_cov}. 
The concentration inequality verified on the sphere will structure our paper in a way that we will try to express our result with that form as often as possible to pursue Levy's idea; we will thus choose the short notation $f(Z) \in m_f \pm 2e^{-(p-1)\,\cdot \,^2/2}$ to express it in a simple way.


As Milman has advocated from the beginning of the seventies (\cite{Mil86}), the concentration of a random object appears to be an essential feature leading immediately to a lot of implications and controls on the object. We do not present here the historical introduction of concentration of the measure. Rather than the geometric, distribution-oriented approach, we directly adopt a probabilistic point of view on the notion.
That does not mean that the presentation of the theory will be incomplete. On the contrary, we display here, almost always with their proofs, all the important theorems and propositions necessary to apprehend the theory and set rigorously the study of Section~\ref{sec:emp_cov} about the sample covariance. 
The usual approach to the concentration of measure is to start with geometric inequalities ruling high dimensional Banach spaces and then track from these powerful results some probabilistic properties on the real functionals, the ``observable world''. We propose here a reversed approach where we start from the probabilistic results on $\mathbb{R}$ which offer us some interesting reasoning schemes all the same. 
Then, once the reader convinced by the direct computation improvements offered by the theory, we perform the fundamental step consisting in considering high dimensional concentration properties.
Many of the results are derived from the complete presentation of the theory made by Ledoux in \cite{Led01}.

\section{Concentration of a random variable}\label{sse:conc_variable_al}

\subsection{Definition and first Properties}

\begin{definition}[Concentration function]\label{def:fonction_de_concentration}
  Any non-increasing and left continuous function $\alpha : \mathbb{R}_+ \rightarrow [0,1]$, is called a concentration function.
\end{definition}
Given a random variable $Z$ and a concentration function $\alpha$, we choose first to express the \textit{$\alpha$-concentration} of $Z$ through the introduction of an independent copy $Z'$. 
\begin{definition}[Concentration of a random variable]\label{def:conc_variable_aleatoire} 
  The random variable $Z$ is said to be $\alpha$\emph{-concentrated}, and we write $Z \propto \alpha$, iff for any independent copy $Z'$~:
  \begin{align}\label{eq:conc_diff}
    \forall t >0 \ : \  \ \mathbb{P}\left( \left\vert Z-Z'\right\vert \geq t\right) \leq \alpha(t).
  \end{align}
\end{definition}
 In Definition~\ref{def:fonction_de_concentration}, the different properties required for the concentration function ($\alpha\in [0,1]$, non-increasing, left-continuous) are important features of any function $t \mapsto \mathbb{P}\left(\left\vert X\right\vert \geq t\right)$ when $X$ is a random variable. In particular, if we often deal with concentration functions $\alpha$ that take values outside of $(1,+\infty)$, it is implicitly understood that we consider in the calculus $t \mapsto \min\{\alpha(t),1\}$ instead of $\alpha$.

In Definition~\ref{def:conc_variable_aleatoire}, we see that $\alpha$ basically limits the variations of the random variable $Z$. The faster $\alpha$ decreases, the lower the variations of $Z$.
Instinctively one might hope that this limitation of the variation would be equivalent to a concentration around some central quantity that will be called a \textit{pivot} of the concentration in the following sense~:
\begin{definition}[Concentration around a pivot]\label{def:concentration_autour}
  Given $a\in \mathbb{R}$, the random variable $Z$ is said to be  $\alpha$\emph{-concentrated around the pivot $a$}, and we write $Z \in a \pm \alpha$, iff :
  \begin{align*}
    \forall t >0 \ : \ \mathbb{P}\left( \left\vert Z -a\right\vert \geq t\right) \leq \alpha(t).
  \end{align*}
\end{definition}
We call the parameter $a$ a \textit{pivot} because the concentration around $a$ provides similar concentration around other values close to $a$. Given $\theta >0$, let us denote $\tau_{\theta}$ the operator defined on the set of concentration functions that verifies for any concentration function $\alpha$~:
\begin{align*}
  \tau_{\theta} \cdot \alpha(t) = \left\{\begin{aligned}
                                      &1 \text{  if  } t\leq \theta \\
                                      &\alpha(t-\theta) \text{  if  } t > \theta.
                                     \end{aligned}\right.
\end{align*}
Then we have the simple Lemma~:
\begin{lemma}\label{lem:concentration_autour_des_valeurs_proches_du_pivot}
  Given a random variable $Z \in \mathbb{R}$, a concentration function $ \alpha$, two real numbers $a,b\in \mathbb{R}$ and $\theta >0$, we have the implication~:
  \begin{align*}
    \left\{\begin{aligned}
        &Z \in a \pm \alpha\\
        & \left\Vert a-b\right\Vert\leq \theta
    \end{aligned}\right.&  
    &\Longrightarrow&
    &Z \in b \pm \tau_\theta \cdot \alpha.
  \end{align*} 
\end{lemma}

At first sight, there exists no pivot $a\in \mathbb{R}$ such that Definition~\ref{def:conc_variable_aleatoire} and Definition~\ref{def:concentration_autour} are equivalent. We can however find an interesting relation considering the case $a=m_Z$ where $m_Z$ is a median of $Z$, verifying by definition $\mathbb{P}(Z\geq m_Z)\geq 1/2$ and $\mathbb{P}(Z\leq m_Z)\geq 1/2$. 
\begin{proposition}[Concentration around the median, from \protect{\cite[Corollary 1.5]{Led01}}]\label{pro:conc_med}
  Given a random variable $Z$, a median $m_Z$ of $Z$ and a concentration function $\alpha$, we have the implications :
  \begin{align*}
    Z\propto \alpha&
    &\Longrightarrow&
    &Z \in m_Z \pm 2\alpha&
    &\Longrightarrow&
    &Z\propto 4\alpha( \, \cdot \, /2)
  \end{align*}
  where $\alpha(\, \cdot \, /2)$ is defined as being the function $t \mapsto \alpha(t/2)$.
\end{proposition}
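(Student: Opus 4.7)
My plan is to prove the two implications separately, using only the independent-copy definition, a symmetrization trick, a triangle inequality, and the defining properties of the median.

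For the first implication $Z \propto \alpha \Rightarrow Z \in m_Z \pm 2\alpha$, I would first observe that since $Z$ and $Z'$ are i.i.d., the random variable $Z - Z'$ is symmetric, so for any $t > 0$ the events $\{Z - Z' \geq t\}$ and $\{Z' - Z \geq t\}$ are disjoint and have equal probability, with union $\{|Z-Z'|\geq t\}$. Hence
\begin{align*}
\mathbb{P}(Z - Z' \geq t) = \tfrac12 \mathbb{P}(|Z-Z'| \geq t) \leq \tfrac12 \alpha(t).
\end{align*}
Then I would fix $t > 0$ and condition on $Z'$ lying below the median: by independence of $Z$ and $Z'$ and the median property $\mathbb{P}(Z' \leq m_Z) \geq 1/2$,
\begin{align*}
\tfrac12 \mathbb{P}(Z \geq m_Z + t) \leq \mathbb{P}(Z \geq m_Z + t,\, Z' \leq m_Z) \leq \mathbb{P}(Z - Z' \geq t) \leq \tfrac12\alpha(t),
\end{align*}
yielding $\mathbb{P}(Z \geq m_Z + t) \leq \alpha(t)$. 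The symmetric argument, swapping the roles of $Z$ and $Z'$ and using $\mathbb{P}(Z' \geq m_Z) \geq 1/2$, gives $\mathbb{P}(Z \leq m_Z - t) \leq \alpha(t)$. A union bound then produces $\mathbb{P}(|Z - m_Z| \geq t) \leq 2\alpha(t)$, which is exactly $Z \in m_Z \pm 2\alpha$.

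For the second implication $Z \in m_Z \pm 2\alpha \Rightarrow Z \propto 4\alpha(\cdot/2)$, I would apply the triangle inequality $|Z - Z'| \leq |Z - m_Z| + |Z' - m_Z|$, so that
\begin{align*}
\{|Z - Z'| \geq t\} \subset \{|Z - m_Z| \geq t/2\} \cup \{|Z' - m_Z| \geq t/2\}.
\end{align*}
A union bound and the fact that $Z'$ has the same law as $Z$ then give
\begin{align*}
\mathbb{P}(|Z - Z'| \geq t) \leq 2\,\mathbb{P}(|Z - m_Z| \geq t/2) \leq 4\alpha(t/2),
\end{align*}
which is the desired statement.

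The only delicate point is the symmetrization step in the first implication: without exploiting that $Z-Z'$ is symmetric one only obtains a factor $4\alpha(t)$ instead of $2\alpha(t)$, so the sharpness of the constant depends entirely on noticing that the hypothesis controls $|Z-Z'|$ and not merely the one-sided deviation $Z-Z'$. Everything else is routine union bounds and the defining inequalities of the median.
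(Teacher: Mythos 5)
Your proof is correct and follows essentially the same strategy as the paper: both implications rest on conditioning $Z'$ relative to the median (using independence and $\mathbb{P}(Z'\leq m_Z),\mathbb{P}(Z'\geq m_Z)\geq \tfrac12$) for the first, and the triangle inequality $|Z-Z'|\leq |Z-m_Z|+|Z'-m_Z|$ with a union bound for the second. The only cosmetic difference is how the factor $2$ is obtained in the first implication: you split $\mathbb{P}(|Z-Z'|\geq t)$ into two equal one-sided tails by the symmetry of $Z-Z'$ and treat each tail of $Z-m_Z$ separately, whereas the paper lower-bounds $\mathbb{P}(|Z-Z'|\geq t)$ by $\tfrac12\mathbb{P}(|Z-m_Z|\geq t)$ directly via an explicit case split on $Z'<m_Z$, $Z'>m_Z$, $Z'=m_Z$ that handles a possible atom at the median; your version reaches the same constant with less bookkeeping.
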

We see here that if $Z$ is $\alpha$-concentrated, the tail of $Z$, i.e., the behavior of $Z$ far from the median, is closely linked to the decreasing speed of $\alpha$.
\begin{proof}
   We need to consider the fact that $\mathbb{P}\left( Z = m_Z\right)=\epsilon$ may be non zero. Therefore there exist $\epsilon_1, \epsilon_2$ such that $\epsilon=\epsilon_1 + \epsilon_2$ and~:
  \begin{align*}
    \mathbb{P}\left( Z < m_Z\right)=\frac{1}{2}-\epsilon_1&
    &\mathbb{P}\left( Z > m_Z\right)=\frac{1}{2}-\epsilon_2.
  \end{align*}  
  Let us take $t>0$. The first result follows from the inequalities~:
  \begin{align*}
    \mathbb{P}\left(\left\vert Z-Z'\right\vert \geq t\right)
    &=\mathbb{P}\left(\left\vert Z-Z'\right\vert \geq t, Z' < m_Z\right)+\mathbb{P}\left(\left\vert Z-Z'\right\vert \geq t, Z' > m_Z\right) \\
    &\hspace{ 0.5cm}+\mathbb{P}\left(\left\vert Z-m_Z\right\vert \geq t, Z' = m_Z\right) \\
    &\geq\left( 1/2-\epsilon_1\right)\mathbb{P}\left(Z \geq t + Z' \ | \ Z' < m_Z\right)  \\
    & \hspace{0.5cm}+\left( 1/2-\epsilon_2\right)\mathbb{P}\left(Z' \geq t + Z \ | \ Z' > m_Z\right)  \\
    & \hspace{0.5cm}+\epsilon \ \mathbb{P}\left(\left\vert  Z - m_Z\right\vert  \geq t\right) \\
    &\geq \left(1/2-\epsilon\right) \left(  \mathbb{P}\left(Z \geq t + m_Z \right) + \mathbb{P}\left(m_Z \geq t + Z \right)\right) \\
    &\hspace{0.5cm}+\epsilon \ \mathbb{P}\left(\left\vert   Z - m_Z\right\vert  \geq t\right) \\
    &= \frac{1}{2}\,\mathbb{P}\left(\left\vert Z-m_Z\right\vert \geq t  \right) .
  \end{align*}
  The second result is true for any real $a\in \mathbb{R}$ replacing $m_Z$~:
  \begin{align*}
    \mathbb{P}\left( \left\vert Z-Z'\right\vert \geq t\right)
    &\leq \mathbb{P}\left( \left\vert Z-m_Z + m_Z-Z'\right\vert \geq t\right) \\
    &\leq 2\mathbb{P}\left( \left\vert Z-m_Z\right\vert \geq t/2\right).
  \end{align*}
\end{proof} 
The reason why Definition~\ref{def:conc_variable_aleatoire} was presented firstly and thus given the importance of the naturally underlying definition, even though Definition~\ref{def:concentration_autour} might seem more intuitive, lies in its immediate compatibility to the composition with any Lipschitz function and more generally with any uniformly continuous function. The uniform continuity of a function is sized by its \textit{modulus of continuity}. Although we presently work on $\mathbb{R}$, we give a general definition of the continuity modulus between two normed vector spaces because it will be useful in the next sections.

\begin{definition}[Uniform continuity]\label{def:fonction_holderienne}
  Any non decreasing function $\omega : \mathbb{R}_+ \rightarrow \mathbb R_+$ continuous and null in $0$ is called a modulus of continuity. Given two normed vector space $(E,\left\Vert \cdot \right\Vert_E)$ and $(F,\left\Vert \cdot\right\Vert_F)$, a function $f : E \rightarrow F$ is said to be continuous under the modulus of continuity $\omega$ if~: 
  \begin{align*}
    \forall x,y \in E \ \ : \ \ \ \left\Vert f(x) - f(y)\right\Vert_F \leq  \omega(\left\Vert x- y\right\Vert_E).
  \end{align*}
  When $\forall t >0$, $\omega(t)=\lambda t^\nu$ for $\lambda>0$ and $\nu \in (0,1]$, we say that $f$ is $(\lambda,\nu)$-H\"older continuous, $\nu$ is called the \emph{H\"older exponent} and $\lambda$ is called the \emph{Lipschitz coefficient} ; indeed, if $\nu=1$, $f$ is said to be \emph{$\lambda$-Lipschitz}. Of course a function admits a modulus of continuity iff it is uniformly continuous.
\end{definition}
\begin{lemma}\label{lem:conc_transfo_lipschitz}
  Let us consider a random variable $Z$, a concentration function $\alpha$, and a function $f : \mathbb{R} \rightarrow \mathbb{R}$ continuous under a modulus of continuity $\omega$.
  We allow ourselves to write $\omega^{-1}$ the pseudo inverse of $\omega$ defined (for $\omega$ bijective or not) as : $\omega^{-1}(w) = \inf \{t,\omega(t)\geq w \} $. We have the implication~:
  \begin{align*}
    Z \propto \alpha \  \Longrightarrow \ f(Z) \propto  \alpha\left(\omega^{-1}\left(\, \cdot \, \right)\right).
  \end{align*}  
\end{lemma}
\begin{proof}
  It is straightforward to write~:
  \begin{align*}
    \mathbb{P}\left(\left\vert f(Z) -f(Z')\right\vert\geq t\right)
    &\leq\mathbb{P}\left(\omega (\vert Z - Z'\vert)\geq t\right) \\
    &\leq\mathbb{P}\left(\vert Z - Z'\vert\geq \omega^{-1}(t)\right)
    \leq \alpha \left(\omega^{-1}\left(t\right)\right),
  \end{align*}
  since for any $w,t>0$, $\omega(t) \geq w \Longrightarrow t \geq \omega^{-1}(w)$ by definition of the pseudo inverse. 
\end{proof}
\begin{remark}\label{rem:incomp_def2_lipsch}
  Definition~\ref{def:concentration_autour} is not so compatible with the $\omega$-continuous transformations. Indeed, the simple developments that one finds in the proof above cannot be performed when strictly assuming that $Z\in m_Z \pm \alpha$. In this case, one would rather combine Lemma~\ref{lem:conc_transfo_lipschitz} with Proposition~\ref{pro:conc_med} to find~:
  \begin{align*}
    Z \in m_Z \pm \alpha \  \Longrightarrow \ f(Z) \in m_{f_Z} \pm 2\alpha\left(\frac{1}{2}\omega^{-1}\left(\, \cdot \, /2 \right)\right).
  \end{align*}  
\end{remark}

The stability with respect to $\omega$-continuous functions reflects the fact that a $\omega$-continuous function contains the spreading of the distribution up to the modulus of continuity. 

One property that we could expect from $\alpha$-concentration is a stability towards the sum. Up to a multiplying factor of $2$, this is the case :
\begin{lemma}\label{lem:conc_somme}
  Given two random variable $Z_1$ and $Z_2$ and two concentration functions $\alpha,\beta$, we have the implication~:
  \begin{align*}
    Z_1 \propto \alpha \ \  \text{and} \ \ Z_2 \propto \beta& 
    &\Longrightarrow& 
    &Z_1 + Z_2 \propto \alpha(\, \cdot \, /2) + \beta(\, \cdot \, /2).
  \end{align*}  
\end{lemma} 
\begin{proof}
  Recall that $Z_1'$ and $Z_2'$ are two independent copies respectively of $Z_1$ and $Z_2$. There is no reasons for $Z_1'$ to be independent of $Z_2$ (resp., $Z_2'$ of $Z_1$). The idea is to decompose the threshold $t>0$ in $\frac{t}{2}+\frac{t}{2}$ ~:
  \begin{align*}
    &\mathbb{P}\left( \left\vert Z_1+Z_2 - Z_1' - Z_2'\right\vert \geq t\right) \\
    & \hspace{1cm}\leq \mathbb{P}\left( \left\vert Z_1 - Z_1'\right\vert  \geq \frac t2 \right)
    +\mathbb{P}\left( \left\vert Z_2 - Z_2'\right\vert\geq \frac t2 \right)
     \ \leq \ \alpha \left(\frac t2\right) +\beta \left(\frac t2\right).
  \end{align*}
\end{proof}
\begin{remark}\label{rem:comp_conc_aut_somme}
  This time the idea of the demonstration works the same with the setting of Definition~\ref{def:concentration_autour}, and we have~:
  \begin{align*}
    Z_1 \in a \pm \alpha \ \  \text{and} \ \ Z_2 \in b \pm \beta&
    &\Longrightarrow&
    &Z_1 + Z_2 \in a + b \pm \alpha(\, \cdot \, /2) + \beta(\, \cdot \, /2).
  \end{align*}
\end{remark}
In a first approach, $\alpha$-concentration performs badly with the product, as it requires a bound on both random variables involved which greatly reduces the number of possible applications.
\begin{lemma}\label{lem:conc_produit}
  Given two bounded random variable $Z_1$ and $Z_2$ such that $\left\vert Z_1\right\vert\leq K_1$ and $\left\vert Z_2\right\vert\leq K_2$ and two concentration functions $\alpha, \beta$~:
  \begin{align*}
    Z_1 \propto \alpha \ \ \text{and} \ \ Z_2 \propto \beta&
    &\Longrightarrow&
    &Z_1Z_2 \propto  \alpha \left(\, \frac{\cdot}{2K_2} \,\right) +  \beta \left(\, \frac{\cdot}{2K_1} \,\right).
  \end{align*}  
\end{lemma}
\begin{proof}
  Given $t>0$~:

  \begin{align*}
    \mathbb{P}\left(\left\vert Z_1Z_2 -Z_1'Z_2'\right\vert\geq t\right)
    &\leq \mathbb{P}\left(\left\vert Z_1(Z_2 -Z_2')\right\vert\geq \frac{t}{2}\right)+\mathbb{P}\left(\left\vert (Z_1 -Z_1')Z_2'\right\vert\geq \frac{t}{2}\right)  \\
    &\leq \mathbb{P}\left(\left\vert Z_2 -Z_2'\right\vert\geq \frac{t}{2K_1}\right)+\mathbb{P}\left(\left\vert Z_1 -Z_1'\right\vert\geq \frac{t}{2K_2}\right).
  \end{align*}
\end{proof}
Actually, the setting of Definition~\ref{def:concentration_autour} is more convenient here than the setting of Definition~\ref{def:conc_variable_aleatoire} because it allows us to only require one random variable to be bounded~:
\begin{lemma}\label{lem:conc_autour_prod}
  Given two random variables $Z_1$ and $Z_2$ such that $\left\vert Z_1\right\vert\leq K_1$, two pivot $a,b\in \mathbb R$ and two concentration functions $\alpha, \beta$, if $b\neq 0$ one has the implication~:
  \begin{align*}
    Z_1 \in a \pm \alpha \ \ \text{and} \ \ Z_2 \in b \pm\beta&
    &\Longrightarrow&
    &Z_1Z_2 \in  ab \pm \alpha \left(\, \frac{\cdot}{2 \left\vert b\right\vert} \,\right) +  \beta \left(\, \frac{\cdot}{2K_1} \,\right),
  \end{align*}
  if $b=0$, then the concentration of $Z_1$ and $Z_2$ implies that $Z_1Z_2 \in 0 \pm \beta (\frac{\cdot}{K_1})$. 
\end{lemma}

We may even go further and dispense with the bounding hypothesis to get in this case a slightly more complicated concentration form~:
\begin{proposition}\label{pro:conc_autour_prod_ss_borne}
  Given two random variable $Z_1$ and $Z_2$, two pivot $a,b \in \mathbb R$ and two concentration functions $\alpha, \beta$ such that $Z_1 \in a \pm \alpha$ and $Z_2 \in b \pm\beta$, if $a\neq 0$ and $b\neq 0$, then $Z_1Z_2$ is concentrated around $ab$ with ~: 
  \begin{align*}
    Z_1Z_2 \in  ab \pm \alpha \left(\, \sqrt{\frac{\cdot}{3}} \,\right)+\alpha \left(\, \frac{\cdot}{3 \left\vert b\right\vert} \,\right) + \beta \left(\, \sqrt{\frac{\cdot}{3}} \,\right)+  \beta \left(\, \frac{\cdot}{3 \left\vert a\right\vert} \,\right).
  \end{align*}
  If $a=0$ and $b\neq 0$, we get $Z_1Z_2 \in  0 \pm \alpha \left(\, \sqrt{\frac{\cdot}{2}} \,\right)+\alpha \left(\, \frac{\cdot}{2 \left\vert b\right\vert} \,\right) + \beta \left(\, \sqrt{\frac{\cdot}{2}} \,\right)$ and if $a=b=0$, $Z_1Z_2 \in  0 \pm \alpha \left(\, \sqrt{\, \cdot \,}\right) +  \beta \left(\, \sqrt{\, \cdot \,} \,\right)$.
\end{proposition}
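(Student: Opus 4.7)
The plan is to start from the algebraic identity
\begin{align*}
Z_1Z_2 - ab = (Z_1-a)(Z_2-b) + a(Z_2-b) + b(Z_1-a),
\end{align*}
to control each of the three summands separately by the hypotheses $Z_1 \in a \pm \alpha$ and $Z_2 \in b \pm \beta$, and then to recombine the three bounds by a union bound after splitting the threshold $t$ into three equal pieces of size $t/3$ (so that the bad event $\{|Z_1Z_2-ab|\geq t\}$ is contained in the union of the three bad events $\{|\,\text{summand}_k\,|\geq t/3\}$).

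The two linear summands are the easy part. Indeed, when $b\neq 0$ the concentration $Z_1 \in a \pm \alpha$ immediately yields
\begin{align*}
\mathbb{P}\!\left(|b(Z_1-a)|\geq t/3\right) \leq \alpha\!\left(\tfrac{t}{3|b|}\right),
\end{align*}
and analogously $\mathbb{P}(|a(Z_2-b)|\geq t/3) \leq \beta(t/(3|a|))$ when $a\neq 0$. These are exactly the two ``$\,\cdot/(3|b|)$'' and ``$\,\cdot/(3|a|)$'' terms in the announced concentration function.

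The only point requiring an idea is the cross term $(Z_1-a)(Z_2-b)$, which is a product of two potentially unbounded random variables, so Lemma~\ref{lem:conc_autour_prod} does not apply. Here I would use the elementary observation that if both $|Z_1-a|<\sqrt{t/3}$ and $|Z_2-b|<\sqrt{t/3}$, then necessarily $|(Z_1-a)(Z_2-b)|<t/3$; contrapositively,
\begin{align*}
\mathbb{P}\!\left(|(Z_1-a)(Z_2-b)|\geq t/3\right)
 \leq \mathbb{P}\!\left(|Z_1-a|\geq \sqrt{t/3}\right) + \mathbb{P}\!\left(|Z_2-b|\geq \sqrt{t/3}\right)
 \leq \alpha\!\left(\sqrt{t/3}\right) + \beta\!\left(\sqrt{t/3}\right),
\end{align*}
which delivers the two ``$\sqrt{\,\cdot/3\,}$'' terms. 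Summing the three bounds by the union bound gives the result in the generic case $a,b\neq 0$.

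The two degenerate cases are handled identically but with fewer terms: if $a=0$ (resp.\ $b=0$) the corresponding linear summand vanishes from the identity, so we split the threshold into only two halves $t/2+t/2$, producing $\alpha(\sqrt{\cdot/2})+\beta(\sqrt{\cdot/2})+\alpha(\cdot/(2|b|))$; if $a=b=0$ the decomposition collapses to $Z_1Z_2=(Z_1-a)(Z_2-b)$ and the cross-term bound is applied with the full threshold $t$. The main (and only) obstacle is thus the cross-term inequality; everything else is bookkeeping with concentration functions. Note that the sub-Gaussian-type ``$\sqrt{\,\cdot\,}$'' factor is intrinsic here, reflecting the fact that a product of two concentrated random variables is one order worse in its tail (e.g.\ a product of sub-Gaussians is only sub-exponential).
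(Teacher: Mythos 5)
Your proof is correct and follows exactly the paper's own argument: the same algebraic decomposition $Z_1Z_2-ab=(Z_1-a)(Z_2-b)+a(Z_2-b)+b(Z_1-a)$, the same splitting of the threshold into three (or two, or one) equal pieces, and the same use of the implication $xy\geq t \Rightarrow x\geq\sqrt{t}$ or $y\geq\sqrt{t}$ to handle the cross term. Nothing to add.
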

\begin{proof}
  We just prove the result for $a,b \neq 0$ since the other cases are simpler. The idea is to use the algebraic identity~: $xy-ab=(x-a)(y-b) + a(y-b) + b(x-a)$ and the implication $xy \geq t \Rightarrow x\geq \sqrt{t} \ \text{or} \ y \geq \sqrt{t} $ (for $x,y,t\geq 0$):
  \begin{align*}
    \mathbb{P}\left(\left\vert Z_1Z_2 - ab\right\vert \geq t\right)
    &\leq \mathbb{P}\left(\left\vert Z_1 - a\right\vert \geq \sqrt{\frac{t}{3}}\right) + \mathbb{P}\left(\left\vert Z_2 - b\right\vert \geq \sqrt{\frac{t}{3}}\right) \\
    &\hspace{0.5cm}+\mathbb{P}\left(\left\vert Z_1 - a\right\vert\left\vert b\right\vert \geq \frac{t}{3}\right) + \mathbb{P}\left(\left\vert Z_2 - b\right\vert \left\vert a\right\vert \geq \frac{t}{3}\right).
  \end{align*}
\end{proof}

In the case of the square of a random variable or even an integer power of any size, the concentration given by Proposition~\ref{pro:conc_autour_prod_ss_borne} can simplify.
\begin{proposition}\label{pro:conc_puiss_var_al_conc_autour}
  Given $m\in \mathbb{N}$ and a random variable $Z \in a \pm \alpha$ with $a\in \mathbb{R}$ and $\alpha$, one has the concentration~:
  \begin{align*}
    Z^m \in a^m \pm \alpha\left(\frac{\cdot}{2^m\left\vert a\right\vert^{m-1}}\right) + \alpha\left(\left(\frac{\cdot}{2}\right)^{\frac{1}{m}}\right).
  \end{align*}
\end{proposition}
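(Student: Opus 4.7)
The plan is to use the factorization
$Z^m - a^m = W \sum_{k=0}^{m-1} Z^k a^{m-1-k}$, where $W := Z - a$, and to perform a case analysis according to whether $|W| \leq |a|$ or $|W| > |a|$; in each regime a geometric-series bound controls the sum, and the hypothesis $Z \in a \pm \alpha$ converts the resulting deterministic inclusions of events into probability bounds.

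In the regime $|W| \leq |a|$ the elementary estimate $|Z| \leq |a| + |W| \leq 2|a|$ gives $|Z^k a^{m-1-k}| \leq 2^k |a|^{m-1}$ for each $k$, so summing yields $\bigl| \sum_{k=0}^{m-1} Z^k a^{m-1-k} \bigr| \leq (2^m - 1)|a|^{m-1} < 2^m |a|^{m-1}$ and therefore $|Z^m - a^m| < 2^m |a|^{m-1}|W|$. The event $\{|Z^m - a^m| \geq t\} \cap \{|W| \leq |a|\}$ is thus contained in $\{|W| \geq t/(2^m|a|^{m-1})\}$, contributing $\alpha(t/(2^m|a|^{m-1}))$ to the total probability bound. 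In the complementary regime $|W| > |a|$ the roles of $|W|$ and $|a|$ are exchanged: now $|Z| \leq 2|W|$ and $|a| \leq |W|$, so the same geometric estimate gives $|Z^m - a^m| < 2^m |W|^m$, and the corresponding event is included in $\{|W|^m \geq t/2^m\}$, producing the second term. A union bound over the two regimes then delivers the announced concentration inequality.

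The only subtle bookkeeping point is the precise constant in the second summand: the naive case split above directly produces $\alpha(t^{1/m}/2)$, whereas the statement writes the very slightly tighter $\alpha((t/2)^{1/m})$, the two differing by a factor $2^{1-1/m} \in [1,2)$ inside the argument of $\alpha$. I expect to recover the tighter form by replacing the case split on $|W|$ versus $|a|$ by a pigeonhole argument applied directly to the binomial expansion $Z^m - a^m = \sum_{k=1}^m \binom{m}{k} a^{m-k} W^k$, grouping the intermediate summands $|a|^{m-k}|W|^k$ for $2 \leq k \leq m-1$ into the two extremal scales via the AM--GM interpolation $|a|^{m-k}|W|^k \leq \tfrac{m-k}{m-1}|a|^{m-1}|W| + \tfrac{k-1}{m-1}|W|^m$. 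For $m=2$ this refinement is transparent: $|Z^2 - a^2| \leq 2|a||W| + |W|^2$ together with the pigeonhole $x + y \geq t \Rightarrow x \geq t/2$ or $y \geq t/2$ reproduces exactly $\alpha(t/(4|a|)) + \alpha((t/2)^{1/2})$, and this is the template the general $m$ argument imitates.
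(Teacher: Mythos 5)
Your proof is correct and takes essentially the same route as the paper's: the paper expands $Z^m=(Z-a+a)^m$ binomially and uses the same dichotomy (there phrased as $\vert \frac{Z-a}{a}\vert \lessgtr 1$) to reach $\vert Z^m-a^m\vert \leq 2^m\vert a\vert^{m-1}\vert Z-a\vert + 2^m\vert Z-a\vert^m$ before the union bound. Regarding your ``subtle bookkeeping point'': the paper's own argument likewise only yields $\alpha(t^{1/m}/2)$ rather than $\alpha((t/2)^{1/m})$ in the second term, so the discrepancy is a harmless constant of exactly the kind the paper declines to optimize; moreover your proposed AM--GM patch would not close it for $m\geq 3$ (summing $\frac{k-1}{m-1}$ against the binomial weights gives a coefficient of $\vert Z-a\vert^m$ strictly larger than $1$, e.g.\ $5/2$ for $m=3$), so you should simply keep the case-split argument and accept the factor $2^{1-1/m}$.
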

\begin{proof}
  Let us employ the algebraic identity~:
  \begin{align*}
    Z^m=\left(Z-a +a\right)^m = \sum_{i=0}^m \binom{m}{i } a^{m-i} (Z-a)^i= a^m + a^m \sum_{i=1}^m \binom{m}{i } \left(\frac{Z-a}{a}\right)^i.
  \end{align*}
  If $\left\vert \frac{Z-a}{a}\right\vert \leq 1$, for any $i\in \{1,\ldots m\}$, $\left\vert \frac{Z-a}{a}\right\vert^i\leq \left\vert \frac{Z-a}{a}\right\vert$ and conversely, if $\left\vert \frac{Z-a}{a}\right\vert\geq 1$, then $\left\vert \frac{Z-a}{a}\right\vert^i \leq \left\vert \frac{Z-a}{a}\right\vert^m$. This entails~:
  \begin{align*}
    \left\vert Z^m -a^m\right\vert \leq (2 \left\vert a\right\vert)^m \left(\left\vert \frac{Z-a}{a}\right\vert + \left\vert \frac{Z-a}{a}\right\vert^m\right)
  \end{align*}
  and therefore~:
  \begin{align*}
    \mathbb{P}\left(\left\vert Z^m-a^m\right\vert\geq t\right)
    \leq \mathbb{P}\left(\left\vert Z-a\right\vert\geq \frac{t}{2^m\left\vert a\right\vert^{m-1}}\right) + \mathbb{P}\left(\left\vert Z-a\right\vert\geq \left(\frac{t}{2}\right)^{\frac{1}{m}}\right).
  \end{align*}
\end{proof}


We conclude this section by setting the continuity of the concentration property. We adopt the classical formalism for the convergence of a random variable (or vector). 
\begin{definition}\label{def:converg_loi}
  We say that a sequence of random variables (or random vectors) $Z_n$ converges in law (or ``in distribution'' or ``weakly'') to $Z$ if for any real valued continuous function $f$ with compact support~:
  \begin{align*}
    \lim_{n\rightarrow \infty}\mathbb{E} [f(Z_n)]= \mathbb{ E} [f(Z)].
  \end{align*}  
\end{definition}
Although we rather find in the literature the definition mentioning \textit{continuous and bounded} functions, this equivalent definition relying on the class of continuous functions with compact support is more adapted to our needs (see Proposition~\ref{pro:conc_faible_trAQ}). We start with the preliminary lemma~:
\begin{lemma}[\cite{Ouvr09}, Proposition 14.17]\label{lem:conv_loi_fonct_repartition}
  Let us consider a sequence of random variable $Z_n$, $n\in \mathbb{N}$, and a random variable $Z$ with cumulative distribution functions respectively noted $F_{Z_n}$, $n\in \mathbb{N}$, and $F_Z$. The sequence $(Z_n)_{n\geq 0}$ converges in law to $Z$ iff for any $t\in \mathbb{ R}$ such that $F_Z$ is continuous on $t$, $(F_{Z_n}(t))_{n\geq 0}$ converges to $F_Z(t)$.
\end{lemma}
\begin{proposition}\label{pro:converg_loi}
  Consider a sequence of random variables $(Z_n)_{n\geq 0}$ that converges in law to a random variable $Z$, a sequence of pivot $(a_n)_{n\geq 0}$ converging to a pivot $a\in \mathbb{  R}$ and a sequence of concentration functions $(\alpha_n)_{n\geq 0}$ that point-wise converges to a continuous concentration function $\alpha$. If we suppose that, for any $n \in \mathbb{N}$, $Z_n\in a_n \pm \alpha_n$ then $Z \in a \pm \alpha$.
\end{proposition}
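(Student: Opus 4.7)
The plan is to prove $\mathbb{P}(|Z - a| \geq t) \leq \alpha(t)$ for every fixed $t > 0$ by a three-stage limiting argument: pass from concentration around $a_n$ to concentration around $a$ via the triangle inequality, pass from $Z_n$ to $Z$ via Lemma~\ref{lem:conv_loi_fonct_repartition} at continuity points of $F_Z$, and close by using the continuity of $\alpha$ to tighten the thresholds.

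First I would fix $s > 0$ and prove the auxiliary bound $\mathbb{P}(|Z - a| > s) \leq \alpha(s)$. Pick $\tilde s > s$ such that $a + \tilde s$ and $a - \tilde s$ are continuity points of $F_Z$; this is possible with $\tilde s$ arbitrarily close to $s$ from above since $F_Z$ admits at most countably many discontinuities. Because $a_n \to a$, for $n$ large enough we have $|a_n - a| \leq \tilde s - s$, so the triangle inequality yields the inclusion $\{|Z_n - a| \geq \tilde s\} \subset \{|Z_n - a_n| \geq s\}$ and therefore
\begin{align*}
  \mathbb{P}(|Z_n - a| \geq \tilde s) \leq \mathbb{P}(|Z_n - a_n| \geq s) \leq \alpha_n(s).
\end{align*}
Since $a \pm \tilde s$ are continuity points of $F_Z$, Lemma~\ref{lem:conv_loi_fonct_repartition} gives $F_{Z_n}(a + \tilde s) \to F_Z(a + \tilde s)$ and, by the standard sandwiching at continuity points, $F_{Z_n}((a - \tilde s)^-) \to F_Z(a - \tilde s)$; combining these, $\mathbb{P}(|Z_n - a| \geq \tilde s) \to \mathbb{P}(|Z - a| \geq \tilde s)$. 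Together with $\alpha_n(s) \to \alpha(s)$ this yields $\mathbb{P}(|Z - a| \geq \tilde s) \leq \alpha(s)$. Letting $\tilde s \downarrow s$ through such continuity points, continuity of measure on the increasing union $\{|Z - a| > s\} = \bigcup_{\tilde s > s}\{|Z - a| \geq \tilde s\}$ delivers the auxiliary bound $\mathbb{P}(|Z - a| > s) \leq \alpha(s)$.

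To conclude, I would use the decreasing-intersection identity $\{|Z - a| \geq t\} = \bigcap_{s < t}\{|Z - a| > s\}$, so that
\begin{align*}
  \mathbb{P}(|Z - a| \geq t) = \lim_{s \uparrow t}\mathbb{P}(|Z - a| > s) \leq \lim_{s \uparrow t}\alpha(s) = \alpha(t),
\end{align*}
the final equality invoking the continuity hypothesis on $\alpha$. This is exactly $Z \in a \pm \alpha$.

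The delicate step is the central one: weak convergence only provides pointwise CDF convergence at continuity points of $F_Z$, so I have to insert the auxiliary threshold $\tilde s > s$ to dodge any possible atom of $|Z - a|$ at $s$, and only then send $\tilde s \downarrow s$. The main care lies in orchestrating the three parameter movements in the correct order — the triangle-inequality shift absorbing $|a_n - a|$, the continuity-point perturbation $\tilde s \downarrow s$, and the final $s \uparrow t$ — and in invoking continuity of $\alpha$ (rather than merely the left-continuity of a generic concentration function) at the closing step to ensure the bound $\alpha(t)$ is actually attained in the limit.
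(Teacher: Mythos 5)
Your proof is correct, but it takes a genuinely different route from the paper's. The paper first establishes that $Y_n=\left\vert Z_n-a_n\right\vert$ converges in law to $Y=\left\vert Z-a\right\vert$ --- this is where the bulk of its work lies, via a uniform-continuity argument on compactly supported test functions that absorbs the drift $a_n\to a$ --- and only then applies Lemma~\ref{lem:conv_loi_fonct_repartition} to the sequence $(Y_n)$, finishing with the same continuity-of-$\alpha$/monotonicity step you use. You avoid proving any new convergence in law: you absorb $\left\vert a_n-a\right\vert$ by perturbing the threshold ($s\mapsto\tilde s$) and invoke Lemma~\ref{lem:conv_loi_fonct_repartition} directly for $Z_n\to Z$ at the two points $a\pm\tilde s$, chosen among the (co-countably many) continuity points of $F_Z$. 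This is more elementary --- no test-function manipulation, no auxiliary convergence-in-law statement --- at the cost of a slightly more intricate orchestration of the three limits at the end, which you carry out correctly (the two set identities and the use of continuity of measure are all valid). Both proofs ultimately rely on the continuity of $\alpha$ to pass from a dense set of thresholds to all $t>0$, which is where that hypothesis is genuinely needed. One cosmetic remark: in decomposing $\mathbb{P}\left(\left\vert Z_n-a\right\vert\geq\tilde s\right)$, it is the right tail $\mathbb{P}\left(Z_n\geq a+\tilde s\right)=1-F_{Z_n}\left((a+\tilde s)^-\right)$ that requires the left-limit sandwiching, while $\mathbb{P}\left(Z_n\leq a-\tilde s\right)=F_{Z_n}(a-\tilde s)$ converges directly from the lemma; you have the two roles interchanged, but both convergences hold at continuity points of $F_Z$, so nothing breaks.
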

\begin{proof}
  For any $n\in \mathbb{N}$, let us note $Y_n=\left\vert Z_n-a_n\right\vert$ and $Y= \left\vert Z-a\right\vert$. We wish to show first that $(Y_n)_{n\geq 0}$ converges in law to $Y$. Let us consider for that purpose a continuous function $f: \mathbb{R} \rightarrow \mathbb{R}$ with compact support $S$ and $\varepsilon>0$. We know from the Heine-Cantor theorem that $f$ is uniformly continuous, therefore there exists $\eta>0$ such that~:
  \begin{align*}
    \left\vert  x-y\right\vert  \leq \eta \ \  \ \Longrightarrow \left\vert   f(x)-f(y)\right\vert\leq \frac{ \varepsilon}{2}.
  \end{align*}
  Moreover, since $\lim_{n\rightarrow \infty} a_n = a$, there exists $n_0 >0$ such that for any $n \geq n_0$, $ \left\vert a_n - a\right\vert \leq \eta$. Eventually, if we introduce the function $g : x \rightarrow \left\vert x-a\right\vert$, we know that $f \circ g$ has a compact support $S'=a+S \cup a-S$, and thus there exists $n_1 \in \mathbb{ N}$ such that if $n\geq n_1$~:
  \begin{align*}
    \left\vert \mathbb{E} [f (\left\vert  Z_n-a\right\vert)]- \mathbb{  E} [f(\left\vert  Z-a\right\vert)]\right\vert=\left\vert \mathbb{E} [f\circ g(Z_n)]- \mathbb{ E} [f \circ g(Z)]\right\vert \leq \frac{\varepsilon}{2}.
  \end{align*}
  Therefore if we consider $n \geq \max(n_0,n_1)$~:
  \begin{align*}
    \left\vert  \mathbb{E} [f(Y_n)] - \mathbb{  E} [f(Y)]\right\vert  \ \ 
    &\leq\left\vert   \mathbb{E} [f(|Z_n-a_n|)-f(|Z_n-a|)]\right\vert \\  
    &\hspace{0.8cm}+ \left\vert \mathbb{E}[f (\left\vert  Z_n-a\right\vert)- f(\left\vert   Z-a\right\vert)]\right\vert 
    \ \ \leq \ \varepsilon.
  \end{align*}
  Then we know from Lemma~\ref{lem:conv_loi_fonct_repartition} that for any $t$ such that the cumulative distribution function $F_Y=\mathbb{P}\left(\left\vert Z-a\right\vert\leq \, \cdot \,\right)  $ is continuous around $t$~:
  \begin{align*}
    \mathbb{P}\left(\left\vert Z-a\right\vert\geq t\right)  = \lim_{n\rightarrow \infty} \mathbb{P}\left(\left\vert Z_n-a_n\right\vert\geq t\right) \leq \lim_{n \rightarrow \infty} \alpha_n(t) = \alpha(t).
  \end{align*}  
  Since $\alpha$ is continuous and $t \rightarrow \mathbb{P}\left(\left\vert Z-a\right\vert\geq t\right)$ is decreasing, we recover the preceding inequality for any $t>0$, and $Z \in a \pm \alpha$.
\end{proof} 
In the setting of Definition~\ref{def:conc_variable_aleatoire}, we can show the continuity of the concentration the same way introducing this time the random variables $Y_n=\left\vert Z_n-Z_n'\right\vert$ ($Z_n'$ being a sequence of independent copies of $Z_n$). We present the next proposition without proof.
\begin{proposition}\label{pro:continuite_conc_def_1}
  In the setting of Proposition~\ref{pro:converg_loi}, if for any $n\in \mathbb{  N}$ $Z_n \propto \alpha_n$, then $Z \propto \alpha$.
\end{proposition}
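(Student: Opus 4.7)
The structure is to mirror the proof of Proposition~\ref{pro:converg_loi}, replacing the deterministic pivots $(a_n, a)$ with the auxiliary random copies $(Z_n', Z')$. Specifically, I let $(Z_n')_{n \geq 0}$ be a sequence where each $Z_n'$ is independent of $Z_n$ and has the same law, and let $Z'$ be an independent copy of $Z$. Setting $Y_n = |Z_n - Z_n'|$ and $Y = |Z - Z'|$, the hypothesis $Z_n \propto \alpha_n$ reads $\mathbb{P}(Y_n \geq t) \leq \alpha_n(t)$, and the desired conclusion $Z \propto \alpha$ reads $\mathbb{P}(Y \geq t) \leq \alpha(t)$.

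The key preliminary step is to establish that $(Y_n)_{n \geq 0}$ converges in law to $Y$. For this, I would first argue that the pair $(Z_n, Z_n')$ converges in law to $(Z, Z')$. Given a continuous, compactly supported test function $h : \mathbb{R}^2 \to \mathbb{R}$, Fubini's theorem together with the independence of $Z_n$ and $Z_n'$ yields
\begin{align*}
    \mathbb{E}[h(Z_n, Z_n')] = \mathbb{E}\bigl[ \mathbb{E}[h(Z_n, Z_n') \mid Z_n]\bigr] = \mathbb{E}[h_n(Z_n)],
\end{align*}
where $h_n(x) = \mathbb{E}[h(x, Z_n')]$. A standard equicontinuity/uniform continuity argument on the compact support of $h$, combined with $Z_n' \to Z'$ in law, shows that $h_n$ converges uniformly on compact sets to $h_\infty(x) = \mathbb{E}[h(x, Z')]$, and then $\mathbb{E}[h_n(Z_n)] \to \mathbb{E}[h_\infty(Z)] = \mathbb{E}[h(Z, Z')]$ by using $Z_n \to Z$ in law. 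Then, applying the continuous mapping argument (as in the proof of Proposition~\ref{pro:converg_loi}, using Heine--Cantor on the map $(x,y) \mapsto |x - y|$ composed with a compactly supported continuous function), one gets $Y_n \to Y$ in law.

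Once weak convergence of $Y_n$ is secured, I invoke Lemma~\ref{lem:conv_loi_fonct_repartition}: for every $t > 0$ at which the cumulative distribution function $F_Y$ is continuous,
\begin{align*}
    \mathbb{P}(Y \geq t) = \lim_{n \to \infty} \mathbb{P}(Y_n \geq t) \leq \lim_{n \to \infty} \alpha_n(t) = \alpha(t).
\end{align*}
Since $F_Y$ is non-decreasing, its set of discontinuities is at most countable, so the inequality $\mathbb{P}(Y \geq t) \leq \alpha(t)$ holds on a dense subset of $\mathbb{R}_+$; since both sides are non-increasing in $t$ and $\alpha$ is continuous, a standard left/right limit argument extends it to all $t > 0$, which is precisely the statement $Z \propto \alpha$.

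The main obstacle is the joint-convergence step: one cannot simply say "apply the continuous mapping theorem" without first promoting the marginal weak convergences $Z_n \to Z$ and $Z_n' \to Z'$ (the latter holding since $Z_n'$ shares the law of $Z_n$) to joint convergence of $(Z_n, Z_n')$. The independence of $Z_n$ and $Z_n'$ is what makes this work; without it, marginal weak convergence of two sequences does not imply joint convergence. Once this is handled, everything else follows the template of Proposition~\ref{pro:converg_loi}, which is why the author omits the details.
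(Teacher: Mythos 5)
Your proof is correct and follows exactly the route the paper itself indicates (the paper omits the proof but explicitly says to introduce $Y_n=\vert Z_n-Z_n'\vert$ and repeat the argument of Proposition~\ref{pro:converg_loi}). You correctly identify and fill in the one genuinely new ingredient, namely upgrading the marginal weak convergences to joint convergence of $(Z_n,Z_n')$ via independence, after which the conclusion via Lemma~\ref{lem:conv_loi_fonct_repartition} and the monotonicity/continuity of $\alpha$ is identical to the paper's template.
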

\subsection{Exponential concentration}\label{sse:exponential_conc}

In \cite{Led01}, Ledoux defines a random variable as \textit{normally concentrated} when the concentration function is of the form $Z\propto C e^{-(\, \cdot \, /\sigma)^2}$ for two given constants $C\geq 1$ and $\sigma>0$. The form of the concentration function has no real importance on small dimensions since the result ensues from a mere bound on the Gaussian Q-function; it is however surprising that this form naturally appears for the uniform distribution on the sphere (see Theorem~\ref{the:concentration_vecteur_spherique}) and others in high dimensions.

In order to present a general picture that will be helpful later (when dealing with products of random variables), let us include the general case of \textit{$q$-exponential concentrations} that we present with the formalism of Definition~\ref{def:concentration_autour}. 
\begin{definition}\label{def:concentration_exp}
  Given $q>0$, a random variable $Z$ is said to be $q$-\emph{exponentially concentrated} with \emph{head parameter} $C\geq 1$ and \emph{tail parameter} $\sigma>0$ iff there exists a pivot $a\in \mathbb{R}$ such that $Z \in a \pm C e^{-( \cdot / \sigma)^q}$. 
\end{definition}

\begin{example}\label{exe:conc_variable_gaussienne}
  A random variable $Z$ following a Gaussian distribution with zero mean (i.e. zero median) and unit variance is $2$-exponentially concentrated with a tail parameter equal to $\sqrt{2}$ : $Z \in 0 \pm 2 e^{-(\cdot)^2/2}$.
\end{example}

In practice, the random variables will depend on a random vector whose dimension, say $p$, tends to infinity, the tail parameter is then a function of $p$ that represents the \textit{asymptotic} speed of concentration since it has the same order as the standard deviation of $Z$.
If the $q$-exponential concentration functions are employed reasonably, the asymptotic information can be transmitted from the head parameter to the tail parameter so that the head parameter would stay mainly uninformative and close to $1$.
For that purpose, the following lemma gives us an easy way to bound a given concentration to a close one with a head parameter equal to $e$. 

\begin{lemma}\label{lem:head_parameter_to_tail}
  Given $x,q >0 $ and $ C\geq e$, we have the inequality~:
  $$\min(1,C e^{-x}) \leq e e^{-x/2\log(C)}.$$
\end{lemma}
\begin{proof}
    If $x \leq 2\log(C)$ the inequality is clear, and if $x\geq 2\log(C)\geq 2$, we deduce the result of the lemma from the equivalence 
    \begin{align*}
        \log(C)-x \leq - \frac{x}{2\log(C)}&
        &\Longleftrightarrow&
        &x \geq \frac{2\log(C)^2}{2\log(C) -1},
    \end{align*}
    since $\frac{2\log(C)^2}{2\log(C) -1} \leq \frac{2 \log(C)^2}{\log(C)} \leq x$.
\end{proof}

To place ourselves under the hypotheses of Lemma~\ref{lem:head_parameter_to_tail} and as it appears rather convenient in several propositions below we will suppose from now on that the tail parameter $C$ is greater than $e$.

Exponential concentrations offer simple expressions of the concentration through shifting the pivot thanks to the following lemma.
\begin{lemma}\label{lem:pivotement_de_la_concentration_avec_concentration_exponentielle}
  Given the parameters $C \geq e$ and $q,\sigma,\theta>0$~:
  \begin{align*}
     \forall t>0 \ : \ \ \tau_\theta \cdot C e^{-(\cdot \,/\sigma)^q} \leq \max\left(e^{(\theta/\sigma)^q},C\right) e^{-(\cdot \, /2\sigma)^q}.
   \end{align*} 
\end{lemma}
\begin{proof}
  When $t\geq 2 \theta$, the increasing behavior of $t \mapsto t^q$ ensures that $\left(t-\theta\right)^q\geq (t/2)^q$ which implies the functional inequality on $[2\theta, \infty)$. When $t \leq 2 \theta$:
  \begin{align*}
    \max\left(e^{(\theta/\sigma)^q},C\right) e^{-(t \, /2\sigma)^q}\geq e^{(\theta/\sigma)^q} e^{-(2\theta \, /2\sigma)^q}\geq 1 \geq \tau_\theta \cdot C e^{-(\cdot \,/\sigma)^q},
  \end{align*} 
  (by definition of $\tau_\theta$).
\end{proof}
\color{black}
This lemma, combined with lemma~\ref{lem:concentration_autour_des_valeurs_proches_du_pivot}, clarifies the notion of \textit{tail parameters}. The next corollary shows that it can be seen as the diameter of a ``black hole'' centered around any pivot of the concentration, in a sense that each value inside this ``black hole'' can be considered as a satisfactory pivot for the concentration ; this will be called later, in the case of random vectors, the \textit{observable diameter} of the distribution, following Gromov terminology \cite{Gro79}.

\begin{corollary}\label{cor:sens_param_de_queue}
  Given $C\geq e$ and three positive parameters $\sigma,\lambda,q>0$, two real $a$ and $b$ such that $\left\vert a-b\right\vert  \leq \lambda \sigma$ and a random variable $Z$, one has the implication~:
  \begin{align*}
    Z \in a \pm C e^{-( \cdot /\sigma)^q}&
    &\Longrightarrow&
    &Z \in b \pm C' \exp\left(-\left(\frac{\, \cdot \, }{2\sigma}\right)^q\right),
  \end{align*}  
  where $C'=\max(C,e^{\lambda^q})$.
\end{corollary}
Note that the interesting aspect of the result is the independence of the head parameter $C'=\max(C,e^{\lambda^q})$ to the tail parameter $\sigma$. Moreover the tail parameter stays unmodified when $q<1$. 

We now have all the elements to show that, due to the high concentration of exponentially concentrated random vectors, every median plays a pivotal role among the different constants that can localize the concentration.

\begin{proposition}[\protect{\cite[Proposition~1.8]{Led01}}]\label{pro:mediane_pivot}
  Given a random variable $Z$, and a median $m_Z$ of $Z$, if we suppose that $Z \in a \pm Ce^{-(\,\cdot \, / \sigma)^q}$ for a pivot $a\in \mathbb{R}$, then~:
  \begin{align*}
    Z  \in m_Z \pm 2C \exp \left(-\left(\frac{\cdot}{2\sigma}\right)^q\right).
  \end{align*}

\end{proposition}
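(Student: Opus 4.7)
The plan is to reduce to Corollary~\ref{cor:sens_param_de_queue} (shift of pivot under exponential concentration) by first showing that any median $m_Z$ cannot be too far from the given pivot $a$, and then plugging the resulting bound on $|a-m_Z|$ into that corollary.

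First, I would control $|a - m_Z|$. Assume without loss of generality that $m_Z \geq a$ (the symmetric case is identical). By definition of the median,
\begin{align*}
  \tfrac12 \leq \mathbb{P}(Z \geq m_Z) \leq \mathbb{P}(|Z - a| \geq m_Z - a) \leq C e^{-((m_Z - a)/\sigma)^q},
\end{align*}
where the last inequality uses the assumption $Z \in a \pm Ce^{-(\cdot/\sigma)^q}$. Rearranging, this gives
\begin{align*}
  |m_Z - a| \leq \sigma \bigl(\log(2C)\bigr)^{1/q}.
\end{align*}

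Next, I would apply Corollary~\ref{cor:sens_param_de_queue} with $\lambda := (\log(2C))^{1/q}$, which yields
\begin{align*}
  Z \in m_Z \pm C' \exp\!\left(-\left(\frac{\,\cdot\,}{2\sigma}\right)^{q}\right),
  \qquad C' = \max\bigl(C, e^{\lambda^q}\bigr) = \max(C, 2C) = 2C,
\end{align*}
which is exactly the claimed concentration.

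The argument has essentially no obstacle: the only subtlety is to remember that the median bound must be extracted from the concentration around the (possibly unknown) pivot $a$ rather than from the median itself, which is why one cannot simply iterate the statement. The degradation of the tail parameter from $\sigma$ to $2\sigma$ and the doubling of the head parameter from $C$ to $2C$ are both unavoidable byproducts of the pivot shift via Lemma~\ref{lem:pivotement_de_la_concentration_avec_concentration_exponentielle}, and they are tight enough that iterating this bound remains harmless in asymptotic applications.
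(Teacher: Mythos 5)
Your proof is correct and follows essentially the same route as the paper: bound $\vert a-m_Z\vert$ by $\sigma(\log(2C))^{1/q}$ using the definition of the median together with the concentration around $a$, then invoke Corollary~\ref{cor:sens_param_de_queue} with $\lambda=(\log(2C))^{1/q}$ so that $C'=\max(C,e^{\lambda^q})=2C$. The only (cosmetic) difference is that the paper reaches the bound on $\vert a-m_Z\vert$ by contradiction with an $\varepsilon$-slack and then lets $\varepsilon\to 0$, whereas you obtain it directly from the inequality chain, which is slightly cleaner.
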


\begin{proof}
  For some $\varepsilon>0$, we choose $t_0>\sigma\left(\log(2C)+\varepsilon\right)^{1/q}$. We know that $\mathbb{P}\left( \left\vert Z -a\right\vert \geq t_0\right) < \frac{1}{2}$ and consequently $\left\vert a-m_Z\right\vert \leq t_0$. Indeed if we suppose that $m_Z \geq a + t_0$, then~:
  \begin{align*}
    1/2 \leq \mathbb{P}\left( Z  \geq m_Z \right)\leq \mathbb{P}\left( Z - a\geq t_0 \right) \leq \mathbb{P}\left( \left\vert Z - a\right\vert\geq t_0 \right) ,
   \end{align*} 
  and we get the same absurd result if we suppose that $a \geq m_Z + t_0$. We can thus conclude thanks to Corollary~\ref{cor:sens_param_de_queue} (with $C'=\max(C,\exp(\frac{t_0^q}{\sigma^q}))=2Ce^\varepsilon$), letting $\varepsilon$ tend to zero.

\end{proof} 
The tails of $q$-exponentially concentrated random variables can be controlled rather easily and roughly thanks to the next lemma that is based on the same simple mathematical inequalities that lead to Corollary~\ref{cor:sens_param_de_queue}.
\begin{lemma}\label{lem:conentration_superieure}
  Given a random variable $Z$, two parameters $C\geq e$ and $\sigma>0$ an exponent $q>0$ and a pivot $a\in \mathbb{R}$, if $Z \in a \pm C e^{-(\, \cdot\, /\sigma)^q}$ then~:
  \begin{align*}
    \forall t \geq 2 \left\vert a\right\vert: \ \mathbb{P}\left( \left\vert Z\right\vert\geq t\right) \leq C e^{-(t/2\sigma)^q}
  \end{align*}
  (for any $t> 0$, $\mathbb{P}\left( \left\vert Z\right\vert\geq t\right) \leq C e^{(|a|/\sigma)^q}e^{-(t/2\sigma)^q}$).
\end{lemma}


Very interestingly, exponential concentration is of great computation convenience to manage H\"older's inequality. For instance a general issue is to bound~:
\begin{align*}
  \mathbb{E} \left[ (Z_1-a_1)^{r_1} \cdots (Z_m-a_m)^{r_m}\right].
\end{align*}
For any $\theta_1, \cdots, \theta_m \in (0,1)$ such that  $\theta_1+ \cdots + \theta_m=1$, H\"older's inequality gives us directly~:
\begin{align}\label{eq:holder}
  \mathbb{E} \left[ (Z_1-a_1)^{r_1} \cdots (Z_m-a_m)^{r_m}\right] \leq \prod_{i=1}^m\left(\mathbb{E} \, \vert Z_i-a_i \vert ^{\frac{r_i}{\theta_i}}\right)^{\theta_i} .
\end{align}
As we will see in the next proposition, the quantities $\mathbb{E} \, \vert Z_i-a_i \vert ^{r}$ can be bounded easily when $Z_i=a_i \pm C e^{-(\, \cdot \, /\sigma)^{q_i}}$, and we will even show in the next proposition that the bounds on $\mathbb{E} \, \vert Z_i-a_i \vert ^{r}$ for $r>0$ can become a characterization (it is actually a \textit{pseudo-characterization} since there is no equivalence) of $q$-exponential concentrations.
\begin{proposition}[Moment characterization of concentration, \protect{\cite[Proposition~1.10]{Led01}}]\label{pro:carcterisation_ac_moments_conc_q_expo}
  Given a random variable $Z$, a pivot $a\in \mathbb{R}$, two exponents $r,q>0$, and two parameters $C\geq e$ and $\sigma>0$, we have the implications :
  \begin{align*}
    Z \propto C e^{-( \cdot/\sigma)^q}
    \ \Rightarrow \
    \forall r \geq q : \mathbb{E}\left[\left\vert Z - Z'\right\vert^r\right] \leq C \Gamma\left(\frac{r}{q}+1\right)\sigma^r
    \ \Rightarrow \
    Z \propto C e^{-\frac{(\cdot /\sigma)^q}{e}}
  \end{align*}
  and 
  \begin{align*}
    Z \in a \pm C e^{-( \cdot/\sigma)^q}
    \ \Rightarrow \
    \forall r \geq q : \mathbb{E}\left[\left\vert Z - a\right\vert^r\right] \leq C \Gamma\left(\frac{r}{q}+1\right)\sigma^r
    \ \Rightarrow \
    Z \in a \pm C e^{-\frac{(\cdot /\sigma)^q}{e}},
  \end{align*}
  where $\Gamma : r \mapsto \int_0^\infty t^{r-1}e^{-t} dt$, (if $n\in \mathbb N$, $\Gamma(n+1) = n!$).
\end{proposition}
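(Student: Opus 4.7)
The plan is to prove both arrows and both formulations (around an independent copy $Z'$ and around a deterministic pivot $a$) in one shot, by setting $Y:=Z-Z'$ in the first setting or $Y:=Z-a$ in the second: in both cases the hypotheses unfold to $\mathbb{P}(|Y|\geq t)\leq Ce^{-(t/\sigma)^q}$ and the conclusions say the same thing about $|Y|$, so the two arguments are identical.

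For the first implication (concentration implies moment bound), I would invoke the layer-cake identity
\[
\mathbb{E}\bigl[|Y|^r\bigr]=\int_0^\infty r t^{r-1}\mathbb{P}(|Y|\geq t)\,dt,
\]
and bound $\mathbb{P}(|Y|\geq t)$ by $\min(1,Ce^{-(t/\sigma)^q})\leq Ce^{-(t/\sigma)^q}$. The resulting integral is evaluated by the substitution $u=(t/\sigma)^q$, giving $C\sigma^r\cdot(r/q)\int_0^\infty u^{r/q-1}e^{-u}\,du = C\sigma^r\,\Gamma(r/q+1)$, which is exactly the claimed bound. The restriction $r\geq q$ is in fact not used at this stage; the bound holds for every $r>0$.

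For the reverse implication (moment bound implies $1/e$-concentration), I would apply Markov's inequality to $|Y|^r$, obtaining
\[
\mathbb{P}(|Y|\geq t)=\mathbb{P}(|Y|^r\geq t^r)\leq \frac{C\Gamma(r/q+1)\sigma^r}{t^r},
\]
and then optimize the free parameter $r\geq q$. Setting $s=(t/\sigma)^q$ and $m=r/q\geq 1$, this bound reads $C\,\Gamma(m+1)\,s^{-m}$, and the target is $Ce^{-s/e}$. I would split into two regimes. If $s\leq e$ then $Ce^{-s/e}\geq Ce^{-1}\geq 1$ since $C\geq e$, and the desired bound is trivial. If $s\geq e$, I would pick the near-optimal $m=s/e\geq 1$ (corresponding to the admissible choice $r=qs/e\geq q$) and use the elementary inequality $\Gamma(m+1)\leq m^m$ valid for $m\geq 1$ to conclude
\[
C\,\Gamma(m+1)\,s^{-m}\leq C(m/s)^m = Ce^{-m}=Ce^{-s/e}.
\]
The inequality $\Gamma(m+1)\leq m^m$ for $m\geq 1$ can be verified by noting the equality at $m=1$ and comparing the logarithmic derivatives $\psi(m+1)$ and $\log m+1$, the latter dominating the former on $[1,\infty)$.

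The main obstacle is precisely this backward optimization: the factor $1/e$ in the exponent of the conclusion is \emph{tight} for this route. The choice $m=s/e$ is exactly what is needed to cancel the $e^{-m}$ contribution coming from $\Gamma(m+1)\leq m^m$ (or equivalently from the leading Stirling term); any other choice of $m$, or any sharper estimate of $\Gamma(m+1)$ that still leaves a residual $e^{-m}$, would degrade the constant $1/e$ further. This explains structurally why the proposition provides only a pseudo-characterization rather than an equivalence, a loss that is intrinsic to the Markov-based route.
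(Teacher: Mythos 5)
Your proposal is correct and follows essentially the same route as the paper: the layer-cake identity plus the substitution $u=(t/\sigma)^q$ for the forward implication, and Markov's inequality with the bound $\Gamma(m+1)\leq m^m$ followed by the choice $r=q(t/\sigma)^q/e$ (with the trivial regime $(t/\sigma)^q\leq e$ handled via $C\geq e$) for the backward one. Your side remarks — that $r\geq q$ is not needed for the moment bound, and that the loss of the factor $1/e$ is intrinsic to the Markov route — are both accurate.
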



In both results, the first implication consists in bounding an expectation with a probability; it will involve the Fubini relation, giving for any positive random variable $Z$~:
\begin{align*}
  \mathbb{E} Z = \int_{Z} \left(\int_0^{\infty} \un_{t \leq Z} dt\right) dZ = \int_0^{\infty} \mathbb{P}\left(Z\geq t\right)dt,
\end{align*}
where $\un_{t \leq Z}$ is equal to $1$ if $t \leq Z$ and to $0$ otherwise. 

The second implication consists in bounding a probability with an expectation; it is a consequence of Markov's inequality, for any non decreasing function $f : \mathbb{R}\rightarrow \mathbb{R}$~:
\begin{align*}
  \mathbb{P}\left(Z\geq t\right) \leq \frac{\mathbb{E}f(Z)}{f(t)}.
\end{align*}
These two key indications given we can start the proof.
\begin{proof}
  We just prove the first implication, as it will be clear that both can be proved the same way. Let us first suppose that $r\geq q$. Knowing that $Z \propto  e^{-(ct)^q}$, we consider $Z'$, an independent copy of $Z$, and we can bound~:
  \begin{align*}
    \mathbb{E}\left[ \vert Z-Z' \vert^{r}\right]
    &=\int_{0}^{\infty} \mathbb{P}\left( \vert Z-Z' \vert^{r} \geq t\right) dt
    =\int_{0}^{\infty} rt^{r-1}\mathbb{P}\left( \vert Z-Z' \vert \geq t\right) dt \\
    &\leq C r \int_{0}^{\infty} t^{r-1}  e^{-(t/\sigma)^q}dt 
    = C\sigma^{r}  r\int_{0}^{\infty} t^{r-1}  e^{- t^q}dt,
  \end{align*}
  and, since $r\geq q$~:
  \begin{align*}
    r\int_{0}^{\infty} t^{r-1}  e^{- t^q}dt
    &= \frac{r}{q}\int_{0}^{\infty} t^{\frac{r}{q}-1} e^{- t}dt = \Gamma\left(\frac{r}{q}+1\right).
  \end{align*}
  
  Now, assuming the second term of the implication chain, we know from Markov's inequality that $\forall r \geq q$~:
  \begin{align*}
    \mathbb{P}\left(\left\vert Z-Z'\right\vert \geq t\right)
    &\leq \frac{\mathbb{E}\left[\left\vert Z-Z'\right\vert^r\right]}{t^r}
    \leq C \Gamma\left(\frac{r}{q}+1\right) \left(\frac \sigma t \right)^r
    \leq C \left(\frac{r}{q(t/\sigma)^q}\right)^{r/q}.
  \end{align*} 
  If $t\geq e^{\frac{1}{q}} \sigma$, we can then set $r= \frac{qt^q}{e\sigma^q}\geq q$, and we get~:
  \begin{align*}
    \mathbb{P}\left(\left\vert Z-Z'\right\vert \geq t\right)
    \leq C e^{-(t/\sigma)^q/e}.
  \end{align*}
  If $t\leq e^{\frac{1}{q}} \sigma$, we still know that $\mathbb{P}\left(\left\vert Z-Z'\right\vert \geq t\right)\leq 1$, and we conclude that~:
  \begin{align*}
    \forall t>0 \ \ : \ \ \ \mathbb{P}\left(\left\vert Z-Z'\right\vert \geq t\right)
    \leq \max(C,e)e^{-(t/\sigma)^q/e}.
  \end{align*}
\end{proof}

\begin{remark}\label{rem:borne_moments_r<q}
  In the last proposition, we did not provide a bound on $\mathbb{E}\left[\left\vert Z-Z'\right\vert^r\right]$ and $\mathbb{E}\left[\left\vert Z-a\right\vert^r\right]$ for $0\leq r <q$ since it is irrelevant to the characterization of $q$-exponential concentration. We may nonetheless easily bound these quantities with $C\sigma^r$. Getting inspiration from our previous derivations, we can indeed obtain when $1\leq r \leq q$~:
  \begin{align*}
    \mathbb{E}\left[ \vert Z-Z' \vert^{r}\right]
    &\leq C\sigma^{r}  \int_{0}^{\infty} r t^{r-1}  e^{- t^q}dt 
    \leq C\sigma^{r}  \int_{0}^{\infty} t^{r} q t^{q-1} e^{- t^q}dt \\
    &\leq C\sigma^{r}  \int_{0}^{1} q t^{q-1} e^{- t^q}dt + C\sigma^{r}  \int_{1}^{\infty} t^{q} q t^{q-1} e^{- t^q}dt\\
    &= C\sigma^{r}  \int_{0}^{\infty} qt^{q-1} e^{- t^q}dt 
    \ \ = \ C\sigma^r.
  \end{align*}
  And when $r\leq 1 \leq q$, we conclude with Jensen's inequality~:
  \begin{align*}
    \mathbb{E}\left[ \vert Z-Z' \vert^{r}\right]
    \leq \mathbb{E}\left[ \vert Z-Z' \vert\right]^{r}
    \leq C^r \sigma^r \leq C \sigma^r. 
  \end{align*}
\end{remark}

The following Lemma gives an alternative result to the aforementioned sufficiency of bounds on $\mathbb  E \left[ \vert Z - a\vert^r\right]$ (or on $\mathbb  E \left[ \vert Z - Z'\vert^r\right]$) for $r \in \mathbb N$.
\begin{lemma}\label{lem:relaxation_borne_moments}
    Given a random variable $Z\in \mathbb R_+$ and three parameters $C>e$, $q,\sigma>0$ one has the implication~:
    \begin{align*}
      \forall m \in \mathbb N,~ \mathbb E[Z^m] \leq C \left(\frac{m}{q}\right)^{\frac{m}{q}} \sigma^m&
      &\Longrightarrow&
      &\forall r \geq 0,~ \mathbb E[Z^r] \leq C e^{\frac 1 e} \left(\frac{r}{q}\right)^{\frac{r}{q}}  \left(\sigma(2/\bar q)^{\frac{1}{q}}\right)^r,
    \end{align*}
    where $\bar q = \min(q,1)$.
  \end{lemma} 
  \begin{proof}
When $r \leq 1$, we already know thanks to Jensen's inequality, by concavity of $t \mapsto t^r$, that~:
\begin{align*}
  \mathbb E [Z^r] \leq (\mathbb E[Z])^r \leq C \left(\frac{1}{q}\right)^{\frac{r}{q}} \sigma^r \leq C e^{\frac 1 e} \left(\frac{r}{q}\right)^{\frac{r}{q}}  \left(\frac{\sigma}{q^{\frac{1}{q}}}\right)^r,
 \end{align*}
 since $\forall t>0$, $t^t \geq \frac{1}{e^{\frac{1}{e}}}$.

 When $r\geq 1$, one can invoke the well known result concerning $\ell^r$ norms, where in our case, $\left\Vert Z\right\Vert_{\ell^r}=\mathbb{E}\left[\left\vert Z\right\vert^r\right]^{1/r}$. Let us consider the general case where we are given $p_1,p_2>0$ such that $p_1 \leq r \leq p_2$ and we consider $\theta \in \left(0,1\right)$ satisfying $1/r=\theta/p_1 + (1-\theta)/p_2$. We then have the inequality :
  \begin{align*}
    \left\Vert Z\right\Vert_{\ell^r} \leq \left\Vert Z\right\Vert_{\ell^{p_1}}^\theta \left\Vert Z\right\Vert_{\ell^{p_2}}^{1-\theta}.
  \end{align*}
  This implies~:
  \begin{align*}
    \mathbb{E}\left[Z^r\right]^{\frac{1}{r}}
    &\leq \left(C^{\frac{1}{p_1}} \sigma \left(\frac{p_1}{q }\right)^{\frac{1}{q}}\right)^\theta  \left(C^{\frac{1}{p_2}} \sigma \left(\frac{p_2}{q}\right)^{\frac{1}{q}}\right)^{1-\theta}  
    \leq\frac{C^{\frac{1}{r}} \sigma}{q^{\frac{1}{q}}} p_1^{\frac{\theta}{q}}p_2^{\frac{1-\theta}{q}}.
  \end{align*}
  We would like to bound $p_1^{\frac{\theta}{q}}p_2^{\frac{1-\theta}{q}}$ with $r^{\frac{1}{q}}$.
  Unfortunately, for $\theta \in \left]0,1\right[$, $p_1^{\theta}p_2^{1-\theta}> 1/(\theta/p_1 + (1-\theta)/p_2)=r$ (this is due to the inequality of arithmetic and geometric means, itself a consequence of the concavity of the $\log$ function). However, for the particular setting under study~:
  \begin{align*}
    \left(\frac{\theta}{p_1} + \frac{1-\theta}{p_2}\right)p_1^\theta p_2^{1-\theta}
    &\leq \left(\frac{1}{p_2} +\frac{(p_2-p_1)\theta}{p_1p_2}\right) p_2 
     \ \ \leq \  1 +\frac{p_2-p_1}{p_1}.
  \end{align*}
  As a consequence, taking $p_1 = \lfloor r \rfloor$ and $p_2 = \lceil r \rceil$, one obtains~: $$\mathbb{E}\left[\left\vert Z\right\vert^r\right]\leq C \sigma^r \left(\frac{2r}{q}\right)^{\frac{r}{q}}.$$ 
\end{proof}

\begin{remark}\label{rem:borne_esp_Z}
  In Proposition~\ref{pro:carcterisation_ac_moments_conc_q_expo}, we saw that if $Z \in a \pm Ce^{-(\, \cdot \, /\sigma )^q}$ then $\left\vert \mathbb{E}Z - a\right\vert\leq \mathbb{E}\left\vert Z-a \right\vert \leq C \left(\frac{1}{q}\right)^{1/q} \sigma <\infty$. Therefore any $q$-exponentially concentrated random variable admits a finite expectation.
\end{remark}

Now that we know it exists, we are going to show that the expectation $\mathbb{E}Z$ plays the same pivotal role as any median.
\begin{corollary}[\protect{\cite[Proposition~1.9]{Led01}}]\label{cor:moyenne_pivot}
  With the notations of the previous proposition, one has~:
  \begin{align*}
    Z \in a \pm Ce^{-(\, \cdot \, /\sigma )^q}&
    &\Longrightarrow&
    &Z \in \mathbb{E}Z \pm e^{\frac{C^q}{q}} e^{-(\, \cdot \,/2\sigma)^q}.
  \end{align*}
\end{corollary}
\begin{proof}
  We suppose that $Z \in Ce^{-(\, \cdot \, /\sigma )^q}$ for a pivot $a\in \mathbb{R}$. Proposition~\ref{pro:carcterisation_ac_moments_conc_q_expo} applied in the case $r=1$ gives us $\left\vert a - \mathbb{E}Z\right\vert\leq C (\frac{1}{q})^\frac{1}{q} \, \sigma$. One can then invoke Corollary~\ref{cor:sens_param_de_queue}, to get the concentration~:
  \begin{align*}
    Z \in \mathbb{E}Z \pm C' e^{-(\, \cdot \,/2\sigma)^q},
  \end{align*}
  with $C'=\max(C,e^{\frac{C^q}{q}})$. It is then interesting to note that the function $q \mapsto \frac{C^q}{q}$ has a minimum in $\frac{1}{\log(C)}$ where it takes the value $e \log C$. Then we see that $$C=e^{\log C}\leq e^{e\log C} \leq e^{\frac{C^q}{q}},$$ and we can simplify the head parameter to obtain the result of the corollary.
\end{proof}
If $Z\in a\pm Ce^{-(\, \cdot \, /\sigma)^p}$, we can then employ $\mathbb{E}Z$ as a pivot of $Z$ to get a bound on the centered moments~:
\begin{corollary}\label{cor:borne_moments_centres_conc_q_exp}
  With the notations of Corollary~\ref{cor:moyenne_pivot}, if we suppose that $Z\in a \pm C e^{-(\, \cdot \, /\sigma)^q}$, then we have~:
  \begin{align*}
    &\mathbb{E}\left\vert Z- \mathbb{E}Z  \right\vert^r \leq  e^{\frac{C^q}{q}}  \left(2 \sigma\right)^r\left(\frac{r}{q}\right)^{\frac{r}{q}} .
  \end{align*}
\end{corollary}

 The previous development of $q$-exponential concentration primarily aims at providing a versatile and convenient ``toolbox'' (note that most introduced inequalities could have been enhanced, however to the expense of clarity) for the subsequent treatment of large dimensional random vectors, rather than random variables. This analysis will be performed through resorting to concentrated \emph{functionals} of the random vectors, i.e., real images of through a mapping with different levels of regularity (linear, Lipschitz, convex..). For large dimensional vectors, one is mostly interested in the \emph{order} of the concentration, thus the various constants appearing in most of the previous propositions and lemmas do not have any particular interest; of major interest instead is the independence of the concentration with respect to the random vector dimension, as observed for instance in Theorem~\ref{the:concentration_vecteur_spherique} and that we will extend to other type of random vectors in what follows.

\section{Concentration of a random vector of a normed vector space}\label{sse:conc_vect_al}


In Section~\ref{sec:emp_cov}, the random vectors under study are either in $E=\mathbb{R}^p$ endowed with the Euclidean norm $\left\Vert z \right\Vert=\sqrt{\sum_{i=1}^pz_i^2}$ or the $\ell_1$-norm $\Vert z \Vert_1 = \sum_{i=1}^p \vert z_i \vert$, or in $E=\mathcal{M}_{pn}$, $p,n \in \mathbb{N}$, endowed with two possible norms\footnote{The notion of concentration presented in this subsection can be immediately extended to vector spaces endowed with \textit{semi norms}}~:
\begin{itemize}
  \item the spectral norm defined as $\left\Vert M\right\Vert= \sup_{\left\Vert z\right\Vert\leq 1} \left\Vert Mz\right\Vert$,
  \item the Frobenius norm $\left\Vert M \right\Vert_F=\sqrt{\sum_{\genfrac{}{}{0pt}{2}{1\leq k\leq p}{1 \leq i \leq n}}M_{k,i}^2}$
 \end{itemize} 
(where $M \in \mathcal M_{p,n}$). Note that the Frobenius norm can be seen as a Euclidean norm on $\mathbb{R}^{pn}$ with the bijection that concatenates the column of a matrix.

To generalize the notion of concentration to the case of a random vector of a normed vector space $(E, \left\Vert \cdot\right\Vert)$, one might be tempted to follow the idea of Definition~\ref{def:concentration_autour} and say that a vector $Z \in E$ is $\alpha$-concentrated if one has for instance $\mathbb{P}\left(\left\Vert Z- \tilde Z\right\Vert\geq t\right)\leq \alpha(t)$ for a deterministic vector $\tilde Z$, well chosen. This would describe a notion of a concentration \textit{around a vector}.

However, this basic notion would not be compatible with the fundamental example of the uniform distribution on the sphere of radius $\sqrt{p}$ presented in Theorem~\ref{the:concentration_vecteur_spherique} or the Gaussian vectors of identity population covariance matrices. 
When the dimension grows, those random vectors concentrate around a growing sphere which is the exact opposite behavior of being concentrated around a point. Yet, they present strong dimension free concentration properties that we will progressively identify through the presentation of three fundamental notions~:
\begin{enumerate}
    \item The \textit{linear concentration} which is the concentration of $u(Z-\tilde Z)$ for some deterministic vector $\tilde Z \in E$ (the so-called \textit{deterministic equivalent}) and for any bounded linear form $u :E \mapsto \mathbb R$. For instance, we know from Theorem~\ref{the:concentration_vecteur_spherique} that any random vector $Z$ uniformly distributed on the sphere admits $\tilde Z = \mathbb E Z =0$ as a deterministic equivalent. This means that most drawings of $Z$ are close to the equator when the dimension grows.
    \item The \textit{Lipschitz concentration} which is the concentration of $f(Z)-f(Z')$ for any i.i.d. copy $Z'$ of $Z$ and any Lipschitz map $f :E \mapsto \mathbb R$.
    \item The \textit{convex concentration} which is the concentration of $f(Z)-f(Z')$ for any Lipschitz \textit{and weakly convex} map $f :E \mapsto \mathbb R$. This notion is of course weaker than the Lipschitz concentration, its presentation here is only justified by the fundamental Theorem~\ref{the:talagrand} owed to Talagrand that provides concentration properties on random vectors with independent and bounded entries. It is less ``stable'' than the Lipschitz concentration, meaning there exist very few transformations that preserve convex concentration. As a consequence one usually naturally returns to linear concentration after some refinement. For instance, supposing that $X$ is convexly concentrated, only a linear concentration can be obtained for the resolvent $Q = (XX^T/n + zI_p)^{-1}$. 
\end{enumerate}
Although a seemingly basic notion, linear concentration still has quite interesting features that justify an independent treatment at the beginning of this section.

\subsection{Linear Concentration}
\subsubsection{Definition and first properties}
Considering the linear functionals of random vectors allows us, in particular, to introduce the notion of \textit{deterministic equivalents}, which play the role of the pivots we presented in the concentration of random variables.

\begin{definition}\label{def:deterministic equivalent}
  Given a random vector $Z\in E$, a deterministic vector $\tilde Z\in E$ and a concentration function $\alpha$, we say that $Z$ is \emph{linearly $\alpha$-concentrated} around the \emph{deterministic equivalent} $\tilde Z$ if for any bounded linear form $u : E \rightarrow \mathbb{R}$ with a unit operator norm (i.e., $\forall z\in E, \ \vert u(z)\vert\leq \left\Vert z\right\Vert$)~:
  \begin{align*}
    u(Z) \in u(\tilde Z) \pm \alpha. 
  \end{align*}
  We note in that case : $Z \in \tilde Z \pm \alpha \ \ \text{in} \ (E,\left\Vert \cdot\right\Vert)$.
\end{definition}
\begin{remark}\label{rem:compatibilite_avec_def_2}
  Definition~\ref{def:deterministic equivalent} is clearly compatible with Definition~\ref{def:concentration_autour} for the case where $E=\mathbb{R}$. Indeed, in $\mathbb{R}$ the linear forms are the scalar functions and for any coefficient $\lambda \in \mathbb{R}_*$ and any random variable $Z\in \mathbb R$ we have the equivalence~:
  \begin{align*}
     Z\in a \pm \alpha&
     &\Longleftrightarrow&
     &\lambda Z \in \lambda a \pm \alpha(\, \cdot \, / \lambda).
  \end{align*}
\end{remark}
\color{blue}
\color{black}
\begin{remark}\label{rem:avantage_fonctionnelles_lin\'{e}aires}
  The advantage of \textit{linear functionals} is that they preserve the expectation which gives us the simplest deterministic equivalent. For instance if $Z$ is linearly $q$-exponentially concentrated, then we know from Corollary~\ref{cor:moyenne_pivot} that $\mathbb{E}Z$ is a deterministic equivalent for $Z$ since for any continuous linear form $u$, $\mathbb{E} [u(Z)]=u(\mathbb{E}[Z])$. 
  For instance, thanks to Theorem~\ref{the:concentration_vecteur_spherique}, one knows that for any $p \in \mathbb{N_*}$, if $Z \sim \sigma_p$ then $Z \in 0 \pm 2e^{-\, \cdot \, ^2/2}$ since $\mathbb{E}[Z]=0$.
\end{remark}

Since the Frobenius norm is larger than the spectral norm, for any random matrix $M\in \mathcal M_{p,n}$, we have the implication~:
\begin{align*}
  M\in \tilde M \pm \alpha \ \ \text{in} \ (\mathcal M_{p,n}, \left\Vert \cdot\right\Vert_F)&
  &\Longrightarrow&
  &M\in \tilde M \pm \alpha \ \ \text{in} \ (\mathcal M_{p,n}, \left\Vert \cdot\right\Vert),
\end{align*}
for some deterministic matrix $\tilde M$ and some concentration function $\alpha$. When the choice of the norm is not ambiguous (when $E=\mathbb{R}^p$ in particular), we will allow ourselves not to specify the norm.
\begin{remark}\label{rem:indentification_forme_lin_vect}
  The same way that linear forms in $\mathbb{R}^p$ are fully described with the scalar product, in $\mathcal M_{p,n}$ the linear forms are fully defined by the functions $f_A : M \mapsto \tr A M$ for $A\in \mathcal M_{p,n}$ where $f_A$ is said to have a unit norm if $\left\Vert f_A\right\Vert_*=1$, i.e.,
  \begin{itemize}
    \item in $(\mathcal M_{p,n}, \left\Vert \, \cdot \,\right\Vert_F)$ : $\left\Vert A\right\Vert_F= \sqrt{\tr (AA^T)}=1$, 
    \item in $(\mathcal M_{p,n}, \left\Vert \, \cdot \,\right\Vert)$ : $\left\Vert A\right\Vert_1= \tr (AA^T)^{\frac{1}{2}}=  1$.
  \end{itemize}
\end{remark}

Linear concentration is provided by classical concentration inequalities like Bernstein's or Hoeffding's inequalities. 
 \begin{example}\label{exe:bernstein}[Bernstein's inequality, \cite[Theorem 2.8.2]{Ver17}]
Given $p$ independent random variables $(Z_i)_{1\leq i \leq p} \in \mathbb{R}^{p}$, and three parameters $C\geq e$ and $c,q >0$, we have the implication~:
\begin{align*}
   \forall i \in \{1,\ldots, p\}, Z_i \in \mathbb{E} [Z_i] \pm C e^{- \, \cdot \, /\sigma}&
   &\Rightarrow&
   & Z \in \mathbb{E}[Z] \pm 2Ce^{-c(\, \cdot \, /\sigma)^2} + 2C e^{- c\, \cdot/ \sigma},
\end{align*}
where $c$ is a numerical constant depending only on $C$ and $C = (Z_1,\ldots, Z_p)$.
\end{example}

\begin{example}\label{exe:hoeffding}[Hoeffding's inequality, \cite[Theorem 2.2.6]{Ver17}]
Given $p$ independent random variables $Z_1,\ldots,Z_p\in [0,1]$, the random vector $Z=(Z_1, \ldots,Z_p) \in \mathbb{R}^p$ is linearly concentrated and verifies $Z \in \mathbb{E}[Z] \pm 2 e^{-2 \, \cdot \,^2}$ in $(\mathbb{R}^p, \left\Vert \cdot\right\Vert)$.
\end{example}
We will see in Subsection~\ref{ssse:concentration_convexe} about convex concentration a generalization of Hoeffding's theorem with the theorem of Talagrand.

Dealing with $q$-exponential concentrations, we can get an analogous result to Corollary~\ref{cor:sens_param_de_queue} that allowed us to interchange the pivot $a$ with any other real in a ball around $a$ with a diameter of the same order as the tail parameter. 
The tail parameter will be called in the case of a random vector an \textit{observable diameter}. Given a random vector $Z\in E$, we will say that $Z$ is \emph{$q$-exponentially concentrated} around $\tilde Z$ with a \emph{head parameter $C$} and an \emph{observable diameter $\sigma$} if $Z \in \tilde Z \pm C e^{-(\, \cdot \, / \sigma)^q}$.
The \emph{observable diameter} is the diameter of the ``observations''  of the distribution that could be seen as linear projections (for other types of concentrations to be introduced subsequently, they will alternatively be related to 1-Lipschitz or quasi-convex maps) on $\mathbb{R}$. Intuitively, it is the diameter of the observation in the ``real world'' ; refer to \cite{gro99} for a both more precise and more general definition.
  
\begin{lemma}\label{lem:comportement_pivot_equivalent_deterministe}
  Let us consider a random vector $Z\in E$, two deterministic vectors $\tilde Z,\tilde Z' \in E$ and three parameters $C\geq e$, $\lambda,\sigma>0$. If $\Vert \tilde Z - \tilde Z'\Vert\leq \lambda \sigma$ then we have the implication~:
  \begin{align*}
    Z \in \tilde Z \pm C e^{-(\, \cdot \, / \sigma)^q}&
    &\Longrightarrow&
    &Z \in \tilde Z' \pm \max(C, e^{\lambda^q}) e^{-(\, \cdot \, / \sigma)^q}.
  \end{align*}
\end{lemma}

\subsubsection{Product of Linearly concentrated random vectors}
One might expect for concentrated random vectors a similar proposition to Proposition~\ref{pro:conc_autour_prod_ss_borne} giving the concentration of a product of random variables. In the case of vectorial objects, we are looking for the concentration of a scalar product of two random vectors $X,Y\in E$ or, closer to our present interest as announced in the preamble, for the concentration of $u^TQx$ where $u$ is deterministic, $Q$ is a concentrated random matrix and $x$ a concentrated random vector. If one looks closely at the proof of Proposition~\ref{pro:conc_autour_prod_ss_borne}, it becomes obvious that although some steps can be fully adapted, the method gets stuck when trying to bound~:
\begin{align*}
  \mathbb{P}\left(\left\vert u\left( (X- \mathbb{E}X)(Y- \mathbb{E}Y)\right)\right\vert\geq t\right).
\end{align*}
It is tempting here to invoke the Cauchy-Schwartz inequality~:
\begin{align*}
  u\left((X- \mathbb{E}X)(Y- \mathbb{E}Y)\right)\leq \Vert u \Vert \left\Vert X- \mathbb{E}X\right\Vert\left\Vert Y- \mathbb{E}Y\right\Vert,
\end{align*}
where $\Vert u \Vert$ is the operator norm of $u$. However, as we explained it with the example of spherical and Gaussian vectors in the introduction of this subsection, unlike $u( X- \mathbb{E}X)$ and $u( Y- \mathbb{E}Y)$ the quantities $\left\Vert X- \mathbb{E}X\right\Vert$ and $\left\Vert Y- \mathbb{E}Y\right\Vert$ are far from being concentrated for concentrated random vectors $X$ and $Y$ of practical use. 
We will see in Proposition~\ref{pro:tao} that it is still possible to express the concentration of the norms and obtain consequently loose bounds for the concentration of $(X-\mathbb E X)(Y - \mathbb E Y)$ as presented in Examples~\ref{exe:conc_lineaire_produit_odot} and~\ref{exe:concentration_lineaire_covariance_empirique}.

Before that, to present a setting where the concentration is satisfactory, we suppose in addition to the concentration that the two vectors are independent and one is bounded.
\begin{proposition}\label{pro:conc_f(Z_1,Z_2)_independant}
  Let us consider two normed vector spaces $(E_1,\left\Vert \cdot\right\Vert_1)$ and $(E_2,\left\Vert \cdot\right\Vert_2)$, two \emph{independent} random vectors $Z_1\in E_1$ and $Z_2\in E_2$ and a bilinear form $f : E_1\times E_2 \rightarrow \mathbb{R}$ such that for any $(z_1,z_2)\in E_1\times E_2$~: 
  \begin{align*}
    \left\vert  f(z_1,z_2)\right\vert \leq \left\Vert z_1\right\Vert_1 \left\Vert z_2\right\Vert_2.
  \end{align*}  
  If there exist two concentration functions $\alpha, \beta : \mathbb R_+ \rightarrow \mathbb R_+$ and two deterministic vectors $(\tilde Z_1, \tilde Z_2)\in E_1 \times E_2$ such that~:
  \begin{align*}
    Z_1 \in \tilde Z_1 \pm \alpha  \ \text{in} \ (E_1, \left\Vert \cdot\right\Vert_1)&
    &\text{and}&
    &Z_2 \in \tilde Z_2  \pm \beta \ \text{in} \ (E_2, \left\Vert \cdot\right\Vert_2),
  \end{align*}
  and if $\left\Vert  Z_2\right\Vert_2$ is bounded by a real $K_2>0$, then~:
  \begin{align*}
    f(Z_1,Z_2)\in f\left(\tilde Z_1,\tilde Z_2\right) \pm \alpha \left(\frac{\, \cdot \,}{2K_2}\right)+ \beta \left(\frac{\, \cdot \,}{2\Vert \tilde Z_1\Vert_1}\right). 
  \end{align*}
\end{proposition}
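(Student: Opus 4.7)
The plan is to exploit bilinearity to split the error into two pieces, each of which can be handled by the linear concentration of exactly one of the two vectors. Writing
\begin{align*}
  f(Z_1,Z_2) - f(\tilde Z_1,\tilde Z_2) = \bigl[f(Z_1,Z_2) - f(\tilde Z_1,Z_2)\bigr] + \bigl[f(\tilde Z_1,Z_2) - f(\tilde Z_1,\tilde Z_2)\bigr],
\end{align*}
the triangle inequality together with the elementary decomposition $t = t/2 + t/2$ reduces the proof to controlling each summand separately by $\alpha(\cdot/(2K_2))$ and $\beta(\cdot/(2\|\tilde Z_1\|_1))$ respectively, then applying a union bound.

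For the second summand, the map $u_{\tilde Z_1}:z_2\mapsto f(\tilde Z_1,z_2)$ is a linear form on $E_2$, and the hypothesis $|f(z_1,z_2)|\leq\|z_1\|_1\|z_2\|_2$ gives $\|u_{\tilde Z_1}\|_*\leq \|\tilde Z_1\|_1$. Applying Definition~\ref{def:deterministic equivalent} to the rescaled unit-norm form $u_{\tilde Z_1}/\|\tilde Z_1\|_1$ yields directly
\begin{align*}
  \mathbb P\bigl(|f(\tilde Z_1,Z_2)-f(\tilde Z_1,\tilde Z_2)|\geq t/2\bigr) \leq \beta\bigl(t/(2\|\tilde Z_1\|_1)\bigr).
\end{align*}

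The first summand is the real point. Here I would condition on $Z_2$: for each realisation $z_2$, the map $u_{z_2}:z_1\mapsto f(z_1,z_2)$ is a linear form on $E_1$ of operator norm at most $\|z_2\|_2\leq K_2$. By the independence of $Z_1$ and $Z_2$, the conditional law of $Z_1$ given $Z_2=z_2$ coincides with the unconditional law, so the linear concentration $Z_1\in \tilde Z_1\pm\alpha$ applied to $u_{z_2}/K_2$ gives
\begin{align*}
  \mathbb P\bigl(|f(Z_1,Z_2)-f(\tilde Z_1,Z_2)|\geq t/2 \,\big|\, Z_2=z_2\bigr) \leq \alpha\bigl(t/(2K_2)\bigr).
\end{align*}
Since the right-hand side no longer depends on $z_2$, taking expectation in $z_2$ preserves the bound. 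Combining the two pieces delivers the announced concentration function $\alpha(\cdot/(2K_2))+\beta(\cdot/(2\|\tilde Z_1\|_1))$.

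The only subtle step is the legitimacy of the conditioning in the first piece: one needs $Z_1$ and $Z_2$ independent (which is assumed) and the boundedness of $\|Z_2\|_2$ (to avoid a $z_2$-dependent scaling factor that could not be controlled uniformly). The role of the uniform bound $K_2$ is precisely to turn what is otherwise a random linear form $u_{Z_2}$ into one of controlled norm, so that the linear concentration of $Z_1$ can be invoked with a deterministic rescaling. No such bound is needed on $Z_1$ because the second summand only involves the deterministic pivot $\tilde Z_1$.
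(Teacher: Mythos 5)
Your proof is correct and follows essentially the same route as the paper's: the same bilinear decomposition into $f(Z_1-\tilde Z_1,Z_2)$ and $f(\tilde Z_1,Z_2-\tilde Z_2)$, the same conditioning on $Z_2$ for the first term (justified by independence and the uniform bound $K_2$ on the operator norm of $z_1\mapsto f(z_1,Z_2)$), and the same direct application of linear concentration to the fixed form $z_2\mapsto f(\tilde Z_1,z_2)$ for the second. You merely spell out the legitimacy of the conditioning step, which the paper leaves implicit.
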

\begin{proof}
  Given $t>0$, we just compute~:
  \begin{align*}
    &\mathbb{P}\left(\left\vert f(Z_1,Z_2)-f(\tilde Z_1,\tilde Z_2)\right\vert\geq t\right)\\
     &\hspace{1cm}\leq \mathbb{E}\left[ \mathbb{P}\left(\left\vert f(Z_1-\tilde Z_1,Z_2) \right\vert\geq \frac{t}{2} \ \vert \ Z_2\right)\right] 
    + \mathbb{P}\left(\left\vert f(\tilde Z_1,Z_2- \tilde Z_2)\right\vert\geq \frac{t}{2}\right) \\
    &\hspace{1cm}\leq \alpha \left(\frac{t}{2K_2}\right)+ \beta \left(\frac{t}{2 \Vert \tilde Z_1\Vert_1}\right),
  \end{align*}
  since $x\mapsto f(x,Z_2)$ and $y\mapsto f(\tilde Z_1,y)$ are both linear and respectively $K_2$-Lipschitz and $\Vert \tilde Z_1\Vert$-Lipschitz.
\end{proof}
If we place ourselves in an algebra $\mathcal A$ endowed with an algebra norm $\Vert \cdot \Vert$ (verifying $\Vert Z_1 Z_2 \Vert \leq \Vert Z_1\Vert \Vert Z_2 \Vert$, $\forall Z_1, Z_2 \in \mathcal A$), in the setting of Proposition~\ref{pro:conc_f(Z_1,Z_2)_independant}, we also have the concentration of the random vector $Z_1Z_2$.
\begin{corollary}\label{cor:conc_lineaire_pdt_independant_borne}
  Given two independent random vectors $Z_1,Z_2$ of an algebra $\mathcal A$, some constant $K_2$, two deterministic vectors $\tilde Z_1,\tilde Z_2 \in \mathcal A$ and two concentration function $\alpha,\beta : \mathbb R_+ \rightarrow \mathbb R_+$, we have the implication~:
  \begin{align*}
      \left\{\begin{aligned}
      &Z_1 \in \tilde Z_1 \pm \alpha \\
      &Z_2 \in \tilde Z_2 \pm \beta \\
      &\Vert Z_2 \Vert \leq K_2
      \end{aligned} \right.&
      &\Longrightarrow&
      &Z_1Z_2 \in \tilde Z_1 \tilde Z_2 \pm \alpha \left(\frac{\, \cdot \,}{2K_2}\right)+ \beta \left(\frac{\, \cdot \,}{2\Vert \tilde Z_1\Vert_1}\right).
  \end{align*}
\end{corollary}

\subsubsection{Size of the norm}
If a random vector $Z$ is linearly concentrated around a deterministic equivalent $\tilde Z$, it is possible to control the norm $\Vert Z -\tilde Z \Vert$ if the norm $\left\Vert \, \cdot \,\right\Vert$ can be defined as the supremum on a set of linear forms. For instance, in $\mathbb{R}^p$ endowed with the sup norm $\left\Vert \cdot \right\Vert_{\infty}$ ($\left\Vert x \right\Vert_{\infty} = \sup\{\vert x_i \vert, 1 \leq i \leq p\}$), if $Z \in \tilde Z \pm \alpha$~:
\begin{align}\label{eq:concentration_norme_infinie}
    \mathbb{P}\left(\Vert Z - \tilde Z \Vert_{\infty} \geq t\right) 
    &=\mathbb{P}\left(\sup_{1\leq i \leq p} e_i^T(Z - \tilde Z) \geq t\right) \\
    &\leq p \sup_{1\leq i \leq p}\mathbb{P}\left( e_i^T(Z - \tilde Z) \geq t\right) \leq p \alpha(t),
\end{align}
where $(e_1, \ldots,e_p)$ is the canonical basis of $\mathbb R^p$ ($e_i$ is a vector full of $0$ with $1$ on the $i^{\text{th}}$ coordinate).
To manage the infinity norm, the supremum is taken on a finite set $\{e_1, \ldots e_p\}$; things are more complex if we look at the Euclidean norm because this time one comes to use the identity $\left\Vert x \right\Vert = \sup\{ u^T x, \Vert u \Vert \leq 1\}$ where the supremum is taken on the unit ball.
To tackle this loss of cardinality, it is convenient here to introduce the so-called $\epsilon$-nets to discretize the ball in order to approach sufficiently the norm and at the same time find a good bound for the probability (see \cite{Tao11}).  
We leave the proof in the appendix.

  \begin{proposition}\label{pro:tao}
  Let us consider a normed vector space $(E, \left\Vert \, \cdot \,\right\Vert)$ of finite dimension such that there exists a subspace $H$ of the dual space $(E^*,\left\Vert \, \cdot \, \right\Vert_*)$, and a ball $\mathcal B_H =\{f \in H, \left\Vert f\right\Vert_*\leq 1\} \subset H$ verifying for any $z\in E$~: 
  \begin{align}
     \left\Vert z\right\Vert=\sup_{f \in \mathcal B_H} f(z).
  \end{align}
  Given a random vector $Z \in E$, a deterministic equivalent $\tilde Z$ and a concentration function $\alpha$, if we suppose that $Z \in \tilde Z \pm \alpha$ then we have the concentration~:
  \begin{align}\label{eq:concentration_norme_d_H}
    \left\Vert Z -\tilde Z\right\Vert \in 0 \pm 8^{{\rm dim}(H)} \alpha \left( \frac{\cdot}{2}\right),
  \end{align}
  where ${\rm dim}(H)$ is the dimension of $H$. 
\end{proposition}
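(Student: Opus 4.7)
My plan is the standard $\varepsilon$-net reduction of a supremum over $\mathcal{B}_H$ to a finite maximum, combined with the linear concentration hypothesis. The setup is favourable because $H$ is a \emph{finite-dimensional} normed subspace, so it admits small covering nets in the dual norm $\|\cdot\|_*$.

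First, setting $d:=\dim(H)$, I would build a $\tfrac{1}{2}$-net $\mathcal{N}\subset \mathcal{B}_H$ in $(H,\|\cdot\|_*)$, i.e.\ a finite set such that for every $f\in\mathcal{B}_H$ there exists $g\in\mathcal{N}$ with $\|f-g\|_*\le \tfrac{1}{2}$. The usual volumetric covering estimate in a $d$-dimensional normed space (cf.\ \cite{Tao11}) gives $|\mathcal{N}|\le (1+2/\varepsilon)^d=5^d$, which is comfortably below the advertised $8^d$.

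Second, I would use the net to control the norm by a \emph{finite} supremum of linear forms. For any $z\in E$ and $f\in\mathcal{B}_H$, pick $g\in\mathcal{N}$ with $\|f-g\|_*\le \tfrac{1}{2}$; then $f(z)=g(z)+(f-g)(z)\le g(z)+\tfrac{1}{2}\|z\|$. Taking the supremum over $f\in\mathcal{B}_H$ and using the hypothesis $\|z\|=\sup_{f\in\mathcal{B}_H}f(z)$ gives $\|z\|\le \sup_{g\in\mathcal{N}}g(z)+\tfrac{1}{2}\|z\|$, hence
\begin{align*}
   \|z\|\ \le\ 2\sup_{g\in\mathcal{N}} g(z).
\end{align*}

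Third, I would apply this bound to $z=Z-\tilde Z$ and finish with a union bound. Each $g\in\mathcal{N}$ lies in $\mathcal{B}_H$, so in particular $\|g\|_*\le 1$, and the linear concentration hypothesis $Z\in \tilde Z\pm \alpha$ yields $g(Z)\in g(\tilde Z)\pm \alpha$. Therefore
\begin{align*}
   \mathbb{P}\bigl(\|Z-\tilde Z\|\ge t\bigr)
   \ \le\ \mathbb{P}\Bigl(\sup_{g\in\mathcal{N}} g(Z-\tilde Z)\ge \tfrac{t}{2}\Bigr)
   \ \le\ |\mathcal{N}|\,\alpha\!\left(\tfrac{t}{2}\right)
   \ \le\ 8^d\,\alpha\!\left(\tfrac{t}{2}\right),
\end{align*}
which is exactly \eqref{eq:concentration_norme_d_H}. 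The only ingredient that is not a direct calculation is the covering-number estimate; since it is a classical fact valid in any finite-dimensional normed space, there is no real obstacle. The slight looseness between $5^d$ and $8^d$ leaves room for a cleaner constant if one wished to optimise, but it is immaterial to the subsequent applications.
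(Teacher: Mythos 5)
Your proof is correct and follows essentially the same route as the paper: a $\tfrac12$-net of $\mathcal B_H$, the comparison $\|z\|\le 2\sup_{g\in\mathcal N}g(z)$, and a union bound over the net. The only (immaterial) difference is that you use the covering estimate $(1+2/\varepsilon)^{d}=5^{d}$ where the paper counts a maximal $\tfrac12$-separated set by a volume argument to get $8^{d}$.
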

\begin{remark}\label{rem:avantage_enonce_prop_tao}
  One always has $\left\Vert z\right\Vert=\sup_{\left\Vert f\right\Vert_*\leq 1} f(z)$, so one might be tempted to systematically consider $H=E^*$. For instance in $(\mathbb{R}^p, \left\Vert \, \cdot \, \right\Vert)$, $H$ is taken to be equal to $\mathbb{R}^p$ and ${{\rm dim}(H)}=p$. The same way, in the Euclidean space $(\mathcal M_{p,n}, \left\Vert \, \cdot \,\right\Vert_F)$, we take $H=\mathcal M_{p,n}$ (and ${{\rm dim}(H)}=pn$). However, we can reduce greatly the dimension ${\rm dim}(H)$ in the case of $(\mathcal M_{p,n}, \left\Vert \, \cdot \,\right\Vert)$ since for any $M \in \mathcal M_{p,n}$~:
  \begin{align*}
    \left\Vert M\right\Vert = \sup_{\left\Vert u\right\Vert, \left\Vert v\right\Vert\leq 1} \left\vert u^TMv\right\vert.
  \end{align*}
  Thus it is wise to consider $H=\{f_{vu^T}, u,v \in \mathbb{R}^p\times \mathbb{R}^n\}$ (see Remark~\ref{rem:indentification_forme_lin_vect} for a definition of $f_A$ ; here, $\left\Vert f_{vu^T}\right\Vert_*=\left\Vert vu^T\right\Vert_1=\left\Vert u\right\Vert \left\Vert v\right\Vert$) and the dimension is then only ${\rm dim}(H)=p+n$.
\end{remark}
Taking into account the two results \eqref{eq:concentration_norme_infinie} and \eqref{eq:concentration_norme_d_H}, we are led to introduce an indicator characteristic to the norm that gives the speed of the concentration.  
\begin{definition}[Norm degree]\label{def:norm_degree}
  Given a normed vector space $(E, \Vert \cdot \Vert)$, and a subset $H\subset E^*$, let us define the degree $\eta_H$ of $H$ as~:
  \begin{itemize}
      \item $\eta_H=\log(\# H)$ if $H$ is finite
      \item $\eta_H=\dimm(\vect H)$ if $H$ is infinite 
  \end{itemize}
  where $\# H$ is the number of elements in $H$ and $\vect H$ is the subspace of $E^*$ generated by $H$.
  When it is defined, we note $\eta(E, \Vert \cdot \Vert)$ or more simply $\eta _{\Vert \cdot \Vert}$ the degree of $\Vert \cdot \Vert$ that is defined as\footnote{If we allow $\eta_{(E,\|\cdot\|)}$ to take infinite values, it is always defined thanks to Hahn-Banach Theorem}~:
  \begin{align*}
      \eta_{\Vert \cdot \Vert}=\eta(E, \Vert \cdot \Vert)=\inf \left\{\eta_H, H\subset E^* \ | \ \forall x \in E, \Vert x \Vert = \sup_{f \in \mathcal B_H}f(x)\right\}.
  \end{align*}
\end{definition}
In the setting of the last proposition~:
\begin{align}\label{eq:conc_norm_ac_eta}
    \left\Vert Z -\tilde Z\right\Vert \in 0 \pm e^{c\eta(E,\left\Vert \cdot\right\Vert)} \alpha \left( \frac{\cdot}{2}\right),
\end{align}
where $c$ is a numerical constant. In the case of the $q$-exponential concentration, it is possible to rearrange the concentration of $\Vert Z - \tilde Z \Vert$ to obtain a head parameter of order $1$. 
\begin{proposition}\label{pro:tao_conc_exp}
  Given a random vector $Z \in E$, a deterministic vector $\tilde Z \in \mathbb  R^p$ and three parameters $C \geq e$, $q,\sigma>0$, we have the implication~:
  \begin{align*}
      Z \in \tilde Z \pm C e^{-(\, \cdot \, /\sigma)^q}&
      &\Longrightarrow&
      & \Vert Z - \tilde Z \Vert \in 0 \pm Ce^{-(\, \cdot \,/2\sigma)^q/2c\eta_{\Vert \cdot \Vert}},
  \end{align*}
  where $c$ is the same numerical constant as in~\eqref{eq:conc_norm_ac_eta}. 

  Reciprocally~:
  \begin{align*}
      \Vert Z - \tilde Z \Vert \in 0 \pm Ce^{-(\, \cdot \,/\sigma)^q}&
      &\Longrightarrow&
      &Z \in \tilde Z \pm C e^{-(\, \cdot \, /\sigma)^q}.
  \end{align*}
\end{proposition}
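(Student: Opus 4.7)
The reverse implication is essentially immediate from the definition of linear concentration. For any continuous linear form $u : E \to \mathbb{R}$ with $\Vert u\Vert_* \leq 1$, we have $|u(Z) - u(\tilde Z)| = |u(Z - \tilde Z)| \leq \Vert Z - \tilde Z\Vert$, so the hypothesized tail bound on $\Vert Z - \tilde Z\Vert$ transfers verbatim to $u(Z) - u(\tilde Z)$, giving $u(Z) \in u(\tilde Z) \pm Ce^{-(\cdot/\sigma)^q}$. By Definition~\ref{def:deterministic equivalent} this is exactly $Z \in \tilde Z \pm Ce^{-(\cdot/\sigma)^q}$.

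For the forward implication, the natural starting point is Proposition~\ref{pro:tao}, equivalently the displayed bound~\eqref{eq:conc_norm_ac_eta}, which rewrites the linear $q$-exponential concentration of $Z$ as a concentration of $\Vert Z - \tilde Z\Vert$ with an inflated head parameter:
\[
  \mathbb{P}\bigl(\Vert Z - \tilde Z\Vert \geq t\bigr) \leq C\, e^{c\eta_{\Vert\cdot\Vert}}\, e^{-(t/2\sigma)^q}.
\]
The remaining task is to absorb the factor $e^{c\eta_{\Vert\cdot\Vert}}$ into the tail exponent, yielding the claimed head parameter $C$ at the cost of dividing the exponent by $2c\eta_{\Vert\cdot\Vert}$. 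This is morally the same trick as in Lemma~\ref{lem:head_parameter_to_tail}, but arranged so as to preserve the head parameter $C$ rather than replace it by $e$. I would handle it by a case split at the threshold $(t/2\sigma)^q = 2c\eta_{\Vert\cdot\Vert}$.

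When $(t/2\sigma)^q \leq 2c\eta_{\Vert\cdot\Vert}$, the trivial bound $\mathbb{P} \leq 1$ suffices: the ratio $(t/2\sigma)^q/(2c\eta_{\Vert\cdot\Vert})$ is then at most $1$, so by $C \geq e$ the claimed bound $Ce^{-(t/2\sigma)^q/(2c\eta_{\Vert\cdot\Vert})}$ is already at least $Ce^{-1} \geq 1$. When $(t/2\sigma)^q \geq 2c\eta_{\Vert\cdot\Vert}$, direct manipulation gives
\[
  (t/2\sigma)^q \Bigl(1 - \tfrac{1}{2c\eta_{\Vert\cdot\Vert}}\Bigr) \geq 2c\eta_{\Vert\cdot\Vert} - 1 \geq c\eta_{\Vert\cdot\Vert},
\]
using $c\eta_{\Vert\cdot\Vert}\geq 1$, which rearranges to $c\eta_{\Vert\cdot\Vert} - (t/2\sigma)^q \leq -(t/2\sigma)^q/(2c\eta_{\Vert\cdot\Vert})$; exponentiating then shows that the bound from~\eqref{eq:conc_norm_ac_eta} is dominated by the claimed one. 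Combining the two cases yields the forward implication.

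I expect no deeper obstacle: the structural content is entirely contained in Proposition~\ref{pro:tao}, and the rest is constant bookkeeping. The only mild subtleties are the standing assumption $C \geq e$ (used to make the trivial case work) and the implicit hypothesis $c\eta_{\Vert\cdot\Vert} \geq 1$ (used in the algebra of the second case), both of which are consistent with the conventions already adopted in Subsection~\ref{sse:exponential_conc}.
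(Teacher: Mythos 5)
Your proof is correct and follows essentially the same route as the paper: the reverse implication is immediate from $|u(Z-\tilde Z)|\leq \Vert Z-\tilde Z\Vert$ for unit-norm linear forms, and the forward implication is obtained from \eqref{eq:conc_norm_ac_eta} (i.e.\ Proposition~\ref{pro:tao}) followed by absorbing the factor $e^{c\eta_{\Vert\cdot\Vert}}$ into the tail exponent, which is exactly the paper's ``\eqref{eq:conc_norm_ac_eta} combined with Lemma~\ref{lem:head_parameter_to_tail}'' — you merely rework the absorption by hand so as to keep the head parameter $C$ instead of $e$, at the cost of the (harmless, and correctly flagged) assumption $c\eta_{\Vert\cdot\Vert}\geq 1$.
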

The second result of the proposition is trivial (and quite useless) and the first result is just a simple consequence of \eqref{eq:conc_norm_ac_eta} combined with Lemma~\ref{lem:head_parameter_to_tail}.
\begin{example}\label{exe:norm_degree}
    We can give some examples of norm degrees~:
    \begin{itemize}
        \item $\eta \left( \mathbb R^p, \Vert \cdot \Vert_\infty \right) = \log(p)$  \ \ (for $H = \{x \mapsto e_i^Tx, 1\leq i\leq p\}$)
        \item $\eta \left( \mathbb R^p, \Vert \cdot \Vert \right) = p$  \ \ (for $H = \{x \mapsto u^Tx, u \in \mathcal B_{\mathbb R^p}\}$)
        \item $\eta \left( \mathcal M_{p,n}, \Vert \cdot \Vert \right) = n+p$ \ \  (for $H = \{M \mapsto u^TMv, (u,v)\in \mathcal B_{\mathbb R^p} \times \mathcal B_{\mathbb R^n}\}$)
        \item $\eta \left( \mathcal M_{p,n}, \Vert \cdot \Vert_F \right) = np$   \ \ (for $H = \{M \mapsto \tr(AM), A \in \mathcal M_{n,p}, \|A\|_F\leq 1\}$).
    \end{itemize}
\end{example}
In the particular case of $q$-exponential concentrations, the norm degree allows us to bound the expectation of the norm of $Z-\tilde Z$ thanks to Proposition~\ref{pro:tao_conc_exp}.
\begin{corollary}\label{cor:tao}
  Given a random vector $Z \in E$, if we suppose that $Z \in \tilde Z \pm C e^{- (\, \cdot \, /\sigma)^q}$ and $q\geq 1$, we can bound~:
  \begin{align*}
    \mathbb{E}\left\Vert Z-\tilde Z\right\Vert \leq C' \sigma \eta_{\Vert \cdot \Vert}^{1/q},
  \end{align*}
  where $C'$ is a constant depending on $C$.
\end{corollary}
\begin{example}\label{exe:borne_esp_norm_vecteur_lin_conc}
Let $Z \in \mathbb{R}^p$ and $M \in \mathcal M_{p,n}$ be two random vectors. Then,
\begin{itemize}
  \item  if $Z\in \tilde Z \pm 2 e^{-t^2/2}$ in $(\mathbb{R}^p,\left\Vert \cdot\right\Vert)$~: $\mathbb{E}\left\Vert Z\right\Vert\leq\Vert \tilde  Z\Vert + C \sqrt{p}$
  \item if $M \in \tilde M \pm 2 e^{-t^2/2}$ in $(\mathcal M_{p,n}, \left\Vert \, \cdot \,\right\Vert)$~: 
  $\mathbb{E}\left\Vert M\right\Vert \leq \Vert \tilde M\Vert + C\sqrt{p+n},$
  \item if $M \in \tilde M \pm 2 e^{-t^2/2}$ in $(\mathcal M_{p,n}, \left\Vert \, \cdot \,\right\Vert_F)$~:
  $\mathbb{E}\left\Vert M\right\Vert \leq \Vert \tilde M\Vert + C\sqrt{pn}$.
\end{itemize}
\end{example}

Now that we can control the quantities $\Vert Z-\tilde Z \Vert$ when $\tilde Z$ is a deterministic equivalent of $Z$, it is tempting to extend Corollary~\ref{cor:conc_lineaire_pdt_independant_borne} to the concentration of the product of any linearly concentrated random vectors. The examples presented below are just here to give an idea of what could be obtained, they are not relevant in practice since the bounds are too loose. Unlike Lipschitz concentration as it will be presented in next subsection, linear concentration is not suited to study the concentration of the product of random vectors.

\begin{example}\label{exe:conc_lineaire_produit_odot}
   Let us note $\odot$ the product in $\mathbb{R}^p$ verifying for any $x=(x_1,\ldots,x_p)$ and $y=(y_1,\ldots,y_p)$, $x \odot y = (x_1 y_1, \ldots, x_p y_p)$. It gives an algebra structure to $\mathbb R^p$ where $\Vert \cdot \Vert_\infty$ and $\Vert \cdot \Vert_2$  are both algebra norms ($\Vert x \odot y \Vert_2 \leq \Vert x \odot y \Vert_1 = \sum_{i=1}^p \vert x_i y_i\vert \leq \Vert x \Vert_2 \Vert y \Vert_2 $ thanks to the Cauchy Schwarz inequality). Therefore we have for any vector $Z\in \tilde Z \pm 2 e^{- \,\cdot \,^2/2}$ in $(\mathbb{R}^p,\Vert \cdot \Vert_2)$~:
   \begin{itemize}
       \item $\frac{Z^{\odot 2}}{p}= \frac{Z \odot Z}{p} \in \frac{\tilde Z ^{\odot^2}}{p} \pm C e^{-c \cdot} +  C e^{-c (\sqrt{p} \,\cdot \,/\Vert \tilde Z\Vert)^2}$ in $(\mathbb{R}^p, \Vert \cdot \Vert_2)$ 
       \item $\frac{Z^{\odot 2}}{\log p}  \in \frac{\tilde Z ^{\odot^2}}{\log p}\pm Ce^{-c \cdot} +  C e^{-c (\sqrt{\log p}\, \cdot \,/\Vert \tilde Z\Vert_\infty)^2} $ in $(\mathbb{R}^p, \Vert \cdot \Vert_\infty)$,
   \end{itemize}
   where $C\geq e$ and $c>0$ are two numerical constants. 
\end{example}
\begin{example}[Concentration of the sample covariance]\label{exe:concentration_lineaire_covariance_empirique}
    Given a matrix $X \in \mathcal   M_{p,n}$, and three parameters $C,q \geq 1$ and $c >0$, if we suppose that $X \in \tilde X \pm 2 e^{-\, \cdot \, ^2/2}$ in $\left( \mathcal  M_{p,n}, \left\Vert   \cdot\right\Vert _F\right) $, then~:
    \begin{itemize}
      \item $\frac{XX^T}{ n^2 } \propto C e^{-c   \, \cdot \,/ \gamma} + C e^{-c ( n\,\cdot \,/\Vert \tilde X\Vert_F)^2}$  in $\left( \mathcal  M_{p,n}, \left\Vert   \cdot\right\Vert _F\right) $
      \item $\frac{ XX^T}{ n } \propto C e^{-c \, \cdot \,/ \bar \gamma} + C e^{-c ( n\,\cdot \,/\Vert \tilde X\Vert)^2}$  in $\left( \mathcal  M_{p,n}, \left\Vert   \cdot\right\Vert\right) $
      \item $\frac{ XX^T}{ \log n } \propto C \exp\left(- \frac{c \, \cdot}{1+\frac{\log p}{\log n}} \right)+ C e^{-c ( \sqrt{\log n}\,\cdot \,/\Vert \tilde X\Vert_\infty)^2}$  in $\left( \mathcal  M_{p,n}, \left\Vert   \cdot\right\Vert _\infty\right) $,
    \end{itemize}  
    where $\gamma=\frac{p}{n}$, $\bar \gamma= \gamma + 1 \geq \max(\gamma, 1)$ and $C\geq e$, $c>0$ are two numerical constants. 
\end{example}

As rich it could be the notion of linear concentration is insufficient when dealing with the resolvent of random matrices, starting with the resolvent of the sample covariance $Q = (XX^T/n + I_p)^{-1}$. It is possible to infer the concentration of the sample covariance  in $( \mathcal M_p, \left\Vert \cdot\right\Vert)$ from the concentration of $X$ as we saw in Example~\ref{exe:concentration_covariance_empirique}, we can even track the concentration of the resolvent since the inverse of a matrix of $\mathcal M_p$ can be written as a polynomial of degree $p$, but the observable diameter will then be of diverging order. To solve these issues, one needs a stronger notion of concentration to be introduced next : the \textit{Lipschitz} concentration.


\subsection{Lipschitz Concentration}

\subsubsection{Definition and fundamental examples}
Theorem~\ref{the:concentration_vecteur_spherique} given in the preamble provides us with the concentration of the Lipschitz and even uniformly continuous functionals of random vectors uniformly distributed on the sphere, that is to say, cases that go far beyond the linear case and that let us hope for interesting inference on the resolvent of random matrices. The class of Lipschitz concentrated vectors is a subclass of the class of linearly concentrated random vectors, so it satisfies more properties and is also far more stable. Indeed it supports any Lipschitz transformations but also concatenation (as depicted in Proposition~\ref{pro:concentration_(X,Y)_independant} for the linear concentration) and tensorial product under some assumptions (see Theorem~\ref{the:concentration_des_transformations_lipschitz_multilineairement} below).

As firstly evoked in Lemma~\ref{lem:conc_transfo_lipschitz}, the concentration of a random vector $Z$ can be expressed through the concentration of any random variable $f(Z) \in \mathbb{R}$ when $f$ is Lipschitz. As before, this approach of concentration has the asset of bringing back the concentration on any normed vector space to a mere concentration on $\mathbb{R}$, that we deeply studied at the beginning of the section. 
The following definition allows us to generalize the notion of concentration to any metric space as presented in \cite{Led01}.
\begin{definition}[Lipschitz Concentration of a random vector]\label{def:conc_lipschitzienne_vect_al}
  Given a concentration function $\alpha$, a random vector $Z$ is said to be Lipschitz \textit{$\alpha$-concentrated} iff one of the following three assertions is verified for any $1$-Lipschitz function $f : E \rightarrow \mathbb{R}$~:
  \begin{itemize}
     \item $f(Z) \propto \alpha, $ and we will note in that case $ Z \propto \alpha$\\
     \item $f(Z) \in m_f \pm \alpha, $ and we will note in that case $ Z \overset{m}{\propto} \alpha$\\
     \item $f(Z) \in \mathbb{E}[f(Z)] \pm \alpha, $ and we will note in that case $ Z \overset{\mathbb{E}}{\propto} \alpha$,
   \end{itemize} 
   where $m_f$ is a median of $f(Z)$.
\end{definition}
The Lipschitz concentration is the strongest notion of concentration we will present and it is the one that received most of the interest from the scientific community; therefore, we allow ourselves to omit the term ``Lipschitz'' when mentioning this kind of concentration.
In our paper, such Lipschitz concentration of random vectors can only be obtained through Theorem~\ref{the:concentration_vecteur_spherique} or Theorem~\ref{the:concentration_vecteur_gaaussien} below, and they are both set on the normed vector space $(\mathbb{R}^p, \left\Vert \cdot\right\Vert)$ (or the analogous one $(\mathcal M_{p,n}, \left\Vert   \cdot\right\Vert_F)$. We will thus allow ourselves to omit the precision about the normed vector space on which is made the concentration when we are on $(\mathbb{R}^p, \left\Vert \cdot\right\Vert)$, on $(\mathcal M_{p,n}, \left\Vert \cdot\right\Vert_F)$ or when we are on the formal normed vector space $(E, \left\Vert \cdot\right\Vert)$.


\begin{remark}\label{rem:justification_definition}
  We know from Lemma~\ref{lem:conc_transfo_lipschitz} that Definition~\ref{def:conc_lipschitzienne_vect_al} is compatible with Definition~\ref{def:conc_variable_aleatoire} when $E=\mathbb{R}$, so there are no conflicts between the different uses of the notation $\propto$ for random vectors and random variables.
\end{remark}
\begin{remark}\label{rem:lien_entre_diff_types_de_concentrations_lipschitz}
    Given a random vector $Z \in E$ and a concentration function $\alpha$, we know thanks to Lemma~\ref{pro:conc_med}~:
    \begin{align*}
        Z \propto \alpha
        & \Longrightarrow&
        &Z \overset{m}{\propto} 2\alpha&
        &\Longrightarrow&
        &Z \propto 4 \alpha(\cdot/2).
    \end{align*}
    Also, by definition of the linear concentration~:
    \begin{align*}
        Z \overset{\mathbb{E}}{\propto} \alpha&
        &\Longrightarrow&
        &Z \in \mathbb E Z \pm \alpha,
    \end{align*}
    and we can link this concentration to the other two thanks to Proposition~\ref{pro:mediane_pivot} and Corollary~\ref{cor:moyenne_pivot} in the case of a $q$-exponential concentration~:
    \begin{align*}
        Z \overset{m}{\propto} C e^{-(\cdot/\sigma)^q}&
        &\Longrightarrow&
        &Z \overset{\mathbb{E}}{\propto} e^{C^q/q}e^{-(\cdot/2\sigma)^q}&
        &\Longrightarrow&
        &Z \overset{m}{\propto} 2e^{C^q/q}e^{-(\cdot/4\sigma)^q},
    \end{align*}
    for any $q,\sigma>0$ and $C \geq e$.
\end{remark}


We thought useful to present the theorem of concentration of Gaussian vectors in our new formalism, to add this setting to the historical example of the concentration on the sphere.
\begin{theorem}\label{the:concentration_vecteur_gaaussien}
A canonical Gaussian vector $Z$ is normally concentrated independently of its dimension. For any $p \in \mathbb{N}$~:
  \begin{align*}
    Z \sim \mathcal N(0,I_p)&
    &\Longrightarrow& 
    &Z \overset{m}{\propto} 2 e^{-\, \cdot \,^2/2} \  \text{ and } \ Z \overset{\mathbb{E}}{\propto} 2 e^{-\, \cdot \,^2/2},
  \end{align*}
  where $\mathcal N(0,I_p)$ is the distribution of the \emph{canonical Gaussian vectors of dimension $p$} that have independent zero mean and unit variance Gaussian entries.
\end{theorem}
A structural proof with a geometrical approach from the Poincar\'{e} lemma tracks the concentration of Gaussian vectors from the concentration of the uniform distribution on the sphere. A more functional approach based on the log-Sobolev inequalities can be found in \cite{Led01}. An alternative proof is found in \cite{Tao11}, originally proposed by Maurey and Pisier, which does not provide the optimal constants but is more efficient and simple.

\begin{remark}\label{rem:observable_diameter_gaussienne}
  The independence of the concentration of a random vector to its dimension can be interpreted as a conservation of its \emph{observable diameter} through dimensionality when the \emph{actual diameter} increases. This second diameter, that can be referred to as the \emph{metric diameter}, can be naturally defined as the expectation of the distance between two independent random vectors drawn from the same distribution. Theorem~\ref{the:concentration_vecteur_gaaussien} states that the observable diameter of a Gaussian distribution in $\mathbb{R}^p$ is of order $1$, that is to say $\frac{1}{\sqrt{p}}$ times less than the diameter (that is of order $\sqrt{p}$). The same result holds for the uniform distribution on the sphere of $\mathbb{R}^p$ and for any distribution that would be called for that reason \emph{concentrated}.
\end{remark}

Theorem~\ref{the:concentration_vecteur_gaaussien} can be generalized to any random vector $X \in \mathbb R^p$ with density $d\mathbb P_X(x) = e^{-U(x)}d\lambda_p(x)$ where $U : \mathbb R^p \rightarrow \mathbb R$ is a positive functional with the hessian bounded inferiorly by, say $cI_p$, $c>0$. In that case, $X \propto 2e^{-c \cdot^2/2}$, (see \cite[Theorem 2.7]{Led01})   

Let us add for the general picture a result from Talagrand \cite[Theorem 2.4]{TAL94} (or \cite[Proposition 4.18]{Led01}) concerning the concentration of the exponential distribution, that we shall denote $\nu^p$, which is the distribution of random vectors of $\mathbb R^p$ with independent entries having density $\frac 1 2 e^{-\vert \cdot \vert}d\lambda_1$. 

\begin{theorem}\label{the:conc_distribution_exponentielle}\cite[Proposition 4.18]{Led01}
There exist two numerical constants $C\geq 1$ and $c>0$, such that for any $p\in \mathbb{N}$~:
  \begin{align*}
      Z \sim \nu^p&
  &\Longrightarrow&
  &Z \propto C e^{-c\, \cdot} .
  \end{align*}
\end{theorem}

As an example of $q$-exponential concentrations when $q\in [1,2]$, one may consider vectors uniformly distributed on the balls of $\mathbb R^p$, that is $\mathcal B_{\Vert\cdot \Vert_q} = \{x \in \mathbb R^p \ \vert \ \Vert x \Vert_q = (\sum x_i^q)^{1/q}\leq 1\}$; let us note $\beta_q^p$ this distribution.

\begin{theorem}\label{the: concentration_dist_unif_boule} \cite[Proposition 4.21]{Led01}
Given $q\in[1,2]$, there exist two numerical constants $C\geq 1$ and $c>0$, such that for any $p\in \mathbb{N}$~:
  \begin{align*}
      Z \sim \beta_q^p&
  &\Longrightarrow&
  &Z \propto C e^{-cp\, \cdot^q} .
  \end{align*}
\end{theorem}


Definition~\ref{def:conc_lipschitzienne_vect_al} only presents the concentration of Lipschitz functionals of $Z$ (if $f$ is $\lambda$-Lipschitz then $f/\lambda$ is $1$-Lipschitz and the product with a constant is easy to manage, we find $f(Z)\propto \alpha(\, \cdot \, /\lambda)$), but it is possible to show the concentration of any uniformly continuous functional of $Z$~:

\begin{proposition}[Concentration of the uniformly continuous transformations]\label{pro:conc_fonctionnelles_holderienne}
  Given two normed vector spaces $E$ and $G$, a random vector $Z\in E$, a concentration function $\alpha$, a modulus of continuity $\omega$, a function $\phi : E \rightarrow G$, $\omega$-continuous, we have the implication~:
  \begin{align*}
    Z \overset{m}{\propto} \alpha &
    &\Longrightarrow&
    &\phi(Z) \overset{m}{\propto} \alpha(\omega^{-1}(\, \cdot \, )).
  \end{align*}
\end{proposition}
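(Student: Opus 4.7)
The plan is to unpack the definition $\phi(Z) \overset{m}{\propto} \alpha(\omega^{-1}(\cdot))$: fix an arbitrary $1$-Lipschitz functional $g : G \to \mathbb{R}$ and show that $g(\phi(Z)) \in m_{g(\phi(Z))} \pm \alpha(\omega^{-1}(\cdot))$. The first step, which is immediate, is to notice that $g \circ \phi : E \to \mathbb{R}$ inherits the modulus $\omega$ from $\phi$: for any $x,y\in E$,
\begin{align*}
\vert g(\phi(x)) - g(\phi(y))\vert \leq \Vert \phi(x) - \phi(y)\Vert_G \leq \omega(\Vert x - y\Vert_E).
\end{align*}
So the proposition reduces to the following sublemma: if $Z \overset{m}{\propto} \alpha$ and $h : E \to \mathbb{R}$ is $\omega$-continuous, then $h(Z) \in m_{h(Z)} \pm \alpha(\omega^{-1}(\cdot))$ (up to a harmless factor $2$ coming from summing the two tails, which may be absorbed in $\alpha$).

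The key intermediate step is an isoperimetric consequence of $Z \overset{m}{\propto} \alpha$: for any Borel set $A \subset E$ with $\mathbb{P}(Z \in A) \geq 1/2$ and any $t > 0$,
\begin{align*}
\mathbb{P}(Z \notin A_t) \leq \alpha(t).
\end{align*}
This follows by applying the definition to the $1$-Lipschitz functional $f(y) = \dist(y, A)$: since $\mathbb{P}(f(Z) = 0) \geq 1/2$, we have $m_f = 0$, hence $\mathbb{P}(f(Z) \geq t) = \mathbb{P}(\vert f(Z) - m_f\vert \geq t) \leq \alpha(t)$.

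I would then apply this to the two sets $A^- = \{x\in E : h(x) \leq m_{h(Z)}\}$ and $A^+ = \{x \in E : h(x) \geq m_{h(Z)}\}$, both of which carry mass at least $1/2$ by definition of a median. By $\omega$-continuity, for any $y \in A^-_{\omega^{-1}(t)}$ there is $x \in A^-$ with $\Vert y - x\Vert \leq \omega^{-1}(t)$, hence
\begin{align*}
h(y) \leq h(x) + \omega(\Vert y - x\Vert) \leq m_{h(Z)} + t,
\end{align*}
so $\{h(Z) > m_{h(Z)} + t\} \subset \{Z \notin A^-_{\omega^{-1}(t)}\}$ and its probability is bounded by $\alpha(\omega^{-1}(t))$. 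The symmetric argument applied to $A^+$ controls the lower tail, and combining the two gives the desired concentration.

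The only real subtlety is the step $\omega(\Vert y - x\Vert) \leq t$: one must use the definition of the pseudo-inverse (exactly as in Lemma~\ref{lem:conc_transfo_lipschitz}) to ensure that $\Vert y - x\Vert \leq \omega^{-1}(t)$ implies $\omega(\Vert y - x\Vert) \leq t$ (a priori one only has $\omega(\omega^{-1}(t)) \leq t$ by left-continuity/definition of the infimum). This is routine but the main point where one must be careful, and it is the reason for the slight loss suggested by Remark~\ref{rem:incomp_def2_lipsch}; the extra factor $2$ on the tails is expected to be absorbed in $\alpha$, consistent with the tightening we already saw, e.g., in Proposition~\ref{pro:conc_med}.
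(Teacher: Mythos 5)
Your proof follows the same basic mechanism as the paper's (enlarge the two median half-level-sets $A^{\pm}$ of $h=g\circ\phi$, convert the enlargement radius through the pseudo-inverse of $\omega$), and each individual step is correct; the issue is that it establishes a quantitatively weaker conclusion than the one stated. Summing the two one-sided tail bounds gives $\phi(Z)\overset{m}{\propto}2\,\alpha(\omega^{-1}(\cdot))$, and the factor $2$ cannot literally be ``absorbed in $\alpha$'': $2\alpha$ is a strictly larger concentration function, while the proposition asserts concentration with $\alpha(\omega^{-1}(\cdot))$ itself and is used as such downstream. The paper avoids this loss by invoking the hypothesis $Z\overset{m}{\propto}\alpha$ only \emph{once}, against the single functional $\tilde f(z)=d(z,A_-)-d(z,A_+)$. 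Because $A_-\cup A_+=E$, at every point at least one of the two distances vanishes, so $\tilde f$ is $1$-Lipschitz (this uses the intermediate value theorem on segments joining a point of $A_-\setminus A_+$ to a point of $A_+\setminus A_-$, hence the continuity of $h$, which you already have), $0$ is a median of $\tilde f(Z)$, and $\vert h(z)-m_h\vert\leq\omega(\vert\tilde f(z)\vert)$ for all $z$; both tails are then controlled simultaneously by $\mathbb{P}(\vert\tilde f(Z)\vert\geq\omega^{-1}(t))\leq\alpha(\omega^{-1}(t))$. If you wish to keep your two-set formulation, the repair is exactly this: replace your two separate applications of the isoperimetric inequality (to $d(\cdot,A^-)$ and to $d(\cdot,A^+)$) by the single two-sided application to their difference. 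In the $q$-exponential regime your factor $2$ merely doubles the head parameter and is harmless for every use the paper makes of this proposition, but as a proof of the constant-free statement it falls short by that factor.
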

\begin{proof}
  Let us introduce a $1$-Lipschitz function $g : G \rightarrow \mathbb R$, we note $f = g \circ \phi$. We introduce $m_f$, a median of $f(Z)$ and the sets $A_-=\{z,f(z)\leq m_f\}$ and $A_+=\{z,f(z)\geq m_f\}$ (they verify by definition $\mathbb{P}(Z\in A_+),\mathbb{P}(Z\in A_-)\geq \frac{1}{2}$). 
  The image through $f$ of the boundary $\partial A_+ = \partial A_- = A_+ \cap A_-$ is equal to $\left\{m_f\right\}$. Since the boundary is closed, for any $z \in A_+$, there exists a sequence $z_n\in \partial A_-$ such that $\left\Vert z-z_n\right\Vert \underset{n \rightarrow \infty}{\longrightarrow} d(z, \partial A_-)=d(z, A_-)$, then since $f$ is uniformly continuous like $\phi$, we can bound~:
  \begin{align*}
     \left\vert f(z)-m_f \right\vert 
     &= \lim_{n \rightarrow \infty}\left\vert f(z)-f(z_n)\right\vert \leq  \lim_{n \rightarrow \infty} \omega (\left\Vert z -z_n\right\Vert ) \\
     &\leq\omega (d(z,A_-)) =\omega(\left\vert d(z,A_-)-d(z,A_+)\right\vert) ,
  \end{align*}
  and the same inequality is also verified for any $z\in A_-$. This entails~:
  \begin{align*}
    \mathbb{P}\left( \left\vert f(Z) -m_f\right\vert\geq  t\right) 
    &\leq  \mathbb{P}\left( \left\vert d(Z,A_-) -d(Z,A_+)\right\vert \geq  \omega^{-1}\left(t\right)\right),
  \end{align*}
  and we can then conclude since $\tilde f : z \mapsto d(z,A_-) - d(z, A_+)$ is $1$-Lipschitz and $\tilde f(Z)$ admits $0$ as a median. 
\end{proof}

  \begin{remark}\label{rem:concentration_transformation_hold_gaussien}
    Theorems~\ref{the:concentration_vecteur_spherique}, \ref{the:concentration_vecteur_gaaussien}, \ref{the:conc_distribution_exponentielle}~and~\ref{the: concentration_dist_unif_boule} combined with Proposition~\ref{pro:conc_fonctionnelles_holderienne} give us immediately a $q$-exponential concentration of all the random vectors $F(Z)$ where $F : \mathbb{R}^p \rightarrow \mathbb{R}^d$ is uniformly continuous and $Z \sim \sigma_p$, $Z\sim\mathcal N(0,I_p)$, $Z\sim\nu^p$, $Z\sim \beta_q^p$. This describes a wide range of random vectors that remarkably do not need to present independent entries.
\end{remark}



\subsubsection{Concentration of the concatenation}
We want to answer the question: if $X$ and $Y$ are concentrated, is $(X,Y)$ also concentrated with a comparable observable diameter ? In general it is false, as one can see in Example~\ref{ex:contre_ex_conc_(X,Y)}. 
We first present a way to infer the concentration of $(X,Y)$ when $X$ and $Y$ are independent and we don't make further assumptions on their concentration functions (the particular case of the $q$-exponential concentration is studied in Appendix~\ref{ssa:conc_conc_conv}).

To get interesting inferences from the concentration of the measure theory, one has to base its initial inequalities on a theorem of the kind of Theorem~\ref{the:concentration_vecteur_spherique}, \ref{the:concentration_vecteur_gaaussien} or~\ref{the:conc_distribution_exponentielle} providing distributions with an observable diameter far smaller than the metric parameter (see Remark~\ref{rem:observable_diameter_gaussienne} for precisions). The following proposition is just a tool that can be some help when one wants to study a \textit{limited} (with regard to the dimension) concatenation of concentrated vectors.
In what follows $E$ and $F$ are two normed vector spaces respectively equipped with the norms $\left\Vert \cdot\right\Vert_E$ and $\left\Vert \cdot\right\Vert_F$. We again note $\left\Vert \cdot\right\Vert_{\ell_1}$ the norm of $E\times F$ defined as $\left\Vert (x,y)\right\Vert_{\ell_1} = \left\Vert x\right\Vert_E + \left\Vert y\right\Vert_F$.
\begin{proposition}\label{pro:concentration_(X,Y)_independant}
  Given two independent random vectors $X \in E$ and $Y\in F$, if we suppose that $X$ and $Y$ are concentrated then $(X,Y)$ is also concentrated. Given two concentration functions $\alpha,\beta : \mathbb{R}_+ \mapsto \mathbb R_+$, and any $\lambda \in (0,1)$~:
  \begin{align*}
    \left\{\begin{aligned}
      &X \propto  \alpha \\
      &Y \propto  \beta
    \end{aligned}\right.&
    &\Longrightarrow&
    &(X,Y) \propto  \alpha \left( \lambda \, \cdot \, \right) + \beta \left( (1- \lambda) \, \cdot \, \right),  
  \end{align*}
  If we suppose that $\alpha$ is invertible and piecewise differentiable, we also have the implication~:
  \begin{align*}
    \left\{\begin{aligned}
      &X \propto \alpha \\
      &Y \propto \beta
    \end{aligned}\right.&
    &\Longrightarrow&
    &(X,Y) \propto  \alpha  + \beta -\alpha' * \beta  ,  
  \end{align*}
  where $*$ is the convolution operator ($f*g(t) = \int_{\mathbb R} f(u)g(t-u) du$). Since $\alpha$ and $\beta$ are only defined on $\mathbb R_+$, we implicitly compute the convolution with a null continuation of $\alpha$ and $\beta$ on $\mathbb R_-$. The implications are also true if we work with concentrations around the medians ($X \overset{m}{\propto} \alpha$) or around the means ($X \overset{\mathbb{E}}{\propto} \alpha$).
\end{proposition}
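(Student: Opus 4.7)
Fix an arbitrary $1$-Lipschitz function $f : E \times F \rightarrow \mathbb R$ with respect to the norm $\Vert \cdot \Vert_{\ell_1}$. The crucial geometric observation is that, by the very definition of $\Vert (x,y) \Vert_{\ell_1} = \Vert x \Vert_E + \Vert y \Vert_F$, the function $f$ is $1$-Lipschitz \emph{in each coordinate separately}: for any fixed $y \in F$, $x \mapsto f(x,y)$ is $1$-Lipschitz on $E$, and symmetrically for the other slot. This is what will let me feed $f$ into the Lipschitz concentration of $X$ and of $Y$ one variable at a time.

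\textbf{Linear bound.} Let $(X',Y')$ be an independent copy of $(X,Y)$, chosen so that $X,X',Y,Y'$ are jointly independent (possible since $X \perp Y$). The triangle inequality yields
\begin{align*}
  |f(X,Y) - f(X',Y')| \leq \underbrace{|f(X,Y) - f(X',Y)|}_{=: U} + \underbrace{|f(X',Y) - f(X',Y')|}_{=: V}.
\end{align*}
Conditioning on $Y$, the map $x \mapsto f(x,Y)$ is (almost surely in $Y$) $1$-Lipschitz, while $X$ and $X'$ are two independent copies independent of $Y$, so $X \propto \alpha$ gives $\mathbb P(U \geq s \mid Y) \leq \alpha(s)$ uniformly in $Y$, hence $\mathbb P(U \geq s) \leq \alpha(s)$. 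Conditioning on $X'$ symmetrically gives $\mathbb P(V \geq s) \leq \beta(s)$. Splitting the threshold as $t = \lambda t + (1-\lambda) t$ and applying a union bound produces the first implication.

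\textbf{Convolution bound.} With the same $U$ and $V$, I now condition on the pair $(X',Y)$ rather than on one coordinate at a time. Since $X$ and $Y'$ are each independent of $(X',Y)$ and of one another, the conditional laws of $U$ and $V$ given $(X',Y)$ are independent, and their conditional survival functions remain dominated by $\alpha$ and $\beta$ respectively. A straightforward stochastic-dominance/coupling argument then gives
\begin{align*}
  \mathbb P(U + V \geq t \mid X',Y) \leq \mathbb P(U^\star + V^\star \geq t),
\end{align*}
where $U^\star,V^\star$ are genuinely independent with survival functions exactly $\alpha$ and $\beta$. Writing the density of $U^\star$ as $-\alpha'$ and integrating,
\begin{align*}
  \mathbb P(U^\star + V^\star \geq t) = \alpha(t) + \int_0^t \beta(t-u)\,(-\alpha'(u))\,du = \alpha(t) - (\alpha' * \beta)(t),
\end{align*}
which is in turn bounded by $\alpha(t) + \beta(t) - (\alpha'*\beta)(t)$ since $\beta \geq 0$. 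Taking expectation over $(X',Y)$ and using the triangle-inequality estimate completes the second implication.

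\textbf{Medians and means.} The same two-step template handles the other two notions. For $\overset{\mathbb E}{\propto}$, set $g(y) = \mathbb E[f(X,y)]$; this is $1$-Lipschitz by Jensen, so $g(Y) \in \mathbb E[f(X,Y)] \pm \beta$, while the conditional statement $f(X,y) \in g(y) \pm \alpha$ transfers to $f(X,Y) \in g(Y) \pm \alpha$ after integrating over $Y$, and the triangle inequality combined with the same threshold split closes the argument. For $\overset{m}{\propto}$, one replaces $g$ by a measurable selection of conditional medians $m(y)$ of $f(X,y)$, which is classically $1$-Lipschitz in $y$, and argues analogously.

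\textbf{Main obstacle.} The only non-routine point is the convolution bound: the variables $U$ and $V$ share the randomness $(X',Y)$ and so are \emph{a priori} dependent. The right conditioning, on the shared pair $(X',Y)$, is what makes them conditionally independent with the right marginals, after which stochastic dominance by a fully independent pair $(U^\star,V^\star)$ and the elementary convolution computation give the stated bound. The rest is standard Lipschitz-in-each-variable bookkeeping.
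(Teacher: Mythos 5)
Your first implication is correct and follows the standard route (the paper simply cites \cite[Proposition~1.11]{Led01} for it): the decomposition $|f(X,Y)-f(X',Y')|\leq U+V$ with $U=|f(X,Y)-f(X',Y)|$, $V=|f(X',Y)-f(X',Y')|$, conditioning on $Y$ (resp.\ $X'$) and a union bound at $\lambda t$ and $(1-\lambda)t$ is exactly what is needed, since $\mathbb P(U\geq s\mid Y)$ compares \emph{two independent copies} $X,X'$ through the $1$-Lipschitz map $f(\cdot,Y)$.

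The convolution bound, however, has a genuine gap. You claim that, conditionally on $(X',Y)=(x',y)$, the survival functions of $U$ and $V$ "remain dominated by $\alpha$ and $\beta$". But conditionally on $(X',Y)=(x',y)$, $U=|f(X,y)-f(x',y)|$ is the deviation of $f(X,y)$ from the \emph{fixed number} $f(x',y)$, not from an independent copy. The hypothesis $X\propto\alpha$ (Definition~\ref{def:conc_variable_aleatoire}) only controls $|g(X)-g(X')|$ for an independent copy $X'$; it does not give $\mathbb P(|g(X)-c|\geq s)\leq\alpha(s)$ for an arbitrary constant $c$ (for an atypical realization $x'$ this probability can be close to $1$ for $s$ of the order of the metric diameter). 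So the pointwise stochastic dominance $\mathbb P(U+V\geq t\mid X',Y)\leq\mathbb P(U^\star+V^\star\geq t)$ fails; only the \emph{averaged} bounds $\mathbb E_{X'}[\mathbb P(U\geq s\mid X',Y)]\leq\alpha(s)$ hold, and these do not commute with the convolution. The structural obstruction is that any conditioning making $U$ and $V$ conditionally independent destroys the pointwise domination, and vice versa. The paper's proof avoids this by working with the mean pivot: writing $|f(X,Y)-\mathbb Ef|\leq A+B$ with $A=|f(X,Y)-\mathbb E[f\mid Y]|$ and $B=|\mathbb E[f\mid Y]-\mathbb Ef|$, the term $B$ is $\sigma(Y)$-measurable (hence deterministic given $Y$) while $A$ has conditional survival bounded by $\alpha$ \emph{uniformly in} $Y$, because the pivot $\mathbb E[f\mid Y=y]$ is precisely the mean of the conditional distribution. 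Then $\mathbb P(A+B\geq t)\leq\mathbb E[\alpha(t-B)]$ and the layer-cake computation with $\alpha^{-1}$ produce the term $-\alpha'*\beta$. You in fact assemble exactly these ingredients ($g(y)=\mathbb E[f(X,y)]$ is $1$-Lipschitz, $f(X,Y)\in g(Y)\pm\alpha$ conditionally) in your last paragraph, but you only use them for the union-bound version; running the convolution computation there, in the mean (or median) setting, is the correct fix.
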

In the second result, be careful that, since the concentration functions are decreasing, $\alpha'$ is a negative function.
\begin{proof}
  For the proof of the first implication, refer to \cite[Proposition 1.11]{Led01}. Let us consider a $1$-Lipschitz function $f : E \times F \rightarrow \mathbb{R}$. We will work with the concentration around the means since it is easier and we will note for simplicity $\mathbb{E}f = \mathbb{E}\left[f(X,Y)\right]$ and $\mathbb{E}\left[f | Y\right] = \mathbb{E}\left[f(X,Y) \ | \ Y\right]$ ; we can bound~:
  \begin{align*}
    \mathbb{P}\left(\left\vert f(X,Y) - \mathbb{E}f\right\vert\geq t\right)
    &\leq \mathbb{E}\left[\mathbb{P}\left(\left\vert f(X,Y) - \mathbb{E}\left[f| Y\right]\right\vert\geq t - \left\vert \mathbb{E}f - \mathbb{E}\left[f | Y\right]\right\vert\ | \ Y\right)\right]\\
    &\leq \mathbb{E}\left[\alpha\left(t-\left\vert \mathbb{E}f - \mathbb{E}\left[f | Y\right]\right\vert\right)\right] \, + \,  \mathbb{P}\left(\left\vert \mathbb{E}f - \mathbb{E}\left[f | Y\right]\right\vert\geq t\right) \\
    & \leq \int_0^1 \mathbb{P}\left(\alpha \left(t - \left\vert \mathbb{E}f - \mathbb{E}\left[f | Y\right]\right\vert\right) \geq u\right) du \, + \beta(t)\\
    & \leq \int_0^1 \mathbb{P}\left(t - \left\vert \mathbb{E}f - \mathbb{E}\left[f | Y\right] \right\vert\leq \alpha^{-1}(u)\right) du \, + \beta(t)\\
    &\leq \int_{\alpha(t)}^1 \beta\left(t - \alpha^{-1}(u)\right) du \, + \alpha(t) + \beta(t) \\
    &=\int_{t}^0 \alpha'(u)\beta\left(t - u \right) du \, + \alpha(t) + \beta(t).
  \end{align*}
\end{proof}
\begin{remark}\label{rem:diametre_observable_generalise}
  The two results of Proposition~\ref{pro:concentration_(X,Y)_independant} can be compared if we consider a generalization of the notion of observable diameters, as defined for the exponential concentration before Lemma~\ref{lem:comportement_pivot_equivalent_deterministe}, and that is slightly different from the observable diameter introduced by Gromov in \cite[Chapter~3.1/2]{gro99}. Still, it is of the same order when the dimension is large. Given a concentration function $\alpha$, we note $\mathcal R_\alpha=\int_0^\infty \alpha,$ and for any random vector $X$ the observable diameter $\mathcal R_X$ is defined as $\mathcal R_X= \inf\{\mathcal R_\alpha \ |\ X \propto \alpha\}$. Our definition comes from the fact that if, say, $X \propto \alpha$, then for any $1$-Lipschitz function $f$ and any independent copy $X'$~:
  \begin{align*}
    \mathbb{E}\left[\left\vert f(X) - f(X')\right\vert\right] = \int_0^\infty \mathbb{P}\left(\left\vert f(X) - f(X')\right\vert\geq t\right) dt \leq \int_0^\infty \alpha = \mathcal R_\alpha.
  \end{align*}
  With the first result given by Proposition~\ref{pro:concentration_(X,Y)_independant}, we find~:
  \begin{align*}
    \mathcal R_{(X,Y)} \leq  \frac{\mathcal R_X}{1-\lambda} + \frac{\mathcal R_Y}{\lambda},
  \end{align*}
  and with the second result~:
  \begin{align*}
    \mathcal R_{(X,Y)} \leq \mathcal R_X + 2 \mathcal R_Y,
  \end{align*}
  since for any differentiable concentration functions $\alpha,\beta$~: $$\int_0^\infty \alpha' * \beta = \int_{-\infty}^{+\infty}\alpha'*\beta = \int_{-\infty}^{+\infty}\alpha' \int_{-\infty}^{+\infty}\beta = - \alpha(0) \mathcal R_\beta.$$
  The second inequality is clearly better than the first one, however we are far from the stability of the observable diameter that we find in Theorems~\ref{the:concentration_vecteur_spherique} and~\ref{the:concentration_vecteur_gaaussien}.
\end{remark}
One can assert easily that the concentration could be generalized to non-independent random vectors since we know for instance that $(X,f(X))$ is concentrated as a $2$-Lipschitz transformation of $X$ when $f : E \rightarrow E$ is $1$-Lipschitz. However, there exists a lot of examples where $(X,Y)$ is far from being concentrated despite $X$ and $Y$ being concentrated.

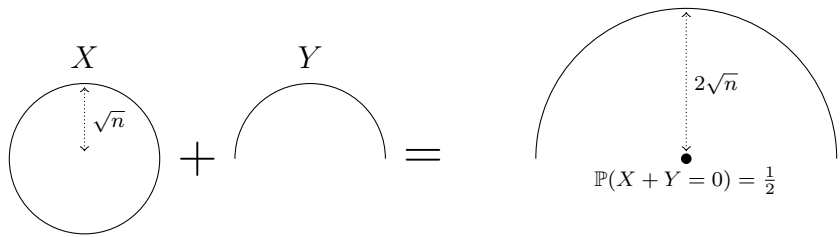
\begin{figure}[!ht]
\begin{center}

\begin{tikzpicture}
\draw (0,1) circle (1cm);
\draw[densely dotted,<->] (0,1.1) -- (0,1.95) ;
\draw (0,1.5) node[anchor = west] {$\sqrt{n}$};
\draw (0,2.1) node[anchor=south] {\large $X$};
\draw (1.5,1) node {\huge $+$};
\draw (4,1) arc (0:180:1cm);
\draw (4.5,1) node {\huge $=$};
\draw (3,2.1) node[anchor=south] {\large$Y$};
\draw (10,1) arc (0:180:2cm);
\draw[densely dotted,<->] (8,1.1) -- (8,2.95) ;
\draw (8,2) node[anchor = west] { $2\sqrt{n}$};
\fill (8,1) circle[radius=2pt] node[anchor=north]{ $\mathbb P(X+Y=0) = \frac{1}{2}$};

\end{tikzpicture}
\end{center}
\caption{The sum of two concentrated random vectors can be non concentrated}
\label{fig:somme_non_conc}
\end{figure}
\begin{example}\label{ex:contre_ex_conc_(X,Y)}
  Given $p\geq 0$, let us consider a random vector $Y\sim \sigma_p$. We define the random vector $X$ as being equal to $Y$ on $\sqrt{p} \, \mathbb{S}^p_+ = \sqrt{p} \, \mathbb{S}^p \cap (\mathbb R_+ \times \mathbb R^p) $ and equal to $-Y$ on $\sqrt{p} \, \mathbb{S}^p_- = \sqrt{p} \, \mathbb{S}^p \cap (\mathbb R_- \times \mathbb R^p)$. Note that $X\sim \sigma_p \sim 2 \un_{\sqrt{p}\mathbbm{S}^p_+}\sigma_p$  and $Y$ are both $2 e^{- \, \cdot \, ^2/2}$-concentrated (see Theorem~\ref{the:concentration_vecteur_spherique}) and therefore $\mathcal R_X \leq R_Y \leq \mathcal R_Y \leq \int_0^\infty 2 e^{-t^2/2} dt= \sqrt{2\pi}$. As we see on the schematic Figure~\ref{fig:somme_non_conc}, the distribution of $X+Y$ is completely different from the distribution of $X$ and $Y$ since it is discontinuous. If one looks at the variations of the random variable $\left\Vert X +Y\right\Vert$ that is a $1$-Lipschitz functional of $(X,Y)$, one notes that $0$ is a median of $\left\Vert X + Y\right\Vert$ and we have~:
  \begin{align*}
    \mathbb{P} \left(\left\Vert X+Y\right\Vert \geq t \right) =
    \left\{\begin{aligned}
      & 1 \text{ if } t=0 \\
      & \frac{1}{2} \text{ if } t \in \left(0,\sqrt{2p}\right]\\
      & 0\text{ if } t >\sqrt{2p}.
    \end{aligned} \right.
  \end{align*} 
  Therefore, with the notation of the observable diameter we introduced in Remark~\ref{rem:diametre_observable_generalise}, we see that~:
  \begin{align*}
    \mathcal R_{(X,Y)} \geq \mathcal R_{\left\Vert (
    X+Y\right\Vert} = \int_0^\infty  \mathbb{P} \left(\left\Vert X + Y\right\Vert \geq t \right) dt = \sqrt{\frac{p}{2}} > 4 \sqrt{2\pi} \geq 2(\mathcal R_X + \mathcal R_Y)  .
  \end{align*}
  For $p$ sufficiently large, it contradicts the inequalities given in Remark~\ref{rem:diametre_observable_generalise} and we see in particular that the random vector $(X,Y)$ has an observable diameter of the same order as the metric diameter which is of order $\sqrt{p}$. The vector $(X,Y)$ is not concentrated in the sense given by Remark~\ref{rem:observable_diameter_gaussienne}.
\end{example}

\subsubsection{Concentration of the product}
 We are now going to look at the concentration of a tensorial product of concentrated vectors (if $E$ is an algebra, that will imply in particular the concentration of a product of vectors). 
Basically, the concentration of products will be shown with two complementary arguments:
\begin{itemize}
  \item products are Lipschitz on balls,
  \item there are few drawings of a concentrated random vector outside of a ball of sufficient size.
\end{itemize}

To present a picture as general as possible, we add in the hypotheses the existence of supplementary norm (or a seminorm) $\|\cdot\|'$ smaller than the norm $\|\cdot\|$ where the concentration happens, allowing us, when it controls the product to sharpen the concentration rate. 
\begin{theorem}\label{the:concentration_des_transformations_lipschitz_multilineairement}
Let us consider three normed vectors spaces $(E,\|\cdot \|_E)$, $(F,\|\cdot \|_F)$ and $(G,\|\cdot \|_G)$ and a product $\cdot : E\times F \to G$
satisfying:
\begin{align*}
  \forall x\in E, \forall y \in F, \|x\cdot y \|_G \leq 
  \left\{\begin{aligned}
    &\|x\|'_E\|y\|_F\\
    &\|x\|_E\|y\|'_F
  \end{aligned}\right.
\end{align*}
where $\|\cdot \|_E'$ and $\|\cdot \|_F'$ are semi-norms respectively endowing $E $ and $F$.
Given two random vectors $X\in E$ and $Y \in F$, if $(X,Y) \propto C e^{-(t/\sigma)^q}$ in $(E\times F, \|\cdot \|_{\ell^\infty} )$ and $(E\times F, \|\cdot \|_{\ell^\infty}' )$, where for any $x,y \in E \times F$, $\|(x,y)\|_{\ell^\infty} = \sup(\|x\|_E,\|y\|_F)$ and $\|(x,y)\|'_{\ell^\infty} = \sup(\|x\|_E',\|y\|_F')$, then there exists two constants $C',c'$, respectively proportional to $C$ and $c$ such that:
\begin{align*}
  X \cdot Y \propto C e^{-(t/\sigma' \mu)^{\frac{q}{2}}}
 + C e^{-(t/\sigma'{}^2)^{\frac{q}{2}}},
\end{align*}
where $\mu = \sup(\mathbb E[\|X\|_E'], \mathbb E[\|Y\|_F'])$

\end{theorem}
{\ajout\begin{proof}
  Let us consider a $1$-Lipschitz (for the norm $\left\Vert \cdot\right\Vert$) function $f: G \rightarrow \mathbb{R}$ and $m_f$, the median of $f(X \cdot Y)$ 
  We wish to bound:
  \begin{align*}
    \mathbb{P}\left(\left\vert f(X \cdot Y) - m_f\right\vert\geq t\right).
  \end{align*}
  The map $(X,Y) \mapsto f(X \cdot Y)$ is not Lipschitz, unless $X$ and $Y$ are bounded. We thus decompose the probability argument into two events, one with bounded $\left\Vert (X,Y)\right\Vert_{\ell^\infty}'$  and the complementary with small probability.
  Since $\left\Vert X\right\Vert_E' \in \mathbb E[\left\Vert X\right\Vert_E'] \pm Ce^{-(\, \cdot \,/\sigma)^q}$ and $\mathbb E[\left\Vert X\right\Vert_E']\leq \mu$, and the same holds for $Y$, we know from Lemma~\ref{lem:conentration_superieure} that:
  \begin{align*}
     &\mathbb P \left((\left\Vert (X,Y)\right\Vert_{\ell^\infty}' \geq t\right) \\
     &\hspace{0.5cm}\leq \mathbb P \left(\left\vert \left\Vert X\right\Vert_E' -\mathbb E[\left\Vert X\right\Vert_E']\right\vert\geq t-\mu\right)+ \mathbb P \left(\left\vert \left\Vert Y\right\Vert_F' - \mathbb E[\left\Vert Y\right\Vert_F']\right\vert \geq t-\mu\right) \\
     &\hspace{0.5cm}\leq 2Ce^{-(t- \mu)^q/\sigma^q}.
  \end{align*}
  Let us introduce 
  $K \geq 2\mu$ and the set
  \begin{align*}
    \mathcal A_K = \left\{(x,y) \in E \times F\ | \ \left\Vert x\right\Vert_E' \leq K  , \left\Vert y\right\Vert_F' \leq K \right\}.
  \end{align*}
  We already know, on the one hand that $K-\mu \geq \frac{K}{2}$ and therefore:
  \begin{align}\label{eq:bound_event_1}
    \mathbb{P} \left(\left\vert f(X \cdot Y) - m_f\right\vert\geq t, (X,Y)\in\mathcal A_K^c\right) 
    \leq \mathbb{P} \left(Z \in \mathcal A_K^c\right)
    \leq 2Ce^{-(K/2)^q/\sigma^q}
  \end{align}
  On the other hand, if we note $\phi:E\times F \to G$ defined for all $(x,y) \in E \times F$ as $\phi(x,y) = x\cdot y$, the mapping $\restriction{f\circ \phi}{\mathcal A_K}$ is $2K$-Lipschitz.
  Therefore, noting $\mathcal C = \{z \in E\times F \ | \ f(\phi(z)) \leq m_f\}$ and for $\mathcal B \subset E$, $\mathcal B \neq \emptyset$, $d(\cdot,\mathcal B) : z \mapsto \inf\{\|z-b\|, b\in \mathcal B\}$, for all $z\in\mathcal A_K$:
  \begin{align*}
    \left\vert f(\phi(z)) - m_f\right\vert\geq t&
    &\Longrightarrow&
    &d(z, \mathcal C)\geq \frac{t}{K^{(p-1)}} \  \text{ or } \ d(z, \mathcal C^c)\geq \frac{t}{K^{(p-1)}}.
  \end{align*}
  Since $d((X,Y), \mathcal C)$ and $d((X,Y), \mathcal C^c)$ are both $1$-Lipschitz functionals of $(X,Y)$ and admit $0$ as a median, we can bound,
  \begin{align}\label{eq:bound_event_2}
    &\mathbb{P}\left(\left\vert f(X \cdot Y) - m_f\right\vert\geq t, (X,Y)\in\mathcal A_K\right)\\
    &\hspace{1.5cm}\leq \mathbb{P}\left( d((X,Y), \mathcal C)\geq \frac{t}{2}\right) + \mathbb P \left(d((X,Y), \mathcal C^c) \geq \frac{t}{2}\right)\nonumber \
    \leq 2 Ce^{-(t/2K\sigma)^q}.
  \end{align}
  Now, if we choose $K = \max \left(2\mu_i,t^{\frac{1}{2}}\right)$, we know that:
  \begin{align*}
    &\left(\frac{t}{\sigma K}\right)^q = \min \left(\left(\frac{t}{\sigma^2}\right)^{\frac{q}{2}}, \left(\frac{t}{ 2\sigma\mu}\right)^{q}\right).
  \end{align*}
  Therefore, putting \eqref{eq:bound_event_1} and \eqref{eq:bound_event_2} together, we obtain
  \begin{align*}
    \mathbb{P}\left(\left\vert f(X\cdot Y) - m_f\right\vert\geq t\right)
    &\leq  C'e^{(t/\sigma'{}^2)^{q/2}}+C'e^{-(t/\sigma'{}^2 \mu)^q}, 
  \end{align*}
  for $\sigma' = \max(2^{\frac{1}{q}},4)\sigma$ and $C' = 2C$. 
\end{proof}}
This theorem is particularly interesting when $\eta_{\|\cdot\|_E'} \ll \eta_{\|\cdot\|_E}$ and $\eta_{\|\cdot\|_F'} \ll \eta_{\|\cdot\|_F}$. 
For instance on $\mathbb R^p$:
\begin{align*}
    \eta_{\Vert \cdot \Vert_\infty} = \log p < p =\eta_{\Vert \cdot \Vert},
  \end{align*}
and on $\mathcal M_{q,n}$:
  \begin{align*}
    \eta_{\Vert \cdot \Vert_\infty}=\log(pn) < \eta_{\Vert \cdot \Vert}=p+n < pn =\eta_{\Vert \cdot \Vert_F}.
\end{align*}
The next examples take advantage of those inequalities.

\begin{example}\label{exe:concentration_covariance_empirique}
   Consider two random vector $Y,Z \in\mathbb{R}^p$ and a random matrix $X \in\mathcal M_{pn}$ satisfying $Y,Z, X\propto 2 e^{-(\cdot/2)^2}$ for the Euclidean norm (the Frobenius norm in $\mathcal M_{pn}$) and such that $\mathbb E [Y]=\mathbb E [Z]=0$ and $\mathbb E [X]=0$. Then,
   \begin{itemize}
      \item $Y \odot Z \propto  Ce^{-c \, \cdot }+Ce^{-c\, \cdot ^2/\log p} $ in $(\mathbb{R}^p, \Vert \cdot \Vert)$ 
      \item $\frac{XX^T}{n } \propto Ce^{-cn\,  \cdot}+Ce^{-cn \, \cdot^2/\bar \gamma}$  in $\left( \mathcal  M_{p,n}, \left\Vert   \cdot\right\Vert _F\right) $
    \end{itemize}  
    where $C \geq 1$ and $c>0$ are two numerical constants, $\gamma = \frac pn$ and $\bar \gamma = \gamma + 1$. One can note in these examples that the Lipschitz concentration preserves the concentration rates better than the linear concentration through the product of random vectors. Multiplying two vectors entry-wise let almost unmodified the concentration rate. The last result states that the sample covariance matrix is a highly concentrated object.
\end{example}

An important consequence to Theorem~\ref{the:concentration_des_transformations_lipschitz_multilineairement} is a result of concentration of bilinear mappings $X^TAY$ with $X,Y\in \mathcal M_{p,n}$ random and concentrated and $A \in \mathcal M_{p}$ deterministic. 
\begin{theorem}\label{the:conc_forme_quadrat_hypo_conc_lipsch}
  Let us consider three integers $q,m,n\in \mathbb{N}$, two random matrices $X, Y \in \mathcal M_{p,n}$, two positive constants $C \geq e$ and $\sigma>0$ and a matrix $A \in \mathcal M_{p}$. 
  If we suppose that $(X,Y) \propto C e^{- (\, \cdot\,/\sigma )^q}$ and $ \| \mathbb E[ X ]\|, \| \mathbb E[ Y ]\|= 0$\footnote{It is possible to relax this hypothesis if one wants to show the concentration of $\tr (BXAY)$ for a given $B \in \mathcal M_n$, then it suffices to suppose that with the notations introduced in the proof, one has $\| \mathbb E[\check X ]\|_\infty, \| \mathbb E[\check Y ]\|_\infty = O(\log(pn)^{1/q})$} then we have~:
  \begin{align*}
    X^TAY \in C'\exp \left( - \left( \frac{\, \cdot \,} { c\sigma^2\left\Vert A\right\Vert_F \log(pn)^{1/q}}\right)^q\right) + C'e^{-  (\,  \cdot \,/c \left\Vert A\right\Vert_F\sigma^2)^{\frac{q}{2}} },
  \end{align*}
  in $(\mathcal M_{n}, \|\cdot \|_F)$, for some $C',c>0$ depending numerically on $C$ and $q$.
\end{theorem}
Note that this is a result of linear concentration, we are not able to apply Theorem~\ref{the:concentration_des_transformations_lipschitz_multilineairement} directly to the random matrix $X^TAY$ but we can still invoke it for $\tr(BX^TAY)$ for any deterministic matrix $B \in \mathcal M_{n}$.
\begin{proof}
  Let us consider a deterministic matrix $B \in \mathcal M_{n}$ satisfying $\|B\|_F \leq 1$ and try to express the concentration of $\tr(BX^TAY)$. For that purpose let us decompose
  \begin{align*}
     A= P_A \Lambda Q_A&
     &\text{and}&
     &B= P_B \Gamma Q_B,
   \end{align*} with $P_A, Q_A \in \mathcal O_p$, $P_B, Q_B \in \mathcal O_n$ orthogonal and $\Lambda \in \mathcal D_p^+ $, $\Gamma \in \mathcal D_n^+ $ diagonal. Let us introduce the matrices $\check Y,\check X  \in \mathcal M_{p,n}$ defined as $\check X= P_A Z Q_B$ and $\check Y = Q_AY P_B$; since $\|P_A \|,\|Q_A \|,\|P_B \|,\|Q_B \| = 1$, we know that $\check X, \check Y \propto C e^{- (\, \cdot\,/\sigma )^q} $ and therefore, we know from Theorem~\ref{the:concentration_des_transformations_lipschitz_multilineairement} that:
  \begin{align*}
    \check X \odot \check Y \propto C'e^{- \, (\cdot \ / c\sigma^2)^{q/2}}+C'e^{-(\, \cdot \,/c\sigma^2)^q/\log (pn)}  \ \ \text{in } \ (\mathcal M_{p,n},\|\cdot \|_F),
  \end{align*}
  for some numerical constants $c, C' >0$. Now, noting $\lambda = (\lambda_1,\ldots,\lambda_p) \in \mathbb R^p$ and $\gamma = (\gamma_1,\ldots,\gamma_p) \in \mathbb R^p$ the vectors such that $\Lambda = \diag(\lambda)$ and $\Gamma = \diag(\gamma)$, we see that:
  \begin{align*}
    \tr(BX^TAY) = \tr(\Gamma \check X^T \Lambda \check Y)  = \sum_{\genfrac{}{}{0pt}{2}{1\leq i \leq p}{1\leq j \leq n}} \lambda_i \gamma_j \check X_{i,j} \check Y_{i,j} = \gamma^T(\check X\odot \check Y)\lambda,
  \end{align*}
  which is a $\|\lambda\|\|\gamma\|$-Lipschitz functional of $\check X \odot \check Y \in (\mathcal M_{p,n}, \| \cdot\|_F)$ (for the spectral norm so in particular for the Frobenius norm). Since $\|\lambda\| = \|A\|_F$ and $\|\gamma\| = \|B\|_F$, we directly deduce the result of the Theorem.


\end{proof}
\begin{remark}\label{rem:Hanson-Write}
  In the previous theorem, the case $n=1$, gives us result similar to Hanson-Wright Theorem (as found in \cite[Theorem 6.2.1]{Ver17} for instance). Noting $Z =X = Y \in \mathbb R^p$, we have indeed:
  \begin{align*}
    Z^TAZ \in \tr(\mathbb E[ZZ^T] A) \pm C'\exp \left( - \left( \frac{\, \cdot \,} { c\sigma^2\left\Vert A\right\Vert_F \log(p)^{1/q}}\right)^q\right) + C'e^{-  (\,  \cdot \,/c \left\Vert A\right\Vert_F\sigma^2)^{\frac{q}{2}} }.
  \end{align*}
  The original Hanson-Wright inequality does not take as hypothesis the concentration of the whole vector $Z=(z_1,\ldots, z_p)$ but just the concentration of each one of its coordinates $z_i$. However, it assumes that the different coordinates of $Z$ are independent which is quite a strong hypothesis and also that their means are equal to zero. The concentration result obtained under these hypotheses is not exactly the same, and relies on a quantity $K$ that could be seen as the maximum tail parameter of the $\{z_i\}_{1\leq i\leq p}$ ($K=\sigma$ in our case). The Hanson-Wright concentration can indeed be written~:
  \begin{align*}
    \mathbb{P}\left(\left\vert Z^TAZ - \mathbb{E}Z^TAZ\right\vert \geq t\right) \leq 2 \exp \left(-c \min\left(\frac{t^2}{K^4 \left\Vert A\right\Vert_F^2},\frac{t}{K^2 \left\Vert A\right\Vert}\right)\right),
  \end{align*}
  where $K=\max \{\left\Vert z_i\right\Vert_{\psi_2}\}_{1\leq i \leq p}$ is the maximum of the Orlicz norms defined as~:
  \begin{align*}
    \left\Vert z\right\Vert_{\psi_2} = \inf\{t>0 : \mathbb{E}\psi_2(\left\vert z\right\vert/t) \leq 1\}
  \text{ \ \ \ with \ \ $\psi_2(x)=e^{x^2} -1$}.
  \end{align*}
  To compare the classical result with the one of Theorem~\ref{the:conc_forme_quadrat_hypo_conc_lipsch}, let us consider $z \in \mathbb R^p$ such that $Z \propto Ce^{-t^2}$. Then $K$ is basically of order $O(1)$ and the tail parameters of the $1$-exponential component is far smaller in the classical result (generally $\|A\| \ll \|A\|_F$). However we will see in Theorem~\ref{the:conc_forme_quadrat} in next subsection, that for $t$ sufficiently large: 
  \begin{align*}
    \mathbb{P}\left(\left\vert Z^TAZ - \mathbb{E}Z^TAZ\right\vert \geq t\right) \leq C e^{-t/c\|A\|},
  \end{align*}
  for some constants $C,c >0$. The tail of the distribution of $Z^TAZ$ is thus exactly the same with the hypothesis of concentration on the whole vector $Z$. More importantly, note that the tail parameter of the $2$-exponential component which gives the order of the standard deviation of $Z^TAZ$ (and all of its centered moments) is very close to our result since $\sqrt{\log(p)}$ is almost a constant.
\end{remark}

\color{black}
In the preamble, we presented the resolvent as the convenient object that one wants to study to get some insight into the spectrum of the sample covariance of a concentrated random vector. Given a matrix $X\in \mathcal{M}_{p,n}$ and a positive real number $z>0$, recall that the resolvent $Q_S$ of the sample covariance matrix $S=\frac1nXX^T$ is defined as~:
\begin{align*}
  Q_{S}(z)= \left(S + z I_p\right)^{-1}.
\end{align*}

We will simply note it $Q$ to lighten the notations. Assuming that $X$ is concentrated, we saw in Example~\ref{exe:concentration_covariance_empirique} that $S$ is concentrated and consequently that $Q = (S+zI_p)^{-1}$ is concentrated as a $\frac 1 {z^2}$-Lipschitz transformation of $S$. That would however imply the appearance of a term $Ce^{-(c n\, \cdot^{q/2}}$ in the concentration function of $Q$ that we can remove if we directly show that $Q$ is a $\frac{2}{\sqrt{z^3n}}$-Lipschitz transformation of $X$.

\begin{proposition}\label{pro:concentration_resovante}
  Given $z>0$, a random matrix $X\in \mathcal{M}_{p,n}$ and a concentration function $\alpha$, we have the implication (recall that $\mathcal M_{p,n}$ are $\mathcal M_p$ both endowed with the Frobenius norm if no other norm is specified for the concentration)
  \begin{align*}
    X\propto \alpha&
    &\Longrightarrow &
    &Q=\left(\frac{1}{n}XX^T+zI_p\right)^{-1}\propto \alpha\left(\frac{\sqrt{nz^3}}{2} \, \cdot \,\right).
  \end{align*}
  In particular~:
  \begin{align*}
    X\overset{\mathbb{E}}{\propto} \alpha&
    &\Longrightarrow &
    &Q \in \mathbb{E}[Q] \pm \alpha\left(\frac{\sqrt{nz^3}}{2} \, \cdot \,\right).
  \end{align*}
 
 \end{proposition}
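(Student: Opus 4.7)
The plan is to reduce the proposition to a statement about the Lipschitz regularity of the map $\Phi : X \mapsto (\frac{1}{n}XX^T + zI_p)^{-1}$ from $(\mathcal{M}_{p,n}, \|\cdot\|_F)$ to $(\mathcal{M}_p, \|\cdot\|)$. Indeed, once we show that $\Phi$ is $\frac{2}{\sqrt{nz^3}}$-Lipschitz, Lemma~\ref{lem:conc_transfo_lipschitz} (via the definition of Lipschitz concentration of a random matrix) gives $Q = \Phi(X) \propto \alpha(\frac{\sqrt{nz^3}}{2}\, \cdot\,)$, which is exactly the claim. The refinement to concentration around $\mathbb{E}[Q]$ follows from the fact that Lipschitz concentration around the mean is preserved by Lipschitz maps in the same manner.

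To prove the Lipschitz estimate, I would fix $X, X' \in \mathcal{M}_{p,n}$, write $S = \frac{1}{n}XX^T$, $S' = \frac{1}{n}X'X'^T$, and $Q = (S+zI_p)^{-1}$, $Q' = (S'+zI_p)^{-1}$. The resolvent identity yields
\begin{align*}
Q - Q' = Q(S' - S)Q' = \frac{1}{n}Q\bigl((X'-X)X'^T + X(X'-X)^T\bigr)Q'.
\end{align*}
Taking spectral norms, using sub-multiplicativity and the fact that $\|\cdot\| \leq \|\cdot\|_F$ on $\mathcal{M}_{p,n}$, I get
\begin{align*}
\|Q - Q'\| \leq \frac{1}{n}\bigl(\|Q\|\,\|X'-X\|_F\,\|X'^T Q'\| + \|QX\|\,\|X'-X\|_F\,\|Q'\|\bigr).
\end{align*}
At this point, Lemma~\ref{lem:controle_Q} delivers all three ingredients: $\|Q\|, \|Q'\| \leq \frac{1}{z}$ and $\|QX\|, \|Q'X'\| \leq \frac{\sqrt{n}}{\sqrt{z}}$. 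Plugging in gives $\|Q - Q'\| \leq \frac{2}{n} \cdot \frac{1}{z} \cdot \frac{\sqrt{n}}{\sqrt{z}} \|X - X'\|_F = \frac{2}{\sqrt{nz^3}} \|X - X'\|_F$, which is the required Lipschitz bound.

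There is essentially no hard step: the entire argument is the resolvent identity plus the three elementary bounds packaged in Lemma~\ref{lem:controle_Q}. The only subtlety worth flagging is the algebraic decomposition $X'X'^T - XX^T = (X'-X)X'^T + X(X'-X)^T$, which is what allows one factor of $\sqrt{n/z}$ (coming from $\|QX\|$ or $\|X'^TQ'\|$) to appear in each term and thus cuts the naive bound by a factor $\sqrt{n}$; without it one would only reach a Lipschitz constant of order $1/z^2$, losing the crucial scaling in $n$. Once this is laid out, invoking Lemma~\ref{lem:conc_transfo_lipschitz} concludes the proof.
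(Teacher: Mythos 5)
Your proof is correct and follows essentially the same route as the paper's: the paper likewise shows that $R\mapsto(RR^T+zI_p)^{-1}$ is Lipschitz via the resolvent identity, the decomposition $(R+H)(R+H)^T-RR^T=(R+H)H^T+HR^T$, and the bounds of Lemma~\ref{lem:controle_Q}, then composes with the $1/\sqrt{n}$ rescaling of $X$. Your inline treatment of the $1/n$ factor and the concluding appeal to the definition of Lipschitz concentration (and linearity of $u\mapsto u(\mathbb{E}Q)$ for the second claim) match the paper's argument.
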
 
 We need a preliminary lemma before giving the proof to control the Frobenius norm of a product~:
\begin{lemma}\label{lem:borne_norme_frob}
  Given $A \in \mathcal{M}_{pn}$ and $B \in \mathcal{M}_{np}$, one has the bound :
  \begin{align*}
    &\left\Vert AB\right\Vert_F \leq \left\Vert A\right\Vert \left\Vert B\right\Vert_F&
    &\text{and}&
    &\left\Vert AB\right\Vert_F \leq \left\Vert A\right\Vert_F \left\Vert B\right\Vert.&
  \end{align*}
  
\end{lemma}
One must be careful that in most cases $\left\Vert AB\right\Vert_F \neq \left\Vert BA\right\Vert_F$, which is why we need to display both inequalities. Recall in passing that the Cauchy-Schwarz inequality gives us directly $\left\Vert AB\right\Vert_F \leq \left\Vert A\right\Vert_F \left\Vert B\right\Vert_F$.
\begin{proof}
  Lemma~\ref{lem:borne_norme_frob} is just a consequence of the computations~:
  \begin{align*}
    \left\Vert AB\right\Vert_F^2 \leq \sum_{j=1}^p\left\Vert AB_{\cdot, j}\right\Vert_2^2 \leq \left\Vert A \right\Vert^2\sum_{j=1}^p \left\Vert B_{\cdot, j}\right\Vert_2^2 
    \leq  \left\Vert A \right\Vert^2 \left\Vert B\right\Vert_F^2 .
  \end{align*}
  The role of $A$ and $B$ could be inverted in the calculus without any problem.
\end{proof}
\begin{proof}[Proof of Proposition~\ref{pro:concentration_resovante}]
  The function $\phi : \mathcal{M}_{p,n} \rightarrow \mathcal M_p$ defined as $\phi(R)=\left(RR^T+zI_p\right)^{-1}$ is $2/z^{3/2}$-Lipschitz. Indeed, given $R,H \in \mathcal{M}_{p,n}^2$~:
  \begin{align*}
     \phi(R+H)-\phi(R) 
    &= \left((R+H)(R+H)^T+ z I_p\right)^{-1} -\left(RR^T+ z I_p\right)^{-1} \\
    &= \phi(R+H) \left((R+H)H +H R^T\right) \phi(R).
  \end{align*}
  Thus $ \left\Vert \phi(R+H)-\phi(R)\right\Vert_F\leq  2\left\Vert H \right\Vert_F/z^{3/2}$ thanks to Lemma~\ref{lem:borne_norme_frob} and to the basic result $\left\Vert \phi(R) R\right\Vert \leq 1/\sqrt{z}$ and $\left\Vert \phi(R)\right\Vert\leq \frac{1} {z}$ enunciated in Lemma~\ref{lem:controle_Q} in the preamble. Now, since $Q(z)=\phi(X/\sqrt{n})$, we recover directly the result of the proposition thanks to the hypothesis on $X$.
  The second implication is just a consequence of Remark~\ref{rem:lien_entre_diff_types_de_concentrations_lipschitz}.
\end{proof}

If we try to characterize geometrically, and roughly speaking, the range of random vectors concerned by Theorem~\ref{the:concentration_vecteur_spherique}, we would describe the set of respectful modifications of the sphere where bounded dilatations or the removals of some parts are tolerated but any cut is forbidden (we have about the same statement considering Theorem~\ref{the:concentration_vecteur_gaaussien}). This represents already a good range of distributions but one might be interested in representing discrete or at least ``discontinuous'' distributions. 

\subsection{Convex Concentration}\label{ssse:concentration_convexe}

\subsubsection{Definition and fundamental examples}

With a combinatorial approach, Talagrand showed in the nineties that it is possible to find a weaker notion of concentration to apprehend the concentration of partly discrete distributions. In these cases, to be concentrated the ``observation'' not only needs to be Lipschitz but also to be quasiconvex, in the sense of the following definition.
\begin{definition}[Quasiconvexity]\label{def:convexite_faible}
  A function $f : E \rightarrow \mathbb{R}$ is said to be quasiconvex if for any real $t\in \mathbb{R}$, the set $\{z\in E \ : \ f(z) \leq t\} = \{f \leq t\}$ is convex.
\end{definition}

\begin{remark}\label{rem:conv_faible_fonct_mon}
  Quasiconvexity concerns of course convex functions, but also any monotonous function supported on $\mathbb{R}$. More generally, given a convex function $f$ and a non decreasing function $g$, the composition $g \circ f$ is quasiconvex.
\end{remark}
 The class of quasiconvex functions is rather interesting in the sense that it is wider than the class of merely convex functions but still verifies the property of the uniqueness of the minimum.

\begin{definition}[Convex concentration]\label{def:conentration_faible}
  Given a random vector $Z\in \left(E, \left\Vert \cdot \right\Vert\right)$ and a concentration function $\alpha$, we say that $Z$ is \textit{convexly $\alpha$-concentrated} if one of the three assertions is verified for any $1$-Lipschitz and quasiconvex function $f : E \rightarrow \mathbb{R}$~:
  \begin{itemize}
     \item $f(Z) \propto \alpha, $ and we will note in that case $ Z \propto_c \alpha$\\
     \item $f(Z) \in m_f \pm \alpha, $ and we will note in that case $ Z \overset{m}{\propto}_c \alpha$\\
     \item $f(Z) \in \mathbb{E}[f(Z)] \pm \alpha, $ and we will note in that case $ Z \overset{\mathbb{E}}{\propto}_c \alpha$,
   \end{itemize} 
   where $m_f$ is a median of $f(X)$.
\end{definition} 
\begin{remark}\label{rem:equi_def_conc_conv}
  It is clear that the concentration of Definition~\ref{def:conc_lipschitzienne_vect_al} implies the convex concentration of Definition~\ref{def:conentration_faible}. Those two notions are equivalent when $E=\mathbb{R}$ since they are then both equivalent to Definition~\ref{def:conc_variable_aleatoire}.
\end{remark}
Once again, when non ambiguous, we will omit the precision ``in $(E, \left\Vert \cdot\right\Vert)$''. We clearly have the implication~:
\begin{align*}
  Z \propto \alpha
  \ \ \Longrightarrow \ \ 
  Z \propto_c \alpha.
\end{align*}
In the case of a $q$-exponential concentration, we have the implication chain~:
\begin{align*}
  Z \overset{\mathbb{E}}{\propto} C e^{-(\, \cdot \, /\sigma)^q}&
  &\Longrightarrow&
  &Z \overset{\mathbb{E}}{\propto}_c C e^{-(\, \cdot \, /\sigma)^q}&
  &\Longrightarrow&
  &Z \in \mathbb{E}Z \pm  e^{-(\, \cdot \,/\sigma)^q}.
\end{align*}

The fundamental example that alone justifies the interest in convex concentration is owed to Talagrand and provides to our study a supplementary setting to the ``smooth'' scenarios given by Theorems~\ref{the:concentration_vecteur_spherique} and~\ref{the:concentration_vecteur_gaaussien}. 

\begin{theorem}[Convex concentration of the product of bounded distributions, \protect{\cite[Theorem 4.1.1]{TAL95}}]\label{the:talagrand}
  Given a random vector $Z \in [0,1]^m$, $ m \in \mathbb{N}$, with independent entries~:
  \begin{align*}
    &Z \overset{m}{\propto}_c 4 e^{-\, \cdot \,^2/4}.
  \end{align*}
\end{theorem}

Considering the example of the preamble, we are tempted to look for a similar theorem where the sets $[0,1]$ are compact sets of $\mathbb{R}^p$ bounded by $K$ and $m$ is taken to be equal to $n$. It is not so interesting however, the issue being that the factor $K$ that would appear in the tail parameter would jeopardize most of the applications since it should be of order $1$ while $\left\Vert Z_i\right\Vert$ is often of order $\sqrt{p}$ when $Z_i \in \mathbb{R}^p$. This can however find some use when considering sparse random vectors.

The interesting setting is the case where $E=\mathbb{R}$, and $m=pn$. Then we do not exactly consider the concentration on $\mathbb{R}^{np}$ but the concentration on $\mathcal{M}_{p,n}$ (endowed with the Frobenius norm). Theorem~\ref{the:talagrand} gives us in that case a convenient tool to build convexly $q$-exponentially concentrated random matrices. In that case, the bound $\left\vert Z_i\right\vert \leq K$ on the entries of a random vector $Z$ is no more an unreachable hypothesis for applications; however we will need in that case an independence between the entries.


Theorems~\ref{the:concentration_vecteur_spherique} and~\ref{the:concentration_vecteur_gaaussien} allow us to track easily concentration properties from a vector $Z \in \mathbb{R}^p$ verifying $Z\sim \sigma_{p-1}$ or $Z\sim \mathcal N(0,I_p)$ to any Lipschitz transformation and even to any uniformly continuous transformation $f(Z)\in \mathbb{R}^q$. Theorem~\ref{the:talagrand} is not so easy to generalize because the convexity (or the quasiconvexity) of a function is only defined for real-valued functions; indeed, most of the transformations between two vector spaces ruin the subtle structure of convexity. We can still slightly relax the hypothesis of independence in the theorem of Talagrand thanks to affine transformations~:
\begin{lemma}\label{lem:transformations_faiblement_convexes}
  Given two vector spaces $E,F$ and a quasiconvex (resp., convex) function $f~:E\rightarrow \mathbb{R}$, for any affine function $g: F \rightarrow E$, the composition $f\circ g$ is also quasiconvex (resp., convex).
\end{lemma}

We can give a supplementary useful proposition that will allow us to keep the properties of convex concentration when thresholding a random vector. But before let us give an immediate preliminary lemma. We present it without proof since it is a direct consequence of Lemma~\ref{lem:conentration_superieure}. 
\begin{lemma}\label{lem:conc_superieure_vect_al}
  Given a random vector $Z$, an exponent $q>0$ and two positive constants $C\geq e$ and $\sigma>0$, if we note $m_{\left\Vert Z\right\Vert}$ a median of $\left\Vert Z\right\Vert$, we have the implication~:
  \begin{align*}
    Z \overset{m}{\propto}_c C e^{- ( \, \cdot\,/\sigma)^q}
    \ \ \Longrightarrow \ \
    \forall t \geq 2m_{\left\Vert Z\right\Vert} \ \ \mathbb{P}\left(\left\Vert Z\right\Vert\geq t\right) \leq Ce^{-\left(t/2\sigma\right)^q}.
  \end{align*}
\end{lemma}
The lemma displays a slight modification of the concentration constants, with here the behavior of the tail only beyond $2 \mathbb{E}\left\Vert Z\right\Vert$. The next proposition allows us to say that the concentration of a $q$-exponentially convexly concentrated random vector occurs under a threshold of order $2m_{\left\Vert Z\right\Vert}$.
\begin{proposition}\label{pro:concentration_thresholdee}
  Given a random vector $Z \in E$ a constant $K \geq 2 m_{\left\Vert Z\right\Vert}$, we introduce the vector $\bar Z=\min(1,\frac{K}{\left\Vert Z\right\Vert}) Z$. If there exist an exponent $q>0$ and two parameters $C\geq e$ and $\sigma>0$ such that $Z \overset{m}{\propto}_c C e^{-(\, \cdot \,/\sigma)^q}$ then $$\bar Z \propto_c 4C e^{-\left(\, \cdot \,/4\sigma\right)^q}.$$ 
\end{proposition}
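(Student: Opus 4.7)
The plan is to verify the defining property of convex concentration directly. Fix a $1$-Lipschitz quasiconvex function $g : E \to \mathbb{R}$ and let $Z'$ be an independent copy of $Z$, so that $\bar Z' = \min(1, K/\Vert Z' \Vert)\, Z'$ is an independent copy of $\bar Z$. The task reduces to proving that
\[
\mathbb{P}\bigl(|g(\bar Z) - g(\bar Z')| \geq t\bigr) \leq 4 C e^{-(t/4\sigma)^q}
\]
for every $t>0$. A direct attempt to push $g \circ P_K$ (where $P_K(z)=\min(1,K/\Vert z\Vert)\,z$ is the projection onto $\mathcal B_K$) into a $1$-Lipschitz quasiconvex function on $E$ fails: although $P_K$ is $1$-Lipschitz, $g\circ P_K$ need not be quasiconvex. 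So the argument has to go through a truncation at the probability level rather than at the functional level.

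The first step splits the probability according to whether $\Vert Z \Vert$ and $\Vert Z' \Vert$ both lie below $K$. On that good event the truncation is trivial and $(g(\bar Z), g(\bar Z')) = (g(Z), g(Z'))$, so a union bound gives
\[
\mathbb{P}(|g(\bar Z) - g(\bar Z')| \geq t) \leq \mathbb{P}(|g(Z) - g(Z')| \geq t) + 2\mathbb{P}(\Vert Z \Vert > K).
\]
For the first summand, the hypothesis $Z \overset{m}{\propto}_c Ce^{-(\cdot/\sigma)^q}$ applied to the $1$-Lipschitz quasiconvex $g$ yields $\mathbb{P}(|g(Z) - m_g| \geq t/2) \leq Ce^{-(t/2\sigma)^q}$, and a triangle inequality around $m_g$ then gives $\mathbb{P}(|g(Z) - g(Z')| \geq t) \leq 2Ce^{-(t/2\sigma)^q}$. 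For the second summand, Lemma~\ref{lem:conc_superieure_vect_al} applied to the $1$-Lipschitz convex function $\Vert \cdot \Vert$ combined with the hypothesis $K \geq 2m_{\Vert Z \Vert}$ gives $\mathbb{P}(\Vert Z \Vert > K) \leq Ce^{-(K/2\sigma)^q}$.

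The final step is the elementary check that $2Ce^{-(t/2\sigma)^q} + 2Ce^{-(K/2\sigma)^q} \leq 4Ce^{-(t/4\sigma)^q}$. When $t \leq 2K$, both $t/2$ and $K/2$ are at least $t/4$, so $e^{-(t/2\sigma)^q}$ and $e^{-(K/2\sigma)^q}$ are each bounded by $e^{-(t/4\sigma)^q}$ and the sum fits in the target bound. When $t > 2K$, one uses instead that $\bar Z, \bar Z' \in \mathcal{B}_K$ and that $g$ is $1$-Lipschitz to conclude $|g(\bar Z) - g(\bar Z')| \leq \Vert \bar Z - \bar Z' \Vert \leq 2K < t$, so the probability vanishes outright. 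No step is a genuine bottleneck; the whole argument is a truncation combined with the tail estimate of Lemma~\ref{lem:conc_superieure_vect_al}, and the factor $4$ and the dilated tail parameter $4\sigma$ in the statement are exactly what is needed to absorb the losses from the median-to-difference triangle inequality and from the slackness in the tail bound on $\Vert Z\Vert$.
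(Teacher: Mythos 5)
Your proof is correct and follows essentially the same route as the paper's: split on the event $\{\Vert Z\Vert, \Vert Z'\Vert \leq K\}$, use the median-to-difference step (which is exactly Proposition~\ref{pro:conc_med}) on the good event, control the bad event via Lemma~\ref{lem:conc_superieure_vect_al}, and dispose of $t>2K$ by the deterministic bound $\Vert \bar Z - \bar Z'\Vert \leq 2K$. The constants work out as you claim, so nothing further is needed.
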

If $Z \propto C e^{-(\, \cdot \,/\sigma)^q}$, it is possible to see $\bar Z$ as a $1$-Lipschitz transformation of $Z$ that would be naturally concentrated. However the hypothesis of quasiconvexity of the functionals $f(\bar Z)$ required by the convex concentration cannot be extrapolated so simply.

\begin{proof}
  Let us consider a function $f : E\rightarrow \mathbb{R}$ quasiconvex and $1$-Lipschitz. We know from Proposition~\ref{pro:conc_med} that $Z \propto_c 2C e^{-(\, \cdot \,/2\sigma)^q}$, thus introducing $ Z'$, an independent copy of $Z$, we can bound~:
  \begin{align*}
    \mathbb{P}\left(\left\vert f(\bar Z) - f(\bar Z')\right\vert\geq t\right)
    &\leq \mathbb{P}\left(\left\vert f(\bar Z) - f(\bar Z')\right\vert\geq t, \left\Vert  Z\right\Vert\leq K \text{ \ and \ } \left\Vert  Z'\right\Vert\leq K\right) \\
    &\hspace{0.5cm} + \mathbb{P}\left(\left\vert f(\bar Z) - f(\bar Z')\right\vert\geq t, \left\Vert  Z\right\Vert>K \text{ \ or \ } \left\Vert  Z'\right\Vert > K\right) \\
    &\leq \mathbb{P}\left(\left\vert f(Z) - f(Z')\right\vert\geq t\right) + \mathbb{P} \left(\left\Vert  Z\right\Vert>K \text{ \ or \ } \left\Vert  Z'\right\Vert > K\right) \\
    &\leq 2C e^{-(t/2\sigma)^q}+ 2C e^{- (K/2\sigma)^q}.
  \end{align*}
  The last inequality results from the hypothesis on $Z$ and $f$ and Lemma~\ref{lem:conc_superieure_vect_al}.
  By construction $\left\Vert \bar Z\right\Vert, \left\Vert \bar Z'\right\Vert\leq K$, thus $\left\Vert \bar Z - \bar Z'\right\Vert\leq 2K$ and $f$ being $1$-Lipschitz~:
  \begin{align*}
    \text{if $t> 2K$,} \  \ \ \mathbb{P}\left(\left\vert f(\bar Z) - f(\bar Z')\right\vert\geq t\right) \leq \mathbb{P}\left(\left\Vert \bar Z-\bar Z'\right\Vert\geq 2K\right) = 0 .
  \end{align*}
  Now, for any $t \leq 2K$~: $$\exp \left(- \left(\frac{K}{2\sigma}\right)^q\right) \leq   \exp \left(- \left(\frac{t}{4\sigma}\right)^q\right),$$ therefore, if we rejoin the different regimes, we obtain~:
  \begin{align*}
    \forall t>0 \ : \ \ \ \mathbb{P}\left(\left\vert f(\bar Z) - f(\bar Z')\right\vert\geq t\right) \leq 4C \exp \left(- \left(\frac{t}{4\sigma}\right)^q\right),
  \end{align*}
  and we can show exactly in the same manner that~:
  \begin{align*}
    \forall t>0 \ : \ \ \ \mathbb{P}\left(\left\vert f(\bar Z) - \mathbb{E}\left[f(\bar Z)\right]\right\vert\geq t\right) \leq 4C \exp \left(- \left(\frac{t}{4\sigma}\right)^q\right).
  \end{align*}
\end{proof}

\subsubsection{Hanson-Wright-like result for a convexly concentrated random vector}
Contrarily to the linear and Lipschitz concentrations, convex concentration does not seem to be preserved through the product.
To derive any results on the product from a convex concentration hypothesis, we will thus develop the habit of returning on the linear concentration configuration which is weaker but in a sense ``stable'' on algebra, like the Lipschitz concentration.
However, we still have the concentration of the norm as it is a Lipschitz and convex map. That makes a fundamental difference with the linear concentration and that entices a second Hanson-Wright-like result (see Remark~\ref{rem:Hanson-Write}).
It is also a good improvement to \cite[Lemma~8]{ELK09}~:
\begin{theorem}\label{the:conc_forme_quadrat}
  Let us consider two integers $q,m\in \mathbb{N}$ and a random vector $Z \in \mathbb{R}^p$, two positive constants $C \geq e$ and $\sigma>0$ and a matrix $A \in \mathcal M_{p}$. 
  If we suppose that $Z \overset{\mathbb{E}}{\propto}_c C e^{- (\, \cdot\,/\sigma )^q}$ then we have~:
  \begin{align*}
    Z^TAZ \in \tr(A \mathbb{E}[ZZ^T]) \pm 2C e^{- \left( \, \cdot \, / 4 \sigma\left\Vert A\right\Vert  \mathbb{E}\left\Vert Z\right\Vert \right)^q} + 2Ce^{-  (\,  \cdot \,/2 \left\Vert A\right\Vert\sigma^2)^{\frac{q}{2}} }.
  \end{align*}
  It is possible to replace the mean with a median if needed.
\end{theorem}

\begin{proof}
  Let us first assume that $A$ is symmetric nonnegative definite; in this case, $Z^T A Z= \Vert A^{\frac{1}{2}}Z\Vert^2$.
  Theorem~\ref{the:conc_forme_quadrat} is a particular case of Proposition~\ref{pro:conc_puiss_var_al_conc_autour} that gives the concentration of the $r$-power of a random variable $\left\Vert u(Z)\right\Vert$ when $u$ is quasiconvex and $1$-Lipschitz and $r\geq 1$~:
  \begin{align}\label{eq:conc_vvu(Z)vv}   
    \left\Vert u(Z)\right\Vert^r \in \mathbb{E}[\left\Vert u(Z)\right\Vert]^r \pm  Ce^{- \left({\,  \cdot \,}/{2^{r}\sigma \left\Vert u\right\Vert \mathbb{E}[\left\Vert u(Z)\right\Vert]^{r-1}}\right)^q} + Ce^{- \left(\frac{ \,  \cdot \,}{2 \left\Vert u\right\Vert^r  \sigma^r} \right)^\frac{q}{r}}.
  \end{align} 
  The function $z \mapsto \Vert A^{\frac{1}{2}}z\Vert$ is $\Vert A\Vert^{\frac{1}{2}}$-Lipschitz and convex. Therefore~:
  $$\Vert A^{\frac{1}{2}}Z\Vert \in \mathbb{E}[\Vert A^{1/2}Z\Vert] \pm C e^{-  (\, \cdot\,/\sigma \Vert A\Vert^{\frac{1}{2}}\mathbb{E}\Vert A^{1/2}Z\Vert)^q}.$$
  Since $\Vert A^{\frac{1}{2}}Z\Vert \leq  \Vert A^{\frac{1}{2}}\Vert \Vert Z\Vert$, $\mathbb{E}[\Vert A^{1/2}Z\Vert]\leq \Vert A^{\frac{1}{2}}\Vert \mathbb{E}[\Vert Z\Vert]$ and we can then conclude thanks to \eqref{eq:conc_vvu(Z)vv}.
   Now if we consider a general matrix $A \in \mathcal M_p$, let us decompose $A=A_+ - A_-+ A_0$ where $A_+$ is nonnegative symmetric, $A_-$ is non positive symmetric and $A_0$ is antisymmetric. We have clearly $Z^TAZ= Z^TA_+Z -Z^TA_-Z$ and we can conclude thanks to Lemma~\ref{lem:conc_somme}.
\end{proof}

\begin{remark}\label{rem:Hanson-Write_2}
  
  If we compare again this result to the original Hanson-Wright inequality, we can note that the tail parameters of the $1$-exponential component is the same.  The tail parameter of the $2$-exponential component of the classical result is proportional to $\left\Vert A\right\Vert_F$ while in our result, it is proportional (when $\sigma =O(1)$) to $\left\Vert A\right\Vert\mathbb{E}[\Vert Z\Vert]\sim \sqrt{p}\left\Vert A\right\Vert$. Therefore, considering that in most cases, i.e., when $A$ has a high rank and eigenvalues mainly of the same order, $\left\Vert A\right\Vert_F \sim \sqrt{p}\left\Vert A\right\Vert$, we see that the result of Theorem~\ref{the:conc_forme_quadrat} is quite similar to the Hanson-Wright inequality if we do not take into account the hypotheses which are quite different (on the one hand they are stronger because they only require the whole vector $Z$ to be concentrated, on the other hand they are weaker since they do not exploit the independence between the entries).

  It is possible to get a better concentration inequality if we combine Theorem~\ref{the:conc_forme_quadrat} with Theorem~\ref{the:conc_forme_quadrat_hypo_conc_lipsch}, but the expression of the concentration would be very complex (with possibly $4$ concentration rates), we thus prefer not to give it.
\end{remark}

\subsubsection{Transversal Convex concentration}
Although Definition~\ref{def:conc_lipschitzienne_vect_al} is perfectly adapted to the study of the resolvent $Q$ ($=Q_S$), the problem is far less immediate in  the setting of Definition~\ref{def:conentration_faible}, i.e., when $X \propto_c \alpha$.
Indeed in the setting of convex concentration, there does not exist any analogue to Proposition~\ref{pro:concentration_resovante} since it does not seem clear whether a functional of $Q$, $f(Q)$, with $f$ $1$-Lipschitz and quasiconvex can be written $g(X/\sqrt{n})$ with $g$ verifying the same properties.
In this case, the function $\phi$ cannot transfer the quasiconvexity and the study must then be conducted downstream directly with the random variables such as $\tr Q$.

It is interesting to note that $\tr Q=\tr (XX^T/n + zI_p)^{-1}$ stays unmodified if we multiply $X$ on the left or on the right by any orthogonal matrix. More formally, let us introduce the group $\mathcal{O}_{p,n}=\mathcal{O}_p \times \mathcal{O}_n$ where $\mathcal{O}_m$, $m\in \mathbb{N}$, is the orthogonal group of matrices of $\mathcal{M}_{m}$ ; it acts on $\mathcal{M}_{p,n}$ following the formula~:
\begin{align*}
  \text{for} \ \ (U,V) \in \mathcal{O}_{p,n}, \ M \in \mathcal{M}_{p,n} \ : \ \ (U,V) \cdot M = U M V^T.
\end{align*}
The function $f:R \in \mathcal M_{p,n} \mapsto \tr \phi(R)$ is $\mathcal O_{p,n}$-invariant, in the sense that $\forall (U,V) \in \mathcal O_{p,n}$, $\forall R \in \mathcal M_{p,n}$, $f((U,V) \cdot R) = f(R)$.
A result originally owed to Chandler Davis in \cite{DAV57}, and that can be found in a more general setting in \cite{GRA05}, gives us the hint that such an invariance can help us showing that $f$ is quasiconvex. 

To present this theorem, we note $\mathcal{D}_{p,n}^+$ the set of nonnegative diagonal matrices of $\mathcal{M}_{p,n}$~:
$$\mathcal{D}_{p,n}^+=\left\{(M_{i,j})_{\genfrac{}{}{0pt}{2}{1\leq i \leq p}{1\leq j \leq n}} \in \mathcal{M}_{p,n}\ | \ i\neq j \Leftrightarrow M_{i,j} =0 \  \text{and}\  \forall i\in\{1,\ldots, d\}  : M_{i,i}\geq 0\right\},$$where $d=\min(p,n)$.
\begin{theorem}\label{the:convex_matrice_diag}
  If a $\mathcal{O}_{p,n}$-invariant function $f : \mathcal{M}_{p,n}\rightarrow \mathbb{R}$ is quasiconvex on $\mathcal{D}_{p,n}^+$, then it is quasiconvex on the whole set $\mathcal{M}_{p,n}$.
\end{theorem}
The original Davis' theorem is only concerned with symmetric matrices and we could not find any proof of this theorem for the case of rectangle matrices although it is not so different (it was also aiming at proving the \textit{convexity} of $f$ and not its \textit{quasiconvexity} -- but it is actually simpler to treat). We provide in Appendix~\ref{app:davis} a rigorous proof of Theorem~\ref{the:convex_matrice_diag} with the help of some results borrowed from \cite{Bha97}. 

Let us define the subgroup of row permutations $\mathcal P_p=\{U\in \mathcal O_p \ | \ U_{i,j}\in \{0,1\}, 1\leq i,j\leq p\}$ and the subgroup of full permutations~:
\begin{align*}
  \mathcal{P}_{p,n}&=\{(U,V)\in \mathcal{P}_{p} \times \mathcal{P}_{n} \ | UI_{p,n}V^T=I_{p,n}\}\subset \mathcal O_{p,n},
\end{align*}
where $I_{p,n}\in \mathcal{D}_{p,n}$ is a matrix full of ones on the diagonal. 
Given a matrix $A\in \mathcal M_{p,n}$, we note $\diag(A)=(A_{1,1}, \ldots, A_{d,d})$, the vector composed of its diagonal terms (recall that $q = \min(p,n)$). It is tempting to identify the set $\mathcal D_{p,n}^+$ with $\mathbb{R}^d_+$, and the actions of $\mathcal P_{p,n}$ on a diagonal matrix $A$ to the actions of the group of permutation $\mathfrak S_d$ on the vector $\diag(A)$, where we define the action of $\mathfrak S_d$ on $\mathbb{R}^d$ as~:
\begin{align*}
  \forall \tau \in \mathfrak S_d, \forall x \in \mathbb{R}^d \ \ : \ \ \ \tau \cdot x= \left(x_{\tau(1)}, \cdots x_{\tau(d)}\right)&.
\end{align*}
With these considerations in mind, we see that a direct interesting consequence of Theorem~\ref{the:convex_matrice_diag} is that there exists a link between the convex concentration of a matrix $X$ and the convex concentration of the vector of its singular values. Recall that the sequence of singular values verifies $\sigma_1(A) \geq \cdots \sigma_{d}(A) \geq 0$ and for $1\leq i \leq d$, the $i ^{\textit{th}}$ singular value of $M$ can be defined as~:
\begin{align}\label{eq:formule_singuliere}
  \sigma_ {i}(A)= \max_{\genfrac{}{}{0pt}{2}{F \subset \mathbb{R}^n}{{\rm dim} F \geq i}} \min_{\genfrac{}{}{0pt}{2}{x\in F}{\left\Vert   x\right\Vert=1} } \left\Vert  Mx\right\Vert \ \  = \min_{\genfrac{}{}{0pt}{2}{F \subset \mathbb{R}^n}{{\rm dim} F \geq n-i+1} } \max_{\genfrac{}{}{0pt}{2}{x\in F}{\left\Vert  x\right\Vert=1} } \left\Vert  Mx\right\Vert,
\end{align}
where the subsets $F$ of $\mathbb{  R}^d$ on which is computed the optimization are subspaces of $\mathbb{  R}^n$. We introduce the convenient function $\sigma$ mapping a matrix to the ordered sequence of its singular values~:
\begin{align*}
  \sigma \ : \
  \begin{aligned}[t]
    &\mathcal{M}_{p,n} &&\rightarrow && \mathbb{R}^d_+ \\
    &M&&\mapsto&&(\sigma_1(M),\ldots, \sigma_d(M)).
  \end{aligned}
\end{align*}

To formalize this transfer of concentration between a matrix $X$ and $\sigma(X)$ let us introduce a new notion of concentration in a vector space $E$~: the \textit{convex concentration transversally to the action of a group $G$} acting on $E$.
\begin{definition}\label{def:concentrationfaible_transversale}
  Given a normal vector space $E$, a group $G$ acting on $E$, a concentration function $\alpha$ and a random vector $Z\in E$, we say that $Z$ is convexly $\alpha$-concentrated transversally to the action of $G$ and we note $Z\propto^T_G \alpha$ iff for any $1$-Lipschitz, quasiconvex and \emph{$G$-invariant} function $f : E \rightarrow \mathbb{R}$, $f(Z)$ is $\alpha$ concentrated.
\end{definition}

\begin{remark}\label{rem:chaine_implication_type_conc}
  In the setting of Definition~\ref{def:concentrationfaible_transversale}, we have the  induction chain~:$$Z \propto \alpha \ \  \Longrightarrow \ \ Z \propto_c \alpha \ \  \Longrightarrow \ \ Z \propto^T_G \alpha.$$
\end{remark}

Definition~\ref{def:concentrationfaible_transversale} is perfectly adapted to the next theorem whose proof can also be found in Appendix~\ref{app:davis} (it is a similar result to \cite[Corollary 8.21]{Led01} that concerns the eigenvalues of a random symmetric matrix).
\begin{theorem}\label{the:conc_val_sing}
  Given a normal vector space $E$, a concentration function $\alpha$ and a random matrix $X\in \mathcal{M}_{p,n}$, we have the equivalence~:$$\hspace{1cm}X \propto^T_{\mathcal{O}_{p,n}} \alpha \ \Longleftrightarrow \ \sigma(X) \propto^T_{\mathfrak S_d} \alpha,$$
  where $d = \min(p,n)$.
\end{theorem}
Theorem~\ref{the:conc_val_sing} can be rather powerful to set the concentration of symmetric functionals of random singular values. For our present interest, we will prove the concentration of the Stieltjes transform $m_{F}=\frac{1}{p} \tr Q$ of the covariance matrix $S=XX^T/n$ when $X\in \mathcal M_{p,n}$ is convexly concentrated (recall that $F$ stands for the normalized counting measure of the eigenvalues of $S$).
\begin{proposition}\label{pro:conc_faible_tr(Q)}
  Given a random matrix $X\in \mathcal M_{p,n}$, a concentration function $\alpha$ and $z>0$~:$$X \propto_c \alpha \ \ \Longrightarrow \ \ \tr Q \propto 2 \alpha \left(\frac{\sqrt{nz^3}}{8d} \, \cdot \,\right) \ \ \ \text{with} \ \ d=\min(p,n).$$
\end{proposition}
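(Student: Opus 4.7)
The plan is to propagate the convex concentration of $X$ through to the scalar $\tr Q$ via the singular values. I would use Remark~\ref{rem:chaine_implication_type_conc} to promote $X \propto_c \alpha$ to the transversal version $X \propto^T_{\mathcal O_{p,n}} \alpha$, then invoke Theorem~\ref{the:conc_val_sing} to deduce $\sigma(X) \propto^T_{\mathfrak S_d} \alpha$. Since $XX^T/n$ has eigenvalues $\sigma_k(X)^2/n$ for $k = 1, \ldots, d$ together with $(p-d)^+$ zeros, one writes
\[
\tr Q = g(\sigma(X)) + \frac{(p-d)^+}{z}, \qquad g(s) = \sum_{k=1}^d \frac{1}{s_k^2/n + z},
\]
with $g$ manifestly $\mathfrak S_d$-invariant.

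For the Lipschitz constant I would work directly on $X$ (rather than on $\sigma(X)$ with the Euclidean norm) so as to recover the factor $d$ rather than $\sqrt d$ in the conclusion. Proposition~\ref{pro:concentration_resovante}'s proof shows that $Q$ is $(2/\sqrt{nz^3})$-Lipschitz in the Frobenius norm of $X$, while the rank bound $\mathrm{rank}(Q - Q') \leq 2d$ (inherited from $\mathrm{rank}(XX^T - X'X'^T) \leq 2d$) yields
\[
|\tr(Q - Q')| \leq 2d\,\|Q - Q'\|_\mathrm{op} \leq \frac{4d}{\sqrt{nz^3}}\,\|X - X'\|_F.
\]
Applying the convex concentration of $X$ to the normalized functional $(4d/\sqrt{nz^3})^{-1}\tr Q$ in pivot form would give $\tr Q \in m_{\tr Q} \pm \alpha(\sqrt{nz^3}\cdot/(4d))$, and Proposition~\ref{pro:conc_med} then converts this to the independent-copy form $\tr Q \propto 2\alpha(\sqrt{nz^3}\cdot/(8d))$, matching the stated bound.

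The main obstacle is the verification of quasiconvexity of $\tr Q$ (equivalently of $g$), which is required to legitimately use either $X \propto_c \alpha$ directly or $\sigma(X) \propto^T_{\mathfrak S_d} \alpha$ applied to $g$. A direct check shows $g$ is neither globally quasiconvex nor quasiconcave on $\mathbb R_+^d$ (e.g.\ with $n=z=1$, the points $(1/2,1/2)$ and $(\sqrt{2/3},0)$ both give $g = 8/5$ while their midpoint gives $g \approx 1.639 > 8/5$), so a naive appeal to Theorem~\ref{the:convex_matrice_diag} is not available. To circumvent this I would either (i) decompose $g$ as a difference of two $\mathfrak S_d$-invariant, 1-Lipschitz, quasiconvex functionals, apply the transversal concentration to each piece, and recombine via Lemma~\ref{lem:conc_somme}, or (ii) exploit the variational representation $\tr A^{-1} = \sup_{U \succ 0}\{2\tr U - \tr(AU^2)\}$ to rewrite $\tr Q$ as a supremum (over symmetric $U \succ 0$) of concave $1$-Lipschitz functionals of $X$, each of the form $X \mapsto 2\tr U - z\tr U^2 - \|UX\|_F^2/n$, and leverage the $\mathcal O_{p,n}$-invariance together with a discretization argument to transfer concentration from each slice to the supremum.
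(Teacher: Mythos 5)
Your outline coincides with the paper's proof: pass to $\sigma(X)\propto^T_{\mathfrak S_d}\alpha$ via Remark~\ref{rem:chaine_implication_type_conc} and Theorem~\ref{the:conc_val_sing}, write $\tr Q$ as a symmetric Lipschitz functional of the singular values, observe that it fails to be quasiconvex (your counterexample at $(1/2,1/2)$ and $(\sqrt{2/3},0)$ is correct), and repair this by a difference-of-convex splitting combined with Lemma~\ref{lem:conc_somme}. Your option (i) is exactly what the paper does. The one substantive step you leave undone is the step that carries the proof: actually exhibiting the splitting. The paper sets $f(s)=1/(s^2+z)$, $h(s)=(s/z-1/\sqrt z)^2$ for $s\in[0,\sqrt z]$ and $h(s)=0$ for $s\geq\sqrt z$, and $g=f+h$, then checks $g''\geq 0$ and $h''\geq 0$ on both regimes, so that $G=\sum_i g(\sigma_i(X)/\sqrt n)$ and $H=\sum_i h(\sigma_i(X)/\sqrt n)$ are $\mathfrak S_d$-invariant, convex, and respectively $\frac{4d}{z^{3/2}}$- and $\frac{2d}{z^{3/2}}$-Lipschitz; Lemma~\ref{lem:conc_somme} then yields $2\alpha(\sqrt{nz^3}\,\cdot/8d)$ directly. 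Without producing such a splitting your argument is incomplete, since (as you concede) the middle paragraph's direct application of convex concentration to the normalized $\tr Q$ is illegitimate, so the constants $4d$ and $8d$ cannot be obtained by that route and must instead fall out of the decomposition.

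Two further cautions. First, your alternative (ii) does not work as sketched: a supremum of concave functionals of $X$ is in general neither quasiconvex nor quasiconcave, so convex concentration does not apply to the individual slices in a way that survives the supremum; and discretizing over symmetric $U\succ 0$ costs a union bound over an $\varepsilon$-net of a $p(p+1)/2$-dimensional set, i.e.\ a factor of order $e^{cp^2}$ in the head parameter, which destroys the bound. Second, your rank computation $|\tr(Q-Q')|\leq 2d\,\|Q-Q'\|_{\rm op}$ is fine as bookkeeping, but note that the paper's route through $\sigma(X)$ would even tolerate the sharper $\ell_1$-to-$\ell_2$ factor $\sqrt d$; the statement's $8d$ is simply what the cruder bound gives.
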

Let us introduce a simple preliminary lemma without proof :
\begin{lemma}\label{lem:tr_convex}
  Given an integer $d\in \mathbb{N}$, if a function $f: \mathbb{R} \rightarrow \mathbb{R}$ is convex, then the function $F : \mathbb{R}^d \rightarrow \mathbb{R}$ defined as $F(x_1,\ldots, x_d)=\sum_{i=1}^d f(x_i)$ is also convex. 
\end{lemma}
\begin{proof}[Proof of Proposition~\ref{pro:conc_faible_tr(Q)}]
  Assuming $X \propto_c \alpha$, we know from Remark~\ref{rem:chaine_implication_type_conc} and Theorem~\ref{the:conc_val_sing} that $\sigma(X) \propto_{\mathfrak S_d}^T \alpha$, where $d=\min(p,n)$. Let us introduce the function~:
  \begin{align*}
    f : s \longmapsto \frac{1}{s^2+z}.
  \end{align*}
  This function verifies $\sigma_i(Q) = f(\sigma_i(X)/\sqrt{n})$ and  $\tr Q = \sum_{i=1}^d f(\sigma_i(X)/\sqrt{n})$. It is also $\frac{2}{z^{3/2}}$-Lipschitz, for $s>0$~:
  \begin{align*}
  \left\vert f'(s)\right\vert= \frac{2s}{(s^2 +z)^2}, \text{ thus } \ f'(s) \leq \frac{2}{ z^{3/2}}.
  \end{align*}
  Therefore, the random variable $\tr Q$ is a Lipschitz and $\mathfrak S_d$-invariant transformation of $\sigma(X)$. However, it is not quasiconvex. The result of convexity can be obtained decomposing $f=g-h$ with $g$ and $h$ both convex, then we will be able to conclude thanks to Lemma~\ref{lem:conc_somme} giving the concentration of a sum of random variables.
  Let us set $h(s)=(\frac{s}{z}-\frac{1}{\sqrt{z}})^2$ if $s\in [0,\sqrt{z}]$ and $h(s)=0$ if $s\geq \sqrt{z}$, and $g=f+h$. We have~:
  \begin{align*}
    &\text{if $s\in [0,\sqrt{z}]$ :   }& &g'{}'(s)=\frac{ 6 s^2-2z}{(s^2+z)^3}+\frac{2}{z^2} \geq 0 & &\text{and} & & h'{}'(s)=\frac{2}{z^2}\geq 0 \\
    &\text{if $s\geq \sqrt{z}$ :   }& &g'{}'(s)=\frac{ 6s^2-2z}{(s^2+z)^3} \geq 0 & &\text{and} & & h'{}'(s)= 0.
  \end{align*}
  Besides, $h$ is $\frac{2}{z^{3/2}}$-Lipschitz, therefore $g$ is $\frac{4}{z^{3/2}}$-Lipschitz. We next introduce as in Lemma~\ref{lem:tr_convex} the functions $G,H : \mathbb{R}^d \rightarrow \mathbb{R}$ defined as~:
  \begin{align*}
    G(s_1, \cdots s_q)= \sum_{i=1}^d g(s_i)&
    &H(s_1, \cdots s_q)= \sum_{i=1}^d h(s_i).
  \end{align*}
  The functions $G$ and $H$ are both $\mathfrak S_d$-invariant and convex from Lemma~\ref{lem:tr_convex}. Besides, $G$ is $\frac{4d}{z^{3/2}}$-Lipschitz and $H$ is $\frac{2d}{z^{3/2}}$-Lipschitz. Therefore, we know from Lemma~\ref{lem:conc_somme} that $$\tr Q= G(\sigma(X)/\sqrt n)-H(\sigma(X)/\sqrt n) \propto 2\alpha \left(\frac{z^\frac{3}{2}\sqrt{n}\, \cdot \,}{8d}\right).$$
\end{proof}
In the case of a convex concentration of $X$, an analogous to Proposition~\ref{pro:concentration_resovante} setting the convex concentration of $Q$ does not seem obvious, even with the help of Theorem~\ref{the:convex_matrice_diag}. 


\subsubsection{From a convex concentration to a linear concentration}
It seems impossible to construct a quasiconvex \textit{and Lipschitz} shift of $\tr A\phi$ supported on the whole vector space $\mathcal M_{p,n}$ as we did in Proposition~\ref{pro:conc_faible_tr(Q)}. On a bounded subset of $\mathcal M_{p,n}$, it is always possible to shift a Lipschitz function with a convex one so that the sum verifies quasiconvex and Lipschitz properties. If we place ourselves in a convex $q$-exponential concentration setting, Proposition~\ref{pro:concentration_thresholdee} helps us treat first the concentration of $\tr AQ$ for a bounded version of a random vector $X$ and then we can take advantage of the contracting behavior of the resolvent (see Lemma~\ref{lem:controle_Q}) to generalize our first result to any unbounded concentrated vector $X$.

\begin{proposition}\label{pro:conc_faible_trAQ}
  Given a random matrix $X\in \mathcal M_{p,n}$, $z>0$, an exponent $q>0$ and two parameters $C\geq e$ and $\sigma>0$~:
  \begin{align*}
    X \overset{\mathbb E}{\propto_c}   \pm C e^{-(\, \cdot \, / \sigma)^q}
    \ \ \ \ \Longrightarrow \ \ \ \  
    Q \in \mathbb{E}Q \pm 2C e^{- \left( \frac{ \sqrt{z^3n}\, \cdot }{4\sigma}\right)^q } \ \ \text{in} \ (\mathcal M_{p,n}, \left\Vert \cdot\right\Vert_F).
  \end{align*}
\end{proposition}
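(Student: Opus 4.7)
The plan is to establish the concentration of the scalar random variable $\tr(AQ)$ around $\mathbb{E}[\tr(AQ)]$ for every $A\in\mathcal{M}_p$ with $\|A\|_F\leq 1$, which by the definition of linear concentration (Definition~\ref{def:deterministic equivalent}) is equivalent to the claimed concentration of $Q$ in $(\mathcal{M}_p,\|\cdot\|_F)$. Fixing such an $A$, set $f(R) := \tr\bigl(A(RR^T/n + zI_p)^{-1}\bigr)$. Exactly as in the proof of Proposition~\ref{pro:concentration_resovante}, a short differential calculation based on the bounds $\|Q(R)\|\leq 1/z$ and $\|Q(R)R/\sqrt{n}\|\leq 1/\sqrt{z}$ from Lemma~\ref{lem:controle_Q} shows that $f$ is $2/(z^{3/2}\sqrt{n})$-Lipschitz on $(\mathcal{M}_{p,n},\|\cdot\|_F)$. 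The obstruction to invoking the hypothesis $X\propto_c \mathbb{E}\pm Ce^{-(\cdot/\sigma)^q}$ directly is that $f$ is in general not quasiconvex in $R$.

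The idea is to first threshold $X$ to obtain a bounded proxy and then construct a genuinely quasiconvex shift of $f$ on the truncated region. Since $\|\cdot\|_F$ is $1$-Lipschitz and convex, $\|X\|_F$ admits a median $m$, and Proposition~\ref{pro:concentration_thresholdee} applied with any $K\geq 2m$ yields $\bar X := \min(1,K/\|X\|_F)X \propto_c 4Ce^{-(\cdot/(4\sigma))^q}$ with $\|\bar X\|_F\leq K$ almost surely. A second differentiation of $f$, using only the uniform bounds of Lemma~\ref{lem:controle_Q}, shows that $\|\nabla^2 f(R)\|$ is dominated by a constant $M$ of order $1/(nz^2)$ \emph{independent of $R$}; hence $\tilde f := f + M\|\cdot\|_F^2$ is convex on all of $\mathcal{M}_{p,n}$. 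On $\mathcal{B}_K := \{R:\|R\|_F\leq K\}$ both $\tilde f$ and $g := M\|\cdot\|_F^2$ are convex and Lipschitz, and each of them admits a globally convex, globally Lipschitz extension by the inf-convolution $h\mapsto \inf_{R'\in\mathcal{B}_K}[h(R')+L\|R-R'\|_F]$ (taking $L$ equal to the respective Lipschitz constants on $\mathcal{B}_K$, and extending $h$ by $+\infty$ outside $\mathcal{B}_K$). Call these extensions $\tilde F$ and $\tilde G$; they coincide with $\tilde f$ and $g$ on $\mathcal{B}_K$.

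Applying the convex concentration of $\bar X$ to $\tilde F$ and $\tilde G$ separately and combining the two via Lemma~\ref{lem:conc_somme} (which requires no independence) yields the concentration of $\tilde F(\bar X)-\tilde G(\bar X) = f(\bar X) = \tr(AQ(\bar X))$ about $\mathbb{E}[\tr(AQ(\bar X))]$ at a $q$-exponential rate of the claimed order. To transfer to $f(X)$, note that $\bar X = X$ on the event $\{\|X\|_F\leq K\}$, while on its complement $|f(X)-f(\bar X)|\leq 2\|A\|_F\sqrt{p}/z$ by Lemma~\ref{lem:controle_Q}; together with the $q$-exponential tail $\mathbb{P}(\|X\|_F > K)\leq Ce^{-((K-m)/\sigma)^q}$ (and the analogous bound on $|\mathbb{E}f(X)-\mathbb{E}f(\bar X)|$ obtained by integrating), taking $K$ large enough in terms of $t$ delivers the desired tail bound for $\tr(AQ(X))$. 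The main obstacle lies in (i) verifying the uniform Hessian bound on $f$, which is precisely what allows the strongly-convex quadratic shift to succeed globally, and (ii) carefully tracking the cascade of multiplicative constants introduced by the thresholding (factors $4C$ and $4\sigma$), the quadratic shift, the Lipschitz-convex extension and the sum lemma, so as to recover the precise tail parameter $4\sigma/\sqrt{z^3 n}$ announced in the proposition.
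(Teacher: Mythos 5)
Your overall strategy (Hessian analysis of $f(R)=\tr A\,\phi(R/\sqrt n)$, a convex quadratic shift, truncation of $X$, then Lemma~\ref{lem:conc_somme}) is the same as the paper's, and your uniform two--sided Hessian bound is a genuine simplification: it dispenses with the paper's restriction to nonnegative symmetric $A$ (the paper only verifies $\tr(A\phi L\phi L\phi)\geq 0$ for $A\succeq 0$ and never returns to general $A$). However, there is a quantitative gap that is not mere bookkeeping. The shift $g=M\Vert\cdot\Vert_F^2$ with $M\sim 1/(nz^2)$ has gradient $2MR$, so on $\mathcal B_K=\{\Vert R\Vert_F\leq K\}$ its Lipschitz constant is $2MK$. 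Proposition~\ref{pro:concentration_thresholdee} forces $K\geq 2m_{\Vert X\Vert_F}$, and for the data of interest $m_{\Vert X\Vert_F}$ is of order $\sqrt{np}$; hence $2MK\sim \sqrt{p/n}/z^2$, which exceeds the target Lipschitz level $2/(z^{3/2}\sqrt n)$ by a factor of order $\sqrt{p/z}$. Feeding $\tilde F$ and $\tilde G$ into Lemma~\ref{lem:conc_somme} therefore yields a tail parameter of order $\sigma\sqrt{p/n}/z^2$ rather than $4\sigma/\sqrt{z^3n}$: when $p\sim n$ the observable diameter no longer shrinks with $n$, so the stated conclusion is not recovered. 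There is also a circularity in your last step: you propose ``taking $K$ large enough in terms of $t$,'' but enlarging $K$ further degrades the very concentration rate you are trying to establish.

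The paper escapes this through a rescaling identity you do not use: for any $K\geq 1$,
\begin{align*}
  \tr A\left(\frac{XX^T}{n}+zI_p\right)^{-1}=\frac{1}{K}\,\tr A\left(\frac{XX^T}{Kn}+\frac{z}{K}I_p\right)^{-1},
\end{align*}
so the bounded case (truncation at level $1$ after normalizing by $\sqrt{Kn}$) can be invoked with the pair $(Kn,\,z/K)$; the factor $K^{3/2}$ coming from the degraded spectral parameter is exactly compensated by the improved concentration of $X/\sqrt{Kn}$ and the prefactor $1/K$, making the final tail parameter independent of the truncation level. One then lets the truncation level tend to infinity and concludes by continuity of concentration (Proposition~\ref{pro:converg_loi}), with no residual error term on the complement event. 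To repair your argument you would need to incorporate this rescaling (or some equivalent mechanism keeping the quadratic shift's Lipschitz constant of order $1/(z^{3/2}\sqrt n)$ on the region where the truncated matrix lives), not merely track constants.
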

Before proving the result we formulate a preliminary lemma~:
\begin{lemma}\label{lem:borne_trace_norme_spec}
  Given a symmetric nonnegative definite matrix $A \in \mathcal{M}_{p}$ and a matrix $B \in \mathcal M_p$, one has the bound :
  \begin{align*}
    &\vert \tr AB \vert \leq \left\Vert B\right\Vert\tr A .
  \end{align*}
\end{lemma}

\begin{proof}
  There exits a diagonal matrix $\Lambda = \diag(\lambda_i)_{1\leq i\leq p}$ and an orthogonal matrix $U\in \mathcal O_p$ such that $U^TAU=\Lambda$. If we note $u_i$ the $i ^{\textit{th}}$ column of $U$, we can write $A=\sum_{i=1}^p \lambda_i u_iu_i^T$, and for any $i\in \{1,\ldots  p\}$, $\left\Vert u_i\right\Vert=1$, so that~:
  \begin{align*}
    \tr AB = \sum_{i=1}^p \lambda_i \tr(Bu_i u_i^T)= \sum_{i=1}^p \lambda_i u_i^TBu_i \leq \sum_{i=1}^p \lambda_i \left\Vert B\right\Vert \leq \left\Vert B\right\Vert \tr A.
  \end{align*}
  We show the other bound the same way ($\Vert u \Vert \leq 1 \Rightarrow u^T B u \geq -\Vert B \Vert \Vert u \Vert$).
\end{proof}

\begin{proof}[Proof of Proposition~\ref{pro:conc_faible_trAQ}]
  With the function $\phi$ introduced in the proof of Proposition~\ref{pro:concentration_resovante} and given $A\in \mathcal M_p$ verifying $\left\Vert A\right\Vert_F \leq 1$, let us note~:
  \begin{align*}
    &f \ : \ R \ \longmapsto \ \tr A \phi(R).
  \end{align*}
  Given $R,H \in \mathcal M_{p,n}$, let us differentiate~:
  \begin{align*}
    &\nabla \restriction{f}{R} = - \phi(R)A\phi(R) R - R^T\phi(R)A\phi(R) \\
    &\nabla^2 \restriction{f}{R}(H,H) = 2\tr\left(A\phi(R) L \phi(R) L\phi(R)\right) - 2\tr(A\phi(R)HH^T\phi(R)),
  \end{align*}
  with the notation $L=RH^T + HR^T$. Let us suppose first that $A$ is nonnegative symmetric. In that case, we know from Lemma~\ref{lem:borne_trace_norme_spec} ($\left\Vert A\right\Vert\leq \left\Vert A\right\Vert_F\leq 1$) that~:
  \begin{align*}
    \tr(A\phi(R)HH^T\phi(R)) \leq \frac{ 2 }{z^2} \tr HH^T,
  \end{align*}
  and we recognize here the Hessian of the function $g:R \mapsto \frac{1}{z^2}\tr RR^T$ taken in $(H,H)$. If we note $h=f+g$ we know that $$\nabla^2 \restriction{h}{R}(H,H) \geq 2\tr\left(A\phi(R) L \phi(R) L\phi(R)\right)\geq 0.$$ 

  Besides, on the set $\{R \in \mathcal M_{p,n}, \left\Vert R\right\Vert\leq \sqrt{z}\}$, the function $g$ is $\frac{2}{z^{3/2}}$-Lipschitz and convex. If we suppose first that $\left\Vert  X\right\Vert\leq \sqrt{n}$, we know from Lemma~\ref{lem:conc_somme} that the sum $f(X/\sqrt{n})= (h - g)(X/\sqrt{ n})$ is concentrated~:
  \begin{align}\label{eq:conc_trAQ_X_borne}
    \tr A Q \in \tr A\mathbb E[Q] \pm 2Ce^{-(\frac{z^{3/2}\sqrt{n}}{4\sigma} \, \cdot \,)^q}.
  \end{align}

  Now, if we suppose that there exists a constant $K\geq 1$ such that $\left\Vert X\right\Vert\leq z^{\frac{3}{2}} \sqrt{Kn}$, then we have thanks to \eqref{eq:conc_trAQ_X_borne} and Lemma~\ref{lem:conc_transfo_lipschitz}~:
  \begin{align*}
    \tr A Q = \frac{1}{ K} \tr A \left( \frac{XX^T}{Kn} + \frac{z}{ K}I_p\right)^{-1}.
  \end{align*}
  And since $X/\sqrt{Kn}\overset{\mathbb E}{\propto_c}   \pm C e^{-(\sqrt{Kn}\, \cdot \, / \sigma )^q}$ the concentration \eqref{eq:conc_trAQ_X_borne}) and Lemma~\ref{lem:conc_transfo_lipschitz} entail~:
  \begin{align*}
    \tr A Q \in \tr A\mathbb E[Q] \pm 2Ce^{-  (\frac{\sqrt{nz^{3}}}{4\sigma} \, \cdot \,)^q}.
  \end{align*}
  In the general case, we consider the sequences of random matrices $X^{(m)}$ whose entries $X_{i,j}^{(m)}$, $1\leq i \leq p$, $1\leq j \leq n$, are defined as~:
  \begin{align*}
    X_{i,j}^{(m)}=\min\left(1,\frac{\sqrt{nz^3}m}{\left\Vert X\right\Vert_F}\right)X_{i,j}.
  \end{align*}
  By construction, $\left\Vert X^{(m)}\right\Vert_F \leq  \sqrt{nz^{3}}m$. Besides we know from Remark~\ref{rem:borne_esp_Z} that $\mathbb{E}\left\Vert X\right\Vert_F <\infty$ and for $m$ sufficiently large such that $2\mathbb{E}\left\Vert X\right\Vert_F \leq  \sqrt{nz^{3}}m$, we know from Proposition~\ref{pro:concentration_thresholdee} that $X^{(m)} \overset{\mathbb{E}}{\propto} C e^{-(\, \cdot \, / \sigma )^q}$. Thus, as above~:
  $$\tr AQ^{(m)} \in \tr A\mathbb E[Q^{(m)}] \pm 2Ce^{- (\frac{\sqrt{nz^{3}}}{4\sigma} \, \cdot \,)^q}, \ \ \ \text{ with :   } \ \ Q^{(m)}=\phi\left(  \frac{  X^{(m)}}{\sqrt n}\right).$$ If we let $m$ tend to $\infty$, $X^{(m)}$ tends in law to $X$ and thus $\tr A Q^{(m)}$ tends also in law to $\tr AQ$ and we recover the result of the proposition thanks to Proposition~\ref{pro:converg_loi} and Corollary~\ref{cor:moyenne_pivot}.
\end{proof}

We set in Subsections~\ref{sse:conc_vect_al} the comfortable environment where our subsequent results will easily unfold. We notably introduced most of the expressions of concentrated random variables needed in the subsequent section. We are thus now in position to provide results on the spectral distribution of sample covariance matrices of concentrated random vectors.

\section{Spectral distribution of the sample covariance}\label{sec:emp_cov}

\subsection{Setup and notations}\label{sse:set_up}

Reconsidering the example of the preamble of a data matrix $X = (x_1,\ldots,x_n) \in \mathcal M_{p,n}$, let us now list the different hypotheses that are needed to set the concentration and the estimation of the Stieltjes transform $m_F(z) = \frac1p \tr((S- z I_p)^{-1}$ (that will then give us the spectral distribution of the sample covariance matrix $S = \frac1n XX^T$). 
\begin{assump}\label{ass:independance_x_i}
  The random vectors $x_1,\ldots,x_n$ are independent.
\end{assump}
Note that we do not suppose that they are identically distributed. We will then note for every $i\in \{1,\ldots,n\}$:
\begin{align*}
  \Sigma_i = \mathbb E[x_ix_i^T],
\end{align*}
that we will call abusively the \textit{population covariance} of $x_i$, although it is not centered; it appears as the only relevant statistic of $X$ when one wants to estimate the spectral distribution of $\frac{1}{n}XX^T$.

All results presented below (with the exception of Propositions~\ref{pro:controle_delta-tilde_delta}, Theorem~\ref{the:deuxiemme_equi_det} and Corollary~\ref{cor:conc_steilt_trans_autour_tilde_Q_delta}) are valid for any choice of $p$ and $n$, but they will of course gain more value when $p$ and $n$ are sufficiently large for the convergence to arise. This is the reason why we will call this study \textit{quasi-asymptotic}. What we call a \textit{constant} is supposed to be independent of the two quasi asymptotic quantities $p$ and $n$ and a \textit{bounded} quantity is simply a quantity lower than a constant, we will note for instance $C = O(1)$. Given two sequences of reals $u_n$ and $a_n$, we will use the notations $u_n = O(a_{n})$ to specify that there exists a constant $C >0$ such that $|u_n | \leq C a_n$, for all $n,p \in \mathbb N$.

We place ourselves under an hypothesis of convex $q$-exponential concentration~:
\begin{assump}[Concentration of $X$]\label{ass:concentration}
  There exist three constants $C\geq e$, $c>0$ and $q>0$ such that:
  \begin{align*}
    X  \propto_c C e^{-\, \cdot \, ^q/c}.
  \end{align*}
\end{assump}
The parameters of the concentration $C$ and $c$ cannot be preserved throughout the different concentrations of the quantities that will be mentioned in this paper. However to lighten the expression of the result we will abusively keep the notations $C$ and $c$ to designate slight modifications of the original $C$ and $c$ by numerical constants.
{
\begin{remark}
   It is at this point important to recall that, in practice, we wish the above concentration to hold for realistic datasets $(x_1,\ldots, x_n)$ where the $x_i$ may belong different distribution classes, thereby entailing a possibly multi-modal distribution for the $x_i$'s, which has no reason to be concentrated (in particular, if the distance between the statistical means of the classes is larger that $O(\sqrt p)$, we can be sure concentration does not hold). Yet, by considering ``unsorted'' data vectors $x_1,\ldots,x_n$, even possibly with unfixed class sizes, one may still assume the weaker assumption that $X \propto^T_{\mathfrak S_n} C e^{- \cdot\,^q /c}$ where the action of $\mathfrak S_n$ is a permutation of the columns. Indeed, as all the functionals of $X$ considered in this section are actually functionals of $XX^T/n$ (thus invariant permutation of the columns of $X$), all the upcoming results remain valid. 

  Note besides that it is not sufficient to suppose that each $x_i$ is concentrated, since we do not have any satisfactory result allowing us to generalize the concentration to the whole matrix $X$ with an interesting observable diameter -- without further assumptions, the best one can hope is an observable diameter of size $O(\sqrt n)$ in the Lipschitz concentration case (and for the Frobenius norm on $\mathcal M_{p,n}$) \cite{LED05}; Proposition~\ref{pro:concentration_concatenation_vecteurs_independants_lipsch_et_convexe} in the Appendix~\ref{app:side_result_concentration} gives a weaker result in the case of convex concentration.
\end{remark}}
\begin{remark}\label{rem:hypothèse_concentration_avec_theoremes}
  Assumption~\ref{ass:concentration} is in particular verified for the distributions concerned by Theorems~\ref{the:concentration_vecteur_spherique},~\ref{the:concentration_vecteur_gaaussien} and~\ref{the:talagrand}. For $i\in\{1,\ldots, n\}$, if we note $\mu_i$, the law of $x_i$, it  can respect one of the two settings~:
  \begin{itemize}
    \item \text{Setting $1$ :} $\mu_i$ is a pushforward of the canonical normal distribution on $\mathbb{R}^d$, or of the sphere $\mathbb S^{d+1}$ through the mapping of a uniformly continuous function (see Proposition~\ref{pro:conc_fonctionnelles_holderienne}).
    \item \text{Setting $2$ :} $\mu_i$ is an affine pushforward of a product of distributions on $\mathbb{R}$ with support belonging to $\left[-1,1\right]$ (see Lemma~\ref{lem:transformations_faiblement_convexes}).
  \end{itemize}
  Note that $\mu_i$ can also be the sum of two independent distributions, each one coming from a different setting.
  In the literature, it is possible to find plenty of other cases of $q$-exponential concentration with $q \neq 2$ (and still with Euclidean norm!); their presentation goes far beyond the objectives of our paper and we invite the reader to refer to \cite{Led01} for more information. We will see at the end of this paper that our results are still valid on different types of practical data. Thus we come to think that convex $q$-exponential concentration is a general hypothesis that can be adopted in a large range of applications where the data have a satisfactory entropy compared to the diameter of their distribution (the link with entropy is explored once again in \cite{Led01}).
\end{remark}
Assumption~\ref{ass:concentration} gives us access to the results of the propositions and remarks of Subsection~\ref{sse:conc_vect_al} concerning the matrix $X$ but also those concerning the random vectors $x_i$ and the resolvent $Q$. To begin with, we give an immediate bounding result on the true convariance of $X$. Let us note the mean of $x_i$:
\begin{align*}
  m_i = \mathbb E[x_i] \in \mathbb R^p.
\end{align*}
\begin{proposition}\label{pro:covariance_bornee}
  $\forall i \in \{1, \ldots,n\}, \| \Sigma_i - m_i m_i^T \| = O(1)$.
\end{proposition}
\begin{proof}
  Let us consider $u\in \mathbb R^p$ such that $\|u\| \leq 1$. We know that $u^Tx_i \in u^Tm_i C e^{- \, \cdot \,^q/c}$ thus we know from Lemma~\ref{lem:conc_autour_prod} that:
  \begin{align*}
    (u^Tx_i)^2 \in (u^T m_i)^2 \pm C e^{- \, \cdot \,^q/c'} + C e^{- \, \cdot \,^{q/2}/c'{}'},
  \end{align*}
  for some constants $c',c'{}'>0$ depending numerically on $c$. We can then conclude thanks to a closely similar result to Proposition~\ref{pro:carcterisation_ac_moments_conc_q_expo} that:
  \begin{align*}
    \| \Sigma_i - m_i m_i^T \| = \sup_{\|u\|\leq 1} \left\vert\mathbb E \left[ (u^Tx_i)^2 - (u^T m_i)^2\right]\right\vert = O(1).
  \end{align*}
\end{proof}
\color{black}
We can rewrite the first result of Proposition~\ref{pro:conc_faible_trAQ} to get the concentration of the Stieltjes transform of $F$, the spectral distribution of $S$~:
\begin{align*}
  m_{F}(z)= \frac{1}{p}\tr Q(-z)&
  &\text{where}&
  &Q=Q(z)= \left(XX^T/n + z I_p\right)^{-1}.
\end{align*}
We saw in Proposition~\ref{pro:conc_faible_trAQ} that $Q \in \mathbb{E}Q \pm C e^{-(\sqrt{zn} \, \cdot \, )^q/c}$ in $(\mathcal M_{p,n}, \left\Vert \cdot\right\Vert_F)$, for some constants $C\geq e$, $c>0$. Since the linear form $M \in (\mathcal M_{p,n}, \left\Vert \cdot\right\Vert_F) \mapsto \frac{1}{ p}\tr M$ has an operator norm equal to $\frac{1}{p}\left\Vert I_p\right\Vert_F = 1/\sqrt{p}$, the linear concentration of $Q$ implies directly the concentration of the Stieltjes transform with a tail parameter of order $\sqrt{z^3np}$.
\begin{proposition}\label{pro:conc_Steiltjes}
  There exist two numerical constants $C\geq e$ and $c>0$ such that for every $z>0$, $m_{F}(-z) \in \frac{1}{p}\tr \mathbb{E}Q \pm C e^{- (\sqrt{z^3pn} \, \cdot \,)^q/c}$.
\end{proposition}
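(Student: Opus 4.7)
The plan is essentially to read off the result from two earlier ingredients, so the proof should be quite short. I would proceed as follows.

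First I would invoke Proposition~\ref{pro:concentration_de_X}, which tells us that the full data matrix $X$ satisfies $X \propto_c C e^{-\,\cdot\,^q/c}$ in $(\mathcal{M}_{p,n},\|\cdot\|_F)$. This puts us exactly in the hypothesis of Proposition~\ref{pro:conc_faible_trAQ}, which immediately yields the linear concentration
\begin{align*}
  Q \;\in\; \mathbb{E}Q \;\pm\; 2C\,\exp\!\left(-\left(\tfrac{\sqrt{z^3 n}}{c}\,\cdot\right)^{q}\right)
  \quad \text{in }(\mathcal{M}_p,\|\cdot\|_F),
\end{align*}
after harmlessly absorbing the constant $4$ into $c$.

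Next, I would unpack what this linear concentration says for the particular functional we care about. By Remark~\ref{rem:indentification_forme_lin_vect}, linear forms on $(\mathcal{M}_p,\|\cdot\|_F)$ are of the form $M \mapsto \tr(AM)$ with dual norm $\|A\|_F$. The Stieltjes transform is $m_F(-z) = \frac{1}{p}\tr Q = \frac{1}{p}\tr(I_p Q)$, corresponding to the functional associated with $A = \frac{1}{p}I_p$, whose Frobenius dual norm is $\|\tfrac{1}{p}I_p\|_F = \frac{1}{\sqrt{p}}$. Applying Definition~\ref{def:deterministic equivalent} to the unit-norm functional $\sqrt{p}\cdot\tfrac{1}{p}\tr(\cdot)$ yields
\begin{align*}
  \sqrt{p}\,\tfrac{1}{p}\tr Q \;\in\; \sqrt{p}\,\tfrac{1}{p}\tr \mathbb{E}Q \;\pm\; 2C\,\exp\!\left(-\left(\tfrac{\sqrt{z^3 n}}{c}\,\cdot\right)^{q}\right),
\end{align*}
and rescaling the variable (using the trivial equivalence in Remark~\ref{rem:compatibilite_avec_def_2}) by the factor $1/\sqrt{p}$ converts the tail parameter from $\sqrt{z^3 n}$ to $\sqrt{z^3 n p}$, giving exactly the claimed concentration.

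There is essentially no obstacle: the only point requiring a moment's care is the bookkeeping of the operator-norm factor $1/\sqrt{p}$ when moving from Frobenius-level concentration of $Q$ to the scalar quantity $\frac{1}{p}\tr Q$. This is precisely what accounts for the improvement of the tail parameter from $\sqrt{z^3 n}$ to $\sqrt{z^3 n p}$, and it matches the intuition that averaging $p$ dependent but jointly concentrated quantities gains an extra factor $\sqrt{p}$ in the observable diameter, in line with the remark on norm degrees made just before the statement.
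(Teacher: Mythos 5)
Your proposal is correct and follows essentially the same route as the paper: it deduces the Frobenius-norm linear concentration of $Q$ from Proposition~\ref{pro:concentration_de_X} via Proposition~\ref{pro:conc_faible_trAQ}, and then exploits the fact that the linear form $M\mapsto \frac{1}{p}\tr M$ has dual norm $\|\tfrac1p I_p\|_F=1/\sqrt{p}$ to gain the extra factor $\sqrt{p}$ in the tail parameter. The only implicit bookkeeping (passing between $\propto_c$ and $\overset{\mathbb{E}}{\propto}_c$ and absorbing numerical factors into $C$ and $c$) is exactly what the paper itself does.
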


To complete the lacuna of the concentration notion that does not give any information on the order of a random vector $X$, as concentrated it could be, one needs to give restrictions to the size of the quantities $\left\Vert m_i\right\Vert$, $1\leq i \leq n$. It appears that the means $m_i$ can be of great amplitude ($\|m_i\| = O(\sqrt p)$), as long as their main direction is the same. We will thus decompose each $m_i$ followingly:
\begin{align*}
  m_i = \mathring m_i + s,
\end{align*}
and we place ourselves under the assumption:


\begin{assump}\label{ass:cosmetique}
  $\forall i \in \{1, \ldots, n\} \ : \ \mathbb{E}\left\Vert x_i \right\Vert = O(\sqrt p)$ and $\|\mathring m_i\| = O(1)$\footnote{
  One can also assume that there exit a finite number of signals $s_1,\ldots, s_k$ with $k = O(1)$ such that $\forall i \in \{1,\ldots, n\}$, $\exists a  \in \{1,\ldots, k\}$ satisfying $\mathbb E[x_i] = \mathring m_i + s_a$ (and that all the $s_a$ are sufficiently reprensented).}.
\end{assump}
However, thanks to Corollary~\ref{cor:tao}, it is possible to make a simpler hypothesis concerning directly the deterministic vectors $\mathring m_i$ and $m$ if we suppose that $q\geq 2$ (then $p^{1/q}\leq \sqrt{p}$).
\begin{assbis}
\   $q\geq 2$, $\left\Vert s\right\Vert = O(\sqrt p)$ and $\sup_{1\leq i \leq n} \left\Vert \mathring m_i\right\Vert = O(1)$.
\end{assbis}
Assumption~\ref{ass:cosmetique} (or $3$ bis) allows us to control $\tr \Sigma_i$.
\begin{proposition}\label{pro:borne_tr_Sigma_l}
  For any $l\in\{1,\ldots, n\}$, $\tr \Sigma_i=O(p)$.
\end{proposition}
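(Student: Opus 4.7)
The plan is to translate the bound on $\tr\Sigma_l$ into a bound on $\mathbb{E}\|y_l\|^2$ and then control the latter using the concentration hypothesis via two ingredients: the variance bound that follows from $q$-exponential concentration of the norm, and either of Assumptions~\ref{ass:cosmetique}/$3$ bis to handle the mean.

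First I would observe that, since $\Sigma_l = \mathbb{E}[y_l y_l^T]$, one has
\begin{align*}
  \tfrac{1}{p}\tr\Sigma_l = \tfrac{1}{p}\,\mathbb{E}[\|y_l\|^2] = \tfrac{1}{p}\bigl(\mathbb{E}\|y_l\|\bigr)^2 + \tfrac{1}{p}\,\mathrm{Var}(\|y_l\|).
\end{align*}
The map $y\mapsto \|y\|$ is $1$-Lipschitz and convex, so Assumption~\ref{ass:concentration} (convex concentration of $y_l$) together with Proposition~\ref{pro:concentration_concatenation_vecteurs_independants_lipsch_et_convexe} gives $\|y_l\|\in \mathbb{E}\|y_l\| \pm Ce^{-\,\cdot\,^q/c}$, with constants $C,c$ independent of $p$. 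By the moment characterization of $q$-exponential concentration (Proposition~\ref{pro:carcterisation_ac_moments_conc_q_expo} for $r\geq q$, together with Remark~\ref{rem:borne_moments_r<q} for $r<q$), $\mathrm{Var}(\|y_l\|)$ is bounded by a constant independent of $p$, so the second term above vanishes as $p\to\infty$.

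It remains to show that $(\mathbb{E}\|y_l\|)^2/p$ is bounded. Under Assumption~\ref{ass:cosmetique}, this is immediate from the hypothesis $\mathbb{E}\|y_l\|/\sqrt p\leq K$. Under Assumption $3$ bis, where only $\|\bar y_l\|/\sqrt p$ is assumed bounded, I would invoke the linear concentration of $y_l$ around $\bar y_l = \mathbb{E}[y_l]$ (convex concentration implies linear concentration, since linear forms are convex and Lipschitz). Corollary~\ref{cor:tao} then yields
\begin{align*}
  \mathbb{E}\|y_l - \bar y_l\|\ \leq\ C'\, \eta_{\|\cdot\|}^{1/q}\ =\ C' p^{1/q}\ \leq\ C'\sqrt p,
\end{align*}
where the last inequality uses $q\geq 2$. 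A triangle inequality then gives $\mathbb{E}\|y_l\|\leq C'\sqrt p + \|\bar y_l\|\leq C''\sqrt p$, concluding the argument.

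The only delicate point is the matching of exponents in the second case: Corollary~\ref{cor:tao} only yields the correct order $\sqrt p$ for the norm degree when $q\geq 2$, which is precisely why Assumption $3$ bis includes this restriction. No other step is technically difficult; everything reduces to combining standard facts already proved in Section~\ref{sse:conc_vect_al}.
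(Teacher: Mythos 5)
Your proof is correct and follows essentially the same route as the paper: the identical decomposition $\tr\Sigma_l=\mathbb{E}\|y_l\|^2=(\mathbb{E}\|y_l\|)^2+\mathrm{Var}(\|y_l\|)$, with the variance bounded by a constant via the $q$-exponential concentration of the ($1$-Lipschitz, convex) norm functional and the squared mean controlled by Assumption~\ref{ass:cosmetique}. Your explicit treatment of the Assumption $3$ bis case via Corollary~\ref{cor:tao} and $p^{1/q}\leq\sqrt p$ for $q\geq 2$ is exactly the reduction the paper sketches just before stating the proposition.
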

\begin{proof}
  Since $\left\Vert x_i\right\Vert \in C e^{-\, \cdot \,^q/c}$, we know from Proposition~\ref{pro:carcterisation_ac_moments_conc_q_expo} that~:
  \begin{align*}
      \tr \Sigma_i = \mathbb{E}\left[\left\Vert x_i\right\Vert^2\right] = \left(\mathbb{E}\left\Vert x_i\right\Vert\right)^2 + \mathbb{E}\left(\left\Vert x_i\right\Vert -\mathbb{E}\left\Vert x_i\right\Vert\right)^2 \leq C p,
  \end{align*}  
  for some constant $C>0$.
\end{proof}

 \subsection{Estimation of the spectral distribution of the sample covariance}\label{sse:est_dist_spect_cov_empi}

We know from Proposition~\ref{pro:conc_Steiltjes} that the Stieltjes transform is concentrated in $\mathbb{R}$, we expect now to find a deterministic quantity localizing this concentration. The deterministic equivalent we presented in the preamble can inspire us for a choice of a deterministic equivalent in this more general setting, assuming Assumptions~\ref{ass:independance_x_i},~\ref{ass:concentration} and~\ref{ass:cosmetique} (or $3$ bis). 
Let us note
\begin{align*}
  \tilde Q^{\delta}= Q_{\Sigma^{\delta}} =\left(\Sigma^{\delta} + z I_p\right)^{-1}&
  &\text{with}&
  &\Sigma^{\delta}=  \frac{1}{n}\sum_{i=1}^n \frac{\Sigma_i}{1+\delta_i}&
  &\text{and}&
  &z>0 
 \end{align*}
is a deterministic equivalent for the resolvent $Q(z)$ if $\delta=(\delta_1,\ldots, \delta_n)\in \mathbb{R}^n$ is chosen correctly.

Following the calculus of the preamble in this more general case, one gets~:
\begin{align*}
  &\tilde Q^{\delta} - \mathbb{E}Q =  \mathbb{E}\left[\Delta + \epsilon \right]\  \\
  & \text{with :} \ \ \ \left\{
  \begin{aligned}
    &\Delta = \frac{1}{n}\sum_{i=1}^n\left(\frac{1}{1+x_i^T Q_{-i} x_i/n} - \frac{1}{1+\delta_i}\right)Q_{-i}x_ix_i^T \tilde{Q}_{\delta} \\
    & \epsilon =  \frac{1}{n^2}\sum_{i=1}^n \frac{\mathbb{E}\left[Q_{-i}x_ix_i^TQ\Sigma_i\tilde{Q}_{\delta}\right]}{1+\delta_i},
  \end{aligned}
  \right. 
\end{align*}
where we took advantage of the independence between $Q_{-i}$ and $x_i$ to say that $\mathbb{E}[Q_{-i}\Sigma_i\tilde{Q}_{\delta}/(1+\delta_i)]=\mathbb{E}[Q_{-i}x_ix_i^T\tilde{Q}_{\delta}/(1+\delta_i)]$ (recall that $Q = (\frac{1}{n}XX^T - \frac{1}{n} x_i x_i^T  z I_p)^{-1}$. The two matrices $\Delta_i$ and $\epsilon_i$ will be used several times in our proof arguments; we thus invite the reader to remember their definition. The form of $\Delta_i$ entices us to set
$$\delta= \left(\frac{1}{n}\tr (\Sigma_i\mathbb{E}Q_{-i})\right)_{1\leq l \leq k}$$
to show that $\tilde Q^{\delta}$ is a deterministic equivalent for $Q$. We will show afterwards that the same result holds for $\tilde Q^{\delta'}$ with $\delta' \in \mathbb{R}^+$ chosen as a solution of the system (see Proposition~\ref{pro:definition_delta_recursive} for the validity of this definition)~:
$$\delta'_i=\frac{1}{n}\tr\left(\Sigma_i \left(\frac{1}{n}\sum_{\genfrac{}{}{0pt}{2}{j=1}{j\neq i}}^n \frac{\Sigma_j}{1+\delta'_j}+z I_p\right)^{-1}\right) \ \ \ 1\leq l \leq k.$$

The first choice $\delta$ can seem unsatisfactory because it relies on the computation of $\mathbb{E}Q_{-i}$ which is uneasy to treat. On the contrary, the second option $\delta' $ is much more interesting as it can be approximated by iteration of the fixed point equation as we will see in Proposition~\ref{pro:definition_delta_recursive}. In particular, this second choice reveals that the deterministic equivalent can be chosen in a way that it only depends on $z$ and on the matrices $\Sigma_i$, $1\leq i\leq n$ as will be fully explained in Remark~\ref{rem:loi_spectrale_distr_gaussi}.

\subsubsection{Design of a first deterministic equivalent}
Let us first show the concentration of the random variable $x_i^TQ_{-i}x_i/n$ around its mean $\delta_i$. To simplify the concentration bounds, we introduce the real $1>z_0>0$, and from now on, $z$ is supposed to be greater than $z_0$. To begin with, note that $\forall i \in \{1,\ldots,n\}$, $\delta_i$ is bounded.
\begin{lemma}\label{lem:controle_QC}
  For any $i\in \{1, \ldots, n\}$, $\delta_i = \frac{1}{n}\mathbb E [x_i^T Q_{-i} x_i]\leq \frac{\gamma}{z_0}$.
\end{lemma}
\begin{proof}
  We know from Lemma~\ref{lem:borne_trace_norme_spec} and Assumption~\ref{ass:cosmetique} that : 
  \begin{align}\label{eq:maj_delta'}
    \left\vert \delta_i\right\vert=\frac1n\left\vert \tr \mathbb{E}Q_{-i}\Sigma_i\right\vert  \leq \frac1n\mathbb{E}\left\Vert  Q_{-i}\right\Vert \tr \Sigma_i \leq \frac{\gamma }{z_0}.
  \end{align}
\end{proof}
\begin{proposition}\label{pro:conc_yqy}
  Given $z>z_0$, there exists two numerical constants $C,c>0$ such that~:
  \begin{align*}
    &x_i^TQ_{-i}x_i/n \in \delta_i \pm C e^{-(z_0n \, \cdot \,)^{\frac{q}{2}}/c} + Ce^{-(\sqrt{z_0^3n} \, \cdot \, /\bar{\gamma})^q/c},
  \end{align*}
  where $\gamma=\frac{p}{n}$, and $\bar \gamma= \gamma + 1 \geq \max(\gamma, 1)$. 
\end{proposition}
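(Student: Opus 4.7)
My plan is to split
\[
  \frac{y_l^T Q_{-y_l} y_l}{n} - \delta_l \;=\; \underbrace{\Bigl(\frac{y_l^T Q_{-y_l} y_l}{n} - \frac{\tr(\Sigma_l Q_{-y_l})}{n}\Bigr)}_{(\mathrm{i})} \;+\; \underbrace{\Bigl(\frac{\tr(\Sigma_l Q_{-y_l})}{n} - \delta_l\Bigr)}_{(\mathrm{ii})},
\]
treat $(\mathrm{i})$ as a quadratic form in the concentrated vector $y_l$ with a conditionally-deterministic coefficient matrix, and $(\mathrm{ii})$ as a linear functional of the concentrated resolvent $Q_{-y_l}$, then recombine via Remark~\ref{rem:comp_conc_aut_somme}. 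The two pieces land naturally on Theorem~\ref{the:conc_forme_quadrat} and Proposition~\ref{pro:conc_faible_trAQ} respectively.

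For $(\mathrm{i})$, I would condition on $Q_{-y_l}$, which is independent of $y_l$. The matrix $A\equiv Q_{-y_l}/n$ is then deterministic; Lemma~\ref{lem:controle_Q} provides $\Vert A\Vert\le 1/(nz_0)$; Assumption~\ref{ass:concentration} ensures $y_l$ remains convexly $q$-exponentially concentrated conditionally (independence preserves the conditional law); and Assumption~\ref{ass:cosmetique} gives $\mathbb E\Vert y_l\Vert\le C\sqrt p$. The paper's convention, as confirmed by the proof of Lemma~\ref{lem:controle_QC}, is $\mathbb E[y_l y_l^T]=\Sigma_l$, so Theorem~\ref{the:conc_forme_quadrat} produces a bound whose two tail parameters depend only on the deterministic upper bound on $\Vert Q_{-y_l}\Vert$; averaging over $Q_{-y_l}$ then yields
\[
  \frac{y_l^T Q_{-y_l} y_l}{n} \in \frac{\tr(\Sigma_l Q_{-y_l})}{n} \;\pm\; 2Ce^{-(z_0 n\cdot/c)^{q/2}} + 2Ce^{-(z_0 \sqrt n\cdot/(c\sqrt\gamma))^q}.
\]
Using $z_0\le 1$ and $\bar\gamma\ge\sqrt\gamma$, one checks $z_0\sqrt n/\sqrt\gamma \ge \sqrt{z_0^3 n}/\bar\gamma$, so the $q$-exponential term is absorbed into $Ce^{-(\sqrt{z_0^3 n}\cdot/(c\bar\gamma))^q}$.

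For $(\mathrm{ii})$, the matrix $X_{-i}$ (with $i$ the index of $y_l$) is a linear contraction of $X$ and thus also convexly $q$-exponentially concentrated by Proposition~\ref{pro:concentration_de_X}, so Proposition~\ref{pro:conc_faible_trAQ} applies to $Q_{-y_l}$ and gives $Q_{-y_l}\in\mathbb E Q_{-y_l}\pm 2Ce^{-(\sqrt{z_0^3 n}\cdot/c)^q}$ in $(\mathcal M_p,\Vert\cdot\Vert_F)$. The linear form $v:M\mapsto \tfrac1n\tr(\Sigma_l M)$ has dual Frobenius norm $\Vert\Sigma_l\Vert_F/n\le(\tr\Sigma_l)/n\le C\gamma\le C\bar\gamma$, using $\Vert\Sigma_l\Vert_F\le\tr\Sigma_l$ (valid since $\Sigma_l\succeq 0$) and Proposition~\ref{pro:borne_tr_Sigma_l}. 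Definition~\ref{def:deterministic equivalent} therefore yields $\tfrac1n\tr(\Sigma_l Q_{-y_l})\in\delta_l\pm 2Ce^{-(\sqrt{z_0^3 n}\cdot/(c\bar\gamma))^q}$, the pivot being $\delta_l$ by the very definition of $\delta_l$.

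Summing $(\mathrm{i})$ and $(\mathrm{ii})$ by Remark~\ref{rem:comp_conc_aut_somme} and absorbing the numerical factors into fresh $C,c$ concludes. The main subtlety driving the two-step structure is that Theorem~\ref{the:conc_forme_quadrat} naturally centers $y_l^T Q_{-y_l} y_l$ at the \emph{random} quantity $\tr(\Sigma_l Q_{-y_l})$ rather than at the deterministic pivot $\delta_l$; the bridge between them is precisely the linear concentration of the resolvent supplied by Proposition~\ref{pro:conc_faible_trAQ}. The rest is dimensional bookkeeping to turn the raw prefactors $\Vert A\Vert$, $\mathbb E\Vert y_l\Vert$, $\Vert\Sigma_l\Vert_F/n$ into the $(z_0 n)^{-1/2}$ and $\bar\gamma/\sqrt{z_0^3 n}$ rates of the statement.
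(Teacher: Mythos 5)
Your proposal is correct and follows essentially the same route as the paper: the same splitting of $\frac1n y_l^TQ_{-y_l}y_l-\delta_l$ into the quadratic-form fluctuation conditioned on $X_{-y_l}$ (handled by Theorem~\ref{the:conc_forme_quadrat} with $\Vert Q_{-y_l}/n\Vert\leq 1/(nz_0)$ and $\mathbb{E}\Vert y_l\Vert\leq C\sqrt p$) and the linear fluctuation of $\frac1n\tr(\Sigma_l Q_{-y_l})$ (handled by Proposition~\ref{pro:conc_faible_trAQ} with $\Vert\Sigma_l\Vert_F\leq\tr\Sigma_l\leq Cp$), recombined by splitting the threshold. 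The bookkeeping absorbing $\sqrt\gamma$ and $\gamma$ into $\bar\gamma$ matches the stated rates.
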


\begin{proof}
  This proposition looks like Theorem~\ref{the:conc_forme_quadrat} and Proposition~\ref{pro:conc_faible_trAQ} applied with independent random objects, respectively $x_i$ and $Q_{-i}$ that we know to be concentrated thanks to the initial concentration on $X$ given by Assumption~\ref{ass:concentration}. The independence allows us to consider $Q_{-i}$ as deterministic when we bound a probability involving $x_i$ and conversely.
  \begin{align*}
    &\mathbb{P}\left(\left\vert \frac{1}{n}x_i^T Q_{-i}x_i - \frac{1}{n} \tr (\Sigma_i\mathbb{E} Q_{-i})\right\vert \geq t\right)\\
    &\hspace{1cm}\leq \mathbb{E}\left[\mathbb{P}\left(\left\vert x_i^T Q_{-i}x_i - \tr \Sigma_i Q_{-i}\right\vert \geq \frac{nt}{2} \ | \ X_{-x_i}\right)\right] \\
    &\hspace{1.5cm}+\mathbb{P}\left(\left\vert \frac{1}{n}\tr \Sigma_i (Q_{-i} - \mathbb{E}Q_{-i})\right\vert \geq \frac{t}{2}\right) \\
    &\hspace{1cm}\leq  C e^{-(znt)^{\frac{q}{2}}/c} + Ce^{-(zn^{\frac{3}{2}} t/p)^q/c} + Ce^{-((zn)^\frac{3}{2} t/p)^q/c},
  \end{align*}
  for some $C,c>0$. We employed Theorem~\ref{the:conc_forme_quadrat} together with Lemma~\ref{lem:controle_Q} to control the variation of $x_i^TQ_{-i}x_i\ | \ X_{-x_i}$ ; and to control the variation of $\frac{1}{n}\tr \Sigma_i Q_{-i}$, we employed Proposition~\ref{pro:conc_faible_trAQ} thanks to the fact that $\left\Vert \Sigma_i\right\Vert_F\leq \tr \Sigma_i = \mathbb{E}\left\Vert x_i\right\Vert^2\leq p$.

\end{proof}
We are interested in bounding the first centered moments of $\frac{1}{n}x_i^T Q_{-i}x_i$, thus the exponent $m$ is considered to be a constant of the problem and that simplifies the expression of the bound. Assumption~\ref{ass:cosmetique} allows us to simplify the bound~:
\begin{proposition}\label{pro:carcterisation_ac_moments_conc_q_expo_yqy}
  
  Given $l \in \left\{1, \ldots, k\right\}$ and $r>0$, there exists a constant $C\geq e$ such that :
  \begin{align*}
    &\mathbb{E}\left[\left\vert \frac{x_i^TQ_{-i}x_i}{n} - \delta_i\right\vert^r\right] \leq C\left(\frac{\bar \gamma^2}{z_0^3 n}\right)^{r/2}.
  \end{align*}
\end{proposition}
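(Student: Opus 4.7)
The plan is to deduce the moment bound as a direct consequence of the concentration inequality established in Proposition~\ref{pro:conc_yqy} via the moment characterization of $q$-exponential concentrations given in Proposition~\ref{pro:carcterisation_ac_moments_conc_q_expo}. Concretely, Proposition~\ref{pro:conc_yqy} furnishes
\begin{align*}
   \frac{y_l^T Q_{-y_l} y_l}{n} \in \delta_l \pm C e^{-(z_0 n \,\cdot\,)^{q/2}/c} + C e^{-(\sqrt{z_0^3 n}\,\cdot\,/\bar\gamma)^q/c},
\end{align*}
so the random variable of interest is concentrated around $\delta_l$ with two tail contributions: a $(q/2)$-exponential piece with tail parameter of order $1/(z_0 n)$ and a $q$-exponential piece with tail parameter of order $\bar\gamma/\sqrt{z_0^3 n}$.

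First I would apply Proposition~\ref{pro:carcterisation_ac_moments_conc_q_expo} separately to each exponential contribution. Treating the two pieces as controlling variables (they dominate the tail independently by Lemma~\ref{lem:borne_moment_polynome_de_variables_dependantes} applied to $|Z-\delta_l|^r$), one obtains, for $r$ sufficiently large,
\begin{align*}
   \mathbb{E}\left[\left\vert \frac{y_l^T Q_{-y_l} y_l}{n} - \delta_l \right\vert^r\right]
   \leq C_r\left(\frac{1}{z_0 n}\right)^{r} + C_r\left(\frac{\bar\gamma}{\sqrt{z_0^3 n}}\right)^r,
\end{align*}
where the factor $C_r$ absorbs the gamma factors $\Gamma(2r/q+1)$ and $\Gamma(r/q+1)$, which are bounded for fixed $r$.

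Next, I would show that the first term is dominated by the second, up to a constant. Writing $(z_0 n)^{-r} = (z_0 n)^{-r/2}\cdot(z_0 n)^{-r/2}$ and $(\bar\gamma^2/(z_0^3 n))^{r/2} = (z_0 n)^{-r/2}\cdot(\bar\gamma^2/z_0^2)^{r/2}$, the desired inequality reduces to $z_0/n \leq \bar\gamma^2$, which holds since $z_0 \leq 1$, $n \geq 1$, and $\bar\gamma \geq 1$. Merging both contributions thus yields the announced bound $C(\bar\gamma^2/(z_0^3 n))^{r/2}$.

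Finally, for the regime $0 < r < q/2$ where Proposition~\ref{pro:carcterisation_ac_moments_conc_q_expo} does not directly apply (as explained in Remark~\ref{rem:borne_moments_r<q}), I would invoke Jensen's inequality: $\mathbb{E}[|Z-\delta_l|^r] \leq (\mathbb{E}[|Z-\delta_l|^{q/2}])^{2r/q}$, using the bound already established for the exponent $q/2$ to recover the claim with an adjusted constant $C$. No step poses a real obstacle here; this proposition is essentially a corollary of Proposition~\ref{pro:conc_yqy}, and the main technical point is merely the comparison between the two tail parameters, which is handled by the elementary inequalities $z_0 \leq 1 \leq \bar\gamma$ and $n \geq 1$.
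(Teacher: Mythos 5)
Your proposal is correct and follows essentially the same route as the paper: convert the two-piece concentration inequality of Proposition~\ref{pro:conc_yqy} into moment bounds, obtaining the two contributions $C/(z_0 n)^r$ and $C(\bar\gamma^2/(z_0^3n))^{r/2}$, and then observe that the first is dominated by the second because $z_0\leq 1\leq n,\bar\gamma$. The only (immaterial) differences are that the paper invokes Proposition~\ref{pro:borne_moments_produit_variables_concentrees} rather than Proposition~\ref{pro:carcterisation_ac_moments_conc_q_expo} to pass from tails to moments, and that it does not spell out the small-$r$ regime, which your Jensen argument handles correctly.
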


\begin{proof}
  It is a simple consequence of Proposition~\ref{pro:carcterisation_ac_moments_conc_q_expo} applied to the concentration of $\frac{1}{n}x_i^TQ_{-i}x_i$ given by Proposition~\ref{pro:conc_yqy}~:
  \begin{align*}
    \mathbb{E}\left[\left\vert \frac{x_i^TQ_{-i}x_i}{n} - \frac{1}{n} \tr (\Sigma_i Q_{-i})\right\vert^r\right] \leq \left(\frac{C\bar \gamma^2}{z_0^3 n}\right)^{r/2} +\frac{C}{(z_0n)^r},
  \end{align*}
  and we recover the bound of the proposition thanks to~:
  \begin{align*}
    \left(\frac{C\bar \gamma^2}{z_0^3 n}\right)^{r/2} \geq \frac{C}{(z_0n)^r}  \hspace{1cm} \text{(recall that $z_0\leq 1$)}.
  \end{align*}
\end{proof}

We then just need a last lemma to be able to bound the spectral norm of $\Sigma_i \tilde Q$. It is actually for this purpose that we had to assume that the $\mathring m_i = \mathbb E[x_i]$ were bounded.
{\ajout \begin{lemma}\label{lem:borne_Ci_Q}
  $\sup_{1\leq i \leq n} \| \Sigma_i \tilde Q \| \leq \frac{C\bar \gamma}{ z_0}$.
\end{lemma}
\begin{proof}
  Let us note $\mathring \Sigma_i = \Sigma_i - (m_i+s)(m_i+s)^T\geq 0$ (as a symmetric matrix), $m = \frac{1}{n}\sum_{i=1}^n \frac{m_i}{\sqrt{1+\delta_i}}$ and $\nu = \frac{1}{n}\sum_{i=1}^n \frac{1}{\sqrt{1+\delta_i}}$, we can decompose:
  \begin{align*}
    \frac{1}{n}\sum_{i=1}^n\frac{\Sigma_i}{1+\delta_i}
    &= \frac{1}{n}\sum_{i=1}^n\frac{\mathring \Sigma_i}{1+\delta_i} + \frac{1}{n}\sum_{i=1}^n\frac{m_i m_i^T}{1+\delta_i} - mm^T + (m+\nu s)(m + \nu s)^T,
  \end{align*}
  where we note that $\frac{1}{n}\sum_{i=1}^n\frac{m_i m_i^T}{1+\delta_i} - mm^T \geq 0$ (as a covariance matrix). Therefore:
  \begin{align*}
    Q_{-s}^\delta \equiv \left( \frac{1}{n}\sum_{i=1}^n\frac{\mathring \Sigma_i}{1+\delta_i} + \frac{1}{n}\sum_{i=1}^n\frac{m_i m_i^T}{1+\delta_i} - mm^T + z I_p\right)^{-1} \geq 0.
  \end{align*}
  And we can employ the Schur formula to set:
  \begin{align*}
    (m+\nu s)^T\tilde Q^\delta (m+\nu s)  = \frac{ (m+\nu s)^T\tilde Q_{-s}^\delta (m+\nu s)}{1 + (m+\nu s)^T\tilde Q_{-s}^\delta (m+\nu s)} <1.
  \end{align*}
  Thus, since (i) $\frac{1}{\nu} = O(\sqrt{\bar \gamma})$ (thanks to Lemma~\ref{lem:controle_QC}), (ii) $\|\mathring m \| = O(1)$ (see Assumption~\ref{ass:cosmetique} or $3$ bis) and (iii) $\|\tilde Q^\delta \| \leq O (1/z_0)$ (see Lemma~\ref{lem:controle_Q}):
  \begin{align*}
    s^T\tilde Q^\delta s \leq \frac{2}{\nu^2}(m+\nu s)^T\tilde Q^\delta (m+\nu s)  + \frac{2}{\nu^2} m^T\tilde Q^\delta m  = O (\bar \gamma /z_0).
  \end{align*}
 The same way, $m_i^T\tilde Q^\delta s, m_i^T\tilde Q^\delta m_i = O(\bar \gamma /z_0)$, which allows us to conclude:
  \begin{align*}
      \| \Sigma_i \tilde Q\| \leq \| \mathring \Sigma_i \tilde Q\| + \| (m_i+s)^T \tilde Q(m_i+s)\| = O \left( \frac{\bar \gamma}{z_0}\right).
  \end{align*}
\end{proof} }

To show the concentration of the Stieltjes transform around $\frac{1}{p}\tr \tilde Q^\delta$ (we already know from Proposition~\ref{pro:conc_Steiltjes} that it concentrates around $\frac{1}{p}\tr \mathbb{E}Q$) but also to bound $\left\Vert \delta-\delta'\right\Vert$, it is important to control $\Vert \mathbb{E}Q - \tilde Q^{\delta}\Vert$.

\begin{proposition}\label{pro:borne_EQ_m_tQ}
    $\left\Vert \mathbb{E}Q - \tilde Q^{\delta} \right\Vert = O \left(a_{n,p}  \right)$, with $a_{n,p} \equiv \frac{\bar \gamma^{3/2}}{z_0^4}\sqrt{\frac{ \log(n)}{n}}$ (recall that $\gamma = \frac{p}{n}$ and $\bar \gamma = 1+\gamma$).
\end{proposition} 
{\ajout \begin{proof}
  It is sufficient to bound for any vectors $u,v\in \mathbb{R}^p$ of unit norm the quantity $\vert u^T (\mathbb{E}Q - \tilde Q^{\delta})v\vert$.
  Recall from the heuristic approach displayed at the beginning of this subsection the matrices $\Delta$ and $\epsilon$ verifying~:
  \begin{align*}
    u^T(\mathbb{E}Q - \tilde Q^{\delta})u = u^T \Delta v
    + u^T \epsilon v.
  \end{align*}
  Let us compute:
  \begin{align*}
    \left\vert u^T\epsilon v\right\vert
    &= \frac{1}{n}   \left\vert\sum_{i=1}^n\mathbb E \left[ \frac{ u^T(Q -Q_{-i} ) \Sigma_i\tilde Q^\delta  v }{1 + \delta_i}\right]\right\vert
      = \frac{1}{n^2}   \left\vert\sum_{i=1}^n\mathbb E \left[ \frac{ u^TQ x_ix_i^TQ_{-i}  \Sigma_i\tilde Q^\delta  v }{1 + \delta_i}\right]\right\vert \\
      & \leq \frac{1}{\sqrt n} \sqrt{   \frac{1}{n}\sum_{i=1}^n\mathbb E \left[ u^TQ x_ix_i^TQu D_i\right] \frac{1}{n}\sum_{i=1}^n\mathbb E \left[ v^T\tilde Q^\delta \Sigma_iQ_{-i}  \Sigma_i\tilde Q^\delta  v \right] },
    \end{align*}
    with $D_i = \frac{1}{n}x_i^TQ_{-i}x_i$. We thus know from Lemma~\ref{lem:borne_Ci_Q} that:
    \begin{align*}
       \left\vert u^T\epsilon v\right\vert \leq \frac{\bar \gamma}{\sqrt{nz_0^3}}  \sqrt{\frac{1}{n}\mathbb E[u^TQXDXQu]} \leq \frac{\sqrt{ \mathbb E[\| D \|]}\bar \gamma}{z_0^{2} \sqrt n}  .
     \end{align*} 
    We know from Lemma~\ref{pro:conc_yqy} that:
    \begin{align*}
       D_i \in \delta_i \pm Ce^{-(\sqrt{z_0^3n} \, \cdot \, /\bar{\gamma})^{q/2}/c},
     \end{align*} 
    we can then conclude with an analog to Corollary~\ref{cor:tao} that 
    $$\mathbb E[\|D\|] = \mathbb E \left[\sup_{i\in[n]}|D_i|\right] \leq \mathbb E[\|\delta\|_{\infty}] + O \left(\log(n)^{2/q}\sqrt{\frac{\bar \gamma}{nz_0^{3}}}\right) = O \left(\sqrt{\frac{ \bar \gamma}{z_0^{3}}}\right),$$
    and thus $u^T\epsilon v = O\left(\frac{\bar \gamma^{3/2}}{z_0^4\sqrt{n}}\right)$. 

    Besides, with Cauchy Schwartz inequality, we can bound:
\begin{align*}
      u^T\Delta v
      &= \frac{1}{n}  \left\vert \sum_{i=1}^n \mathbb E \left[    \frac{ u^T Q_{-i} x_i x_i^T\tilde Q^\delta v \left(\frac{1}{n} x_i^T Q_{-i} x_i - \delta_i \right)  }{\left(1 + \frac{1}{n} x_i^T Q_{-i} x_i\right) \left(1 + \delta_i\right)} \right] \right\vert\\
      &= \frac{1}{n}  \sqrt{ \mathbb E \left[  u^T Q X  E  X^T Qu\right] \sum_{i=1}^n\mathbb E\left[\frac{ v \tilde Q^\delta x_i x_i^T\tilde Q^\delta v }{1 + \delta_i} \right]},
    \end{align*}
    with $E =  \diag \left( \frac{1}{n} x_i^T Q_{-i} x_i - \delta_i \right)^2_{1\leq i \leq n}$. As previously, since 
    \begin{align*}
    E_i \in 0 \pm C e^{-(z_0^2n^2 \, \cdot \,)^{\frac{q}{4}}/c} + Ce^{-(z_0^3n \, \cdot \, /\bar{\gamma})^{q/2}/c},
  \end{align*}
  we can show that:
    \begin{align*}
      \mathbb E[\|E\|] \leq O \left( \frac{\log(n)^{4/q}}{z_0^2n^2}\right) +   \left(\log(n)^{2/q} \frac{\bar \gamma}{z_0^3 n}\right) = \left(\log(n)^{2/q} \frac{\bar \gamma}{z_0^3 n}\right).
    \end{align*}
    Therefore $u^T\Delta v = O(\frac{\bar \gamma\sqrt{\mathbb E[\|E\|]}}{z_0}) =  O(\frac{\bar \gamma^{3/2}}{z_0^2}\sqrt{\frac{ \log(n)}{n}})$.

  Putting the bounds on $u^T \Delta v$ and on $u^T \epsilon v$ together, we find the results of the proposition.
\end{proof}}
\color{black}
This last proposition together with Proposition~\ref{pro:conc_faible_trAQ} and Lemma~\ref{lem:comportement_pivot_equivalent_deterministe} implies that $\tilde Q^\delta$ is a deterministic equivalent for $Q$.
\begin{proposition}\label{pro:premier_equivalent_deterministe}
  $Q \in \tilde Q^\delta \pm Ce^{-(a_{n,p}\, \cdot \,)^q/c}$ in $(\mathcal  M_{p,n}, \left\Vert \cdot\right\Vert)$ (for some constant $C=O(1)$, $a_{n,p}$ is defined in Proposition~\ref{pro:borne_EQ_m_tQ}).
\end{proposition}
\begin{proof}
  We know from Proposition~\ref{pro:conc_faible_trAQ} that $Q \in \mathbb{E}Q \pm C e^{-(\sqrt{zn} \, \cdot \, )^q/c}$ in $(\mathcal M_{p,n}, \left\Vert \cdot\right\Vert_F)$.
  But since $\Vert \mathbb{E}Q - \tilde Q^\delta\Vert \leq O(a_{n,p})$, and $1/\sqrt{zn} = O(a_{n,p})$, Lemma~\ref{lem:comportement_pivot_equivalent_deterministe} entails the result of the proposition.
\end{proof}

  

  
\subsubsection{A second deterministic equivalent}

Let us introduce a substitute for $\delta$ that we note $\delta'$ and that will only depend on the means and covariances of the laws $\mu_l$ and on the cardinality coefficients $\frac{1}{n}$ of the different classes.
\begin{proposition}[Definition of $\delta'$]\label{pro:definition_delta_recursive}
  The system of equations~:
  \begin{align*}
    \forall i \in \{1,\ldots, n\} \ :&
    &\delta'_i=\frac{1}{n}\tr\left(\Sigma_i \left(\frac{1}{n}\sum_{\genfrac{}{}{0pt}{2}{j=1}{j \neq i}}^n \frac{\Sigma_j}{1+\delta'_j}+z I_p\right)^{-1}\right)   
  \end{align*}
  admits a unique solution in $\mathbb{R}_+^n$ that we note $\delta'$. 
\end{proposition}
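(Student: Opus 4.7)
My plan is to reformulate the system as a fixed-point equation $\delta = F(\delta)$ on $\mathbb{R}_+^k$, extract existence from Brouwer's theorem applied to an invariant compact convex set, and extract uniqueness from a strict-contraction argument in Thompson's part metric on the positive cone. The key structural point that makes uniqueness work is the regularization term $zI_p$ with $z>0$: it breaks the $1$-homogeneity that the sum $\sum_h (n_h/n)\Sigma_h/(1+\delta_h)$ alone would enjoy, and converts a mere non-expansion into a strict contraction.

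First I would define $F_l(\delta) = \frac{1}{n}\tr(\Sigma_l\, G(\delta)^{-1})$ with $G(\delta) = \sum_h \tfrac{n_h}{n}\tfrac{\Sigma_h}{1+\delta_h}+zI_p$, and observe that $G(\delta)\geq zI_p$ gives $G(\delta)^{-1}\leq z^{-1}I_p$, whence by Lemma~\ref{lem:borne_trace_norme_spec} and Proposition~\ref{pro:borne_tr_Sigma_l}, $0\leq F_l(\delta)\leq \tr(\Sigma_l)/(nz) =: M_l<\infty$. Since $F$ is continuous and sends the compact convex cube $\prod_l [0, M_l]$ into itself, Brouwer's fixed-point theorem provides a solution.

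For uniqueness I would switch variables to $g_l := 1+\delta_l$ and study $\Phi_l(g) = 1 + \frac{1}{n}\tr(\Sigma_l\, G(g)^{-1})$ with now $G(g) = \sum_h (n_h/n)\Sigma_h/g_h + zI_p$. Two structural facts drive the argument. First, $\Phi$ is monotone non-decreasing componentwise: increasing $g_h$ shrinks $G(g)$ in the positive-semidefinite order, hence enlarges $G(g)^{-1}$, hence increases $\Phi_l(g)$. Second, for any $\alpha > 1$, the matrix identity
\begin{align*}
G(\alpha g) \;=\; \tfrac{1}{\alpha} G(g) + z\bigl(1-\tfrac{1}{\alpha}\bigr)I_p \;>\; \tfrac{1}{\alpha}\,G(g)
\end{align*}
(strict in the p.s.d.\ sense, because $z>0$) inverts to $G(\alpha g)^{-1} < \alpha\, G(g)^{-1}$, which after tracing against $\Sigma_l$ and adding $1$ produces the strict subhomogeneity $\Phi_l(\alpha g) < \alpha\,\Phi_l(g)$. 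Now, if $g$ and $g^\star$ were two fixed points, I would set $\alpha^\star := \max\bigl(\max_l g_l/g^\star_l,\, \max_l g^\star_l/g_l\bigr) \geq 1$, finite because both vectors have entries $\geq 1$. Supposing $\alpha^\star > 1$, the componentwise bound $g \leq \alpha^\star g^\star$ combined with monotonicity and strict subhomogeneity would yield
\begin{align*}
g \,=\, \Phi(g) \,\leq\, \Phi(\alpha^\star g^\star) \,<\, \alpha^\star\, \Phi(g^\star) \,=\, \alpha^\star g^\star
\end{align*}
strictly in every coordinate, and symmetrically $g^\star < \alpha^\star g$, contradicting the very definition of $\alpha^\star$. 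Hence $\alpha^\star=1$, $g = g^\star$, and $\delta = \delta^\star$.

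The main obstacle I anticipate is the strict subhomogeneity step: it is precisely the ``$zI_p$'' regularization that lifts $\Phi$ out of the borderline $1$-homogeneous regime in which a continuum of fixed points could exist, and must be exploited with care to produce \emph{strict} matrix inequalities rather than merely weak ones. Every other ingredient --- continuity, invariance of the cube, monotonicity --- is essentially bookkeeping, traceable to Lemma~\ref{lem:controle_Q} and Proposition~\ref{pro:borne_tr_Sigma_l}.
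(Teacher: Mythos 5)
Your proof is correct, but it takes a genuinely different route from the paper's on both halves, so a comparison is worth making. For existence, the paper does not invoke Brouwer: it starts from the explicit super-solution $x_0=(\tr\Sigma_l/(nz))_l$, notes $I(x_0)\leq x_0$, and uses monotonicity of $I$ to get a decreasing, bounded-below sequence $(I^n(x_0))$ converging to a fixed point; this constructive argument is what later justifies the claim that $\delta'$ can be computed by iterating the fixed-point equation, which Brouwer alone does not give you. For uniqueness, your argument is essentially the canonical scalability proof for standard interference functions in the sense of \cite{YAT95} (monotonicity plus $\Phi(\alpha g)<\alpha\Phi(g)$ for $\alpha>1$, then a Thompson-metric extremal-ratio contradiction) --- arguably closer to the cited framework than what the paper actually does. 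The paper instead forms the weighted differences $\epsilon_l=\vert\nu_l-\delta'_l\vert/\sqrt{(1+\nu_l)(1+\delta'_l)}$, applies Cauchy--Schwarz to the trace, and uses the resolvent identity $\tilde Q_x\Sigma_x+z\tilde Q_x=I_p$ to show the contraction factor $\frac{1}{n}\tr(\Sigma_l\tilde Q_x\Sigma_x\tilde Q_x)/(1+x_l)$ is strictly less than $1$. The payoff of the paper's heavier route is quantitative: the intermediate bound \eqref{eq:maj_gamma-delta} is reused verbatim in Proposition~\ref{pro:controle_delta-tilde_delta} to control $\Vert\delta-\delta'\Vert_\infty$ at rate $1/\sqrt{n}$, whereas your argument, though cleaner, is purely qualitative. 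One small remark on your "key structural point": after the change of variables $g=1+\delta$, the strict inequality $\Phi_l(\alpha g)<\alpha\Phi_l(g)$ already follows from the additive constant $1<\alpha$ even when $\Sigma_l=0$ or when the traced matrix inequality degenerates to equality; the $zI_p$ term is indispensable for invertibility of $G$ and for the uniform bound $M_l$, but the strictness you need is delivered by the "$+1$" as much as by the regularization.
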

\begin{proof}
  The scheme of the proof follows the formalism of standard interference functions as presented in \cite{YAT95}. Following the ideas of Yates, we introduce the function :
  \begin{align*}
    I \ \ : \ \ 
    \begin{aligned}[t]
      & \ \ \ \mathbb{R}_+^n&&\longrightarrow&& \mathbb{R}_+^n \\
      &(\delta'_i)_{1\leq l \leq k}&&\longmapsto&& \frac{1}{n}\tr \left(\Sigma_i \tilde Q_{-i}^{\delta'}\right),
    \end{aligned}
  \end{align*}
  where $\tilde Q^\nu_{-i}=(\Sigma^\nu_{-i}+zI_p)^{-1}$, and $\Sigma^\nu_{-i}= \frac{1}{n}\sum_{\genfrac{}{}{0pt}{2}{j = 1}{j \neq i}}^n \frac{\Sigma_j}{1+\nu_j}$ for $\nu=(\nu_1,\ldots, \nu_n)\in \mathbb{R}^n$. Given $\nu,\eta \in \mathbb{R}^n$, we note $\nu\leq \eta$ iff $\forall i \in \{1,\ldots, n\}$, $\nu_i\leq \eta_i$. The function $I$ is increasing in the sense that~:
  \begin{align*}
    \nu\leq \eta \ \ \ \Longrightarrow  \ \ \ I(\nu) \leq I(\eta).
  \end{align*}
  This is simply due to the fact that for any $i\in \{1, \ldots, n\}$, $\Sigma_i$ is symmetric nonnegative definite and $\nu\mapsto \tilde Q^\nu$ is increasing (with the classical order relation defined on the set of symmetric matrices see \cite[Section~V]{Bha97} for more details). Besides, we know from Lemmas~\ref{lem:controle_Q} and~\ref{lem:borne_trace_norme_spec} that for any $\nu\in \mathbb{R}^n$, $I(\nu) \leq \frac{\tr \Sigma_i}{nz}$. Therefore the vector 
  $$\nu_0=\left(\frac{\tr \Sigma_1}{nz}, \cdots ,\frac{\tr \Sigma_n}{nz}\right)$$
  verifies $I(\nu_0)\leq \nu_0$. Then the monotonicity of $I$ implies that the sequence $(I^n(\nu_0))_{n\geq 0}$ is decreasing and since $I$ takes its values in $\mathbb{R}_+^n$, it converges to a fixed point $\delta'\geq 0$. 

  Let us now show the uniqueness of this fixed point. Let us suppose that there exists $\nu \neq \delta'$ such that $I(\nu)=\nu$. Given $i\in \{1, \cdots , n\}$, we have the identity~: 
  \begin{align*}
     \nu_i - \delta'_i
    & = \frac{1}{n} \tr \left(\frac{1}{n}\sum_{\genfrac{}{}{0pt}{2}{j=1}{j\neq i} }^n \frac{\gamma_j - \delta'_j}{(1 + \delta'_j)(1 + \gamma_j)}\Sigma_i \tilde Q^{\delta'}_{-i} \Sigma_j\tilde Q^\nu_{-i}\right),
  \end{align*}
  and if we introduce the vector $\varepsilon=(\varepsilon_l)_{1\leq l \leq k}$ defined as $\varepsilon_l=\frac{\left\vert \nu_i - \delta'_i\right\vert}{\sqrt{(1+\nu_i)(1+\delta'_i)}}$~:
  \begin{align}\label{eq:maj_gamma-delta}
    \left\vert \varepsilon_i\right\vert
    &=\frac{1}{n} \tr \left(\frac{1}{n}\sum_{\genfrac{}{}{0pt}{2}{j=1}{j\neq i} }^n \frac{\Sigma_i^{\frac 12} \tilde Q^{\delta'}_{-i} \Sigma_j^{\frac 12}}{\sqrt{(1 + \delta'_j)(1 + \delta'_i)}}\frac{\Sigma_j^{\frac 12} \tilde Q^\nu_{-i} \Sigma_i^{\frac 12}}{\sqrt{(1 + \gamma_j)(1 + \nu_i)}} \left\vert \varepsilon_j\right\vert\right) \nonumber\\
    & \leq \sqrt{\frac{1}{n}\frac{\tr \left(\Sigma_i \tilde Q^{\delta'}_{-i} \Sigma_{-i}^{\delta'}\tilde Q^{\delta'}_{-i}\right)}{1+\delta'_i} } \ \ \sqrt{\frac{1}{n}\frac{\tr \left(\Sigma_i \tilde Q^\nu_{-i} \Sigma_{-i}^\nu\tilde Q^\nu_{-i}\right) }{1+\nu_i} } \ \ \left\Vert \varepsilon\right\Vert_\infty.
  \end{align}
  Recall that by definition of the resolvent $\tilde Q^\sigma=Q_{\Sigma_{-i}^\sigma}$, we have the identity
  $$\tilde Q^\sigma_{-i} \Sigma_{-i}^\sigma + z \tilde Q^\sigma_{-i} = I_p.$$
  Thus, for any $\sigma\in \mathbb{R}_+^n$ such that $I(\sigma)=(\frac{1}{n}\tr (\Sigma_i \tilde Q_{-i}^\sigma))_{1\leq l \leq k}=\sigma$ and:
  \begin{align}\label{eq:borne_pour_delta_prime}
    \frac{1}{n}\tr \left(\Sigma_i \tilde Q_\sigma \Sigma_{-i}^\sigma\tilde Q_{-i}^\sigma\right) 
    &= \sigma_l- \frac{z}{n}\tr \left(\Sigma_i (\tilde Q_{-i}^\sigma)^2\right) \ \ <  \ 1+\sigma_l  .
  \end{align}
  Therefore, we know from the inequality~\eqref{eq:maj_gamma-delta}, true for every $i\in \{1, \ldots, n\}$, that $\left\Vert \varepsilon\right\Vert_\infty=0$, in other words $\delta'=\nu$, the fixed point is unique.
\end{proof}
\begin{proposition}\label{pro:controle_delta-tilde_delta}
  If $p =O(n)$, $\frac{1}{z_0} =O(1)$, $q\geq2$ and $n$ is large enough, there exists a constant $C>0$ such that~: 
  \begin{align*}
    \left\Vert \delta- \delta'\right\Vert_\infty = O \left(\sqrt{\frac{\log n}{n}}\right)&
    &\text{and}&
    &\left\Vert \tilde Q^\delta - \tilde Q^{\delta'}\right\Vert = O \left(\sqrt{\frac{\log n}{n}}\right).
  \end{align*}
\end{proposition}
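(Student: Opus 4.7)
The plan is to compare $\delta$ to $\delta'$ through the interference map $I(x)_l = \frac{1}{n}\tr(\Sigma_l \tilde Q_x)$ introduced in the proof of Proposition~\ref{pro:definition_delta_recursive}: by definition $\delta'$ is a fixed point of $I$, while $\delta$ should be an \emph{approximate} fixed point. Setting $\hat\delta_l := I(\delta)_l$, I first establish $\|\delta - \hat\delta\|_\infty \leq C/\sqrt n$ via the decomposition
\begin{align*}
\delta_l - \hat\delta_l = \tfrac{1}{n}\tr\bigl(\Sigma_l(\mathbb{E} Q - \tilde Q_\delta)\bigr) + \tfrac{1}{n}\tr\bigl(\Sigma_l(\mathbb{E} Q_{-l} - \mathbb{E} Q)\bigr).
\end{align*}
The first summand is at most $\tr(\Sigma_l)/n \cdot \|\mathbb{E}Q - \tilde Q_\delta\|$ by Lemma~\ref{lem:borne_trace_norme_spec}, and Proposition~\ref{pro:borne_tr_Sigma_l} (giving $\tr\Sigma_l \leq Cp$) combined with Proposition~\ref{pro:conc_uqv} yields a bound of order $\gamma \bar\gamma^2/(z_0^4\sqrt n) = O(1/\sqrt n)$. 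The second summand is a rank-one correction: the Schur identity~\eqref{eq:lien_q_qj} bounds $\|\mathbb{E}Q_{-l} - \mathbb{E}Q\|$ by $\tfrac{1}{n}\sup_{\|u\|=1}\mathbb{E}[(u^TQ_{-l}y_l)^2]$, which is $O(\bar\gamma/(n_l z_0^3))$ by Proposition~\ref{pro:conc_uQy} and hence dominated by $1/\sqrt n$ under the standing assumption that $n_l$ scales linearly with $n$.

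Next, I reproduce the two-step Cauchy--Schwarz argument used in the proof of Proposition~\ref{pro:definition_delta_recursive}, but starting from $\hat\delta_l - \delta'_l = \tfrac{1}{n}\tr(\Sigma_l(\tilde Q_\delta - \tilde Q_{\delta'}))$ rather than from a fixed-point identity. With $\varepsilon_l := (\delta_l - \delta'_l)/\sqrt{(1+\delta_l)(1+\delta'_l)}$ and $\eta_l := (\delta_l - \hat\delta_l)/\sqrt{(1+\delta_l)(1+\delta'_l)}$, the resolvent identity $\tilde Q_\delta - \tilde Q_{\delta'} = \tilde Q_\delta(\Sigma_{\delta'} - \Sigma_\delta)\tilde Q_{\delta'}$ combined with two successive applications of Cauchy--Schwarz yields
\begin{align*}
|\varepsilon_l - \eta_l| \leq \|\varepsilon\|_\infty \sqrt{\tfrac{\tr(\Sigma_l \tilde Q_\delta \Sigma_\delta \tilde Q_\delta)/n}{1+\delta_l}} \sqrt{\tfrac{\tr(\Sigma_l \tilde Q_{\delta'}\Sigma_{\delta'}\tilde Q_{\delta'})/n}{1+\delta'_l}}.
\end{align*}
Using $\tilde Q_x \Sigma_x = I_p - z\tilde Q_x$, each numerator under the square roots becomes $I(x)_l - z\tr(\Sigma_l\tilde Q_x^2)/n$. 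For $x = \delta'$ the ratio is at most $\delta'_l/(1+\delta'_l) \leq 1 - 1/(1 + C\gamma/z_0)$ since $\delta'_l \leq C\gamma/z_0$ (same argument as Lemma~\ref{lem:controle_QC}). For $x = \delta$ the numerator is $\hat\delta_l - z\tr(\Sigma_l\tilde Q_\delta^2)/n \leq \hat\delta_l \leq \delta_l + C/\sqrt n$, and for $n$ large enough that $C/\sqrt n \leq 1/2$ the ratio is similarly bounded by $1 - 1/(2(1+C\gamma/z_0))$. The product of the two square roots is therefore $\leq 1 - \rho$ for some $\rho > 0$ depending only on $\gamma$ and $z_0$, and taking the supremum over $l$ in $|\varepsilon_l| \leq |\eta_l| + (1-\rho)\|\varepsilon\|_\infty$ gives $\|\varepsilon\|_\infty \leq \|\eta\|_\infty/\rho \leq C/\sqrt n$, hence $\|\delta - \delta'\|_\infty \leq C/\sqrt n$.

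The operator-norm bound on $\tilde Q_\delta - \tilde Q_{\delta'}$ uses the same resolvent expansion
\begin{align*}
\tilde Q_\delta - \tilde Q_{\delta'} = \sum_{h=1}^k \tfrac{n_h}{n}\tfrac{\delta_h - \delta'_h}{(1+\delta_h)(1+\delta'_h)}\tilde Q_\delta \Sigma_h \tilde Q_{\delta'},
\end{align*}
each summand being controlled through $|u^T\tilde Q_\delta\Sigma_h\tilde Q_{\delta'}v| \leq \sqrt{u^T\tilde Q_\delta\Sigma_h\tilde Q_\delta u}\sqrt{v^T\tilde Q_{\delta'}\Sigma_h\tilde Q_{\delta'}v}$ together with the domination $\Sigma_h \preceq \tfrac{n(1+x_h)}{n_h}\Sigma_x$ and the inequality $\tilde Q_x\Sigma_x\tilde Q_x \preceq \tilde Q_x \preceq (1/z_0)I_p$; this yields $\|\tilde Q_\delta \Sigma_h \tilde Q_{\delta'}\| \leq n\sqrt{(1+\delta_h)(1+\delta'_h)}/(n_h z_0)$. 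Summing over $h$, the factors of $n_h$ and $\sqrt{(1+\delta_h)(1+\delta'_h)}$ cancel, leaving $\|\tilde Q_\delta - \tilde Q_{\delta'}\| \leq \frac{1}{z_0}\sum_h |\delta_h - \delta'_h|/\sqrt{(1+\delta_h)(1+\delta'_h)} \leq (k/z_0)\|\delta - \delta'\|_\infty = O(1/\sqrt n)$ by Assumption~\ref{ass:borne nombre de classe} and the previous step. The main technical obstacle is the contraction step: one must verify that the contraction constant, which equals roughly $1 - 1/(1+\gamma/z_0)$ at a genuine fixed point, survives the $O(1/\sqrt n)$ perturbation introduced by $\delta$ being only an approximate fixed point; this is precisely what the ``$n$ large enough'' hypothesis enforces.
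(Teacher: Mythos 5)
Your proof is correct and follows essentially the same route as the paper: you show that $\delta$ is an $O(1/\sqrt n)$-approximate fixed point of the interference map $I$ (via Proposition~\ref{pro:conc_uqv} and Lemma~\ref{lem:borne_trace_norme_spec}) and then reuse the Cauchy--Schwarz contraction estimate \eqref{eq:maj_gamma-delta} from the uniqueness argument of Proposition~\ref{pro:definition_delta_recursive}, which is exactly what the paper does, with the intermediate point $\hat\delta = I(\delta)$ merely made explicit. The only divergence is that you explicitly account for the rank-one gap between $\mathbb{E}Q_{-y_l}$ and $\mathbb{E}Q$ (which the paper silently absorbs into $\|\mathbb{E}Q-\tilde Q_\delta\|$), at the cost of invoking $n_l\sim n$ --- a hypothesis the paper never states but implicitly relies on elsewhere, e.g.\ in the $\epsilon_l$ bound of Proposition~\ref{pro:conc_uqv}.
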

\begin{proof}
 The bounds $p =O(n)$ and $\frac{1}{z_0} =O(1)$ implies that $\bar \gamma = O(1)$ and in particular $a_{p,n} = \log(n)^{1/2} \sqrt{\frac{\bar \gamma}{z_0^3 n}} = O (\sqrt{\log(n) /n})$.
  Let us employ as in the proof of Proposition~\ref{pro:definition_delta_recursive} a vector $\varepsilon=\delta-\delta'$. Given $i\in \{1,\ldots, n\}$, we compute~:
  \begin{align*}
    \left\vert \varepsilon_i\right\vert 
    &=\frac{1}{n}\left\vert \tr \Sigma_i (\mathbb{E}Q_{-i}-\tilde Q_{-i}^{\delta'}) \right\vert\\
    &\leq \frac{\tr \Sigma_i}{n}\left\Vert \mathbb{E}Q_{-i}-\tilde Q_{-i}^{\delta}\right\Vert + \frac{1}{n}\left\vert \tr \Sigma_i (\tilde Q_{-i}^\delta-\tilde Q_{-i}^{\delta'}) \right\vert \\
    &\leq C \sqrt{\frac{\log n}{n}} + \sqrt{\frac{1}{n}\frac{\tr \left(\Sigma_i \tilde Q_{-i}^{\delta'} \Sigma_{-i}^{\delta'}\tilde Q_{-i}^{\delta'}\right)}{1+\delta'_i} } \ \ \sqrt{\frac{1}{n}\frac{\tr \left(\Sigma_i \tilde Q^\delta \Sigma_\delta\tilde Q^\delta\right) }{1+\delta_i} } \ \ \left\Vert \varepsilon\right\Vert_\infty,
  \end{align*}
  where we employed for the first inequality the result of Lemma~\ref{lem:borne_trace_norme_spec} and in the second inequality Propositions~\ref{pro:borne_EQ_m_tQ} and \ref{pro:borne_tr_Sigma_l} and the intermediate result \eqref{eq:maj_gamma-delta} of the proof of Proposition~\ref{pro:definition_delta_recursive}. We already know from \eqref{eq:borne_pour_delta_prime} that~:
  \begin{align*}
    \frac{1}{n}\frac{\tr \left(\Sigma_i \tilde Q_{-i}^{\delta'} \Sigma_{-i}^{\delta'}\tilde Q_{-i}^{\delta'}\right)}{1+\delta'_i} \leq \frac{\delta'}{1+\delta'}<1.
 \end{align*} 
 Besides, we know from Lemma~\ref{lem:controle_Q} that $\Vert (\tilde Q_{-i}^\delta)^{\frac 12}\Sigma_{-i}^\delta (\tilde Q_{-i}^\delta)^{\frac12}\Vert =\Vert \Sigma_{-i}^\delta\tilde Q_{-i}^\delta \Vert \leq 1$ and we can infer from Lemma~\ref{lem:borne_trace_norme_spec} and Proposition~\ref{pro:borne_EQ_m_tQ} that:
\begin{align*}
    \frac{1}{n}\frac{\tr \left(\Sigma_i \tilde Q_{-i}^\delta \Sigma_{-i}^\delta\tilde Q_{-i}^\delta\right) }{1+\delta_i} 
    &\leq \frac{1}{n}\frac{\tr \left((\tilde Q_{-i}^\delta)^{\frac12}\Sigma_i(\tilde Q_{-i}^\delta)^{\frac12} \right) }{1+\delta_i} \\
    &\leq \frac{1}{n}\frac{ \tr \left(\Sigma_i \mathbb E[Q]\right)  + \tr \Sigma_i \Vert \tilde Q_{-i}^\delta - \mathbb E[Q]\Vert }{1+\delta_i} \\
    &\leq \frac{\delta_i}{1+\delta_i} + O \left(\sqrt{\frac{\log n}{n}}\right), 
     \end{align*}
  and we know from \eqref{eq:maj_delta'} that $\frac{\delta_i}{1+\delta_i}\leq 1- \frac{z_0}{\bar \gamma}$. Therefore since $\frac{z_0}{\bar \gamma}$ is bounded from below, if $n$ is large enough~:
  \begin{align*}
    \frac{1}{n}\frac{\tr \left(\Sigma_i \tilde Q_{-i}^\delta \Sigma_{-i}^\delta\tilde Q_{-i}^\delta\right) }{1+\delta_i} <1 - \frac{z_0}{2\bar \gamma}&
    &\text{and thus}&
    &\left\Vert \varepsilon\right\Vert_\infty \leq  \frac{C \sqrt{\frac{\log n}{n}}}{1 - \frac{z_0}{2\bar \gamma}} = O \left(\sqrt{\frac{\log n}{n}}\right).
  \end{align*}
  Now, let us bound the spectral norm of the difference $\tilde Q^\delta -\tilde Q^{\delta'}$~:
  \begin{align*}
    \left\Vert \tilde Q^\delta -\tilde Q^{\delta'}\right\Vert
    &\leq \frac{1}{n}\sum_{i=1}^n \left\vert \delta_i-\delta'_i\right\vert\frac{\left\Vert\tilde Q^{\delta'} \Sigma_i \tilde Q^{\delta}\right\Vert}{(1+\delta_i)(1+\delta'_i)}\\
    &\leq\left\Vert \delta - \delta'\right\Vert_{\infty}  \sqrt{\left\Vert\tilde Q^{\delta'} \Sigma^{\delta'} Q^{\delta'}\right\Vert \left\Vert\tilde Q^{\delta} \Sigma^{\delta} Q^{\delta}\right\Vert}\\
    &\leq  \frac{\left\Vert \delta - \delta'\right\Vert_{\infty}}{z_0}
    \ \  = O \left(\sqrt{\frac{\log n}{n}}\right).
  \end{align*}
\end{proof}
We now have all the elements to set the linear concentration of $Q$ around the second deterministic equivalent $\tilde Q^{\delta'}$ with the same arguments as those given to justify Proposition~\ref{pro:premier_equivalent_deterministe}.
\begin{theorem}\label{the:deuxiemme_equi_det}
  If $p = O(n)$, $\frac{1}{z_0}= O(1)$ and $n$ is large enough, there exist two numerical constants $C\geq e$ and $c>0$ such that~:
  $$Q \in \tilde Q^{\delta'} \pm Ce^{-(\sqrt{n/\log n}\, \cdot \,)^q/c} \ \ \text{in} \ \left(\mathcal M_{p,n}, \left\Vert \cdot\right\Vert\right).$$
\end{theorem}
As for Proposition~\ref{pro:conc_Steiltjes}, we can directly deduce that $\frac{1}{p}\tr \tilde Q^\delta$ is a pivot of the Stieltjes transform. This time the linear form $M \mapsto \frac{1}{p}\tr M$ is seen as a $1$-Lipschitz transformation from $(\mathcal   M_{p,n}, \left\Vert \cdot\right\Vert)$ to $\mathbb{ R}$ (see Lemma~\ref{lem:borne_trace_norme_spec}).
\begin{corollary}[Estimation of the Stieltjes transform]\label{cor:conc_steilt_trans_autour_tilde_Q_delta}
  In the setting of Theorem~\ref{the:deuxiemme_equi_det}, for $z>0$~: 
  \begin{align*}
    m_F(-z)\in\frac{1}{p}\tr \tilde Q^{\delta'}(z) \pm Ce^{-(\sqrt{n/\log n}\, \cdot \,)^q/c}.
  \end{align*}
\end{corollary}
\begin{remark}[Central limit theorem for covariance matrices]\label{rem:loi_spectrale_distr_gaussi}
Let us define~: $$X^\mathcal{N}=(\mathring \Sigma_1x_1^{\mathcal{N}} + m_1, \cdots ,\mathring \Sigma_nx_n^{\mathcal{N}}+ m_n),$$ where $x_1^{\mathcal{N}}, \cdots x_n^{\mathcal{N}}$ are independent Gaussian vectors with zero mean and unit variance entries and for every $i\in \{1,\ldots, n\}$, $\mathring \Sigma_i = \Sigma_i -m_im_i^T$ is the classical population covariance of $x_i$.

From the definition of $\delta'$ and $\tilde Q^{\delta'}$ we can remark that the deterministic equivalent $\tilde Q^{\delta'}$ is the same for a resolvent $Q$ constructed with the sample covariance of $X$ or of the matrix  $X^\mathcal{N}$. This implies that the asymptotic spectral distribution of the sample covariance matrix of $X$ strictly depends on the means and the covariances of the laws $\mu_l$, $1\leq l \leq k$, but not at all on the intrinsic distribution of those laws. In that sense, the Gaussian case describes all the possible asymptotic spectral distributions of sample covariances of any concentrated data respecting our basic assumptions. 
\end{remark}
\subsection{Illustration of the results}


\begin{figure}[H]
\centering
\begin{tabular}{cc}
\begin{tikzpicture}   
  \begin{axis}[ y label style={at={(-0.08,0.5)}}, width = 0.55\textwidth,xlabel=eigenvalues $\lambda$, ylabel=spectral density $dF(\lambda)$, legend cell align = left, legend style={draw=none},] 
    \addplot [blue,ybar,fill, fill opacity=0.3, bar width = 0.01,ybar legend] coordinates {(0.01,10.45) (0.03,4.25) (0.05,3.2) (0.07,2.65) (0.09,2.15) (0.11,2.1) (0.13,1.8) (0.15,1.7) (0.17,1.55) (0.19,1.45) (0.21,1.4) (0.23,1.15) (0.25,1.2) (0.27,1.1) (0.29,1.05) (0.31,0.95) (0.33,0.85) (0.35,0.9) (0.37,0.85) (0.39,0.75) (0.41,0.7) (0.43,0.75) (0.45,0.65) (0.47,0.65) (0.49,0.6) (0.51,0.55) (0.53,0.55) (0.55,0.45) (0.57,0.55) (0.59,0.4) (0.61,0.4) (0.63,0.35) (0.65,0.45) (0.67,0.35) (0.69,0.25) (0.71,0.25) (0.73,0.15) (0.75,0.2) (0.77,0.15) (0.79,0.1)} \closedcycle;
    \addplot[sharp plot,black!60, line width = 1,domain=1:40,samples=100,line legend] coordinates {(0.005,10.458) (0.015,5.987) (0.025,4.597) (0.025,4.597) (0.035,3.851) (0.045,3.367) (0.045,3.367) (0.055,3.018) (0.065,2.751) (0.065,2.751) (0.075,2.538) (0.085,2.362) (0.085,2.362) (0.095,2.214) (0.105,2.086) (0.105,2.086) (0.115,1.974) (0.125,1.876) (0.125,1.876) (0.135,1.787) (0.145,1.708) (0.145,1.708) (0.155,1.635) (0.165,1.569) (0.165,1.569) (0.175,1.508) (0.185,1.452) (0.185,1.452) (0.195,1.399) (0.205,1.351) (0.205,1.351) (0.215,1.305) (0.225,1.261) (0.225,1.261) (0.235,1.221) (0.245,1.182) (0.245,1.182) (0.255,1.146) (0.265,1.111) (0.265,1.111) (0.275,1.078) (0.285,1.047) (0.285,1.047) (0.295,1.017) (0.305,0.988) (0.305,0.988) (0.315,0.96) (0.325,0.934) (0.325,0.934) (0.335,0.908) (0.345,0.883) (0.345,0.883) (0.355,0.859) (0.365,0.836) (0.365,0.836) (0.375,0.814) (0.385,0.792) (0.385,0.792) (0.395,0.771) (0.405,0.75) (0.405,0.75) (0.415,0.73) (0.425,0.711) (0.425,0.711) (0.435,0.692) (0.445,0.673) (0.445,0.673) (0.455,0.655) (0.465,0.637) (0.465,0.637) (0.475,0.62) (0.485,0.603) (0.485,0.603) (0.495,0.586) (0.505,0.57) (0.505,0.57) (0.515,0.554) (0.525,0.538) (0.525,0.538) (0.535,0.522) (0.545,0.507) (0.545,0.507) (0.555,0.491) (0.565,0.476) (0.565,0.476) (0.575,0.461) (0.585,0.446) (0.585,0.446) (0.595,0.431) (0.605,0.416) (0.605,0.416) (0.615,0.401) (0.625,0.387) (0.625,0.387) (0.635,0.372) (0.645,0.357) (0.645,0.357) (0.655,0.342) (0.665,0.327) (0.665,0.327) (0.675,0.312) (0.685,0.297) (0.685,0.297) (0.695,0.281) (0.705,0.265) (0.705,0.265) (0.715,0.248) (0.725,0.231) (0.725,0.231) (0.735,0.213) (0.745,0.195) (0.745,0.195) (0.755,0.175) (0.765,0.152) (0.765,0.152) (0.775,0.128) (0.785,0.097) (0.785,0.097) (0.795,0.053) (0.805,0.0) (0.805,0.0) (0.815,0.0) (0.825,0.0)};
     \legend{empirical distribution, prediction from $\frac{1}{p}\tr \tilde Q^{\delta'}$} 
  \end{axis} 
\end{tikzpicture}
 \begin{tikzpicture}    
  \begin{axis}[width = 0.55\textwidth,xlabel=eigenvalues $\lambda$, legend cell align = left, legend style={draw=none},] 
    \addplot [blue,ybar,fill, fill opacity=0.3, bar width = 0.03,ybar legend] coordinates {(0.025,8.52) (0.075,3.08) (0.125,2.12) (0.175,1.52) (0.225,1.2) (0.275,0.8) (0.325,0.56) (0.375,0.2) (0.425,0.0) (0.475,0.0) (0.525,0.0) (0.575,0.04) (0.625,0.06) (0.675,0.08) (0.725,0.12) (0.775,0.12) (0.825,0.1) (0.875,0.12) (0.925,0.1) (0.975,0.1) (1.025,0.12) (1.075,0.08) (1.125,0.1) (1.175,0.1) (1.225,0.1) (1.275,0.08) (1.325,0.08) (1.375,0.08) (1.425,0.04) (1.475,0.1) (1.525,0.04) (1.575,0.06) (1.625,0.06) (1.675,0.04) (1.725,0.02) (1.775,0.06)} \closedcycle;
    \addplot[sharp plot,black!60, line width = 1,domain=1:40,samples=100,line legend] coordinates {(0.012,8.602) (0.038,4.705) (0.062,3.444) (0.062,3.444) (0.088,2.743) (0.112,2.273) (0.112,2.273) (0.138,1.926) (0.162,1.652) (0.162,1.652) (0.188,1.427) (0.213,1.235) (0.213,1.235) (0.238,1.066) (0.262,0.912) (0.262,0.912) (0.288,0.768) (0.312,0.628) (0.312,0.628) (0.338,0.482) (0.362,0.311) (0.362,0.311) (0.388,0.0) (0.412,0.0) (0.413,0.0) (0.438,0.0) (0.462,0.0) (0.462,0.0) (0.488,0.0) (0.513,0.0) (0.512,0.0) (0.538,0.0) (0.562,0.0) (0.562,0.0) (0.588,0.042) (0.612,0.069) (0.612,0.069) (0.638,0.084) (0.662,0.094) (0.662,0.094) (0.688,0.101) (0.712,0.105) (0.712,0.105) (0.737,0.108) (0.762,0.11) (0.762,0.11) (0.788,0.112) (0.812,0.112) (0.812,0.112) (0.838,0.112) (0.862,0.112) (0.862,0.112) (0.888,0.111) (0.912,0.11) (0.912,0.11) (0.938,0.109) (0.962,0.108) (0.962,0.108) (0.987,0.107) (1.012,0.105) (1.012,0.105) (1.037,0.104) (1.062,0.102) (1.062,0.102) (1.088,0.1) (1.112,0.098) (1.112,0.098) (1.138,0.096) (1.162,0.094) (1.162,0.094) (1.187,0.092) (1.212,0.09) (1.212,0.09) (1.237,0.088) (1.262,0.086) (1.262,0.086) (1.287,0.084) (1.312,0.081) (1.312,0.081) (1.338,0.079) (1.362,0.077) (1.362,0.077) (1.388,0.074) (1.412,0.072) (1.412,0.072) (1.437,0.069) (1.462,0.067) (1.462,0.067) (1.487,0.064) (1.512,0.062) (1.512,0.062) (1.537,0.059) (1.562,0.057) (1.562,0.057) (1.588,0.054) (1.612,0.051) (1.612,0.051) (1.638,0.048) (1.662,0.045) (1.662,0.045) (1.687,0.041) (1.712,0.038) (1.712,0.038) (1.737,0.034) (1.762,0.03) (1.762,0.03) (1.787,0.025) (1.812,0.019) (1.812,0.019) (1.838,0.011) (1.862,0.0)};
  \end{axis} 
\end{tikzpicture}
\end{tabular}

\caption{Spectral distribution of the empirical covariance matrix of two Gaussian populations with $p=n=1000$. On the left, the whole population belongs to one class with covariance $\Sigma = T +9 T^2$ where $T$ is a symmetric Toeplitz matrix with first row equal to $(0.1,0.01,\ldots, 0.1^p)$; on the right the sample contains two classes of data, $n_1 = 100$ samples have a covariance $\Sigma_1 = 10 T$ and the $n_2=900$ samples left have a covariance $\Sigma_2 = 10 T^2$. One can note that although the covariance of the population of the second sample is $\frac{100}{1000} \Sigma_1 + \frac{900}{1000} \Sigma_2 =  T + 9 T^2 = \Sigma$, the spectral distributions of the sample covariances are different.} \label{fig:donnees_artif}
\end{figure}
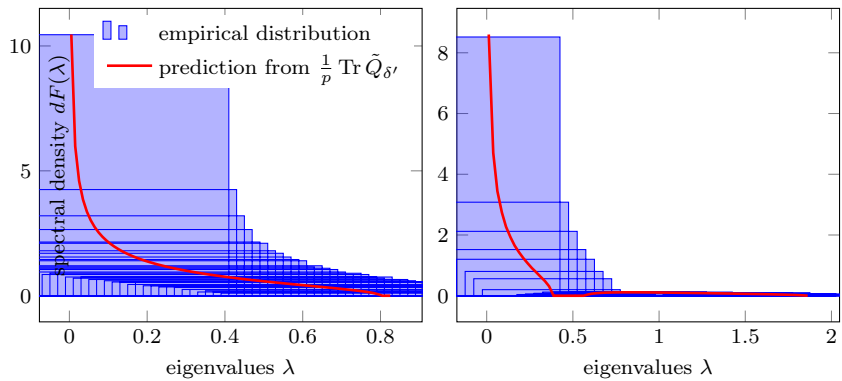
As it was exposed in the preamble, the design of the deterministic equivalent allows us to estimate the spectral distribution $F$ of the sample covariance matrix $S = \frac 1 n XX^T$ thanks to the approximation of the Stieltjes transform $m_F(z) = \frac 1 p \tr Q(z)$ with $\frac 1 p \tr \tilde Q(z)$. The interested reader can find in the abundant literature of random matrices a complete description of this inference, we can cite for instance~\cite{Baisil09}. We just give here some figures to illustrate our results. Figure~\ref{fig:donnees_artif} reveals that the multi-class setting (right display) differs from the uni-class setting (left display) at least when the $\delta_i$, $1\leq i\leq n$ are different from one another. Indeed although the population covariances of the samples from which are drawn the two histograms are rigorously the same, the histogram of the spectral distribution of the empirical covariance matrices are completely different. We drew the histogram from Gaussian data, but we obtain of course the same histograms for Bernouilli populations or any convexly concentrated data.

\begin{figure}[ht]
\centering
\begin{tabular}{cc}
 \begin{tikzpicture}
  \begin{axis}[y label style={at={(-0.08,0.5)}}, width = 0.55\textwidth, height = 0.57\textwidth, xlabel=eigenvalues $\lambda$, ylabel=$dF(\lambda^{0.1})$, legend cell align = left, legend style={draw=none},xtick={0,0.5,1,1.5,2},xmax =2.4, ymax = 5,ytick={0,1,2,3,4}, xticklabels={$0$,$0.5^{10}$,$1^{10}$,$1.5^{10}$, $2^{10}$}] 
    \addplot [smooth, black!60,ybar,fill, fill opacity=0.3, bar width = 0.02,ybar legend] coordinates
    {(0.05,0.077) (0.1,2.143) (0.15,3.265) (0.2,2.194) (0.25,0.816) (0.3,0.51) (0.35,0.357) (0.4,0.332) (0.45,0.638) (0.5,1.097) (0.55,2.296) (0.6,1.939) (0.65,1.429) (0.7,1.097) (0.75,0.638) (0.8,0.383) (0.85,0.255) (0.9,0.153) (0.95,0.153) (1.0,0.077) (1.05,0.051) (1.1,0.051) (1.15,0.0) (1.2,0.051)}
    \closedcycle;
    \addplot [smooth, blue,ybar,fill, fill opacity=0.3, bar width = 0.02,ybar legend] coordinates
    { (0.525,1.5) (0.575,2.6) (0.625,3.0) (0.675,3.3) (0.725,2.7) (0.775,2.3) (0.825,1.5) (0.875,0.8) (0.925,0.8) (0.975,0.5) (1.025,0.3) (1.075,0.2) (1.125,0.2) (1.175,0.1) (1.225,0.0) (1.275,0.1) (1.325,0.0) (1.375,0.0) (1.425,0.0) (1.475,0.0) (1.525,0.1)}
    \closedcycle;
    \addplot[smooth, black!60, line width = 1,domain=1:40,samples=100,line legend] coordinates 
    {(2.275,0.0) (1.575,0.0) (1.55,0.0) (1.525,0.0) (1.5,0.0) (1.475,0.0) (1.45,0.0) (1.425,0.0) (1.4,0.0) (1.375,0.0) (1.35,0.0) (1.325,0.0) (1.3,0.0) (1.275,0.0) (1.25,0.195) (1.225,0.0) (1.2,0.158) (1.175,0.0) (1.15,0.221) (1.125,0.219) (1.1,0.165) (1.075,0.006) (1.05,0.381) (1.025,0.314) (1.0,0.367) (0.975,0.561) (0.95,0.607) (0.925,0.632) (0.9,0.769) (0.875,1.048) (0.85,1.272) (0.825,1.518) (0.8,1.871) (0.775,2.284) (0.75,2.636) (0.725,2.929) (0.7,3.151) (0.675,3.31) (0.65,3.386) (0.625,3.306) (0.6,3.101) (0.575,2.755) (0.55,2.189) (0.525,1.148) (0.5,0.395) (0.49,0.1) (0.48,0.02) (0.47,0) (0.46,0) (0.45,0) (0,0)
    };  
    \legend{spectrum of $\Sigma$, spectrum of $S$, prediction from $\frac{1}{p}\tr \tilde Q^{\delta'}$} 
  \end{axis}
\end{tikzpicture} 
\begin{tikzpicture}   
  \begin{axis}[ width = 0.55\textwidth,height = 0.57\textwidth, xbar, xlabel=eigenvalues $\lambda$, legend cell align = right, legend style={draw=none},xtick={0,0.5,1,1.5,2}, ymax = 5,ytick={0,1,2,3,4}, xmax =2.4, xticklabels={$0$,$0.5^{10}$,$1^{10}$,$1.5^{10}$, $2^{10}$}] 
    \addplot [black!60,ybar,fill, fill opacity=0.3, bar width = 0.017,ybar legend] coordinates 
    {(0.2,0.128) (0.25,1.224) (0.3,3.929) (0.35,2.577) (0.4,2.704) (0.45,2.5) (0.5,2.194) (0.55,1.607) (0.6,1.097) (0.65,0.689) (0.7,0.434) (0.75,0.255) (0.8,0.179) (0.85,0.077) (0.9,0.077) (0.95,0.077) (1.0,0.051) (1.05,0.026) (1.1,0.051) (1.15,0.051) (1.2,0.0) (1.25,0.026) (1.3,0.026) (1.35,0.0) (1.4,0.0) (1.45,0.0) (1.5,0.0) (1.55,0.0) (1.6,0.026) };
    \addplot [blue,ybar,fill, fill opacity=0.3, bar width = 0.017,ybar legend] coordinates 
    {(0.425,0.5) (0.475,2.9) (0.525,3.3) (0.575,3.2) (0.625,3.0) (0.675,1.9) (0.725,1.5) (0.775,1.2) (0.825,0.5) (0.875,0.4) (0.925,0.3) (0.975,0.3) (1.025,0.1) (1.075,0.1) (1.125,0.2) (1.175,0.2) (1.225,0.2) (1.275,0.0) (1.325,0.0) (1.375,0.1) (1.425,0.0) (1.475,0.0) (1.525,0.0) (1.575,0.0) (1.625,0.0) (1.675,0.0) (1.725,0.0) (1.775,0.0) (1.825,0.0) (1.875,0.0) (1.925,0.0) (1.975,0.0) (2.025,0.0) (2.075,0.0) (2.125,0.0) (2.175,0.0) (2.225,0.1) }
    \closedcycle;
    \addplot[smooth,black!60, line width = 1,domain=1:40,samples=100,line legend] coordinates
    {(2.275,0.0) (2.25,0.0) (2.225,0.0) (2.2,0.0) (2.175,0.0) (2.15,0.0) (2.125,0.0) (2.1,0.0) (2.075,0.0) (2.05,0.0) (2.025,0.0) (2.0,0.0) (1.975,0.0) (1.95,0.0) (1.925,0.0) (1.9,0.0) (1.875,0.0) (1.85,0.0) (1.825,0.0) (1.8,0.0) (1.775,0.0) (1.75,0.0) (1.725,0.0) (1.7,0.0) (1.675,0.0) (1.65,0.0) (1.625,0.0) (1.6,0.0) (1.575,0.0) (1.55,0.0) (1.525,0.0) (1.5,0.0) (1.475,0.0) (1.45,0.0) (1.425,0.0) (1.4,0.0) (1.375,0.0) (1.35,0.0) (1.325,0.0) (1.3,0.0) (1.275,0.0) (1.25,0.185) (1.225,0.0) (1.2,0.183) (1.175,0.189) (1.15,0.0) (1.125,0.146) (1.1,0.198) (1.075,0.205) (1.05,0.236) (1.025,0.245) (1.0,0.0) (0.975,0.103) (0.95,0.312) (0.925,0.356) (0.9,0.377) (0.875,0.269) (0.85,0.591) (0.825,0.697) (0.8,0.787) (0.775,1.019) (0.75,1.261) (0.725,1.596) (0.7,1.923) (0.675,2.295) (0.65,2.697) (0.625,3.088) (0.6,3.428) (0.575,3.68) (0.55,3.795) (0.525,3.71) (0.5,3.329) (0.475,2.408) (0.45,1) (0.425,0.5) (0.4,0.1) (0.39,0) (0.38,0) (0.15,0)};
  \end{axis} 
\end{tikzpicture}
\end{tabular}
\caption{Distribution of the (pushforward of the) spectral distributions of the population covariance matrix $\Sigma$ and sample covariance matrix $S$ of two datasets constructed from the MNIST database (\cite{MNIST}) through the map $\lambda\mapsto \lambda^{0.1}$ ; the population covariance and the mean are empirically computed from a larger and independent datasets of $50,000$ elements. {\bf (Left):} images of $2$'s generated from a generative adversarial network (GAN) trained on the MNIST database (see Figure~\ref{fig:image_de_2}); $p=784$ and $n=200$. {\bf (Right):} convolution neural network (CNN) features of those images; $n=200$ and $p=784$ (for fair comparison, we took the number of features equal to the number of pixels in the original images). The CNN is described in \cite{SED20}: it is trained on the MNIST database and achieves $99.1\%$ accuracy to discriminate the $10$ digits.} \label{fig:donnees_reelles}
\end{figure}

The range of validity of our results seems to be rather wide. Indeed, Figure~\ref{fig:donnees_reelles} shows that our predictions are rather good for data $x_i$ taken as ``raw'' (untreated) MNIST handwritten digits \cite{MNIST} (left display) as well as for $x_i$ arising from feature extractions of the same handwritten digits by a convolutional neural network (CNN) (right display). In the CNN, these features are given by the penultimate layer preceding the classification layer that will attribute a score for each class. To be precise, we generated the MNIST images with a generative adversarial network (GAN) as presented in \cite{SED20}: we here used a small sample of these images. This technique allows us to obtain a sufficiently large dataset ($50,000$ elements for data vectors of size $784$) so that the population covariance matrix of the digits can be accurately estimated. Yet, one must be careful that we drew the distribution of a push-forward of the spectral distribution dilated around zero; otherwise the distribution would be huddled against $0$ and at the same time sparsely spread up to high values.
The spectral distribution is more concentrated for the sample covariance of the refined feature dataset and satisfies our prediction better than the generated digits. This suggests that, from a concentration of measure perspective, the treatment of the signal by the network neurons tends to ``normalize'' the data.   
\begin{figure}[ht]
    \label{fig:image_de_2}
    \centering
    \includegraphics[width = 1\textwidth]{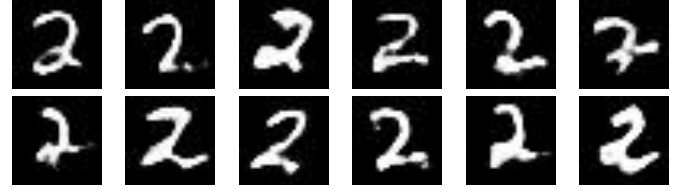}
\caption{Sample of GAN generated images of the MNIST database (see \cite{SED20} for a description of the generative adversarial network).}
\end{figure}

\newpage

\appendix
\addappheadtotoc
\begin{appendices}

\section{Side results of the concentration of random vectors}\label{app:side_result_concentration}
\subsection{Proof of Proposition~\ref{pro:tao}}
  
  Given $\varepsilon\in (0,1)$, a set $A \subset \mathcal B_H$ is said to be an $\varepsilon$-net of $\mathcal B_H$ if $x,y \in A \Rightarrow \left\Vert x-y\right\Vert \geq \varepsilon$. We consider here $N_{1/2}$, a maximal $\frac{1}{2}$-net of $\mathcal B_H$ with respect to inclusion.
  We know that the balls of radius $\frac{1}{4}$ centered on the points of $N_{1/2}$ are all disjoint by hypothesis, and their volume is equal to $ \mathcal V_{\mathcal B_H}/4^p$ (where $\mathcal V_{\mathcal B_H}$ is the volume of $\mathcal B_H$). Since they all belong to the ball of radius $2$ and centered at the origin, we know that their number cannot exceed $8^{{\rm dim}(H)}$.

  Besides, given a drawing of $Z$, there exists $f_0 \in \mathcal B_{H}$ such that $Z-\tilde Z = f_0(Z-\tilde Z)$ (since $\mathcal B_H$ is compact). Then there exists $f \in \mathcal B_H$ such that $\left\Vert f-f_0\right\Vert_*$ is bounded by $\frac{1}{2}$ (otherwise $f_0$ could be added to $N_{1/2}$). Furthermore : 
  \begin{align*}
    \left\Vert Z - \tilde{Z}\right\Vert - f\left(Z - \tilde{Z}\right) 
    &\leq\left\vert f\left(Z - \tilde{Z}\right)  -f_0 \left(Z - \tilde{Z}\right)\right\vert \\
    &\leq \left\Vert f-f_0\right\Vert_*\left\Vert Z - \tilde{Z}\right\Vert\ \leq \ \ \frac{1}{2} \left\Vert Z-\tilde{Z}\right\Vert.
  \end{align*}
  Therefore~:
  \begin{align*}
    \left\Vert Z - \tilde{Z}\right\Vert \leq 2\sup\left\{ f\in N_{1/2} \ : \ f\left(Z - \tilde{Z}\right) \right \}
  \end{align*}
  and this inequality being true for any drawing of $Z$, we have then by hypothesis~:
  \begin{align*}
    \forall t > 0 \ : \ \mathbb{P}\left(\left\Vert Z-\tilde{Z}\right\Vert \geq t\right) \leq \sum_{N_{1/2}} \alpha\left(\frac{t}{2}\right)\leq 8^{{\rm dim}(H)} \alpha\left(\frac{t}{2}\right).
  \end{align*}

\subsection{Linear concentration of the product}

We place ourselves in an algebra $\mathcal{A}$ endowed with an algebra norm $\left\Vert \cdot \right\Vert$ (verifying $\left\Vert xy\right\Vert \leq \left\Vert x\right\Vert \left\Vert y\right\Vert$). To simplify the result we place ourselves in the exponential concentration setting, the reader is required to adapt the proof for generalization if needed.
\begin{proposition}\label{pro:concentration_lineaire_XY}
  Given two random vectors $X,Y \in \mathcal A$ and three parameters $C\geq e$ and $\sigma,q >0$, if $X$ and $Y$ follow the same concentration $X \in \tilde X \pm Ce^{-(\, \cdot \,/\sigma)^q}$ and $Y \in \tilde Y \pm Ce^{-(\, \cdot \,/\sigma)^q}$, then $XY$ is also concentrated~:
  \begin{align*}
    XY \in \tilde X \tilde Y \pm C \exp\left(-\left(\frac{c \,\cdot \,}{\sigma^2 \eta_{\Vert \cdot \Vert}^{\frac{2}{q}}} \right)^{\frac{q}{2}} \right) + C \exp \left(-\left( \frac{c \, \cdot}{\sigma( \Vert \tilde X \Vert + \Vert \tilde Y \Vert) }\right)^q  \right),
  \end{align*}
  where $c$ is a numerical constant independent of $C$ and $\sigma$.
\end{proposition}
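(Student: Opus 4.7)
\medskip

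\noindent\textbf{Proof proposal.} The plan is to follow the same algebraic identity exploited in Proposition~\ref{pro:conc_autour_prod_ss_borne} for real-valued random variables, but now apply it inside the algebra $\mathcal{A}$ and test against arbitrary unit-norm linear forms. Write
\begin{align*}
XY - \tilde X \tilde Y
= (X-\tilde X)(Y-\tilde Y) + \tilde X(Y-\tilde Y) + (X-\tilde X)\tilde Y.
\end{align*}
Fix a linear form $u : \mathcal{A} \to \mathbb{R}$ with $\|u\|_* \leq 1$. To prove the claimed linear concentration of $XY$ around $\tilde X \tilde Y$, it will suffice (by Lemma~\ref{lem:conc_somme}) to prove the concentration around $0$ of each of the three pieces $u(\tilde X(Y-\tilde Y))$, $u((X-\tilde X)\tilde Y)$, and $u((X-\tilde X)(Y-\tilde Y))$, and then combine them by splitting the threshold $t$ into three equal parts.

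The two ``mixed'' terms are immediate from the algebra norm inequality. Indeed, $v : y \mapsto u(\tilde X y)$ is linear with $|v(y)| \leq \|\tilde X\| \|y\|$, hence $v/\|\tilde X\|$ has dual norm at most $1$, and the hypothesis $Y \in \tilde Y \pm Ce^{-(\cdot/\sigma)^q}$ directly yields
\begin{align*}
u(\tilde X(Y-\tilde Y)) \in 0 \pm Ce^{-(\cdot/\sigma\|\tilde X\|)^q},
\end{align*}
and symmetrically $u((X-\tilde X)\tilde Y) \in 0 \pm Ce^{-(\cdot/\sigma\|\tilde Y\|)^q}$. These two contributions produce the exponential term with tail parameter proportional to $\sigma(\|\tilde X\| + \|\tilde Y\|)$ in the statement.

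The hard part, as expected, is the quadratic ``chaos'' term $u((X-\tilde X)(Y-\tilde Y))$, since it is not a linear functional of either variable. I would control it by the crude bound
\begin{align*}
|u((X-\tilde X)(Y-\tilde Y))|
\leq \|(X-\tilde X)(Y-\tilde Y)\|
\leq \|X-\tilde X\|\,\|Y-\tilde Y\|,
\end{align*}
using that $\|u\|_*\leq 1$ and that $\|\cdot\|$ is an algebra norm. The point is then that Proposition~\ref{pro:tao_conc_exp} transfers the linear $q$-exponential concentration of $X$ and $Y$ into the real-variable concentrations $\|X-\tilde X\|, \|Y-\tilde Y\| \in 0 \pm Ce^{-(\cdot/\sigma')^q}$ with $\sigma'$ of order $\sigma\,\eta_{\|\cdot\|}^{1/q}$, and this is where the norm degree $\eta_{\|\cdot\|}$ enters. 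Finally I would invoke Proposition~\ref{pro:conc_autour_prod_ss_borne} in the case $a=b=0$ to conclude
\begin{align*}
\|X-\tilde X\|\,\|Y-\tilde Y\| \in 0 \pm 2Ce^{-(\sqrt{\cdot}/\sigma')^q}
= 2Ce^{-(\cdot/\sigma'^2)^{q/2}},
\end{align*}
with $\sigma'^2$ of order $\sigma^2 \eta_{\|\cdot\|}^{2/q}$, which matches the first exponential in the stated concentration function. The main obstacle — and the only delicate point — is this chaos step: it is where the dimension dependence $\eta_{\|\cdot\|}^{2/q}$ creeps in, and one cannot avoid it without additional structure (e.g.\ independence and boundedness of one factor, as in Proposition~\ref{pro:conc_f(Z_1,Z_2)_independant}). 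The rest is bookkeeping: absorbing numerical factors into a single constant $c$ and using Lemma~\ref{lem:conc_somme} to aggregate the three contributions into the form announced in the proposition.
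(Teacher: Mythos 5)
Your proof is correct and follows essentially the same route as the paper: the identity $XY-\tilde X\tilde Y=(X-\tilde X)(Y-\tilde Y)+\tilde X(Y-\tilde Y)+(X-\tilde X)\tilde Y$, the linear forms $u_z:x\mapsto u(zx)$ of operator norm $\left\Vert z\right\Vert$ for the mixed terms, and the bound $\left\vert u((X-\tilde X)(Y-\tilde Y))\right\vert\leq \left\Vert X-\tilde X\right\Vert\,\left\Vert Y-\tilde Y\right\Vert$ for the quadratic term. The paper's proof leaves the last step implicit, and your completion of it via Proposition~\ref{pro:tao_conc_exp} and Proposition~\ref{pro:conc_autour_prod_ss_borne} (case $a=b=0$) is exactly what is needed to produce the $q/2$-exponential term with tail parameter of order $\sigma^2\eta_{\Vert\cdot\Vert}^{2/q}$.
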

\begin{proof}
  As for Lemma~\ref{lem:conc_autour_prod}, we employ the identity~:
  \begin{align*}
    XY- \tilde X\tilde Y = (X- \tilde X) (Y- \tilde Y) + \tilde Y(X- \tilde X ) + \tilde  Y (Y -\tilde Y).
  \end{align*}
  For any linear function $u$ with an operator norm bounded by $1$, we have~:
  \begin{align*}
    u(XY- \tilde X\tilde Y) \leq \Vert X- \tilde X\Vert \Vert Y- \tilde Y\Vert + u_{\tilde Y}(X- \tilde X ) + u_{\tilde  Y }(Y -\tilde Y)
  \end{align*}
  where for any $z \in \mathcal A$ we defined $u_z : x \mapsto u(zx)$. To conclude, we just have to note that $u_z$ is a linear function the operator norm of which is bounded by $\left\Vert z\right\Vert$ (since $\left\Vert u(zx)\right\Vert \leq \left\Vert u\right\Vert \left\Vert z\right\Vert \left\Vert x\right\Vert$).
\end{proof}

\subsection{Concatenation of convexly concentrated random vectors}\label{ssa:conc_conc_conv}
Through the characterization with the centered moments given by Proposition~\ref{pro:carcterisation_ac_moments_conc_q_expo}, the $q$-exponential concentration allows to explore the concentration of a random vector constructed as a concatenation of $p$ independent random vectors $(Z_1,\ldots,Z_p)$. This approach is indifferently adapted to the Lipschitz or convex concentration. We use the index $(c)$ under the sign $\propto$ to specify that the proposition is valid in both settings (for Lipschitz \textit{or} convexly concentrated random vectors).

\begin{proposition}\label{pro:concentration_concatenation_vecteurs_independants_lipsch_et_convexe}
  Given $p$ normed vector spaces $(E_1, N_1),\ldots,(E_p,N_p)$, consider $p$ independent random vectors $(Z_1,\ldots, Z_p) \in E=E_1 \times \cdots \times E_p$ verifying for any $i\in \{1,\ldots,p\}$ that $Z_i \propto_{(c)} C e^{-(\, \cdot \, / \sigma)^q}$, for two given parameters $C\geq e$, $\sigma>0$.
  The space $E$ can then be seen as a normed vector space endowed with the norm $\left\Vert \cdot\right\Vert_{\ell_1}$ defined as~:
  \begin{align*}
    \forall z=(z_1,\ldots, z_p) \in E_1 \times \cdots \times E_p \ \ : \ \ \ \left\Vert z\right\Vert_{\ell_1}= N_1(z_1) + \ldots + N_p(z_p).
  \end{align*} 
  Then the concatenation $Z=(Z_1,\ldots, Z_p)$ is $q$-exponentially concentrated in $(E,\left\Vert \cdot\right\Vert_{\ell_1})$ with an observable diameter lower than $p\sigma e^{\frac{1}{q}}$~:
  $$Z \propto_{(c)} C e^{-(\,\cdot \, / p\sigma)^q/e}.$$
\end{proposition}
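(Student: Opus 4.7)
The plan is to prove the result via the moment characterisation of $q$-exponential concentration (Proposition~\ref{pro:carcterisation_ac_moments_conc_q_expo}) combined with a telescoping decomposition that isolates one coordinate at a time, exactly in the spirit of the proof of Proposition~\ref{pro:Concentration_lineaire_concatenation_vecteur_q-exp_conc}. Let $Z'=(Z_1',\ldots,Z_p')$ be an independent copy of $Z$ and fix a $1$-Lipschitz (and, in the convex case, quasiconvex) function $f:(E,\|\cdot\|_{\ell_1})\to\mathbb{R}$. Introduce the hybrid vectors
\[
W_k=(Z_1',\ldots,Z_k',Z_{k+1},\ldots,Z_p),\qquad 0\leq k\leq p,
\]
so that $W_0=Z$, $W_p=Z'$, and telescope $f(Z)-f(Z')=\sum_{k=1}^p\bigl(f(W_{k-1})-f(W_k)\bigr)$.

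The key observation is that the map $z_k\mapsto f(Z_1',\ldots,Z_{k-1}',z_k,Z_{k+1},\ldots,Z_p)$ is $1$-Lipschitz with respect to $N_k$ (because $\|\cdot\|_{\ell_1}$ reduces to $N_k$ when only the $k$-th coordinate varies) and, in the convex setting, is quasiconvex as the restriction of a quasiconvex function to an affine subspace. Hence by the $(c)$-concentration hypothesis of $Z_k$ and by independence of $Z_k$ from all the other coordinates (and from $Z_k'$), each increment $f(W_{k-1})-f(W_k)$ is $Ce^{-(\cdot/\sigma)^q}$-concentrated in the sense of Definition~\ref{def:conc_variable_aleatoire}. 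Applying Proposition~\ref{pro:carcterisation_ac_moments_conc_q_expo} to each increment yields, for every integer $m\geq q$,
\[
\mathbb{E}\bigl[\,\lvert f(W_{k-1})-f(W_k)\rvert^{m}\,\bigr]\leq C\,\Gamma\!\left(\tfrac{m}{q}+1\right)\sigma^m.
\]

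Next I would bound $\mathbb{E}[|f(Z)-f(Z')|^m]$. From the telescoping identity and the triangle inequality,
\[
\lvert f(Z)-f(Z')\rvert^{m}\leq \Bigl(\sum_{k=1}^p\lvert f(W_{k-1})-f(W_k)\rvert\Bigr)^{m}=P(Y_1,\ldots,Y_p),
\]
where $P(X_1,\ldots,X_p)=(X_1+\cdots+X_p)^m$ has nonnegative coefficients and $Y_k=\lvert f(W_{k-1})-f(W_k)\rvert$. Although the $Y_k$ are not independent, each is controlled (in the sense of Definition~\ref{def:variable_de_controle}) by a single random variable $Y\geq 0$ with cumulative distribution $1-\min(1,Ce^{-(\cdot/\sigma)^q})$. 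Lemma~\ref{lem:borne_moment_polynome_de_variables_dependantes} then gives
\[
\mathbb{E}\bigl[\lvert f(Z)-f(Z')\rvert^{m}\bigr]\leq \mathbb{E}\bigl[P(Y,\ldots,Y)\bigr]=p^{m}\,\mathbb{E}[Y^{m}]\leq C\,\Gamma\!\left(\tfrac{m}{q}+1\right)(p\sigma)^{m}.
\]

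Finally I would invoke the reverse implication of Proposition~\ref{pro:carcterisation_ac_moments_conc_q_expo}: the bound just obtained on every centred moment $\mathbb{E}[|f(Z)-f(Z')|^m]$ for $m\geq q$ implies $f(Z)\propto Ce^{-(\cdot/(p\sigma))^{q}/e}$, which is exactly the claimed concentration of $Z$. The only subtlety to check at each step is the preservation of quasiconvexity when one freezes coordinates, which, as noted above, follows because fixing coordinates is an affine operation and level sets of $f$ intersected with affine subspaces remain convex; so the same argument covers both $\propto$ and $\propto_c$, which justifies the unified notation $\propto_{(c)}$ in the statement.
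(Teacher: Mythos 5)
Your decomposition is exactly the paper's: the same telescoping through hybrid vectors that swap one coordinate at a time, the same observation that freezing coordinates preserves the $1$-Lipschitz property for $N_k$ and quasiconvexity (restriction to an affine slice), the same use of independence to apply the hypothesis on $Z_k$ conditionally, and the same conclusion via the moment characterization of Proposition~\ref{pro:carcterisation_ac_moments_conc_q_expo}. The one place where you diverge is in how you bound the $r$-th moment of the telescoped sum, and it leaves a small gap. You route the bound through Lemma~\ref{lem:borne_moment_polynome_de_variables_dependantes} with the polynomial $P(X_1,\ldots,X_p)=(X_1+\cdots+X_p)^m$ and a common controlling variable $Y$; this is valid, but it only produces the moment bound for \emph{integer} exponents $m$, whereas the reverse implication of Proposition~\ref{pro:carcterisation_ac_moments_conc_q_expo} that you invoke at the end requires $\mathbb{E}[|f(Z)-f(Z')|^r]\leq C\,\Gamma(r/q+1)(p\sigma)^r$ for \emph{all real} $r\geq q$ (its proof optimizes a Chernoff bound at $r=qt^q/(e\sigma^q)$, which is generically non-integer, and may even lie in $(q,1)$ when $q<1$). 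As written, the final step does not apply.

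Two standard repairs are available, both already in the paper. Either replace the controlling-variable step by the elementary convexity inequality $\bigl(\sum_{k=1}^p a_k\bigr)^r\leq p^{r-1}\sum_{k=1}^p a_k^r$, valid for every real $r\geq 1$ (and handle $q\leq r\leq 1$ by Jensen applied to $t\mapsto t^r$); this is what the paper does and it yields the stated tail parameter $p\sigma$ exactly. Or keep your integer-moment bound and interpolate to real exponents via Lemma~\ref{lem:relaxation_borne_moments} before applying Proposition~\ref{pro:carcterisation_ac_moments_conc_q_expo}, as is done in the proof of Proposition~\ref{pro:Concentration_lineaire_concatenation_vecteur_q-exp_conc}; this closes the argument but degrades the observable diameter by the factor $(2/\bar q)^{1/q}$, so you would prove a marginally weaker constant than the one announced. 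Everything else in your argument -- the conditional application of the hypothesis, the control of each increment in the sense of Definitions~\ref{def:conc_variable_aleatoire} and~\ref{def:variable_de_controle}, and the preservation of quasiconvexity under coordinate freezing -- is correct and matches the paper.
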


\begin{proof}
  Let us consider a function $f: E \rightarrow \mathbb{R}$, $1$-Lipschitz (resp. $1$-Lipschitz and quasiconvex), and $Z'$ an independent copy of $Z$. We plan to employ the characterization with the centered moments given by Proposition~\ref{pro:carcterisation_ac_moments_conc_q_expo}. Given $i\in\{0, \ldots, p\}$, we note $Z^{(i)} = (Z_1,\ldots Z_i, Z_{i+1}',\ldots, Z_p')$ (with this notation : $Z^{(0)}=Z$ and $Z^{(p)}=Z'$). For any $r\geq \max(q,1)$, let us exploit the convexity of $t \mapsto t^r$ to bound~:
  \begin{align*}
    \mathbb{E}\left[\left\vert f(Z)-f(Z')\right\vert^r\right]
    &\leq p^{r-1} \sum_{i=1}^p\mathbb{E}\left[\ \left\vert f\left(Z^{(i-1)}\right)-f\left(Z^{(i)}\right)\right\vert^r \right].
  \end{align*}
  Therefore, since for any $(z_1,\ldots, z_p)\in \mathbb{R}^p$ and for any $i\in \{1,\ldots, p\}$, $z\mapsto f(z_1,\ldots, z_{i-1},z,z_{i+1},\ldots z_p)$ is Lipschitz (resp. Lipschitz and quasiconvex), we can employ Proposition~\ref{pro:carcterisation_ac_moments_conc_q_expo} to bound~:
  \begin{align*}
    \mathbb{E}\left[\left\vert f(Z)-f(Z')\right\vert^r\right]
    \leq C p^r \left(\frac{r}{q}\right)^\frac{r}{q} \sigma^r.
  \end{align*}
  If $q\leq r \leq 1$, the concavity of $t \mapsto t^r$ allows us to write thanks to Jensen's inequality~:
  \begin{align*}
    \mathbb{E}\left[\left\vert f(Z)-f(Z')\right\vert^r\right]
    \leq \left(\mathbb{E}\left[\left\vert f(Z)-f(Z')\right\vert\right]\right)^r 
    \leq C^r p^r \left(\frac{1}{q}\right)^\frac{r}{q} \sigma^r 
    \leq \ p^r \left(\frac{r}{q}\right)^\frac{r}{q} \sigma^r 
  \end{align*}
  since $r\leq 1\leq C^q$.
  The last implication of Proposition~\ref{pro:carcterisation_ac_moments_conc_q_expo} then gives us the desired result: $f(Z) \propto C e^{-(\, \cdot \, / p\sigma)^q/e}$.
\end{proof}


\section{Davis theorem for rectangle matrices}\label{app:davis}

\subsection{Proof of Theorem~\ref{the:convex_matrice_diag}}
Let us first present basic notions to set the theorem. Given a vectorial space $E$ and a group $G$ acting on $E$, for any subset $A\subset E$, we note $G\cdot A=\{g\cdot a, g\in G, a\in A\}$. We say that a set $T$ is \textit{transversal} if $G\cdot T=E$ and we say that a function $f$ is $G$-invariant if $\forall x\in E, \forall g \in G, f(g\cdot x)=f(x)$. In the same vein, we say that a set $A\subset E$ is $G$-\textit{invariant} if $G \cdot A=A$ and that it is $G$-\textit{invariant in $T$} if $A\subset T$ and $G\cdot A \cap T=A$. Given $U \subset T$, we note $L_T^G(U)$ the smallest convex subset of $T$ containing $U$ and $G$-invariant in $T$. We give here an adaptation of one of the result of Grabovsky and Hijab to the case of quasiconvex functions.
\begin{theorem}[cf. \cite{GRA05}, Theorem 4] \label{the:grabowsky}
  Let us consider a vector space $E$, a group $G$ acting on $E$, and a \emph{convex} and \emph{transversal} subset $T\subset E$. 
  We suppose that for any $U\subset  T$, the set $G \cdot  L_{T}^G(U)$ is convex ; we call this property the \emph{convexity conservation of $G$ from T}. Then any $G$-invariant function $f$ is quasiconvex iff its restriction to $T$ is quasiconvex.
\end{theorem}
Intuitively, this theorem states that the quasiconvexity (or the mere convexity as in \cite{GRA05}) of any function $G$-invariant is ``transversal'' to the action of $G$ when $G$ preserves the convexity from $T$. A curious reader might be interested in simplifying the  convexity conservation property as we presented it from a transversal subset $T\subset E$ to a mere conservation of the convexity of any convex subset $U\subset E$ (i.e., for any convex set $U \subset E$, $f(U)$ is convex). This would be indeed an hypothesis more than sufficient for the result of the theorem. However, in practice, and in particular for the applications we want to consider, it cannot be verified. 

\begin{proof}
  Let us note note $\restriction{f}{T}$ the restriction of $f$ on $T$. Given any $t\in \mathbb{R}$, we know that the set $\{\restriction{f}{T}\leq t\}=\{x\in T, f(x)\leq t\}$ is convex. Then the set $\{f\leq t\} = G\cdot \{\restriction{f}{T}\leq t\} = G \cdot L_T^G(\{\restriction{f}{T}\leq t\})$ is also convex thanks to the convexity conservation of $G$ from T. 
\end{proof}
To simplify the application of Theorem~\ref{the:grabowsky}, Grabowsky and Hijab provide us with a useful property~:
\begin{proposition}[Convexity conservation, \cite{GRA05}, Theorem 3]\label{pro:conv_conservation}
  With the notations of Theorem~\ref{the:grabowsky}, if for any $ x,  y \in  T$, the set $G \cdot  L_{{T}}^G(\{ x, y\})$ is convex, then $G$ conserves the convexity from $T$.
\end{proposition}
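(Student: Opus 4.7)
The plan is to reduce the general statement to the pairwise hypothesis by exploiting the monotonicity of the operator $U \mapsto L_T^G(U)$. Concretely, I would pick an arbitrary $U\subset T$ and two points $p,q \in G\cdot L_T^G(U)$, and show that the segment $[p,q]$ lies in $G\cdot L_T^G(U)$; this is enough to conclude that $G \cdot L_T^G(U)$ is convex for every $U\subset T$, which is exactly the convexity conservation property of $G$ from $T$.

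To carry this out, I would write $p = g\cdot p'$ and $q = h\cdot q'$ with $p',q' \in L_T^G(U)\subset T$. The first observation is monotonicity: if $U_1 \subset U_2 \subset T$, then $L_T^G(U_2)$ is a convex set, $G$-invariant in $T$, containing $U_1$, hence by minimality $L_T^G(U_1)\subset L_T^G(U_2)$. Applying this with $U_1 = \{p',q'\}$ and $U_2 = U$ yields $L_T^G(\{p',q'\}) \subset L_T^G(U)$, and therefore
\begin{align*}
    G\cdot L_T^G(\{p',q'\}) \ \subset\ G\cdot L_T^G(U).
\end{align*}

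The second observation applies the pairwise hypothesis directly: the set $G\cdot L_T^G(\{p',q'\})$ is convex. Since $p',q' \in L_T^G(\{p',q'\})$, the images $p = g\cdot p'$ and $q = h\cdot q'$ both belong to $G\cdot L_T^G(\{p',q'\})$, so by convexity the whole segment $[p,q]$ is contained in $G\cdot L_T^G(\{p',q'\})$, and finally in $G\cdot L_T^G(U)$ by the inclusion above. This shows that $G\cdot L_T^G(U)$ is convex.

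There is essentially no obstacle: the argument is a clean reduction whose only real content is the monotonicity of $L_T^G$ with respect to inclusion, which is a direct consequence of the defining minimality property. The one small subtlety worth noting is to keep track of the fact that $L_T^G(\{p',q'\})$ is legitimately defined (as a subset of $T$) only because $p',q' \in T$, which is automatic since $L_T^G(U)\subset T$ by definition.
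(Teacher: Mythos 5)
Your argument is correct and is essentially the paper's own proof: lift $p,q$ to $p',q'\in L_T^G(U)\subset T$, use the pairwise hypothesis on $\{p',q'\}$, and conclude via the inclusion $L_T^G(\{p',q'\})\subset L_T^G(U)$. One tiny bookkeeping point: you invoke monotonicity with $U_1=\{p',q'\}$ and $U_2=U$ even though $\{p',q'\}$ need only lie in $L_T^G(U)$ rather than in $U$ itself, but your own justification (minimality against the convex, $G$-invariant-in-$T$ set $L_T^G(U)$ containing $\{p',q'\}$) already covers exactly this case, so nothing is missing.
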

\begin{proof}
  Let us consider $U \subset T$, and two points $x,y \in G \cdot L_T^G(U)$. There exists $x^*,y^*\in T$ such that $x,y \in G\cdot \{x^*,y^*\}$. We know by hypothesis that the set $G \cdot L_{{T}}^G(\{ x^*, y^*\})$ is convex, and moreover, it contains $x$ and $y$. Therefore, any element of the segment $[x,y]$ is also in $L_{{T}}^G(\{ x^*, y^*\}) \subset L_{{T}}^G(U)$.
\end{proof}

In the case of a matrix concentration, Theorem~\ref{the:grabowsky} can be applied to the transversal set of nonnegative diagonal matrices $\mathcal{D}_{p,n}^+$ for the action of the group $\mathcal{O}_{p,n}$.
The transversal character of $\mathcal{D}_{p,n}^+$ is a consequence of the singular value decomposition. Indeed, for any matrix $M\in \mathcal{M}_{p,n}$, there exists $(U,V)\in \mathcal{O}_{p,n}$ such that $$M=U\Sigma V^T \ \ \ \text{with}  \ \ \Sigma = \text{Diag}_{p,n}(\sigma_i(M))_{1\leq i \leq  d},$$ where $ d=\min(p,n)$, $\sigma_i(M)$ is the $i^{\textit{th}}$ singular value of $M$ and the notation $\text{Diag}_{p,n}(a_i)$ represents an element of $\mathcal{D}_{p,n}^+$ having the values $a_i$ on the diagonal.

To prove Theorem~\ref{the:convex_matrice_diag}, let us characterize the sets $L(X)=L_{\mathcal{D}_{p,n}^+}^{\mathcal{O}_{n,p}}(\{X\})$ when $X\in \mathcal{D}_{p,n}^+$. We know that $\mathcal{D}_{p,n}^+$ is invariant under the action of the subgroup of permutations $\mathcal{P}_{p,n}$. Given a subset $U$ of a vector space, we note $\conv(U)$ the convex hull of $U$.
  \begin{proposition}\label{pro:chract_L(X)=conv(PIX)}
    Given $X\in \mathcal D_{p,n}$, $L(X)=\conv(\mathcal{P}_{p,n} \cdot \{X\})$.
  \end{proposition}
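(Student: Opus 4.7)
The plan is to prove the double inclusion between $L(X)$ and $\conv(\mathcal P_{p,n}\cdot\{X\})$ separately. I will denote $K:=\conv(\mathcal P_{p,n}\cdot\{X\})$ and exploit the minimality characterization of $L(X)$ as the smallest convex subset of $\mathcal D_{p,n}^+$ that contains $X$ and is $\mathcal O_{p,n}$-invariant in $\mathcal D_{p,n}^+$.

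The easy inclusion $K\subseteq L(X)$ follows directly from the fact that the action of $\mathcal P_{p,n}$ preserves $\mathcal D_{p,n}^+$: a direct computation shows that for any $(P,Q)\in\mathcal P_{p,n}$ the matrix $PXQ^T$ is obtained from $X$ by a permutation of its diagonal entries (the constraint $PI_{p,n}Q^T=I_{p,n}$ forcing the underlying permutations $\sigma_P$ and $\sigma_Q$ to coincide on $\{1,\ldots,d\}$), and therefore still lies in $\mathcal D_{p,n}^+$. Hence $\mathcal P_{p,n}\cdot\{X\}\subset\mathcal O_{p,n}\cdot\{X\}\cap\mathcal D_{p,n}^+$, and since $L(X)$ is $\mathcal O_{p,n}$-invariant in $\mathcal D_{p,n}^+$ and contains $X$, it contains $\mathcal P_{p,n}\cdot\{X\}$; convexity then gives $K\subseteq L(X)$.

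For the reverse inclusion $L(X)\subseteq K$, minimality of $L(X)$ reduces the problem to checking that $K$ is itself a convex subset of $\mathcal D_{p,n}^+$ containing $X$ and $\mathcal O_{p,n}$-invariant in $\mathcal D_{p,n}^+$. Convexity and $X\in K$ are immediate, and $K\subseteq\mathcal D_{p,n}^+$ follows because each $PXQ^T$ is a nonnegative diagonal matrix and convex combinations of nonnegative diagonals remain nonnegative diagonals.

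The main obstacle is establishing the $\mathcal O_{p,n}$-invariance in $\mathcal D_{p,n}^+$, namely $\mathcal O_{p,n}\cdot K\cap\mathcal D_{p,n}^+\subseteq K$. To handle it, I will fix $Z\in K$ and $(U,V)\in\mathcal O_{p,n}$ such that $Y:=UZV^T\in\mathcal D_{p,n}^+$, and show $Y\in K$. The crucial observation is that $Y$ and $Z$, being related by an orthogonal action on both sides, share the same sequence of singular values; since both lie in $\mathcal D_{p,n}^+$, these singular values are exactly their diagonal entries in decreasing order. Hence $Y$ and $Z$ have the same multiset of diagonal entries, so they differ by a permutation $\tau\in\mathfrak S_d$, which can be lifted to a pair of permutation matrices $(P,Q)\in\mathcal P_{p,n}$ acting as $\tau$ on the first $d$ indices (and trivially beyond), yielding $Y=PZQ^T$. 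Writing $Z=\sum_i\lambda_iP_iXQ_i^T$ as a finite convex combination and using that $\mathcal P_{p,n}$ is a subgroup of $\mathcal O_p\times\mathcal O_n$ (so $(PP_i,QQ_i)\in\mathcal P_{p,n}$), we obtain $Y=\sum_i\lambda_i(PP_i)X(QQ_i)^T\in K$, which completes the argument.
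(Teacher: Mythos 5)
Your proof is correct and follows essentially the same route as the paper's: the paper invokes the identity $(\mathcal O_{p,n}\cdot U)\cap\mathcal D_{p,n}^+=\mathcal P_{p,n}\cdot U$ (a consequence of the uniqueness of the singular values) together with the linearity of the $\mathcal P_{p,n}$-action, and your argument simply unpacks that identity in the case $U=K$ and carries out the double inclusion explicitly. No gap; your version is just a more detailed write-up of the same idea.
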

  \begin{proof}
    We know from the uniqueness of the singular value decomposition that for any $U \subset \mathcal{D}_{p,n}^+$, $(\mathcal{O}_{p,n} \cdot U) \cap \mathcal{D}_{p,n}^+ = \mathcal{P}_{p,n} \cdot U$. Consequently, since the convexity is stable under the action of $\mathcal {P}_{p,n}$ (they are linear transformations), $L(X)=\conv(\mathcal{P}_{p,n} \cdot \{X\})$.
  \end{proof}

  Here the tools of \textit{majorization} as presented for instance in \cite{Bha97} are perfectly adapted to the description of $\conv(\mathfrak{S}_{p,n} \cdot \{x\})$ that we identify with $\conv(\mathcal{P}_{p,n} \cdot \{X\})=L(X)$. Given a vector $x=(x_1,\ldots x_d)\in \mathbb{R}^ d$, let us note $x^\downarrow=(x_1^\downarrow, \cdots x_d^\downarrow)$, a decreasing ordered version of $x$ ($x_1^\downarrow\geq \cdots x_d^\downarrow$ and $\exists \sigma \in \mathfrak S_d \ | \ x^\downarrow = \sigma \cdot x$).
\begin{definition}\label{def:majorization}
  Given two vectors $x,y \in \mathbb{R}^ d$, $ d\in \mathbb{N}$, we say that $y$ is majorized by $x$ and we note $y\prec x$ iff :
  \begin{align*}
    \forall k \in \{1,\ldots  d\} \  : \ \  \sum_{i=1}^k y_i^\downarrow \leq \sum_{i=1}^k x_i^\downarrow
    &&\text{and}&&
    \sum_{i=1}^ d x_i = \sum_{i=1}^ d y_i.
  \end{align*}
  
\end{definition}

Majorization offers a complete characterization of $\conv(\mathfrak S_d \cdot \{x\})$, $x\in \mathbb{R}^ d$~:
\begin{theorem}[\cite{Bha97}, Theorem~II.1.10]\label{the:ensemble_x_prec_y}
  Given a vectors $x\in \mathbb{R}^{ d}$ :
  \begin{align*}
    \left\{y\in \mathbb{R}^ d,y\prec x\right\} = \conv \left(\mathfrak S_d\cdot\left\{ x \right\}\right).
  \end{align*}
  
\end{theorem} 

Majorization appears to be the pertect tool to control the singular decomposition of a sum of matrices as we will see in Theorem~\ref{the:in_triangul_val_sing}. This is the core argument to justify the convexity consevation of $\mathcal O_{p,n}$ from $\mathcal D_{p,n}^+$ that we need to prove Theorem~\ref{the:convex_matrice_diag}. Let us first give an intermediate result that we originally owe to Schur and whose proof can be found in \cite{Mar11} or \cite{Bha97}.

\begin{proposition}[Schur's Theorem, B.1. in \cite{Mar11}]\label{pro:Schur_s_theo}
  Given a symmetric matrix $S \in \mathcal M_p$, we have the majorization $\diag(S)\prec \sigma(S)$.
\end{proposition}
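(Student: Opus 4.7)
The plan is to follow the classical route to Schur's majorization inequality, combining the spectral decomposition with the Birkhoff--von Neumann theorem. First I would use the spectral theorem to write $S = U \Lambda U^T$ with $U \in \mathcal O_p$ and $\Lambda = \diag(\lambda_1,\ldots,\lambda_p)$ the diagonal of eigenvalues (which, in the regime where the statement makes sense for the definition of majorization recalled in Definition~\ref{def:majorization}, agree with the entries of $\sigma(S)$ up to reordering). Expanding the diagonal of $S$ componentwise gives $S_{ii} = \sum_{j=1}^p U_{ij}^2 \lambda_j$, so that $\diag(S) = A\, \sigma(S)$ for the matrix $A \in \mathcal M_p$ with nonnegative entries $A_{ij} = U_{ij}^2$.

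Next I would observe that $A$ is doubly stochastic: since $U \in \mathcal O_p$, we have $\sum_j U_{ij}^2 = \sum_i U_{ij}^2 = 1$. The Birkhoff--von Neumann theorem then identifies the set of doubly stochastic matrices with $\conv(\mathcal P_p)$, hence there exist $(\mu_\tau)_{\tau\in \mathfrak S_p} \subset \mathbb R_+$ summing to $1$ such that $A = \sum_{\tau} \mu_\tau P_\tau$, where $P_\tau$ is the permutation matrix of $\tau$. This immediately yields
\begin{align*}
  \diag(S) = \sum_{\tau \in \mathfrak S_p} \mu_\tau\, (\tau \cdot \sigma(S)) \in \conv(\mathfrak S_p \cdot \{\sigma(S)\}),
\end{align*}
and Theorem~\ref{the:ensemble_x_prec_y} concludes that $\diag(S) \prec \sigma(S)$.

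The main obstacle is conceptual rather than computational: one has to invoke Birkhoff's theorem as a black box (or re-prove it, which is itself a short but nontrivial convex-geometry argument). If one prefers to avoid Birkhoff, an alternative route is to prove the $k$-th inequality of majorization directly by the Courant--Fischer variational characterization
\begin{align*}
  \sum_{i=1}^k \sigma_i^\downarrow(S) = \max_{\dim F = k} \tr(\Pi_F\, S\, \Pi_F),
\end{align*}
where $\Pi_F$ denotes the orthogonal projector onto $F$. Specializing $F$ to the span of any $k$ coordinate vectors $e_{\pi(1)},\ldots,e_{\pi(k)}$ produces $\sum_{i=1}^k S_{\pi(i),\pi(i)} \le \sum_{i=1}^k \sigma_i^\downarrow(S)$, while the equality of total sums $\sum_i S_{ii} = \tr(S) = \sum_i \sigma_i(S)$ closes the definition of $\prec$. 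Either route completes the argument; the Birkhoff path is shorter and fits naturally with the convex-hull viewpoint already exploited in Proposition~\ref{pro:chract_L(X)=conv(PIX)}.
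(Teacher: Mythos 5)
The paper does not prove this proposition: it is stated as a quoted classical result, with the proof deferred to \cite{Mar11} (B.1) and \cite{Bha97}. Your Birkhoff route is precisely the classical argument given in those references (diagonal of $S$ equals a doubly stochastic matrix applied to the eigenvalue vector, then decompose via Birkhoff--von Neumann and invoke the characterization of $\{y \prec x\}$ as $\conv(\mathfrak S_p\cdot\{x\})$, which is the paper's Theorem~\ref{the:ensemble_x_prec_y} and is established independently, so there is no circularity). The computation $S_{ii}=\sum_j U_{ij}^2\lambda_j$ and the double stochasticity of $(U_{ij}^2)$ are correct, and your alternative Ky Fan / Courant--Fischer route is also a valid standalone proof of the partial-sum inequalities. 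One point worth making explicit rather than parenthetical: with the paper's convention $\sigma(S)$ denotes the \emph{singular values}, i.e.\ $\vert\lambda_i\vert$, so the trace identity $\sum_i S_{ii}=\sum_i\sigma_i(S)$ required by Definition~\ref{def:majorization} fails for symmetric $S$ with negative eigenvalues; the statement should be read with the eigenvalue vector $\lambda(S)$ (as in Schur's theorem proper), which is how it is used in the proof of Theorem~\ref{the:in_triangul_val_sing}, where only the partial-sum inequalities $\sum^k\diag(USV^T)\leq\sum^k\sigma(S)$ are needed and these do follow from $\diag\prec\lambda$ together with $\lambda_i\leq\vert\lambda_i\vert$. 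You flag this caveat, which is the right instinct; your proof is correct.
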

This proposition entails a kind of triangular inequality for the set of singular values.
\begin{theorem}[\cite{Bha97}, Exercise II.1.15]\label{the:in_triangul_val_sing}
  Given two matrices $A,B \in \mathcal M_{p,n}$, $\sigma(A+B) \prec \sigma(A) + \sigma(B)$.
\end{theorem}
Of course, it is important that the vectors $\sigma(A)$ and $\sigma(B)$ are both ordered when we sum them.
\begin{proof}
  Given a symmetric matrix $S\in \mathcal M_q$, there exists $(U_S,V_S) \in \mathcal O_{p,n}$ such that $U_AAV_A^T= \diag_{p,n} \sigma(S)$, thus~:
  \begin{align*}
    \sum^k\diag(U_SSV_S^T) = \sum^k \sigma(S),
   \end{align*}
  where given $x\in \mathbb{R}^ d$ and $k\in \{1,\ldots  d\}$, $\sum^k x=\sum_{i=1}^k x_i^\downarrow$. Besides, since $\sigma(USV^T)=\sigma(S)$, we know from Proposition~\ref{pro:Schur_s_theo} that~:
  \begin{align}\label{eq:indent_trace_tronqu\'{e}e_sup}
    \sum^k \sigma(S)
    =\sup_{(U,V)\in \mathcal O_{p,n}}\sum^k\diag(USV^T).
  \end{align}
  Now, if we suppose that we are given a general matrix $A\mathcal M_{p,n}$ and $(U_A,V_A)\in \mathcal O_{p,n}$ such that $U_AAV_A^T=\diag_{p,n}\sigma(A)$. If we introduce the matrices~:
  \begin{align*}
    \tilde A = \left(\begin{array}{cc} (0)&A\\A^T& (0)\end{array}\right)\in \mathcal M_{p+n},&
    &\text{and}&
    &P = \left(\begin{array}{cc} U&(0)\\(0)& V\end{array}\right) \in \mathcal M_{p+n},
   \end{align*} 
  we have the identity~:$$P \tilde A P^T = \left(\begin{array}{cc} (0) & D\\ D&(0)\end{array}\right) \in \mathcal M_{p+n}, \ \ \  \ \ \text{with} \ \ D=\diag_{p,n}(\sigma(A)).$$
  Depending on the relation between $p$ and $n$, we introduce the invertible matrices~:
  \begin{align*}
    \text{if $ d=n$ :    } Q= \left(\begin{array}{ccc} I_d&(0)&I_d\\(0)&I_d& (0)\\-I_d&(0)&I_d\end{array}\right)&
    &\text{and if $ d=p$ :    } Q= \left(\begin{array}{ccc} I_d&I_d&(0)\\-I_d&I_d& (0)\\(0)&(0)&I_d\end{array}\right),&
  \end{align*}
  then if $ d=n$, $QP\tilde A(PQ)^T=\diag(\sigma(A),0\cdots 0, -\sigma(A))$ and if $ d=p$, $QP\tilde A(PQ)^T=\diag(\sigma(A), -\sigma(A),0\cdots 0)$. Thus in both cases, we obtain a diagonalisation of $\tilde A$ that allows us to generalize the identity \eqref{eq:indent_trace_tronqu\'{e}e_sup}) for any matrix $A\in \mathcal M_{p,n}$ and with $0\leq k \leq  d$.
  The supremum of a sum being lower than the sum of a supremum, for any pair of matrices $A,B\in \mathcal M_{p,n}$~:
  \begin{align*}
    \sigma(A+B) \prec \sigma(A)+\sigma(B).
  \end{align*}
\end{proof}
Now that the picture is clearer, we can prove Theorem~\ref{the:convex_matrice_diag}~:

\begin{proof}[Proof of Theorem~\ref{the:convex_matrice_diag}]
  To employ Theorem~\ref{the:grabowsky}, let us show the convexity conservation property of $\mathcal O_{p,n}$ from $\mathcal D_{p,n}^+$. Inspired by Proposition~\ref{pro:conv_conservation}, we consider two non-negative diagonal matrices $X,Y \in \mathcal D_{p,n}^+$, and we note $L(X,Y)=L^{\mathcal D_{p,n}^+}_{\mathcal O_{p,n}}(\{X,Y\})$. We want to show that $\mathcal O_{p,n} \cdot L(X,Y)$ is convex.

  Noting $x=\diag X$ and $y=\diag Y$, let us first show that $L(X,Y)=K(x,y)$, with~: $$K(x,y)=\{Z\in \mathcal D_{p,n}^+, \diag Z \prec \lambda x+(1-\lambda)y, \ 1\leq \lambda\leq 1\}.$$ 
  
  We know from Theorem~\ref{the:ensemble_x_prec_y} that for any $U\in \mathcal D_{p,n}^+$, $L(U)=\{\diag Z\prec \diag U\}$ and therefore, since for any $t\in [0,1]$, $tX+(1-t)Y \in L(X,Y)$, we obtain the first inclusion $K(x,y) \subset L(X,Y)$. 

  To prove the converse inclusion, let us show that $K(x,y)$ is convex (we already know that $\mathcal O_{p,n}\cdot K(x,y) \cap \mathcal D_{p,n}^+=\mathcal P_{p,n} \cdot K(x,y)=K(x,y)$ by definition of the relation $\prec$). 
  
  We consider $A,B\in K(x,y)$, $t\in[0,1]$ and we set $C=tA+(1-t)B$. Therefore, We know that there exists $t_z,t_w\in [0,1]$ such that $\diag A\prec \lambda x + (1-\lambda) y$ and $\diag B\prec \mu x + (1-\mu) y$, therefore : $$\diag C \prec \left(t \lambda+(1-t)\mu\right)\ x \ \  + \ \  (t(1-\lambda)+(1-t)(1-\mu)) \ y \in [x,y].$$
  
  In conclusion, since $X,Y \in K(x,y)$ and $K(x,y)$ is $\mathcal {P}_{p,n}$-invariant and convex we recover the second inclusion $K(x,y) \subset L(X,Y)$.

  Thus we are left to show that $\mathcal O_{p,n} \cdot K(x,y)$ is convex.
  We consider this time $A,B\in \mathcal O_{p,n} \cdot K(x,y)$, $t\in [0,1]$, and we introduce $C=tA +(1-t) B$. We know from Theorem~\ref{the:in_triangul_val_sing} that~:
  \begin{align*}
    \sigma(C) \prec t \sigma(A) + (1-t) \sigma(B),
  \end{align*}
  and as we saw before, that implies that $\diag \sigma(C) \in K(x,y)$. We can then conclude with the relations $$C \in \mathcal O_{p,n}\cdot \{\diag \sigma(C)\} \subset \mathcal O_{p,n}\cdot K(x,y)= \mathcal O_{p,n}\cdot L(X,Y).$$ 
  
  We can apply Proposition~\ref{pro:conv_conservation} to get the hypothesis of Theorem~\ref{the:grabowsky} that entails Theorem~\ref{the:convex_matrice_diag} in our setting.
\end{proof}
\subsection{Proof of Theorem~\ref{the:conc_val_sing}}
  
  Let us first show the Lipschitz character of $\sigma$, it is a well known result that can be found for instance in \cite{Gol96}~:
\begin{lemma}[Theorem 8.1.15 in \cite{Gol96}]\label{lem:lipschitz_val_singul}
  The function $\sigma$ is $1$-Lipschitz. 
\end{lemma}
\begin{proof}
  Given $M,H\in \mathcal{ M}_{p,n}$, and $i\in \{1,\ldots  d\}$ (where as before, $ d=\min(p,n)$, we know from formula \eqref{eq:formule_singuliere}) that~:
  \begin{align*}
    \lambda_i(A+H) \
    &=\min_{\genfrac{}{}{0pt}{2}{F \subset \mathbb{R}^n}{{\rm dim} F \geq n-i+1}} \max_{\genfrac{}{}{0pt}{2}{x\in F}{\left\Vert x\right\Vert=1}} \left\Vert (A +H)x\right\Vert \\
    &\leq \min_{\genfrac{}{}{0pt}{2}{F \subset \mathbb{R}^n}{{\rm dim} F \geq n-i+1}} \left(\max_{\genfrac{}{}{0pt}{2}{x\in F}{\left\Vert x\right\Vert=1} } \left\Vert Ax\right\Vert +\max_{\genfrac{}{}{0pt}{2}{x\in F}{\left\Vert x\right\Vert=1}} \left\Vert Hx\right\Vert\right) \\
    &\leq \min_{\genfrac{}{}{0pt}{2}{F \subset \mathbb{R}^n}{{\rm dim} F \geq n-i+1}} \max_{\genfrac{}{}{0pt}{2}{x\in F}{\left\Vert x\right\Vert=1}} \left\Vert Ax\right\Vert +\max_{\genfrac{}{}{0pt}{2}{x\in \mathbb R^n}{\left\Vert x\right\Vert=1}} \left\Vert Hx\right\Vert \ \ \leq \ \ \lambda_i(A) + \lambda_1(H),
  \end{align*}
  and the same way, we can show that $\lambda_i(A+H) \geq \lambda_i(A) - \lambda_n(H)$. Therefore, we get~:
  \begin{align*}
    \left\vert  \lambda_i(A+H)-\lambda_i(A)\right\vert  \leq \max(\lambda_1(H), \lambda_n(H)) \leq \left\Vert   H\right\Vert.
  \end{align*}

\end{proof} 
\begin{proof}[Proof of Theorem~\ref{the:conc_val_sing}]
  It is a simple corollary of Theorem~\ref{the:convex_matrice_diag}. If $X\propto^T_{\mathcal{O}_{p,n}} \alpha$, and given a $1$-Lipschitz, convex and $\mathfrak S_n$-invariant function $f:\mathbb{R}^ d \rightarrow \mathbb{R}$, one can introduce the function $\tilde F$ defined as~:
  \begin{align*}
    \tilde F :
    \begin{aligned}[t]
      &\mathcal{M}_{p,n}&&\longrightarrow&&\mathbb{R}  \\
      &M&&\longmapsto&&f(\sigma(M)) .
    \end{aligned}
  \end{align*}
  The function $F$ is $1$-Lipschitz thanks to Lemma~\ref{lem:lipschitz_val_singul} and $\mathcal{O}_{p,n}$-invariant because of the uniqueness of the singular decomposition. Besides we can identify the set $\mathcal{D}_{p,n}^+$ with $\mathbb{R}^ d$ and introduce a function $\tilde f : \mathcal{D}_{p,n}^+ \rightarrow \mathbb{R}$ verifying $\tilde f(\text{diag}_{p,n}(x))=f(x)$ for $x\in \mathbb{R}^ d$. In that case, since $f$ is $\mathfrak S_ d$-invariant and convex, $\tilde f$ is also convex and since $\tilde f = \restriction{F}{\mathcal D_{p,n}^+}$, Theorem~\ref{the:convex_matrice_diag} allows us to set that $F$ is also convex. Therefore, the random variable $f(\sigma(X))=F(X)$ is $\alpha$-concentrated by hypothesis on $X$.

  Reciprocally, let us suppose that we are given a random matrix $X\in \mathcal M_{p,n}$ such that $\sigma(X) \propto_{\mathfrak S_d}^{T} \alpha$, and let us consider a $1$-Lipschitz, convex and $\mathcal O_{p,n}$-invariant function $F : \mathcal M_{p,n}\rightarrow \mathbb{R}$. The restriction $\restriction{F}{\mathcal D_{p,n}^+}$ is also $1$ Lipschitz, convex and $\mathcal {P}_{p,n}$-invariant.
  Thus, with the same identification as before between $\mathcal D_{p,n}^+$ and $\mathbb{R}^ d$, we can assert by hypothesis that $F(X)= \restriction{\tilde F}{ \mathcal D_{p,n}^+}(\sigma(X))$ is $\alpha$-concentrated (we defined $ \restriction{\tilde F}{ \mathcal D_{p,n}^+}(x)= \restriction{ F}{ \mathcal D_{p,n}^+}(\text{diag}_{p,n}(x))$, it is a $\mathfrak S_d$-invariant function).
\end{proof}

\end{appendices}

\bibliographystyle{alpha} 

 \bibliography{biblio,bibli_covariance} 

\begin{thebibliography}{SLTC20}

\bibitem[Ada11]{ADA11}
Radoslaw Adamczak.
\newblock On the marchenko-pastur and circular laws for some classes of random
  matrices with dependent entries.
\newblock {\em Electronic Journal of Probability}, 16:1065--1095, 2011.

\bibitem[Bha97]{Bha97}
Rajendra Bhathia.
\newblock {\em Matrix Analysis}.
\newblock Springer, Graduate texts in mathematics, 1997.

\bibitem[BS09]{Baisil09}
Zhidong Bai and Jack~W. Silverstein.
\newblock {\em Spectral Analysis of large dimensional Random Matrices}.
\newblock Springer, 2009.

\bibitem[BZ08]{Bai08t}
Zhidong Bai and Wang Zhou.
\newblock Large sample covariance matrices without independence structures in
  columns.
\newblock {\em Statistica Sinica}, 18:425--442, 2008.

\bibitem[CB16]{COU16}
R.~Couillet and F.~{Benaych-Georges}.
\newblock Kernel spectral clustering of large dimensional data.
\newblock {\em Electronic Journal of Statistics}, 10(1):1393--1454, 2016.

\bibitem[CT15]{CHA15}
Djalil Chafaï and Konstantin Tikhomirov.
\newblock On the convergence of the extremal eigenvalues of empirical
  covariance matrices with dependence.
\newblock {\em arXiv preprint arXiv:1509.02231}, 2015.

\bibitem[Dav57]{DAV57}
Chandler Davis.
\newblock All convex invariant functions of hermitian matrices.
\newblock {\em Archiv der Mathematik 8}, pages 276--278, 1957.

\bibitem[{El }09]{ELK09}
N.~{El Karoui}.
\newblock Concentration of measure and spectra of random matrices: applications
  to correlation matrices, elliptical distributions and beyond.
\newblock {\em The Annals of Applied Probability}, 19(6):2362--2405, 2009.

\bibitem[{El }10]{ELK10}
N.~{El Karoui}.
\newblock The spectrum of kernel random matrices.
\newblock {\em The Annals of Statistics}, 38(1):1--50, 2010.

\bibitem[GH05]{GRA05}
Yury Grabovsky and Omar Hijab.
\newblock A generalization of the chandler davis convexity theorem.
\newblock {\em Advances in Applied Mathematics 34}, pages 192--212, 2005.

\bibitem[GL96]{Gol96}
Gene~H. Golub and Charles F.~Van Loan.
\newblock {\em Matrix computations}.
\newblock Johns Hopkins University press, 1996.

\bibitem[Gro79]{Gro79}
Mikhail Gromov.
\newblock Paul lévy’s isoperimetric inequality.
\newblock {\em Preprint IHES}, 1979.

\bibitem[Gro99]{gro99}
Mikhail Gromov.
\newblock Metric structures for riemannian and non-riemannian spaces.
\newblock In {\em Progress in Math. 152}. Birkhäuser, Boston, 1999.

\bibitem[HZS11]{HUA06}
Guang-Bin Huang, Qin~Yu Zhu, and Chee-Kheong Siew.
\newblock Extreme learning machine: theory and applications.
\newblock {\em IEEE Transactions on Information Theory}, 2011.

\bibitem[KC17]{KAM17}
Abla Kammoun and Romain Couillet.
\newblock Subspace kernel clustering of large dimensional data.
\newblock {\em (submitted to) Journal of Machine Learning Research}, 2017.

\bibitem[LCB98]{MNIST}
Y.~LeCun, C.~Cortes, and C.~Burges.
\newblock {The MNIST database of handwritten digits}, 1998.

\bibitem[Led01]{Led01}
Michel Ledoux.
\newblock {\em The Concentration of Measure Phenomenon}.
\newblock Mathematical Surveys and Monographs, Number 89, 2001.

\bibitem[Led05]{LED05}
Michel Ledoux.
\newblock {\em The concentration of measure phenomenon}, volume~89.
\newblock American Mathematical Soc., 2005.

\bibitem[LLC18]{LOU17b}
Cosme Louart, Zhenyu Liao, and Romain Couillet.
\newblock A random matrix approach to neural networks.
\newblock {\em Annals of Applied Probability}, 28:1190--1248, 2018.

\bibitem[MOA11]{Mar11}
Albert~W. Marshall, Ingram Olkin, and Barry~C. Arnold.
\newblock {\em Inequalities: Theory of Majorization and Its Applications}.
\newblock Springer series in statistics, 2011.

\bibitem[MP67]{MAR67}
V.~A. Mar\u{c}enko and L.~A. Pastur.
\newblock {Distribution of eigenvalues for some sets of random matrices}.
\newblock {\em Math USSR-Sbornik}, 1(4):457--483, 1967.

\bibitem[MS86]{Mil86}
Vitali~D. Milman and Gideon Schechtman.
\newblock {\em Asymptotic Theory of Finite Dimensional Normed Spaces}.
\newblock Springer-Verlag Berlin Heidelberg, 1986.

\bibitem[Ouv09]{Ouvr09}
Jean-Yves Ouvrard.
\newblock {\em Probabilités, tome II}.
\newblock Cassini, 2009.

\bibitem[PP07]{PAJ07}
Alain Pajor and Leonid Pastur.
\newblock On the limiting empirical measure of the sum of rank one matrices
  with log-concave distribution.
\newblock {\em arXiv preprint arXiv:0710.1346}, 2007.

\bibitem[SB95]{SIL95}
J.~W. Silverstein and Z.~D. Bai.
\newblock {On the empirical distribution of eigenvalues of a class of large
  dimensional random matrices}.
\newblock {\em Journal of Multivariate Analysis}, 54(2):175--192, 1995.

\bibitem[SLTC20]{SED20}
Mohamed El~Amine Seddik, Cosme Louart, Mohamed Tamaazousti, and Romain
  Couillet.
\newblock Random matrix theory proves thatdeep learning representations of
  gan-databehave as gaussian mixtures.
\newblock {\em ICML (submitted)}, 2020.

\bibitem[Tal94]{TAL94}
Michel Talagrand.
\newblock The supremum of some canonical processes.
\newblock {\em The Johns Hopkins University Press}, pages Vol. 116, No. 2, pp.
  283--325, 1994.

\bibitem[Tal95]{TAL95}
Michel Talagrand.
\newblock {\em Concentration of Measure and Isoperimetric Inequalities in
  product spaces}.
\newblock Publications math\'ematiques de l'I.H.E.S., tome 81, 1995.

\bibitem[Tao11]{Tao11}
Terence Tao.
\newblock Topics in random matrix theory.
\newblock Department of Mathematics, UCLA, Los Angeles, 2011.

\bibitem[Ver17]{Ver17}
Roman Vershynin.
\newblock High-dimensional probability.
\newblock University of Michigan, 2017.

\bibitem[Yat95]{YAT95}
R.~D. Yates.
\newblock A framework for uplink power control in cellular radio systems.
\newblock {\em Journal on Selected Areas in Communications}, 13(7):1341--1347,
  1995.

\end{thebibliography}
\end{document}